\documentclass[reqno, 11pt]{amsart}
\usepackage{amsmath,amssymb,cite}

\usepackage{amsthm} 
\usepackage{hyperref}
\usepackage{enumitem}
\usepackage{amsfonts}
\usepackage{verbatim}
\usepackage{stmaryrd}
\usepackage{fancyhdr}
\usepackage{graphicx}
\usepackage{subfigure}
\usepackage[left=1.2in, right=1.2in]{geometry}
\usepackage[usenames, dvipsnames,table]{xcolor}
\usepackage{amssymb,version,graphicx,fancybox,mathrsfs,multirow}
\usepackage{booktabs}      

\newcommand{\Cannote}[1]{\textcolor{blue}{#1}}

\theoremstyle{plain}
\newtheorem{theorem}{Theorem}[section]
\newtheorem{proposition}[theorem]{Proposition}
\newtheorem{lemma}[theorem]{Lemma}
\newtheorem{corollary}[theorem]{Corollary}

\theoremstyle{definition}

\newtheorem{assumption}[theorem]{Assumption}

\theoremstyle{remark}
\newtheorem{note}[theorem]{Note}

\numberwithin{equation}{section}
\numberwithin{theorem}{section}

\DeclareMathOperator{\bigo}{O}

\newcommand{\Dt}{\tau}

\newcommand{\E}{\mathbb{E}}

\DeclareMathOperator{\dist}{dist}

\newcommand{\R}{\mathbb{R}}

\newcommand{\N}{\mathbb{N}}
\newcommand{\be}{\begin{equation}}
\newcommand{\ee}{\end{equation}}

\newcommand{\con}{\mathbf{C}}

\begin{document}

\title[Numerical schemes for SPDEs]{Non-asymptotic uniform in time error bounds for new and old  numerical schemes for SPDEs.}

\author[C. Huang]{Can Huang$^{(1)}$}
\address{$^{(1)}$School of Mathematical Sciences, Xiamen University, Xiamen 361005, Fujian, China}
\email{canhuang@xmu.edu.cn}

\author[M. Ottobre]{Michela Ottobre$^{(2)}$}
\address{$^{(2)}$Maxwell Institute for Mathematical Sciences and Mathematics Department, Heriot-Watt University, Edinburgh EH14 4AS, UK}
\email{m.ottobre@hw.ac.uk}

\author[G. Simpson]{Gideon Simpson$^{(3)}$}
\address{$^{(3)}$Department of Mathematics, Drexel Univesrity, Philadelphia, 19104, USA}
\email{grs53@drexel.edu}

\begin{abstract}
We study numerical schemes for Stochastic Partial Differential Equations (SPDEs).  We introduce a general  method of proof of non-asymptotic uniform in time error bounds on numerical integrators for SPDEs, ensuring the schemes capture both the transient and the long term dynamics faithfully.  We then consider SPDEs with non-globally Lipshitz nonlinearities, which include for example the stochastic Allen-Cahn equation and some stochastic  advection-diffusion equations. For the case of Allen-Cahn type SPDEs  we show that the classic semi-implicit Euler  time-discretization can exhibit finite time blow up. This motivates analysing other schemes which do not suffer from this blow-up problem. We consider three numerical schemes for  SPDEs with non globally Lipshitz nonlinearity: a fully implicit scheme and two tamed schemes.  For these schemes we prove non-asymptotic uniform in time error bounds by leveraging our general criterion, and provide numerical comparisons. While the main emphasis in this paper is on the properties of the time-discretization, the schemes we consider are full space-time discretization of the SPDE.   

	\vspace{5pt}
    \noindent
    {\sc Keywords.} Stochastic Partial differential Equations (SPDEs); numerical schemes for SPDEs; Implicit time-discretizations; Tamed Schemes; non-asymptotic uniform in time approximations. 
    
\vspace{5pt}
\noindent  
{\sc AMS Classification (MSC 2020). 65C20, 65C30, 60H10, 65G99, 47D07, 60J60, 60H15. } 
\end{abstract}

\maketitle

\section{Introduction}


This work is devoted to the analysis of numerical schemes for Stochastic Partial Differential Equations (SPDEs) under full discretization, in both time and space.  Much of the existing analysis has been focused on either proving finite time error bounds, which imply that the scheme captures the transient phase of the dynamics correctly,  or establishing long-time properties of the scheme.  This latter challenge, of long time behavior, is often in relationship to accurate sampling of an invariant measure, provided the SPDE possesses one.


We introduce a set of easy to interpret conditions on the SPDE and the numerical scheme, which, when satisfied, are sufficient to imply error bounds that are uniform in time (UiT) and non-asymptotic, i.e. error bounds of the form
\begin{align}\label{eqn: general uiT bound intro}
\sup_{ k \in \mathbb N } \mathbb E \| u(k\tau) - u_{h}(k\tau)\| \leq  C(\tau, h) \bar C(u_0) \, . 
\end{align}
We state such critera in Section \ref{sec: sec2}. In the above expression $u(t)$ is the solution at time $t$ of the SPDE under consideration, so that, for each $t\geq 0$,  $u(t)$ is a random variable taking values in a suitable function space, such as $L^2(\R^d; \R)$. For brevity, we omit explicit dependence on the space variable $x \in \R^d$ and on the realization $\omega$ in the underlying probability space. Next, $u_{h}(k\tau)$ is a space-time approximation of the solution of the SPDE, with $h$ characterizing the  spatial mesh size  and $\tau$ the time-step size; $u_{h}(k\tau)$ is the discretization after $k$ time steps (i.e. at time $t =k\tau$) of $u$. As customary,   $\| \cdot \|$ denotes the $L^2$ spatial norm (though also other norms can be considered,  see Section \ref{sec: sec2}). Both $u(t)$ and the discretization are started with initial datum $u_0$. 

Below and in the next section we will introduce more precise notation and give more detail on the class of SPDEs and corresponding  numerical schemes we will consider.
For now we highlight that the essential feature of bound \eqref{eqn: general uiT bound intro}  is that both of the constants, $C$ and $\bar C$, appearing on the right hand side, are independent of time  - i.e. independent of $k$.   For this reason we say that the estimate \eqref{eqn: general uiT bound intro} is {\em{uniform in time}} (UiT).  Of course $C$ depends  on $\tau$ and  $h$.  For any {\em consistent} scheme,  $C$ vanishes as both $\tau$ and $h$ tend to zero,  with rates that depend on the specific scheme. For the schemes we will consider in this paper we will have 
$$
C(\tau, h) =  h+\tau^{1/2} \,.
$$
The constant $\bar C$ depends on the initial datum $u_0$.  

{A consequence of \eqref{eqn: general uiT bound intro} is that the numerical scheme has the desired property of capturing both short and long time dynamics, with errors that do not grow in time. In particular, assuming the SPDE \eqref{eqn:SPDEintro} has an invariant measure -- which will be the case under our assumptions, see Proposition \ref{prop:exponential convergence of SPDE} -- the scheme  correctly samples such a measure. {Were we only interested in approximating the invariant measure of the SPDE, then an asymptotic bound (asymptotic in time) would be sufficient}.  In contrast, the bound \eqref{eqn: general uiT bound intro}  is {\em{non-asymptotic}} (in time) - we explain this difference in Section \ref{sec: numerical schemes}. Hence, the challenge of producing bounds  like \eqref{eqn: general uiT bound intro}, which from now on we will refer to as non-asymptotic Uniform in Time bounds (naUiT),  has  attracted significant attention in the (SDE and SPDE) literature; see e.g. \cite{glatt2024long, jiang2025uniform, del2025time, del2020backward, schuh2024conditions, Angeli} and the very recent work  \cite{alfonsi2025euler} which also allows for variable step size.

The criteria we state in Section \ref{sec: sec2} should be seen as providing a general approach to prove  naUiT estimates \eqref{eqn: general uiT bound intro}, which also helps to  reconcile (and partly unify) other methods adopted in the literature, see Note \ref{note: comments on main thm}. After stating such criteria, we demonstrate their applicability by using them to prove naUiT for numerical discretizations of stochastic semilinear parabolic equations of the form \eqref{eqn:SPDEintro} below. We do this for three full space-time discretizations, namely the fully implicit Euler Scheme \eqref{scheme:im} and two tamed schemes, scheme \eqref{scheme: GTEM scheme} and \eqref{scheme:taming1}. 

{The methods used to derive our bounds have broader application beyond numerical discretizations of SPDEs, and they are applicable to  approximations of SDEs/SPDEs obtained via particle methods, multiscale techniques, and more; see, for example,  \cite{del2018stability, heydecker2019pathwise, durmus2020elementary, flandoli2022eddy, mischler2013kac, akyildiz2024multiscale, barre2021fast}}. However, the study of naUiT  bounds for approximations (be them numerical or not)  of SPDEs is still at an early stage compared to analogous questions in finite dimensional SDE dynamics. A broader purpose of this paper is to contribute to developing this branch of literature for SPDEs.   We elaborate on this point in Section \ref{sec: sec2}.}

{Our criteria for naUiT bounds, are analogs to those formulated for SDEs,  in \cite[Proposition 2.2]{Angeli}, and subsequently improved upon in  \cite{schuh2024conditions}. Indeed, the perspective of \cite{schuh2024conditions} allows for greater flexibility and easier adaptability to the infinite dimensional setting than \cite{Angeli}.  We leverage this fact in the present work, building directly on the results of \cite{schuh2024conditions}. 

The criteria formulated here are applicable to a large class of suitably well-posed SPDEs, and they do not require any particular features of the  nonlinear terms.  However,  to fix ideas, in the present work we consider SPDEs of the form 
\begin{equation}\label{eqn:SPDEintro}
\left\{
\begin{array}{ll}
du= \left(\Delta u  + f(u)\right) dt+  dW(t), & t\geq 0, x\in \mathcal O\\
u(t,x)=0, & x \in \partial \mathcal O, t\geq 0\\
u(0,x)= u_0(x), & x \in \mathcal O\, \,.
\end{array}
\right.
\end{equation}
for the unknown $u=u(t,x), \, t \in \R_+, x\in \mathcal O$, where $\mathcal O$ is a bounded subset of  $\R$.  The operator  $\Delta$ is the Laplacian with homogneous boundary conditions and $W$ is a trace class Wiener noise with covariance operator $Q$. Denoting by $L^2(\mathcal O; \R)$ the space of real-valued functions on $\mathcal O$ that are square-integrable,  we assume $Q:L^2(\mathcal O;\R)\rightarrow L^2(\mathcal O;\R)$ to be a symmetric, positive definite and trace class operator, so that there exists an orthonormal basis $\{\tilde{e}_j\}_{j \in \N}$ of $L^2(\mathcal O;\R)$ such that 
\begin{equation}
    Q \tilde e_j = \gamma_j \tilde e_j
\end{equation}
 with $\gamma_j \geq  0$ for every $j \in \mathbb N$ and 
 $$
 \sum_{j \in \N} \gamma_j < \infty \,.
 $$
 This implies that for any $v \in L^2$ (from now on we use $L^2$ as a shorthand for $L^2(\mathcal O; \R)$),  we can write 
$$
Q v= \sum_{j=0}^{\infty}  \gamma_j v_j \tilde{e}_j \,, \quad v_j:=(v, \tilde{e}_j)
$$
with $(\cdot, \cdot)$ denoting scalar product in $L^2$. In this setup, the $Q$-Wiener process $W$ can be written as 
\begin{equation}\label{eqn: Q wiener process definition}
W(t) = \sum_{j \in \N} \sqrt{\gamma_j}\, \tilde{e}_j \, \beta_j(t), \quad t\geq 0  \, ,
\end{equation}
where $\{\beta_j(t)\}_j$ is a collection of one-dimensional independent standard Brownian motions. 
The results we develop in this paper cover non-globally Lipshitz  nonlinearities such as  $f(u)=u-u^3$ or $f(u) = -u|u|^{q-2}, \, q>2$, which correspond to  stochastic Allen-Cahn and to a reaction-diffusion equation,  respectively \cite{prevot2007concise}. 
A complete list of rather standard assumptions on $Q,f$ (and on the operator $\Delta$),   as well as  comments on them, is deferred to Section \ref{sec: proof of main results}. Here we just note that under our assumptions (condition \eqref{assump eqn: growth of f} and Assumption \ref{assumption: on the covariance operator Q} in particular),  the problem \eqref{eqn:SPDEintro} has a unique mild solution,  see \cite[Proposition 6.2.2]{Cerrai01} and Section \ref{sec: proof of main results}.  

{Throughout this work, for simplicity, many of our examples and computations will be for problems posed over a subset  $\mathcal{O}$ of $\R$, like $[0,1]$ with Dirichlet boundary conditions.   But we emphasize that our results can be easily extended to the higher dimensional setting with analogous boundary conditions.}
 We restrict to dimension one for conciseness.    Extensions to bounded domains with periodic boundary conditions are also possible. We don't explore this fully in this paper but we observe that in this case care needs to be taken to appropriately choose the space discretization. We make comments on this point in Section \ref{sec: numerics}.
 
 

For numerical approximations of finite dimensional SDEs, it is well known that when the drift of the SDE is globally Lipshitz, the explicit Euler method correctly approximates  the dynamics on both  finite and  infinite time horizons \cite{mattingly2002ergodicity, Angeli, gyongy2005discretization}. However when the drift is  non-globally Lipshitz then explicit Euler may explode in finite time for sufficiently large data; see \cite[Lemma 6.3]{mattingly2002ergodicity} and a more precise statement in Section \ref{sec: why taming}. We prove that the same is true in the context of SPDEs. To be more precise, a fair analogue of the explicit Euler method for SDEs is given, in the SPDE context, by  the semi-implicit Euler scheme, where the Laplacian operator is treated implicitly while the nonlinearity is discretised explicitly, see \eqref{scheme:explicit Euler}.  Finite time error bounds for the semi-implicit Euler method for SPDEs of the form \eqref{eqn:SPDEintro} with globally Lipshitz  nonlinearity $f$ have  been provided in \cite{debussche2011weak}. We show that finite time blow up does occur when $f$ is non-globally Lipshitz, provided the initial datum is  sufficiently large. This fact, beyond intrinsic interest, also motivates the study of the fully implicit Euler scheme and of tamed schemes in this paper. 

In Section \ref{sec: numerics} we provide numerical experiments, comparing the schemes we analyse in this paper with others in the literature and making practical recommendations on which one to use.

The paper is organised as follows. In Section \ref{sec: sec2} and Section \ref{sec: numerical schemes}  we present our main results, and relate them to the existing literature. In Section \ref{sec: sec2} we state our criteria for naUiT convergence of numerical schemes for SPDEs, see Theorem \ref{thm: UiT theorem in our case} and Theorem \ref{thm: more general UiT theorem}, that is,  criteria to deduce naUiT bounds.  In Section \ref{sec: numerical schemes} we introduce the numerical schemes we consider and provide naUiT error bounds for each of them, see Theorem \ref{thm:Uit for fully implicit scheme}, Theorem \ref{thm: UiT estimate for GTEM} and Theorem \ref{thm: Uit bounds on scheme tamed with grad f}.  The proof of Theorem \ref{thm: UiT estimate for GTEM} - which contains naUiT bounds for one of the tamed schemes we study, namely  scheme \eqref{scheme: GTEM scheme} -  is particularly short. Indeed, we reiterate the point already made in \cite{schuh2024conditions}, that  the criteria of Section \ref{sec: sec2} make it easy to leverage  results which already exist in the literature.  A full list of assumptions as well as  the backbone of the proofs of our main results  are contained in Section \ref{sec: proof of main results}, with the proofs of specific estimates being deferred to Section \ref{sec:proof of uniform moment bounds} and Appendix \ref{sec: proofs of finite time error bounds}. In Section \ref{sec: why taming} we prove finite time blow up of the semi-implicit Euler scheme and in Section \ref{sec: numerics} we provide numerical experiments. The purpose of the numerical experiments of Section \ref{sec: numerics} is to  confirm theoretical results and to provide practical observations on the performance of the schemes we consider. Indeed, the three schemes we consider all satisfy the naUiT bounds \eqref{eqn: general uiT bound intro} so they all approximate correctly both the transient state and the invariant measure of the dynamics. However, when other considerations are taken into account (such as computational cost) then we find that,  a variant of the tamed scheme  \eqref{scheme: GTEM scheme}, namely the tamed scheme \eqref{scheme:taming by abs val f' pointwise standard form}, which we name {\em Truncated Tamed Scheme for SPDEs} is the one to be preferred. Further detail on this in Section \ref{sec: numerical schemes} and Section \ref{sec: numerics}.

\section{Uniform in time convergence of numerical schemes for SPDEs}\label{sec: sec2}

The main results of this section are Theorem \ref{thm: UiT theorem in our case} and a slight generalization of it, Theorem \ref{thm: more general UiT theorem}. These theorems  provide a set of sufficient conditions  which imply  uniform in time non-asymptotic bounds on the numerical error, i.e. bounds like \eqref{eqn: general uiT bound intro}.  Both  theorems are a simple adaptation to SPDEs of the approach of \cite{schuh2024conditions}. 

For the purposes of this section it is useful to introduce a slightly more detailed notation, namely
\begin{subequations}
\begin{align}
    u(t;u_0), & \quad \text{solution at time $t\geq 0$ of \eqref{eqn:SPDEintro} with initial datum $u_0$}, \label{detnot1}\\
    \begin{split}
    u_h(\ell \tau;u_0), &\quad \text{associated space-time discretization at time $t_{\ell}= \ell \tau$, $\ell \in \N$,}\\
    & \quad \text{with initial datum $u_0$}. \label{detnot2}
    \end{split}
\end{align}
\end{subequations}
In later sections we will just write $u_h^{\ell}$ for the space-time discretization at time $t_{\ell}=\ell \tau$, but here we maintain the more detailed notation \eqref{detnot1} - \eqref{detnot2}.


Below and throughout $H^1$ is a short hand for the Sobolev space $H^1(\mathcal O;\R)$ and $\|\cdot\|_1$ is the $H^1$ norm. 

\begin{theorem}\label{thm: UiT theorem in our case}
    With  the notation introduced so far, suppose the following three assumptions hold:
    \begin{enumerate}[label=\textnormal{[H\arabic*]},ref={[H\arabic*]}]
\item\label{Contractivity of SPDE}{\bf Contractivity of the dynamics.} There exist constants  $C, \lambda>0$ such that for any two initial data $u_0, v_0 \in H^1$ of  problem \eqref{eqn:SPDEintro} the following holds
\begin{equation}
    \|u(t;u_0) - u(t;v_0)\| \leq C e^{-\lambda t} \|u_0 - v_0 \| \,,\qquad  a.s. 
\end{equation}

\item\label{Finite time estimate}{\bf Finite time error bound.}
For every $\ell \in \mathbb N$ s.t. $\ell\tau \leq 1$ and for any $u_0 \in H^1$ the following bound holds
\begin{equation}\label{eqn: Finite time estimate}
    \sup_{\ell: \tau \ell \leq 1} 
    \mathbb E \|u(\tau \ell;u_0) - u_h( \tau \ell;u_0)\| \leq \tilde{C}(h, \tau) \mathbb E \|u_0\|_1^{\beta}
\end{equation}
for some constant $\tilde C= \tilde C (\tau, h)$ which depends only on $\tau$ and $h$, and for some $\beta>0$.

\item\label{Uniform in time moment bound}{\bf Uniform in time bound on the moments of the numerical scheme.}
For every $u_0 \in H^1$, there exists a constant $\hat C=\hat C(u_0)$, which depends only on $u_0$, such that
$$
\sup_{\ell \in \N} \mathbb E \|u_h(\tau\ell; u_0)\|_1^{\beta} \leq \hat C(u_0) \,.
$$
\end{enumerate}

Then there exists a constant $\bar C$ depending only on $\lambda$ and $u_0$, but not  $h, \tau $ or $ \ell$, such that 
\begin{equation}\label{UiT bound main thm}
    \sup_{\ell \in \mathbb N} 
    \mathbb E \|u(\tau \ell;u_0) - u_h(\tau \ell;u_0)\| \leq \bar C \, \tilde{C}(h, \tau) \,.
\end{equation}
\end{theorem}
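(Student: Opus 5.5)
The plan is the telescoping argument of \cite{Angeli, schuh2024conditions}, adapted to the present setting. It splits the error at time $\ell\tau$ into a sum of one-unit-of-time local errors, each propagated forward by the exact SPDE flow. Assumption \ref{Contractivity of SPDE} makes the propagated local errors decay geometrically. Assumption \ref{Finite time estimate} controls each local error in terms of the $H^1$-moments of the numerical solution at the start of the block, and these moments are bounded uniformly in time by Assumption \ref{Uniform in time moment bound}.

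\emph{Setup.} Let $N := \lfloor 1/\tau \rfloor$, so $N\tau \le 1$ and $N\tau \ge 1/2$ for $\tau\le 1$. Set $t_j := jN\tau$ and, for fixed $\ell$, $J := \lfloor \ell/N \rfloor$. The exact solution and the scheme are driven by the same $Q$-Wiener process. For an $\mathcal F_s$-measurable datum $v$, write $u(t-s; v)$ for the solution on $[s,t]$ started at time $s$ from $v$, driven by the increments of $W$ after $s$; use the analogous notation for the scheme. Both objects have the flow property, and in particular $u_h(t_{j+1};u_0) = u_h(N\tau; u_h(t_j;u_0))$, where the scheme is restarted at $t_j$. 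Telescoping then gives
\begin{align*}
u(\ell\tau;u_0) - u_h(\ell\tau;u_0) &= \sum_{j=0}^{J-1}\Big[ u\big(\ell\tau - t_{j+1}; u(N\tau; u_h(t_j))\big) - u\big(\ell\tau - t_{j+1}; u_h(N\tau; u_h(t_j))\big)\Big] \\
&\quad + \Big[u\big(\ell\tau - t_J; u_h(t_J)\big) - u_h\big(\ell\tau - t_J; u_h(t_J)\big)\Big].
\end{align*}
Here I used $u(\ell\tau - t_j; v) = u(\ell\tau - t_{j+1}; u(N\tau; v))$.

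\emph{Bounding the terms.} Apply \ref{Contractivity of SPDE} pathwise to the $j$-th summand. This bounds it by $C e^{-\lambda(\ell\tau - t_{j+1})}\,\|u(N\tau;u_h(t_j)) - u_h(N\tau;u_h(t_j))\|$. Next take expectations and condition on $\mathcal F_{t_j}$. Since $u_h(t_j)$ is $\mathcal F_{t_j}$-measurable and the noise increments after $t_j$ are independent of $\mathcal F_{t_j}$, assumption \ref{Finite time estimate} applies with frozen initial datum $u_h(t_j)$ (note $N\tau\le1$). This gives
$$\mathbb E\|\cdot\| \le C e^{-\lambda(\ell\tau-t_{j+1})}\,\tilde C(h,\tau)\,\mathbb E\|u_h(t_j;u_0)\|_1^\beta \le C\,\hat C(u_0)\,\tilde C(h,\tau)\, e^{-\lambda(\ell\tau-t_{j+1})}.$$
The last inequality uses \ref{Uniform in time moment bound}. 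The final bracket is a finite-horizon error over a time $\ell\tau - t_J < N\tau \le 1$, and the same conditioning bounds it by $\tilde C\hat C$. Since $\ell\tau - t_{j+1} \ge (J-1-j)N\tau \ge (J-1-j)/2$, summing the geometric series gives
$$\sup_\ell \mathbb E\|u(\ell\tau)-u_h(\ell\tau)\| \le \tilde C(h,\tau)\,\hat C(u_0)\Big(1 + \frac{C}{1-e^{-\lambda/2}}\Big).$$
This is \eqref{UiT bound main thm} with $\bar C := \hat C(u_0)\big(1 + C/(1-e^{-\lambda/2})\big)$.

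\emph{Main obstacle.} The delicate step is justifying the use of \ref{Contractivity of SPDE} and \ref{Finite time estimate} at random initial data. Both are stated for deterministic $u_0\in H^1$. Two things are needed here. First, a Markov/flow property of both the SPDE and the scheme under the synchronous coupling, so that conditioning on $\mathcal F_{t_j}$ reduces to the deterministic-datum statement; this is a routine disintegration/freezing lemma. Second, the intermediate data $u_h(t_j)$ and $u(N\tau; u_h(t_j))$ must lie in $H^1$ almost surely so that \ref{Contractivity of SPDE} can be invoked. For $u_h(t_j)$ this holds because \ref{Uniform in time moment bound} makes its $H^1$ moment finite. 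For $u(N\tau;\cdot)$ it should follow from the parabolic smoothing of the mild solution. Alternatively, one can read \ref{Contractivity of SPDE} as holding for $L^2$ data by density, using continuity of the solution map in $L^2$.
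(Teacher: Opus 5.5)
Your proof is correct and is essentially the paper's argument: you telescope the error into unit-time local errors, propagate them with the exact flow, contract them via \ref{Contractivity of SPDE}, bound each one (and the final remainder block) via \ref{Finite time estimate} and \ref{Uniform in time moment bound}, and sum a geometric series. Your two changes are minor refinements rather than a different route: blocks of length $\lfloor 1/\tau\rfloor\tau$ keep everything on the time grid, so you avoid the interpolation footnote the paper needs, and you explicitly condition on $\mathcal F_{t_j}$ to apply the hypotheses at random initial data, which the paper leaves implicit.
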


Before proving the theorem, we briefly comment on its assumptions, with additional remarks in  Note \ref{note: comments on main thm}. Assumption \ref{Contractivity of SPDE} is an assumption purely on the SPDE dynamics \eqref{eqn:SPDEintro}, while \ref{Uniform in time moment bound} is an assumption on the numerical scheme. Assumption \ref{Finite time estimate} requires control on the finite time approximation error (in this case up to time one, but of course any finite time will do) and so this is an assumption on both  the dynamics and its numerical approximation.

\begin{proof}[Proof of Theorem \ref{thm: UiT theorem in our case}] This closely follows the result in \cite[Proof of Theorem 3.1]{schuh2024conditions}, so we only provide a sketch.  For any $\ell \in \N$,  let us  write $\ell \tau = k+m$, where $k \in \N$ and $m \in [0,1)$.   Then we can write \footnote{Strictly speaking so far we have only defined the discretization at times which are integer multiples of $\tau$. If $s> 0$ is not an integer multiple of $\tau$ then $u_h(s; \cdot)$ is understood to be a continuous time interpolation of $u_h(\tau n; \cdot)$, for example a piecewise linear interpolation. }
\begin{align*}
& \mathbb E \|u(\tau \ell;u_0) - u_h(\tau \ell; u_0)\| \\
= & \, \mathbb E \|u_h(m;u_h(k;u_0)) - u(m;u(k;u_0))\|\\
 \leq &  \, 
 \mathbb E  \| 
 u_h(m;u_h(k;u_0)) - 
 u(m; u_h(k;u_0))
 \| + \mathbb E \| u(m; u_h(k;u_0)) - u(m;u(k;u_0))
 \| \,.
\end{align*}
Let $I_1$ and $I_2$ be, respectively, the first and second addend on the RHS of the above.  Then, since $m \in [0,1)$, combining \ref{Finite time estimate} and \ref{Uniform in time moment bound}, for $I_1$ we have a straightforward estimate, 
$$
I_1 \stackrel{ \ref{Finite time estimate}}{\leq} \tilde{C}(h, \tau) \sup_{k \in \N} 
\mathbb E \|u_h(k; u_0)\|_1^{\beta} \stackrel{\ref{Uniform in time moment bound}}{\leq} \tilde C \hat C(u_0) \,. 
$$
As for the term $I_2$, 
\begin{align*}
I_2 & \leq \sum_{i=0}^{k-1} \mathbb E \|u(m+i; u_h(k-i;u_0)) - 
u(m+i; u(1;u_h(k-i-1;u_0))) \|\\
& \stackrel{\ref{Contractivity of SPDE}}{\leq}
C \sum_{i=0}^{k-1}  e^{-\lambda (m+i)} 
\mathbb E \| u_h (k-i;u_0) - 
u(1; u_h(k-i-1;u_0))\|\\
& = \,  C \sum_{i=0}^{k-1}  e^{-\lambda (m+i)} \mathbb E
\|u_h(1;u_h(k-i-1;u_0)) - u(1;u_h(k-i-1;u_0))\|\\
& \stackrel{\ref{Finite time estimate}\ref{Uniform in time moment bound}}{\leq} 
C \, \tilde{C}(\tau, h) \hat C(u_0)  \sum_{i=0}^{k-1}  e^{-\lambda (m+i)} \leq 
C \, \tilde{C}(\tau, h) \hat C(u_0)   \frac{1-e^{-\lambda k}}{1-e^{-\lambda}}  \,.
\end{align*}
This concludes the proof. 
\end{proof}


\begin{note}\label{note: comments on main thm}  

The conditions of Theorem \ref{thm: UiT theorem in our case} are completely analogous to those in the main theorem of \cite{schuh2024conditions}; here we just restate them in a way which makes them suited to the infinite dimensional context at hand. Other observations on the above theorem, in no particular order: 
\begin{itemize}
\item  The role of the SPDE dynamics $u$ and of its approximation $u_h$ are completely symmetric in the statement of the theorem; the theorem also holds if the contractivity of  Hypothesis \ref{Contractivity of SPDE} is enforced on the approximation and the uniform in time bound of Hypothesis \ref{Uniform in time moment bound} is proven for the dynamics $u(t; u_0)$. The work of \cite{glatt2024long}, which  is also devoted to providing an approach to prove  naUiT error bounds on SPDEs and their approximations,  can be interpreted in this spirit (modulo choosing the Wasserstein distance instead of the strong norm we choose here). That is, the approach of \cite{glatt2024long} can be obtained from Theorem \ref{thm: UiT theorem in our case}, after swapping the roles of the dynamics and its approximation. 

\item {Theorem \ref{thm: UiT theorem in our case} provides a general framework  for producing naUiT bounds of the form \eqref{UiT bound main thm}. As noted also in \cite{schuh2024conditions, glatt2024long}, the simplicity of this type of theorems hides the potential difficulty of actually proving  Hypotheses \ref{Contractivity of SPDE}, \ref{Finite time estimate}, \ref{Uniform in time moment bound}.  This merits additional comment.}

{Finite time error bounds of the form \eqref{eqn: Finite time estimate} are well studied in the literature, both for SDEs and SPDEs. Indeed, for Allen-Cahn type SPDEs, see, amongst others, \cite{brehier2020weak, brehier2018analysis, cui2019strong, liu2021strong, breit2024weak, cai2021weak, becker2023strong, ma2024strong}, with no claim to completeness of references. As for the contractivity property \ref{Contractivity of SPDE},  this is straightforward for Allen-Cahn SPDEs; see Lemma \ref{lemma:contractivity of SPDE}. {But for other evolutions this can be lengthy to prove, depending on both the evolution and the metric of interest. Again, various approaches have been proposed, using both probabilistic methods, e.g.  coupling techniques, or analytic approaches, e.g. Strong Exponential Stability, see  \cite{Angeli, crisan2022poisson, schuh2024global,liu2025l2, leimkuhler2024contraction, eberle2016reflection, glatt2024long, barrera2024ergodicity} and references therein.}  

The uniform in time bound \ref{Uniform in time moment bound} on the numerical scheme  usually requires an approach tailored to the specific dynamics. Despite the difficulties highlighted so far in proving the conditions required to apply Theorem \ref{thm: UiT theorem in our case},   it is also true that the literature is mature enough that, in some cases, many of the estimates are readily available. This will be the case for one of the schemes we study here, see Subsection \ref{subsec:truncated tamed} on this point.} 

\item Bound \eqref{UiT bound main thm} is a strong error bound. Similar bounds are usually obtained in weak form or in Wasserstein distance, \cite{glatt2024long, Angeli, crisan2021uniform, schuh2024conditions}. The key reason why we obtain strong naUiT bounds for our schemes is that Allen-Cahn type equations are contractive in a strong sense, see Lemma \ref{lemma:contractivity of SPDE}, and also because finite time strong error bounds are either easy to derive or already established in the literature. 

\item As we have repeatedly emphasized, the error bound \eqref{UiT bound main thm} is a {\em non-asymptotic} UiT bound.  In contrast, the UiT bounds commonly found in the literature are asymptotic (in time), see e.g. \cite{mattingly2002ergodicity, bou2023mixing, schuh2024convergence, duncan2017using, brehier2014approximation, cui2021weak}.  These bounds are aimed at understanding how well the numerical scheme approximates the invariant measure, denoted $\pi$, of the process $\{u_t\}_{t\geq 0}$ (when such an invariant measure exists).   Generally speaking, by ``asymptotic'' bounds we mean bounds of the form  
\begin{equation}\label{asymptboundintro}
\left|  \mathbb E\left(g\left( u_h(\tau k; u_0)\right)\right) - \pi(g)  \right| \leq a\,  e^{-\lambda k} + c \,  (\tau^{\hat \zeta} + h^{\zeta})
\end{equation}
for some constants  $a, \lambda, c, \zeta, \hat \zeta>0$ which are independent of time $k$ (though some of them might depend on $\tau$).  

These bounds are usually obtained by considering the invariant measure of the discretization $u_h(\tau k; u_0)$, $\pi^{\tau, h}$,  so that, by the triangle inequality, the first term on the RHS of \eqref{asymptboundintro} comes from studying exponential convergence of $u_h(\tau k; u_0)$ to  $\pi^{\tau,h}$ while the second comes from bounding the asymptotic bias, i.e. the distance between $\pi^{\tau, h}$ and the invariant measure $\pi$ of $\{u_t\}_{t\geq 0}$. Asymptotic bounds such as \eqref{asymptboundintro} are sufficient if one is {\em only} interested in sampling from the invariant measure $\pi$, but they are not necessarily informative of how the scheme approximates the dynamics for finite time.

\item When applying Theorem \ref{thm: UiT theorem in our case}, the detailed properties of the spatial discretization do not play a major role (except of course in \eqref{eqn: Finite time estimate}),  as the emphasis is on the properties of the time-discretization.

\item Theorem \ref{thm: UiT theorem in our case} is expressed in terms of  $L^2$ and $H^1$ norms, for readability and to relate it straight away to the type of norms that will appear in our bounds.  However, other norms for which these properties can be proven can be made to work and Theorem \ref{thm: more general UiT theorem} below is a slight generalisation of Theorem \ref{thm: UiT theorem in our case} in this sense. 

\item Because of the techniques we use to prove \ref{Finite time estimate}, the $H^1$ norm of the initial condition appears on the RHS of the finite time error estimates, like \eqref{eqn: Finite time estimate}. Had we used Malliavin calculus methods, as in \cite{cui2019strong}, then we could have obtained estimate in terms of the $L^2$ norm of the data. This would have had  the advantage that, instead of needing to bound the $H^1$ norm of the numerical scheme, as in \ref{Uniform in time moment bound}, it would suffice to bound the  $L^2$ norm and then use Theorem \ref{thm: more general UiT theorem} below (with $\mathbb M$ the expected value of the $L^2$ norm of $u_0$).  \end{itemize}
\end{note}
The following theorem is a straightforward generalization of Theorem \ref{thm: UiT theorem in our case}.

\begin{theorem}\label{thm: more general UiT theorem}
    Let $\| \cdot\|_B, \| \cdot\|_{\tilde B}$  be  norms  on the space $S$ of functions $u: \mathcal O \rightarrow \R$.\footnote{The space $S$ equipped with either norms should of course be good enough that the evolution \eqref{eqn:SPDEintro} is well posed in $B$ and that the LHS of \ref{Finite time estimate U} makes sense. } With  the notation introduced so far, suppose the following three assumptions hold:
    
    \begin{enumerate}[label=\textnormal{[A\arabic*]},ref={[A\arabic*]}]

\item\label{Contractivity of SPDE U}{\bf Contractivity of the dynamics.} There exist constants  $C, \lambda>0$ such that for any two initial data $u_0, v_0 \in S$  the following holds
\begin{equation}\label{eqn:contractivity norm B}
    \|u(t;u_0) - u(t;v_0)\|_{B} \leq C e^{-\lambda t} \|u_0 - v_0 \|_{\tilde B} \,, \quad a.s. 
\end{equation}

\item\label{Finite time estimate U}{\bf Finite time error bound.}
There exists a function $\mathbb M: B \rightarrow \R$ such that for every $\ell \in \mathbb N$ s.t. $\ell\tau \leq 1$ the following bound holds
\begin{equation}\label{eqn: Finite time estimate U}
    \sup_{\ell: \tau \ell \leq 1} 
    \mathbb E \|u(\tau \ell;u_0) - u_h( \tau \ell;u_0)\|_{\tilde B} \leq \tilde{C}(\tau, h) \mathbb M(u_0)
\end{equation}
for some constant $\tilde C= \tilde C (\tau, h)$ which depends only on $\tau$ and $h$.  

\item\label{Uniform in time moment bound U}{\bf Uniform in time control on the numerical scheme.}
There exists a constant $\hat C=\hat C(u_0)$, which depends only on $u_0$, such that
$$
\sup_{\ell \in \N} \mathbb M (u_h(\tau\ell; u_0)) \leq \hat C(u_0) \,.
$$
\end{enumerate}

Then there exists a constant $\bar C$ which depends on $\lambda$ and $u_0$ but does not depend on $N, \tau $ or $ \ell$ such that 
\begin{equation}
    \sup_{\ell \in \mathbb N} 
    \mathbb E \|u(\tau \ell;u_0) - u_h(\tau \ell;u_0)\|_B \leq \bar C \, \tilde{C}(h, \tau) \,.
\end{equation}

\end{theorem}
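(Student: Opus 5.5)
The proof will mirror the proof of Theorem \ref{thm: UiT theorem in our case} line by line, replacing the $L^2$ norm with $\|\cdot\|_B$ and the $L^2$ norm of the initial difference with $\|\cdot\|_{\tilde B}$. The quantity $\mathbb E\|u_0\|_1^\beta$ is replaced by $\mathbb M(u_0)$.

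Fix $\ell\in\N$ and write $\ell\tau=k+m$ with $k\in\N$ and $m\in[0,1)$, where $u_h(s;\cdot)$ is the continuous-time interpolation used earlier. By the flow property of both the SPDE and the scheme (the latter at least at grid times, which is all we need), we have $u(\ell\tau;u_0)=u(m;u(k;u_0))$ and $u_h(\ell\tau;u_0)=u_h(m;u_h(k;u_0))$. The triangle inequality then splits the error into
\[
I_1=\mathbb E\|u_h(m;u_h(k;u_0))-u(m;u_h(k;u_0))\|_B,\qquad I_2=\mathbb E\|u(m;u_h(k;u_0))-u(m;u(k;u_0))\|_B .
\]

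For $I_2$ we telescope exactly as before. We write $u(k;u_0)=u(1;u(k-1;u_0))$, interpolate through the points $u(i;u_h(k-i;u_0))$ for $i=0,\dots,k$, and obtain
\[
I_2\le\sum_{i=0}^{k-1}\mathbb E\bigl\|u(m+i;u_h(k-i;u_0))-u(m+i;u(1;u_h(k-i-1;u_0)))\bigr\|_B .
\]
Applying \ref{Contractivity of SPDE U} almost surely, with the two random initial data $u_h(k-i;u_0)$ and $u(1;u_h(k-i-1;u_0))$, bounds each summand by $Ce^{-\lambda(m+i)}\,\mathbb E\|u_h(1;v)-u(1;v)\|_{\tilde B}$ with $v=u_h(k-i-1;u_0)$.

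We then apply \ref{Finite time estimate U} conditionally on $v$, using the Markov property and the independence of future noise increments. This gives the bound $\tilde C(\tau,h)\,\mathbb E[\mathbb M(v)]$. We take $\mathbb M$ to already encode the expectation, as in Theorem \ref{thm: UiT theorem in our case}, where $\mathbb M(u_0)=\mathbb E\|u_0\|_1^\beta$. In that case \ref{Uniform in time moment bound U} gives at most $\hat C(u_0)$, uniformly in $i$. Summing the geometric series yields
\[
I_2\le C\tilde C\hat C(u_0)\,(1-e^{-\lambda})^{-1}.
\]

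The term $I_1$ needs a small care point. The hypothesis \ref{Finite time estimate U} is stated in the $\tilde B$ norm, while $I_1$ is measured in $B$. I would either assume $\|\cdot\|_B\le c\|\cdot\|_{\tilde B}$, which follows from \ref{Contractivity of SPDE U} at $t=0$ when $C$ absorbs constants and the solution map at time zero is the identity, or absorb it into one extra step of the telescoping. Under $\|\cdot\|_B\le C\|\cdot\|_{\tilde B}$, conditioning on $u_h(k;u_0)$ and using \ref{Finite time estimate U} and \ref{Uniform in time moment bound U} gives $I_1\le C\tilde C\hat C(u_0)$. Adding the two bounds gives the claim with $\bar C=C\hat C(u_0)\bigl(1+(1-e^{-\lambda})^{-1}\bigr)$.

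The main obstacle is justifying the conditioning steps, namely applying the deterministic-initial-datum bound \ref{Finite time estimate U} at random initial data $u_h(j;u_0)$. This needs the Markov/flow structure of the scheme and of the SPDE driven by the same noise. It also needs a consistent interpretation of $\mathbb M$ evaluated at a random datum, which in \ref{Uniform in time moment bound U} must be read as an expectation. A secondary obstacle is the $B$ versus $\tilde B$ norm mismatch in $I_1$, handled as above.
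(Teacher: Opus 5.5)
Your proposal is correct and is essentially the paper's argument. The paper omits the proof of Theorem \ref{thm: more general UiT theorem} as a relabeling of the proof of Theorem \ref{thm: UiT theorem in our case}, which uses exactly your split into $I_1$ and $I_2$, the telescoping of $I_2$ through \ref{Contractivity of SPDE U}, and the geometric sum. Your remark that \ref{Contractivity of SPDE U} at $t=0$ gives $\|w\|_B\le C\|w\|_{\tilde B}$ is what lets \ref{Finite time estimate U}, stated in $\|\cdot\|_{\tilde B}$, control $I_1$, measured in $\|\cdot\|_B$; this fills the one point the paper leaves implicit.
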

We omit a  proof of the above theorem as it is a minor variation on Theorem \ref{thm: UiT theorem in our case}.  The result can be easily obtained by adapting the proof of  \cite[Theorem 1.1]{schuh2024conditions}. Further generalizations of this theorem can be  obtained by exploiting the flexibility of the framework of \cite{schuh2024conditions}.


\section{Description of numerical schemes and main results on their convergence}\label{sec: numerical schemes}
In this section we introduce the three numerical schemes that we consider in this paper (and also further related schemes). We state here the main results we prove about each of these schemes, proofs are deferred to later sections.  

The space discretization is the same for all the schemes, and it is presented first, in Subsection \ref{subsec:space discretization} below. The different time-discretizations, and hence the three different schemes, are then described in Subsection \ref{subsec:fully implicit scheme} ({\em fully implicit Euler scheme}), Subsection \ref{subsec:truncated tamed} ({\em truncated tamed scheme}) and Subsection \ref{subsec:taming by grad fu} ({\em taming by $\nabla f$}). The fully implicit scheme is well-known in the literature, but only finite time error bounds have been shown so far;  the main contribution here is proving that naUiT bounds hold for this scheme. 
The two tamed schemes merit more comment, so for each of them we will explain the  rationale, we relate them to existing literature, and also consider some variations on them. We do not prove naUiT bounds for each of these variations, but we will numerically compare all of them in Section \ref{sec: numerics}.

\subsection{Space discretization}\label{subsec:space discretization} 
All the theoretical results that we present in this section refer to space- time discretizations where the space - discretization is given by the 
standard linear continuous piecewise Finite Element Method (FEM) \cite{Brenner, QuarteroniV97}, which we briefly recall below.  However in Section \ref{sec: numerics} we will show numerical experiments where the space discretization is produced by FEM and others where we use a Galerkin approximation, to numerically study the extent to which our theoretical findings are robust to different choices of space-discretization.  So below we briefly recall some essential notation for both FEM and Fourier spectral Galerkin approximations.  

\subsubsection{Finite Element Method (FEM)}
\label{s:fem}


In the first discretization we consider, let  $V_h$ be the space of continuous piecewise linear polynomials on the domain $[0,1]$ with Dirichlet boundary conditions over the uniform mesh
\begin{equation}
\label{e:femmesh}
    0=x_0<x_1<\ldots<x_{N} = 1, \quad h = 1/N \,.
\end{equation}
Equivalently, the space $V_h$ can be described as $V_h = \mathrm{span}\{\varphi_j\}_{j=1}^{N-1}$ where the $\varphi_j(x)$  are the ``hat functions":
\begin{equation}
\label{e:hatfunction}
    \varphi_j(x) = \begin{cases} \frac{x-x_{j-1}}{x_j - x_{j-1}}& x\in [x_{j-1}, x_j],\\
    1 - \frac{x_{j+1} - x}{x_{j+1} - x_{j}}& x\in (x_{j}, x_{j+1}], \qquad j=1, \dots, N-1 \\
    0 & \text{otherwise.}
    \end{cases}
\end{equation}
The space $V_h$ defined as in the above is sometimes also referred to as P1 FEM space (with the P1 referring to the fact that the polynomials are piecewise linear).  

Elements $u_h\in V_h$  admit the representation
\begin{equation}
    \label{e:FEM}
    u_h(x) = \sum_{j=1}^{N-1} u_{h,j} \varphi_j(x)\,,
\end{equation}
and the $\{u_{h,j}\}_{j=1}^N$ are determined either via $L^2$  projection or interpolation (the difference between the two choices being order $h^2$ in $L^2$,\cite{Brenner}) .  Regardless of how they are obtained, once the coefficients are set, $u_h(x_j) = u_{h,j}$.

The discrete Laplacian $\Delta_h$ can be defined accordingly,
\begin{align}\label{def:dislap}
(-\Delta_h v_h, w_h)=(\nabla v_h, \nabla w_h), \quad \mbox{for every } v_h, w_h\in V_h.
\end{align}
Furthermore,
$$
\|(-\Delta)^{\frac{1}{2}}v_h\|=\|\nabla v_h\|=\|(-\Delta_h)^{\frac{1}{2}}v_h\|, \ \ v_h\in V_h.
$$
In the sequel, we shall use the  orthogonal projection $P_h: L^2\to V_h$ defined as 
$$
(P_hv, w_h)=(v, w_h), \quad \mbox{for every } v\in L^2, \, w_h\in V_h. 
$$
Since we assume that the domain partition is uniform \cite{QuarteroniV97},
\begin{align}
&\|P_hv\|_1\leq c\|v\|_1, \\
&\|v-P_hv\|\leq ch\|v\|_1,\  \ \forall v\in H^1,
\end{align}
where $c$ is a constant independent of $h$.

\subsubsection{Fourier Pseudo Spectral-Galerkin method}
\label{s:fft}
While our theoretical results are given for the Dirichlet domain problem, we will also provide simulations in the periodic setting on the interval $[0, 2\pi]$,  where it is natural to use Fourier spectral-Galerkin methods and exploit the Fast Fourier Transform (FFT). In this periodic setting we will always consider the basis $V_h^{SG} = \{e^{ikx} \}_{k=-N/2+1}^{N/2}$, 
and fix  mesh size to $h=2\pi/ N$, with $N$ of the form $N=2^{n}$.  The choice of $N = 2^n$ is known to maximise computational efficiency of the FFT.   The method is pseudo spectral as nonlinearites are evaluated pointwise in real space.

For elements $u_h \in V_h^{SG}$,
\begin{equation}
\label{e:specgalerkin}
    u_h(x) = \sum_{j=-N/2+1}^{N/2} \hat{u}_{h}(j) e^{i j x}.
\end{equation}
$\hat{u}_{h}(j)$ is the $j$-th Fourier coefficient of $u_h$.   

\subsubsection{Notation for Time Discretization}

Before moving on to the various time-discretizations let us recall that from here on, instead of the more detailed notation of Section \ref{sec: sec2}, we use the shorthand  notation $u_h^{k}$ for a space-time discretization of \eqref{eqn:SPDEintro} at time $t_k = k\tau$,  where $\tau$ is the (fixed) time-step, so that $t_{k+1}-t_k = \tau$ for every $k\in\N$, and again $h$ refers to the mesh size of the space-discretization.  


\subsection{Fully and Semi-Implicit Euler scheme}\label{subsec:fully implicit scheme}
The Fully Implicit Euler (FIE) scheme for \eqref{eqn:SPDEintro} reads as follows: given $u_h^k$, the approximation $u_h^{k+1}\in V_h$ at time $t_{k+1}$ is given by
$$
u_h^{k+1} = u_h^{k} + \tau \,\Delta u_h^{k+1}  +
\tau \,P_h [f(u_h^{k+1})] + P_h (\delta W^{k+1}) \, 
$$
where $\delta W^{k+1} = W(t_{k+1})- W(t_k)$, recalling that $\{W(t)\}_{t\geq 0}$ is a $Q$-Wiener process, see \eqref{eqn: Q wiener process definition}. We iterate that in the above scheme and throughout we always consider FEM space discretization, unless otherwise stated. 
The FIE scheme can be equivalently  written in weak form as follows: 
\begin{equation}\label{scheme:im}
(u_h^{k+1}, \psi) = (u_h^{k}, \psi) + \tau (\Delta u_h^{k+1}, \psi)  +
\tau(f(u_h^{k+1}), \psi) + (\delta W^{k+1}, \psi), \quad \mbox{for every } \psi\in V_h.
\end{equation}
This time-discretization is certainly not new. For example, the properties of the time dicretization itself have been considered in \cite{breit2024weak,CuiH19,KovacsLL15} and the full space-time discretization in \cite{JentzenKW11, cui2021weak, liu2021strong, QiW19,FengLZ17}, along with references therein. These works produce  finite time error bounds, with the exception of  \cite{cui2021weak} where a UiT asymptotic bound is proved, see \cite[Corollary 1]{cui2021weak} -- in the context of that paper  the asymptotic bound is adequate as  the authors' goal is the approximation of the invariant measure. In this paper we prove the following (with detailed statement of assumptions postponed to Section \ref{sec: proof of main results}). 
\begin{theorem}\label{thm:Uit for fully implicit scheme}
    Suppose the SPDE \eqref{eqn:SPDEintro} satisfies Assumption \ref{assump1},  Assumption \ref{assump:coercivity assumption on overall drift} with $m=2$, as well as Assumption \ref{assump3}, Assumption \ref{assump2} and Assumption \ref{assumption: on the covariance operator Q}.  
    Suppose moreover that there exist constants $K_1 \in \R, K_2 \in \R_+$ such that 
    \begin{align}
    & (f(u) - f(v))(u-v) \leq K_1 |u-v|^2 \label{C21} \\
    & |f'(u)| \leq K_2 (1+ |u|^{p-1}) \label{C22}
    \end{align}
    and let $u_h^k$ be the sequence produced by the Fully Implicit Euler scheme \eqref{scheme:im}. Then the following holds 
     \begin{equation}\label{eqn: UiT estimate fully implicit}
       \sup_{k \in \N } \mathbb E \|u_h^k - u(t_k)\| \leq C (h+ \tau^{1/2}) \con(u_0)
   \end{equation}
   where $C$ is a generic constant (which will depend on $p$ but does not depend on time) and
   $\con(u_0)$ is a constant depending on the initial datum. 
\end{theorem}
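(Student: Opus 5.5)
The plan is to obtain Theorem \ref{thm:Uit for fully implicit scheme} as a direct application of Theorem \ref{thm: UiT theorem in our case}. This means verifying its three hypotheses for the SPDE \eqref{eqn:SPDEintro} and for the FIE scheme \eqref{scheme:im}, with $\tilde C(h,\tau) = C(h+\tau^{1/2})$ and some power $\beta$ depending on $p$.

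\textbf{Hypothesis \ref{Contractivity of SPDE}.} Since the noise is additive, the difference $w = u(t;u_0)-u(t;v_0)$ satisfies the random PDE $\partial_t w = \Delta w + f(u)-f(v)$ pathwise, with no noise term. Testing with $w$, applying the one-sided Lipschitz bound \eqref{C21} pointwise, and using Poincar\'e's inequality $\|\nabla w\|^2\geq \lambda_1\|w\|^2$ gives
\[
\tfrac12\tfrac{d}{dt}\|w\|^2 \leq -(\lambda_1 - K_1)\|w\|^2 .
\]
So we get exponential contraction with $\lambda=\lambda_1-K_1$. I expect Assumption \ref{assump:coercivity assumption on overall drift} (or the standing assumptions) to guarantee $\lambda_1>K_1$; this is the content of the paper's Lemma \ref{lemma:contractivity of SPDE}, which I would invoke. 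The computation is formal for mild solutions, so I would first carry it out on Galerkin or regularized solutions and then pass to the limit.

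\textbf{Hypothesis \ref{Finite time estimate}.} This is a strong error bound on $[0,1]$ of the form $\mathbb E\|u(t_\ell)-u_h^\ell\|\leq C(h+\tau^{1/2})\,\mathbb E\|u_0\|_1^{\beta}$. I would take it from the existing finite-time analyses of the fully implicit FEM scheme cited above (e.g.\ \cite{liu2021strong, QiW19}), deferred here to Appendix \ref{sec: proofs of finite time error bounds}. The sketch of that argument is as follows:
\begin{itemize}
\item Split the error as $u(t_k)-u_h^k = (u(t_k)-P_hu(t_k)) + e^k$ with $e^k = P_hu(t_k)-u_h^k\in V_h$.
\item Bound the first term by $ch\|u(t_k)\|_1$.
\item Write the error equation for $e^k$ and test it with $e^{k+1}$. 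The implicit nonlinearity contributes $(f(P_hu(t_{k+1}))-f(u_h^{k+1}),e^{k+1})\leq K_1\|e^{k+1}\|^2$ by \eqref{C21}. This is the key advantage of full implicitness, since it avoids any Gronwall factor involving the polynomial growth.
\item The consistency residuals, namely $f(u(t_{k+1}))-f(P_hu(t_{k+1}))$ and the time increments $u(t_{k+1})-u(s)$, are controlled using \eqref{C22}, H\"older and Sobolev embedding $H^1\subset L^\infty$ in one dimension, together with moment bounds on $\|u\|_1$ and the $1/2$-H\"older regularity in time of $u$ in $L^2$ (using trace-class $Q$, Assumption \ref{assumption: on the covariance operator Q}).
\end{itemize}
The powers of $\|u\|_1$ produced by \eqref{C22} determine $\beta$, roughly $\beta\sim p$. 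Discrete Gronwall over at most $1/\tau$ steps gives a constant independent of $k$ on $[0,1]$.

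\textbf{Hypothesis \ref{Uniform in time moment bound}.} This is the main obstacle: a bound $\sup_k \mathbb E\|u_h^k\|_1^{\beta}\leq \hat C(u_0)$ uniform in $k$ and in $h,\tau$. I would first prove uniform $L^2$ moments. Testing \eqref{scheme:im} with $u_h^{k+1}$ and using the identity $2(a-b,a)=\|a\|^2-\|b\|^2+\|a-b\|^2$ gives a dissipative one-step inequality
\[
(1+c\tau)\|u_h^{k+1}\|^2 \leq \|u_h^k\|^2 + C\tau + 2(\delta W^{k+1},u_h^{k+1}),
\]
using the coercivity Assumption \ref{assump:coercivity assumption on overall drift} with $m=2$. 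The stochastic term is not a martingale increment because $u_h^{k+1}$ is implicit. I would handle it by writing $u_h^{k+1}=u_h^k+(u_h^{k+1}-u_h^k)$ and absorbing $\|u_h^{k+1}-u_h^k\|^2$ by Young's inequality. Iterating then yields geometric decay plus a constant, and higher moments follow by raising the inequality to powers.

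Next I would test with $-\Delta_h u_h^{k+1}$ for $H^1$ control. The term $(f(u_h^{k+1}),-\Delta_h u_h^{k+1})=(f'(u_h^{k+1})\nabla u_h^{k+1},\nabla u_h^{k+1})$ is bounded by $K_1\|\nabla u_h^{k+1}\|^2$, since \eqref{C21} implies $f'\leq K_1$. The noise term needs $\operatorname{tr}(Q)$-type bounds on $\|\nabla P_h\delta W\|$, which are provided by Assumption \ref{assumption: on the covariance operator Q} and the $H^1$-stability of $P_h$. A second issue is that $f'\leq K_1$ holds pointwise for $f$, but for the FEM function $u_h$ the gradient identity above is only exact for the true $L^2$ pairing, while $P_h$ and $\Delta_h$ introduce commutation errors. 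If this fails, I would use the discrete Laplacian identity \eqref{def:dislap} and exploit the piecewise-linear structure, or fall back on combining the $L^2$ dissipation $\tau\|\nabla u_h^{k+1}\|^2$ summed with weights.

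Once uniform $H^1$ moments of order $\beta$ hold, Theorem \ref{thm: UiT theorem in our case} yields \eqref{eqn: UiT estimate fully implicit} with $\con(u_0)=\bar C$.
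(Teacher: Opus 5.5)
Your plan matches the paper's. You verify \ref{Contractivity of SPDE}--\ref{Uniform in time moment bound} of Theorem \ref{thm: UiT theorem in our case}:
\begin{itemize}
\item \ref{Contractivity of SPDE} via Lemma \ref{lemma:contractivity of SPDE}. The paper reads the contraction directly off Assumption \ref{assump:coercivity assumption on overall drift} with $m=2$, so no condition on $K_1$ is needed there.
\item \ref{Finite time estimate} imported from \cite{liu2021strong}.
\item \ref{Uniform in time moment bound} from $L^2$ and $H^1$ energy estimates, with the implicit noise term split at $u_h^k$ and absorbed by Young, as in Proposition \ref{prop:moment bound for implicit scheme}.
\end{itemize}

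\textbf{Differences in the $H^1$ estimate.} You bound the nonlinearity by $K_1\|\nabla u_h^{k+1}\|^2$ via $f'\le K_1$, which must then be beaten by $2\tau\|\Delta_h u_h^{k+1}\|^2\ge 2\tau\lambda_1\|\nabla u_h^{k+1}\|^2$. The paper instead uses \eqref{eqn:assump on gradient of drift} with $b_2<1$ and absorbs it into the $2\tau\|\nabla u_h^{k+1}\|^2$ coming from the $L^2$ estimate. Your version needs $K_1<\lambda_1$, and this does follow from Assumption \ref{assump:coercivity assumption on overall drift}. Apply it to $v=u+\epsilon\phi$, divide by $\epsilon^2$ and let $\epsilon\to0$. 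Then take $\phi$ to be the first eigenfunction and $u$ equal to a constant $a$ away from $\partial\mathcal O$; this gives $f'(a)\le\lambda_1-\lambda$.

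Your worry about the pairing is justified. For the $L^2$-projected nonlinearity in \eqref{scheme:im}, $(f(u_h),-\Delta_h u_h)=(\nabla P_h f(u_h),\nabla u_h)$, not $(\nabla f(u_h),\nabla u_h)$. The paper makes that identification without comment in \eqref{eq:imH1}. Your piecewise-linear fallback does work for the nodally interpolated nonlinearity $I_hf(u_h)$ of \eqref{e:interp} when $f(0)=0$. Elementwise, $(\nabla I_hf(u_h),\nabla u_h)$ is a sum of terms $h^{-1}(f(u_{i+1})-f(u_i))(u_{i+1}-u_i)$, each $\le K_1 h^{-1}(u_{i+1}-u_i)^2$ by \eqref{C21}.

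\textbf{Differences in \ref{Finite time estimate}.} Your sketch is an energy argument for $P_hu(t_k)-u_h^k$, not the paper's auxiliary process $\breve u_h^k$ combined with the semigroup bounds of Lemma \ref{lem:serrop}. As written it omits two linear consistency terms: $\tau(\nabla(I-P_h)u(t_{k+1}),\nabla e^{k+1})$ and $\int_{t_k}^{t_{k+1}}(\nabla(u(s)-u(t_{k+1})),\nabla e^{k+1})\,ds$. These need $H^2$ and $H^1$-valued H\"older regularity of $u$. That regularity can be obtained here in an $L^2$-in-time sense, but not from the ingredients you list; the paper's route needs only $H^1$ data. Also, the bound on $\|\nabla P_h\delta W\|$ comes from Assumption \ref{assump2}, not Assumption \ref{assumption: on the covariance operator Q}.

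\textbf{The higher-moment step.} The step you leave unspecified, ``higher moments follow by raising the inequality to powers'', is where care is needed. The cross term $2(\delta W^{k+1},u_h^k)$ has size $\sqrt\tau\,\|u_h^k\|$. Keep it signed, raise the one-step inequality to the $n$-th power, and take conditional expectation given $\mathcal F_{t_k}$. Its first power then vanishes, and the remaining corrections are $O(\tau)\|u_h^k\|_1^{2n-2}$, which Young absorbs with a $\tau$-independent weight.

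If you first bound the cross term by Cauchy--Schwarz, as the paper does in \eqref{eq:pn1}, you are left with a per-step term of order $\sqrt\tau\,\mathbb E\|u_h^k\|_1^{2n-1}$. An $O(\tau)$ dissipation cannot absorb this uniformly in $\tau$. The inequality \eqref{eq:binom} with $\alpha\sim\tau$, which the paper uses to get past this, is false for $k\ge 4$: take $a=1$, $b=\sqrt\alpha$, $\alpha\to0$. So that part of the paper's argument is not a safe model to follow.
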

The semi-implicit Euler (SIE) scheme for \eqref{eqn:SPDEintro} is instead given by
\begin{equation}\label{scheme: general SIE}
u_h^{k+1} = u_h^{k} + \tau \,\Delta u_h^{k+1}  +
\tau \,P_h [f(u_h^{k})] + P_h (\delta W^{k+1}) \, 
\end{equation}
We will show finite time blowup for this scheme in Section \ref{sec: why taming}.


\subsection{Truncated tamed scheme: taming by $f'(u)$}\label{subsec:truncated tamed}
We consider a tamed scheme and two variants of it. We describe them first and then motivate and discuss them in Subsection \ref{subsec: discussion of taming schemes}.


\subsubsection{Primary Taming Scheme: Truncated Tamed scheme for SPDEs, pointwise version.}
The main tamed scheme that we wish to discuss is
\begin{equation}\label{scheme:taming by abs val f' pointwise standard form}
u_h^{k+1} = u_h^{k} +  \Delta u_h^{k+1}  \tau +
P_h\left(\frac{f(u_h^{k})}{1+\tau |f'(u_h^k)|}\right) \tau + P_h(\delta W^{k+1}) \,,
\end{equation}
or, in weak form 
\begin{equation}\label{scheme:taming2derivativeabsolute value f'}
(u_h^{k+1}, \psi) = (u_h^{k}, \psi) +  (\Delta u_h^{k+1}, \psi) \tau +
\left(\frac{f(u_h^{k})}{1+\tau |f'(u_h^k)|}, \psi\right)\, \tau + (\delta W^{k+1}, \psi) \,, \quad \mbox{for every } \psi \in V_h\,. 
\end{equation}
In the above expression, $| \cdot |$ denotes absolute value and by $f'(u)$ we mean the derivative of $f$ (viewed as a real valued function of real variable) with respect to its own argument, calculated at $u$.  So for example if $f(u)=\frac u2 - u^3$ then $f'(u)=\frac 12 -3 u^2$. 
If the function $f$ grows at most like a degree $p$ polynomial,  then $f'$ is a polynomial of degree $p-1$ so this scheme essentially corresponds to taming as follows:  
\begin{equation}\label{scheme:taming2derivativeabsolute value}
u_h^{k+1} = u_h^{k} +  \Delta u_h^{k+1} \tau +
P_h \left(\frac{f(u_h^{k})}{1+\tau \alpha |u_h^k|^{p-1}}\right) \tau + P_h (\delta W^{k+1}) \,,  
\end{equation}
for some constant $\alpha>0$ to be chosen. The taming in either one of the above schemes is local, in the sense that it only depends on the value of $f'(u_h^k)$ (as opposed to the scheme we discuss next, where cthe taming depends on the norm of $f'(u)$). The essential feature of these taming schemes is that in the large argument limit (i.e. when $u_h^k$ is large), the drift grows linearly. The choice of $\alpha$ has been discussed in \cite{Angeli}, here we just observe that 
if $f$ is a polynomial with leading coefficient $a_p$, then one can choose $\alpha= p\, a_p$.

\subsubsection{Truncated tamed scheme: global version.}

The first variant of \eqref{scheme:taming by abs val f' pointwise standard form} that we consider tames the nonlinearity by a global quantity; that is, the nonlinear term $f(u)$ is tamed by the  $L^2$ norm of $f(u)$ rather than by the pointwise absolute value of $f(u)$:
\begin{equation}\label{scheme:tame by norm of f'(u)}
u_h^{k+1} = u_h^{k} +  \Delta u_h^{k+1} \tau + P_h\left(
\frac{f(u_h^{k})}{1+\tau \|f'(u_h^k)\|}\right)  \tau +  P_h(\delta W^{k+1}) \,.
\end{equation}
Again, if $f$ is a polynomial of degree $p$, then a similar result can be achieved by the following scheme
\begin{equation}\label{scheme:taming2}
u_h^{k+1} = u_h^{k} +  \Delta u_h^{k+1} \tau +
P_h\left(\frac{f(u_h^{k})}{1+\tau \alpha \|u_h^k\|^{p-1}}\right) \tau +  P_h(\delta W^{k+1}) \,. 
\end{equation}

\subsubsection{Alternate taming: GTEM}
Another variant of \eqref{scheme:taming by abs val f' pointwise standard form} that we consider has been introduced  in \cite{liu2025geometric} and it corresponds to 
\begin{equation}\label{scheme: GTEM scheme}
    u_h^{k+1} = u_h^k + \Delta u_h^{k+1} \tau + P_h\left( \frac{f(u_h^k)}{\sqrt{1+\tau |u_h^k|^{2(p-1)}}} \right)\tau + P_h (\delta W^{k+1}) \,.  
\end{equation}
in the polynomial case, and
\begin{equation}\label{scheme: GTEM scheme f}
    u_h^{k+1} = u_h^k + \Delta u_h^{k+1} \tau + P_h\left( \frac{f(u_h^k)}{\sqrt{1+\tau  |f'(u_h^k)|^2}} \right)\tau + P_h (\delta W^{k+1}) \,,  
\end{equation}
for more general $f$. This scheme was called GTEM in \cite{liu2025geometric}. 
The growth of the tamed nonlinear term is still linear but, with respect to \eqref{scheme:taming by abs val f' pointwise standard form}, in the denominator there is morally a $\sqrt{\tau}$ rather than a $\tau$.  

Before we move on to discussing these schemes, for the reader convenience we recap them in Table \ref{tab:scheme} below, together with other schemes that will be considered later, for the purpose of numerical comparison. 

\begin{table}[ht]
\caption{Summary of numerical schemes explored and compared in this manuscript.}
\label{tab:scheme}
\small
\centering

\rowcolors{2}{gray!10}{white}

\begin{tabular}{|l|l|c|}
\hline
\textbf{Scheme Name} & \textbf{Scheme} & \textbf{Eq. No.} \\
\hline
Fully Implicit Euler (FIE) & $
u_h^{k+1} = u_h^{k} + \tau \,\Delta u_h^{k+1} +
\tau \,P_h [f(u_h^{k+1})] + P_h (\delta W^{k+1})
$ & \eqref{scheme:im} \\
Semi-Implicit Euler (SIE) & $
u_h^{k+1} = u_h^{k} + \tau \,\Delta u_h^{k+1} +
\tau \,P_h [f(u_h^{k})] + P_h (\delta W^{k+1})
$ & \eqref{scheme: general SIE} \\
Truncated pointwise taming & Drift tamed by $1+\tau |f'(u)|$ & \eqref{scheme:taming by abs val f' pointwise standard form} \\
Truncated global taming & Drift tamed by $1+\tau \|f'(u)\|$ & \eqref{scheme:tame by norm of f'(u)} \\
GTEM & Drift tamed by $\sqrt{1+\tau |f'(u)|^2}$ & \eqref{scheme: GTEM scheme f} \\
Global gradient taming & Drift tamed by $1+\tau \|\nabla f(u)\|^2$ & \eqref{scheme:taming1} \\
Gy\"ongy pointwise taming & Drift tamed by $1+\tau|f(u)|$ & \eqref{e:gyongyspde} \\
\hline
\end{tabular}
\end{table}

\subsubsection{Discussion of Taming Schemes}\label{subsec: discussion of taming schemes}


The inspiration for taming scheme \eqref{scheme:taming by abs val f' pointwise standard form}, and its variants, comes from analogous challenges and methods in the finite dimensional setting.  Indeed, consider stochastic evolution equations of the  form 
\begin{equation}\label{generalSDE}
dX_t = b(X_t) dt+ \sigma(X_t) d\beta_t \,,
\end{equation}
where $X_t \in \R^d$ and, for the purposes of this discussion,   the drift $b:\R^d\rightarrow \R^d$ is assumed smooth, the diffusion matrix $\sigma: \R^d \rightarrow \R^{d\times d}$ is smooth and bounded  and $\beta_t$ is a $d$-dimensional standard Brownian motion.  As already noted, when the drift $b$ grows super-linearly the explicit Euler scheme may diverge from the actual solution in finite time \cite{mattingly2002ergodicity}. This motivated the introduction of tamed schemes, such as   
\begin{equation}\label{SDEclassicaltaming}
X^{n+1}=X^n+ \frac{b(X^n)}{1+\tau |b(X^n)|} \tau + \sigma(X_n) \delta W^n \,,   
\end{equation}
which is sometimes referred to as the {\em Gyongy method}. In the above $X^n$ is the discretization at time step $t_n=n\tau$. 
This scheme is convergent over finite time intervals \cite{gyongy2005discretization} and  it samples correctly the invariant measure \cite{brehier2022approximation}.  But bounds like \eqref{eqn: general uiT bound intro} have not yet been established for this scheme, mostly because it is not clear whether the scheme satisfies \ref{Uniform in time moment bound}; see \cite{brehier2022approximation} and \cite[Section 5]{Angeli} for a more extensive discussion of this.


On the other hand, it was proven in \cite[Section 5]{Angeli} that the scheme
\begin{equation}\label{SDEgradienttaming}
X^{n+1}=X^n+ \frac{b(X^n)}{1+\tau |b'(X^n)|} \tau + \sigma(X^n) \delta W^n \,, 
\end{equation}
has better properties, in two respects.  First it is possible to prove naUiT bounds of the form \eqref{eqn: general uiT bound intro} for this scheme.  Second, this scheme respects the Lyapunov structure of the SDE, assuming the SDE does have one.  If $b$ is a polynomial of degree $p$, instead of taming by $(1+ \tau |b'(x)|)$ we can just tame by the (euclidean norm) of $(1+ \tau \alpha |x|^{p-1})$. In this case it is not too difficult to see that  if the function $|x|^{p-1}$ is a Lyapunov function for the SDE then it is a Lyapunov function for the numerical scheme too, so the scheme has moments of order $p$ which are uniformly bounded in time (see \cite[Note 5.2]{Angeli} for more precise remarks and assumptions). Intuitively, when we use taming \eqref{SDEclassicaltaming} we replace the original drift $b(x)$ with a bounded drift;  when we use the taming \eqref{SDEgradienttaming} we replace the original drift $b(x)$ with a linear drift (which is still ``pushing'' in the correct direction).


Scheme \eqref{scheme:taming by abs val f' pointwise standard form} is the SPDE analog of \eqref{SDEgradienttaming}. In this infinite dimensional context one may then wonder whether it is wiser to tame pointwise, as in \eqref{scheme:taming by abs val f' pointwise standard form}, or tame by using the norm of $\|f'(u)\|$, as in \eqref{scheme:tame by norm of f'(u)}. The latter scheme is more expensive because at each iteration one needs to calculate the norm $\|f'(u_h^k)\|$. However, one may also argue that the increase in cost is only linear (in $h$) in many standard settings. We numerically compare these two schemes in Section \ref{sec: numerics}.

Recently the paper \cite{liu2025geometric} has proposed the scheme \eqref{scheme: GTEM scheme}, which is a slight variant of \eqref{scheme:taming by abs val f' pointwise standard form}, providing a more careful and detailed analysis of the Lyapunov structure of the scheme than the one described in \cite{Angeli}. In particular the work \cite{liu2025geometric} contains results on finite time convergence and geometric ergodicity, the latter  without explicit rates,   of \eqref{scheme: GTEM scheme}. In this paper we prove bounds of the form \eqref{eqn: general uiT bound intro} for the same scheme, see Theorem  \ref{thm: UiT estimate for GTEM} below.  One of our points is that Theorem \ref{thm: UiT theorem in our case} expedites the proof of naUiT bounds by leveraging results that are often already in the literature. Indeed, for the specific case of the scheme \eqref{scheme: GTEM scheme}, all needed estimates are already available.


We do not prove analogous bounds also for \eqref{scheme:taming by abs val f' pointwise standard form} and \eqref{scheme:tame by norm of f'(u)} - though it seems straightforward that they should hold, and indeed we provide evidence that this is the case and  numerically compare the schemes introduced in this section (and other tames schemes) in Section \ref{sec: numerics}.


\begin{theorem}\label{thm: UiT estimate for GTEM}
Suppose the SPDE \eqref{eqn:SPDEintro} satisfies  Assumption \ref{assump:coercivity assumption on overall drift} with $m=2$ and let $u_h^k$ be the sequence produced by the scheme \eqref{scheme: GTEM scheme}. 

Suppose furthermore that the initial datum $u_0$ is such that $\|-\Delta^{1/2} u_0\|< \infty$ and $u_0 \in L^{2(3p-2)}$ and that the function $f$ satisfies the following:
    \begin{enumerate}[label=\textnormal{[C\arabic*]},ref={[C\arabic*]}]
\item\label{pointwise cond 1} There exist positive constants $C_1, C_2, C_3, C_4$ such that
$$
f(u) \leq C_1+C_2 |u|^p
$$
and 
$$
u f(u) \leq C_3 - C_4 |u|^{p+1}\,.
$$
\item\label{pointwise cond 2}
There exist positive constants $K_1,K_2$ such that \eqref{C21} and \eqref{C22} are satisfied and a positive constants $K_3$ such that the following holds
\begin{align}
 [1-\tau |u|^{2(p-1)}] f'(u)
+ 2q\tau |u|^{2(p-2)} u f(u) \leq K_3 (1+ \tau |u|^{2(p-1)})^{3/2} \,.\nonumber
\end{align}
\end{enumerate}
   Then there exists a constant $C$, independent of $\tau, h$ and $k$ such that 
   \begin{equation}\label{eqn: UiT estimate for GTEM}
       \sup_{k \in \N } \mathbb E \|u_h^k - u(t_k)\| \leq C   (h+ \tau^{1/2}) \con(u_0), 
   \end{equation}
   where $\con(u_0)$ is again a constant which depends on the initial datum $u_0$. 
\end{theorem}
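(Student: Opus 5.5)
The plan is to apply Theorem \ref{thm: UiT theorem in our case} with $\tilde C(h,\tau)=C(h+\tau^{1/2})$. This means verifying its three hypotheses for the GTEM scheme \eqref{scheme: GTEM scheme}, each with constants independent of $k$. As stressed in Note \ref{note: comments on main thm}, most of the needed estimates can be imported from \cite{liu2025geometric}. The work is therefore mainly one of matching the assumptions of Theorem \ref{thm: UiT estimate for GTEM} with those used there.

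\textbf{Hypothesis \ref{Contractivity of SPDE}.} This concerns only the SPDE. I would invoke the contractivity result for Allen--Cahn type equations (Lemma \ref{lemma:contractivity of SPDE}). The argument is as follows. Take two solutions driven by the same noise. Their difference $w=u(t;u_0)-u(t;v_0)$ solves a deterministic PDE, since the additive noise cancels. Testing with $w$ and using the one-sided Lipschitz bound \eqref{C21} together with Poincar\'e's inequality $\|\nabla w\|^2\ge \lambda_1\|w\|^2$ gives
\[
\tfrac12\tfrac{d}{dt}\|w\|^2\le -(\lambda_1-K_1)\|w\|^2 .
\]
Gronwall's lemma then yields the almost sure exponential contraction, with $\lambda=\lambda_1-K_1$. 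I expect Assumption \ref{assump:coercivity assumption on overall drift} with $m=2$ to encode precisely the requirement $\lambda_1>K_1$.

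\textbf{Hypothesis \ref{Finite time estimate}.} This is the strong finite-time error bound of order $h+\tau^{1/2}$ on $[0,1]$. I would take it directly from the finite-time convergence theorem for GTEM in \cite{liu2025geometric}. Condition \ref{pointwise cond 1} and the growth bound \eqref{C22} are the hypotheses needed there. The requirements $u_0\in H^1$ and $u_0\in L^{2(3p-2)}$ are what allow the error constant to be written as $\mathbb E\|u_0\|_1^\beta$ times a polynomial in $L^{q}$ norms of the data. Since the error bound must be applied with random initial data $u_h^k$, this dependence has to be a moment of norms of the initial datum and not an arbitrary function of it. I would track this carefully, possibly using the variant Theorem \ref{thm: more general UiT theorem} with $\mathbb M(u_0)=\mathbb E\big(\|u_0\|_1^{\beta}+\|u_0\|_{L^{2(3p-2)}}^{\beta'}\big)$.

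\textbf{Hypothesis \ref{Uniform in time moment bound}.} This is the uniform-in-time moment bound for the scheme, and it is the main obstacle. I would proceed in three steps.
\begin{enumerate}
\item Obtain uniform $L^2$ bounds. Test the scheme with $u_h^{k+1}$ and use $uf(u)\le C_3-C_4|u|^{p+1}$ from \ref{pointwise cond 1}. The taming keeps the drift linearly growing, so the explicit term is controlled by $\tau$ times the dissipation.
\item Obtain uniform higher $L^q$ bounds in the same way. These should already be available in the geometric ergodicity analysis of \cite{liu2025geometric}.
\item Obtain uniform $H^1$ bounds. Test with $-\Delta_h u_h^{k+1}$. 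The difficult term is $\big(\nabla \tilde f(u_h^k),\nabla u_h^{k+1}\big)$, where $\tilde f(u)=f(u)/\sqrt{1+\tau|u|^{2(p-1)}}$ is the tamed nonlinearity. The derivative $\tilde f'(u)$ is exactly the left-hand side of \ref{pointwise cond 2} divided by $(1+\tau|u|^{2(p-1)})^{3/2}$, so \ref{pointwise cond 2} gives $\tilde f'\le K_3$.
\end{enumerate}
To exploit $\tilde f'\le K_3$ I would split
\[
\big(\nabla\tilde f(u_h^k),\nabla u_h^{k+1}\big)=\big(\tilde f'(u_h^k)\nabla u_h^k,\nabla u_h^k\big)+\big(\nabla\tilde f(u_h^k),\nabla(u_h^{k+1}-u_h^k)\big).
\]
The first term is at most $K_3\|\nabla u_h^k\|^2$. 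The second term would be absorbed using the numerical dissipation $\|\nabla(u_h^{k+1}-u_h^k)\|^2$ together with the uniform $L^q$ bounds from the previous steps. The dissipation $\|\Delta_h u_h^{k+1}\|^2$ then closes a discrete Gronwall inequality with a contraction factor $(1+c\tau)^{-1}$. This yields $\sup_k\mathbb E\|u_h^k\|_1^\beta\le \hat C(u_0)$, and Theorem \ref{thm: UiT theorem in our case} then gives \eqref{eqn: UiT estimate for GTEM}.
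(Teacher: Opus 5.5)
The gap is in your third step, the uniform-in-time $H^1$ bound, on which your whole route rests. In \eqref{scheme: GTEM scheme} the tamed drift enters as $P_h\tilde f(u_h^k)$. Testing with $-\Delta_h u_h^{k+1}$ therefore produces $\tau(\nabla P_h\tilde f(u_h^k),\nabla u_h^{k+1})$, not $\tau(\nabla\tilde f(u_h^k),\nabla u_h^{k+1})$.

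Since $\tilde f(u_h^k)\notin V_h$, we have $\nabla P_h\tilde f(u_h^k)\neq\tilde f'(u_h^k)\nabla u_h^k$. The $L^2$ projection acts through the inverse of the consistent mass matrix, whose entries alternate in sign, and it does not preserve one-sided bounds. So the pointwise inequality $\tilde f'\le K_3$ cannot be inserted, and your splitting yields nothing.

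What remains available is weaker:
\begin{itemize}
\item $\|\nabla P_h v\|\le c\|\nabla v\|$ together with $|\tilde f'|\lesssim 1+\tau^{-1/2}$ gives a term $C\tau^{1/2}\|\nabla u_h^k\|\,\|\nabla u_h^{k+1}\|$. This cannot be absorbed by $2\tau\|\Delta_h u_h^{k+1}\|^2$ with $\tau$-independent constants: interpolating via $\|\nabla v_h\|^2\le\|v_h\|\,\|\Delta_h v_h\|$ leaves an $O(1)$, not $O(\tau)$, increment per step.
\item Using $|\tilde f'|\lesssim 1+|u|^{p-1}$ instead gives a superlinear term in $\|\nabla u_h^k\|$ that your sketch does not control.
\end{itemize}
Likewise, spatial $L^q$ bounds cannot be obtained ``in the same way'' as the $L^2$ bound, because $|u_h|^{q-2}u_h\notin V_h$. (The $H^1$ computations in Section \ref{sec:proof of uniform moment bounds} for the other schemes pass over the same point.)

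The paper never needs this estimate. Your handling of contractivity and of the finite-time bound matches its proof: Lemma \ref{lemma:contractivity of SPDE} and \cite[Theorem 5.1]{liu2025geometric}. For contractivity, use Assumption \ref{assump:coercivity assumption on overall drift} with $m=2$ directly; going through \eqref{C21} plus Poincar\'e would require $\lambda_1>K_1$, which is not assumed.

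The paper then applies Theorem \ref{thm: more general UiT theorem} with $\|\cdot\|_B$ the $L^2$ norm and $\mathbb M(u_0)=1+\mathbb E\|u_0\|^{p(p+1)}$. It reads the constant in \cite[(5.5)]{liu2025geometric} as depending on the data only through this $L^2$ moment. Hypothesis \ref{Uniform in time moment bound U} then reduces to $\sup_\ell\mathbb E\|u_h^\ell\|^{p(p+1)}<\infty$. The second moment is \cite[Theorem 3.3]{liu2025geometric} under \ref{pointwise cond 1}, and higher moments follow from the power-and-recursion argument from \eqref{eq:pn1} on. So only your first step, plus its moment upgrade, is used.

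Your caution about $\mathbb M$ is nonetheless well founded. A bound of order $h+\tau^{1/2}$ over $0\le\ell\tau\le 1$ cannot depend on $u_0$ only through $L^2$ moments: already at $\ell=0$ the error is $\|u_0-P_hu_0\|$, which is not $O(h)\|u_0\|$ for general $L^2$ data. So if the constant in \cite[(5.5)]{liu2025geometric} involves $\|(-\Delta)^{1/2}u_0\|$, uniform $H^1$ control of the scheme is unavoidable and must be proved properly.

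For instance, suppose $f(0)=0$ and the nonlinearity is discretized by nodal interpolation as in \eqref{e:interp}. The mean value theorem then gives $(\nabla I_h\tilde f(u_h),\nabla u_h)\le K_3\|\nabla u_h\|^2$ and $\|\nabla I_h\tilde f(u_h)\|^2\le C\tau^{-1}\|\nabla u_h\|^2$, and your splitting closes along the lines you describe. For \eqref{scheme: GTEM scheme} with $P_h$, as written, it does not.
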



\subsection{Taming by the gradient of $f(u)$}\label{subsec:taming by grad fu}
The last tamed scheme we introduce is  (in weak form) as follows
\begin{equation}\label{scheme:taming1}
(u_h^{k+1}, \psi) = (u_h^{k}, \psi) +  (\Delta u_h^{k+1}, \psi) \tau +
\frac{\tau}{1+\alpha\tau \|\nabla f(u_h^k)\|^2}(f(u_h^{k}), \psi) + (\delta W^{k+1}, \psi), \ \psi\in V_h \,.
\end{equation}
For the sake of clarity, this time we are taming by the full gradient of $f(u)$, i.e. $\nabla f(u) = f'(u) \nabla u$. This is the most computationally intensive scheme of those proposed so far, as it requires both norm and gradient calculations at each step. 
For now we just present it and state below the main theorem we will prove on this scheme. In Section \ref{sec: numerics} we will compare it numerically to other schemes presented here and make further observations. 

\begin{theorem}\label{thm: Uit bounds on scheme tamed with grad f}
Suppose the SPDE \eqref{eqn:SPDEintro} satisfies the same assumptions as in Theorem \ref{thm:Uit for fully implicit scheme} and let $u_h^k$ be the sequence produced by the scheme \eqref{scheme:taming1}. Then the following holds 
     \begin{equation}
       \sup_{k \in \N } \mathbb E \|u_h^k - u(t_k)\| \leq C (h+ \tau^{1/2}) \con(u_0) \, 
   \end{equation}
   where $C$ is a generic constant (which will depend on $p$ but does not depend on time) and
   $\con(u_0)$ is a constant depending on the initial datum. 
\end{theorem}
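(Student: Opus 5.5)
The plan is to apply Theorem \ref{thm: UiT theorem in our case} with $\beta$ chosen according to the finite time estimate, so we must verify \ref{Contractivity of SPDE}, \ref{Finite time estimate} and \ref{Uniform in time moment bound} for the scheme \eqref{scheme:taming1}.

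\textbf{Contractivity \ref{Contractivity of SPDE}.} This is a property of the SPDE alone and is the same as in the proof of Theorem \ref{thm:Uit for fully implicit scheme}. With additive noise, $w=u(t;u_0)-u(t;v_0)$ solves the deterministic equation $\partial_t w = \Delta w + f(u)-f(v)$. Using \eqref{C21} and the Poincar\'e inequality $\|\nabla w\|^2\ge \lambda_1\|w\|^2$, we get
\[
\tfrac12 \tfrac{d}{dt}\|w\|^2 \le -(\lambda_1-K_1)\|w\|^2 .
\]
This gives contractivity pathwise with $\lambda=\lambda_1-K_1$, provided the assumptions guarantee $\lambda_1>K_1$ (I expect this is encoded in Assumption \ref{assump:coercivity assumption on overall drift}). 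I would simply invoke Lemma \ref{lemma:contractivity of SPDE}.

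\textbf{Finite time bound \ref{Finite time estimate}.} We compare the scheme with the semi-implicit/implicit finite time analysis in the usual way. Write the error $e^k=u_h^k-P_h u(t_k)$ and split the tamed drift as
\[
\frac{f(u_h^k)}{1+\alpha\tau\|\nabla f(u_h^k)\|^2}=f(u_h^k)-\frac{\alpha\tau\|\nabla f(u_h^k)\|^2 f(u_h^k)}{1+\alpha\tau\|\nabla f(u_h^k)\|^2}.
\]
The first term is handled by standard estimates. For the one-sided part we use the monotonicity \eqref{C21}; for the local Lipschitz part we use \eqref{C22} together with $H^1\subset L^\infty$ in one dimension. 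The taming defect is bounded by $\tau\|\nabla f(u_h^k)\|^2\|f(u_h^k)\|$, which contributes $O(\tau)$ times polynomial moments of $\|u_h^k\|_1$. The remaining contributions are the temporal H\"older regularity of $u$, of order $\tau^{1/2}$ in $L^2$ for $H^1$ data, and the projection error $\|v-P_hv\|\le ch\|v\|_1$. A discrete Gronwall argument over $k\tau\le 1$ then gives \eqref{eqn: Finite time estimate} with $\tilde C=C(h+\tau^{1/2})$, and with $\beta$ some polynomial power depending on $p$. Since this is routine, I would defer the details to the appendix, as for the fully implicit scheme.

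\textbf{Uniform moment bound \ref{Uniform in time moment bound}.} This is the main obstacle. We need $\sup_k \E\|u_h^k\|_1^{\beta}<\infty$. I would first test the scheme with $\psi=u_h^{k+1}$ and then with $\psi=-\Delta_h u_h^{k+1}$, using $2(a-b,a)=\|a\|^2-\|b\|^2+\|a-b\|^2$. The factor $\tau_k:=\tau/(1+\alpha\tau\|\nabla f(u_h^k)\|^2)$ is a scalar, so the tamed drift is simply $f$ with a random effective step $\tau_k\le\tau$. This preserves the sign structure of Assumption \ref{assump:coercivity assumption on overall drift}, for instance $(f(u),-\Delta u)=-(f'(u)\nabla u,\nabla u)\le K_1\|\nabla u\|^2$ from \eqref{C21}.

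The danger is the explicit evaluation: $(f(u_h^k),u_h^{k+1})$ and $(\nabla f(u_h^k),\nabla u_h^{k+1})$ must be compared with their values at $u_h^k$. The cross terms are bounded by $\tau_k^2\|\nabla f(u_h^k)\|^2\le \tau/\alpha$, and this is exactly why we tame by $\|\nabla f\|^2$. The implicit Laplacian then gives a dissipative term of size $\tau\|\nabla u_h^{k+1}\|^2$ against $\tau\lambda_1$. Together these yield the recursion
\[
\E\|u_h^{k+1}\|_1^2\le (1-c\tau)\,\E\|u_h^k\|_1^2 + C\tau .
\]
I expect the subtle point to be that the effective drift step $\tau_k$ can be much smaller than $\tau$ when $\|u_h^k\|_1$ is large. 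The dissipation must therefore come from the implicit Laplacian, not from $f$; this needs $\lambda_1>K_1$ uniformly, which is the same condition as for contractivity. Higher moments $\beta$ would follow by raising the one-step inequality to powers, using the Gaussian increments and the trace class property of $Q$ (Assumption \ref{assumption: on the covariance operator Q}).

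Combining the three hypotheses in Theorem \ref{thm: UiT theorem in our case} gives the claim, with $\con(u_0)$ collecting $\hat C(u_0)$.
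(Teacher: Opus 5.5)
Your architecture is the paper's. You apply Theorem \ref{thm: UiT theorem in our case} and take \ref{Contractivity of SPDE} from Lemma \ref{lemma:contractivity of SPDE}. For \ref{Uniform in time moment bound} you follow Proposition \ref{them: uniform in time moment bound for tamed}: test \eqref{scheme:taming1} with $u_h^{k+1}$ and $-\Delta_h u_h^{k+1}$, bound the explicit cross term by $\tau^2\|\nabla f\|^2/(1+\alpha\tau\|\nabla f\|^2)^2\le \tau/(4\alpha)$, and let the implicit Laplacian carry all the dissipation. There are two differences.

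\textbf{(i) The moment bound.} You take the one-sided bound from $f'\le K_1$ and need $K_1<\lambda_1$. The paper instead uses \eqref{eqn:coercive drift}, \eqref{eqn:assump on gradient of drift} and the extra hypothesis $b_2<\min\{1,\lambda_1^2\}$ of Assumption \ref{assump1}. Since $\|\Delta_h v\|^2\ge\lambda_1\|\nabla v\|^2$ on $V_h$, your condition is if anything sharper, and it is essentially implied by Assumption \ref{assump:coercivity assumption on overall drift} with $m=2$. However, if you keep the $L^2$ test, its cross term is $\tau_k^2\|f(u_h^k)\|^2$, not $\tau_k^2\|\nabla f(u_h^k)\|^2$. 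You then need the Poincar\'e step applied to $f(u_h^k)$, as in \eqref{e:PC1}, or else drop the $L^2$ test and use Poincar\'e on $u_h^k$.

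\textbf{(ii) The finite time bound.} For \ref{Finite time estimate} the paper does not estimate $u_h^k-P_hu(t_k)$ variationally. It introduces the auxiliary process \eqref{aux}: discrete semigroup, tamed drift evaluated along the exact solution, same noise. It controls $u(t_k)-\breve u_h^k$ through the mild formulation and Lemma \ref{lem:serrop}, which handles $H^1$ data with no extra regularity. It then runs a noise-free Gronwall argument on $\breve e^k=u_h^k-\breve u_h^k$. Your direct route also works but needs more regularity. To control the consistency term $\int_{t_k}^{t_{k+1}}(\nabla(u(s)-P_hu(t_{k+1})),\nabla e^{k+1})\,ds$ you need $u\in L^2(\Omega\times(0,T);H^2)$ and order-$\tfrac12$ time regularity of $u$ in $H^1$, in an $L^2$-in-time sense. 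Both hold here, but they must be proved.

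In either route, handle the explicit mismatch $\tau(f(u_h^k)-f(\cdot),e^{k+1}-e^k)$ as the paper does. Bound $\tau^2\mathbb E\|f(u_h^k)-f(\cdot)\|^2$ crudely by $C\tau^2$ using the moment bounds. Local Lipschitz continuity would leave a random weight in front of $\|e^k\|^2$ and break Gronwall.

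\textbf{Minor points.}
\begin{itemize}
\item $(f(u),-\Delta u)=+(f'(u)\nabla u,\nabla u)$; your minus sign is a slip, though your conclusion $\le K_1\|\nabla u\|^2$ is right.
\item The $H^1$ moments of $P_h\delta W^{k+1}$ need Assumption \ref{assump2}, not just trace-class $Q$.
\item A caveat shared with the paper's proof: with the $L^2$-projected drift, $(f(u_h^k),-\Delta_h u_h^{k+1})=(\nabla P_hf(u_h^k),\nabla u_h^{k+1})$, not $(\nabla f(u_h^k),\nabla u_h^{k+1})$. So the one-sided bound is not automatic. It is exact for the interpolated drift $I_hf(u_h^k)$ used in the numerics, since in one dimension $(\nabla I_hf(u_h),\nabla u_h)\le K_1\|\nabla u_h\|^2$ follows elementwise from \eqref{C21}.
\end{itemize}
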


\begin{corollary}
    Let $u(t)$ denote the solution to \eqref{eqn:SPDEintro} and $u_h^k$ the numerical approximation of $u(t)$ produced by the scheme \eqref{scheme:im} (\eqref{scheme: GTEM scheme}, \eqref{scheme:taming1}, respectively). Suppose that the assumptions of Theorem \ref{thm:Uit for fully implicit scheme} (Theorem \ref{thm: UiT estimate for GTEM}, Theorem \ref{thm: Uit bounds on scheme tamed with grad f}, respectively) are satisfied. Then the SPDE \eqref{eqn:SPDEintro} admits a unique invariant measure $\pi$ and  the following holds
    $$
  \left \vert \frac1K \sum_{k=0}^{K}  \mathbb E \varphi(u_h^k) - \int_{L^{1}} \varphi(u) d\pi(u) \right \vert \leq C\left( \frac{1}{K} + (h^2+\tau)^{1/2} \right) \con(u_0) \,,
    $$
    for any Lipshitz function $\varphi:L^1 \rightarrow \R$,  where in the above $C$ is a generic constant, independent of $t, h$ and $\tau$, while $\con(u_0)$ is a constant depending on the initial datum.
\begin{proof}
    The proof of this corollary is standard, using Proposition \ref{prop:exponential convergence of SPDE} below and following e.g. \cite[Corollary 3.6]{crisan2021uniform}. 
\end{proof}
    
\end{corollary}


\section{Assumptions and proofs of main results}\label{sec: proof of main results}
In this section we first provide a list of our main assumptions and comment on them; then in Subsection \ref{subsec: 4.1} we provide the backbone of the proof of   Theorem \ref{thm:Uit for fully implicit scheme}, Theorem \ref{thm: UiT estimate for GTEM} and Theorem \ref{thm: Uit bounds on scheme tamed with grad f} (proof of specific estimates needed in those proofs are deferred to later sections.) 

We enforce on $f$  the following. 

\begin{assumption}\label{assump1}
The nonlinearity  
$f$ is the Nemytskii operator associated with a real-valued $C^2$ function $\tilde{f}:\R\rightarrow \R$. The function $\tilde f$ is such that
\begin{equation}\label{assump eqn: growth of f}
    \sup_{y \in  \R} \frac{|\tilde{f}(y)|+ |\tilde{f}'(y)|+|\tilde{f}''(y)|}{1+|y|^p} < \infty
\end{equation}
for some $p\geq 2$. {\it From here on, with abuse of notation, we use $f$ for both the operator and the function.}
Moreover, $f$ satisfies
\begin{align}
&(f(u),u)\leq b_1 \, ,\label{eqn:coercive drift}\\
&(\nabla f(u), \nabla u)\leq b_2\|\nabla u\|^2 \, , \label{eqn:assump on gradient of drift}
\end{align}
for every $u \in H^1$, and for some $ b_1\geq 0$ and $b_2<1$. When dealing with the scheme \eqref{scheme:taming1}, we additionally require $b_2<\min\{1, \lambda_1^2\}$, where $\lambda_1$ is the smallest non-zero eigenvalue of $-\Delta$ on $\mathcal{O}$. 
\end{assumption}

\begin{assumption}\label{assump:coercivity assumption on overall drift}
The nonlinearity and the operator $\Delta$ are  such that
\begin{equation} 
    (\Delta(u-v), (u-v) |u-v|^{m-2}) + (f(u)-f(v), (u-v) |u-v|^{m-2}) \leq -  \lambda \|u-v\|_{L^m}^m, 
\end{equation}
for some $\lambda>0$ and for every $u,v \in L^m$ for which the LHS of the above is well defined and where $m$ is either equal to  $2$ or to $p$ (with $p$ as in \eqref{assump eqn: growth of f}). 
\end{assumption}

\begin{note}[Examples of drifts satisfying Assumption \ref{assump1} and Assumption \ref{assump:coercivity assumption on overall drift}.]
The Allen-Cahn type equation $f(u)=b_2u-b_3u^3\ (b_3>0)$ satisfies Assumption \ref{assump1} with $p=3$ since $(f(u),u)\leq \frac{2b_2}{b_3}|\mathcal{O}|-\frac{b_3}{2}\|u\|_{L^4}^4$ and $(\nabla f(u),\nabla u)\leq b_2\|\nabla u\|^2-b_3\|u\nabla u\|^2$. Assumption \ref{assump:coercivity assumption on overall drift} is easily verified for this example (with $m=2$). 

Another example is provided by the drift  
$f(u)= -u |u|^{q-2} $ with $q> 2$. For this nonlinearity also the bounds \eqref{eqn:coercive drift} \eqref{eqn:assump on gradient of drift} are satisfied, as it is straightforward to check (to verify the latter bound just use the fact that $\nabla f(u) = - (q-1) |u|^{q-2} \nabla u$). To verify that  Assumption \ref{assump:coercivity assumption on overall drift} holds with $m=2$, 
\begin{align*}
    (\Delta(u-v), u-v) + (f(u)-f(v), u-v) & = (\Delta(u-v), u-v) 
    + \int [- u |u|^{p-2} + v |v|^{p-2}] (u-v)\\
    &\leq (\Delta(u-v), u-v) \leq -c \|u-v\|_{L^2}^2
\end{align*}
having used Poincare' inequality in the last inequality and, in the penultimate inequality, we have used the fact that the function $-a|a|^{q-2}$ is decreasing for every $a \in\R$. 
\end{note}


\begin{assumption}\label{assump3}
 The initial condition $u_0$ is such that 
$
\mathbb{E}\|u_0\|_{1}^q<\infty$,  for every  $q\geq 2.$
\end{assumption}

Our assumptions on the covariance operator  $Q$ are as follows. 
\begin{assumption}\label{assump2}
The covariance operator $Q$ satisfies
\begin{align}
\|(-\Delta)^{\frac{1}{2}}Q^{\frac{1}{2}}\|_{HS}<C,
\end{align}
where $C$ is a constant that depends only on $Q$ and $\Delta$ and $\| \cdot \|_{HS}$ denotes Hilbert Schmidt norm.  
\end{assumption}
We recall the definition of the stochastic convolution $Z(t)$, namely
$$
Z(t) = \int_0^t e^{(t-s) \Delta} dW(s) = \sum_j  \sqrt{\gamma_j} \int_0^t e^{(t-s) \Delta} \tilde e_j \,d\beta_j(s) \,.
$$

\begin{assumption}\label{assumption: on the covariance operator Q} The covariance operator $Q$ is such that the stochastic convolution $Z$ is well defined in $L^{\infty}$ and moreover, the following holds
$$
\sup_{t\geq 0} \mathbb E \|Z(t)\|^\ell_{L^{\infty}} < C, \quad \mbox{ for all } \ell \in \mathbb N \, 
$$
where the constant $C$ may depend on $\ell$ but does not depend on $t$. 
\end{assumption}
It should be noted that, in dimension $d=1$, if Assumption \ref{assump2} is satisfied then also Assumption \ref{assumption: on the covariance operator Q} holds. We state them separately so their role in the proofs is more clear (in view of potential future extensions).  Indeed, because of the Sobolev embedding $H^1\hookrightarrow L^\infty$,  which holds in dimension one, the Poincare inequality and the Burkholder-Davis-Gundy inequality, we have 
\begin{align}
\mathbb E \|Z(t)\|_\infty^{\ell} &\leq C\mathbb E\|Z(t)\|_1^{\ell}\leq C\mathbb E\|(-\Delta)^{\frac{1}{2}}Z(t)\|^{\ell} \notag\\
&\leq C\mathbb E \bigg\|\int_0^t (-\Delta)^{\frac{1}{2}}\mathrm{e}^{(t-s)\Delta}dW(s)\bigg\|^{\ell}\notag\\
&\leq C_{\ell} \bigg(\int_0^t \|(-\Delta)^{\frac{1}{2}}Q^{\frac{1}{2}}\|_{HS}^2 \mathrm{e}^{-(t-s)\lambda_1^2}ds\bigg)^{\frac{\ell}{2}}\notag\\
&\leq C_{\ell}  \|(-\Delta)^{\frac{1}{2}}Q^{\frac{1}{2}}\|_{HS}^{\ell} \frac{1}{\lambda_1^{\ell}}.
\end{align}

It is well known that under our assumptions, the SPDE \eqref{eqn:SPDEintro} admits a unique invariant measure $\pi$ and it converges exponentially fast to it. Namely, the following holds. 
\begin{proposition}[Proposition 3.3 in \cite{brehier2022approximation} and Proposition 6.2.2 in \cite{Cerrai01}]\label{prop:exponential convergence of SPDE}
Suppose $f$ satisfies \eqref{assump eqn: growth of f} and Assumption \ref{assumption: on the covariance operator Q}. Then there exists a unique mild solution to the SPDE \eqref{eqn:SPDEintro} for initial data in $L^p$. 
    If also  Assumption \ref{assump:coercivity assumption on overall drift} holds,  then the SPDE \eqref{eqn:SPDEintro} admits a unique invariant measure $\pi$ which satisfies $\int \|u\|_{L^{\infty}}^k d\pi(u)< \infty$ for every $k \in \N$. Moreover, if $m \in \{2,p\}$ (where $p$ is as in  \eqref{assump eqn: growth of f}), then the following holds for every initial datum $u_0 \in L^m$ of \eqref{eqn:SPDEintro}: 
    $$
    \left \vert \mathbb E \varphi(u(t)) - \int_{L^{m}} \varphi(u) d\pi(u) \right \vert \leq  C(\varphi) e^{-\lambda t/2 } (1+\|u_0\|_{L^m}), \quad t\geq 0, 
    $$
    for every $\varphi:L^m\rightarrow \R$  which is Lipshitz continuous,  where $C$ is a constant which depends on the observable $\varphi$. 
\end{proposition}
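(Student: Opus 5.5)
The plan is to split the argument into well-posedness, synchronous coupling, and construction of $\pi$, following the structure of \cite[Proposition 6.2.2]{Cerrai01} and \cite[Proposition 3.3]{brehier2022approximation}.

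\emph{Well-posedness.} Write $u(t)=Z(t)+v(t)$, where $Z$ is the stochastic convolution. Then, pathwise, $v$ solves the random parabolic PDE
\begin{equation*}
\partial_t v=\Delta v+f(v+Z), \qquad v(0)=u_0 .
\end{equation*}
By Assumption \ref{assumption: on the covariance operator Q}, $Z$ has continuous paths in $L^\infty$ with all moments bounded uniformly in time. Because of the growth bound \eqref{assump eqn: growth of f}, $f$ is locally Lipschitz on $L^\infty$, so a fixed point argument for the mild formulation gives a local solution. To globalise it, test the $v$-equation with $v|v|^{m-2}$ and use the dissipativity of $f$ at infinity (which follows from \eqref{eqn:coercive drift}, or from Assumption \ref{assump:coercivity assumption on overall drift} with $v=0$) together with $\|Z\|_{L^\infty}<\infty$. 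This yields an a priori bound on $\|v(t)\|_{L^m}$, and hence on $\|v(t)\|_{L^\infty}$ by parabolic smoothing, which excludes blow-up. Uniqueness follows from the one-sided Lipschitz property of $f$.

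\emph{Contraction under synchronous coupling.} Drive two solutions $u(t;u_0)$, $u(t;v_0)$ with the same noise. The difference $w=u-v$ then satisfies a noise-free equation. Formally testing it with $w|w|^{m-2}$ and applying Assumption \ref{assump:coercivity assumption on overall drift} gives
\begin{equation*}
\frac{1}{m}\frac{d}{dt}\|w\|_{L^m}^m\le -\lambda\|w\|_{L^m}^m ,
\end{equation*}
so that $\|w(t)\|_{L^m}\le e^{-\lambda t/m}\|u_0-v_0\|_{L^m}$ almost surely. This computation is made rigorous on Galerkin or regularised approximations and then passed to the limit. Taking expectations, the Markov semigroup contracts the $L^m$-Wasserstein-$1$ distance at rate $e^{-\lambda t/m}$, which is at least as fast as $e^{-\lambda t/2}$ when $m=2$. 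For $m=p$ the rate is $\lambda/p$ and must be reconciled with the stated $\lambda/2$. I would do this by rescaling constants, since $\lambda$ in the statement is generic, or by interpolating with the $m=2$ case.

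\emph{Invariant measure and moments.} Existence of $\pi$ follows from the contraction: the laws of $u(t;0)$ form a Cauchy sequence in $W_1$, since $W_1(\mathcal L(u(t+s;0)),\mathcal L(u(t;0)))\le e^{-\lambda t/m}\,\mathbb E\|u(s;0)\|_{L^m}$ and the latter is bounded uniformly in $s$ by the energy estimate above. The limit is invariant by the Feller property, and uniqueness is immediate from the contraction. Moments of $\pi$ in $L^\infty$ come from $\sup_t\mathbb E\|v(t)\|_{L^\infty}^k<\infty$ for $u_0=0$, obtained via the $L^m$ bounds for large $m$ plus smoothing, combined with the moment bounds on $Z$ and Fatou's lemma.

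\emph{Convergence estimate.} For Lipschitz $\varphi$, write $\int\varphi\,d\pi=\mathbb E\varphi(u(t;\xi))$ with $\xi\sim\pi$ independent of the noise, and couple synchronously:
\begin{equation*}
\bigl|\mathbb E\varphi(u(t;u_0))-\pi(\varphi)\bigr|\le \mathrm{Lip}(\varphi)\,e^{-\lambda t/m}\Bigl(\|u_0\|_{L^m}+\int\|\xi\|_{L^m}\,d\pi(\xi)\Bigr),
\end{equation*}
which gives the claim.

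The main obstacle I expect is the uniform-in-time moment bound on $\|v\|_{L^\infty}$ and $\|v\|_{L^m}$ when $f$ has superlinear growth. My plan is to use the strong dissipativity $uf(u)\le C-c|u|^{p+1}$ to absorb the cross terms involving $Z$ via Young's inequality, giving $\frac{d}{dt}\|v\|_{L^m}^m\le -c\|v\|_{L^m}^m+C(1+\|Z\|_{L^\infty}^{N})$. A Gronwall argument then yields moments that are uniform in $t$.
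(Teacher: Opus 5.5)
Your architecture (the splitting $u=Z+v$, synchronous coupling, a $W_1$-Cauchy construction of $\pi$, coupling against a stationary start) is the standard dissipative-systems argument behind the references the paper cites. The paper gives no proof of its own. You are also right to bring in some dissipativity already for well-posedness: the growth bound alone allows $f(u)=u^3$.

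There is one slip in the rate. The inequality $\frac1m\frac{d}{dt}\|w\|_{L^m}^m\le-\lambda\|w\|_{L^m}^m$ integrates to $\|w(t)\|_{L^m}^m\le e^{-m\lambda t}\|w(0)\|_{L^m}^m$. Hence $\|w(t)\|_{L^m}\le e^{-\lambda t}\|u_0-v_0\|_{L^m}$ for every $m$, not $e^{-\lambda t/m}$. So there is nothing to reconcile for $m=p$. Rescaling would not have been legitimate anyway, because the $\lambda$ in the statement is the one from Assumption \ref{assump:coercivity assumption on overall drift}.

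The genuine gap is in the a priori bounds, both when you globalise the solution and in the step you single out as the main obstacle. You close them with a hypothesis you do not have. The pointwise bound $uf(u)\le C-c|u|^{p+1}$ is condition \ref{pointwise cond 1} of Theorem \ref{thm: UiT estimate for GTEM}, not an assumption of this proposition, and nothing assumed implies it: $f\equiv0$ satisfies every hypothesis. Your other two sources do not supply it either. The bound \eqref{eqn:coercive drift} is only an integrated upper bound on $(f(u),u)$. Setting the second argument of Assumption \ref{assump:coercivity assumption on overall drift} to $0$ gives dissipativity for $u$, not for the remainder $v=u-Z$ that you estimate pathwise.

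The repair is to apply that assumption to the pair $(v+Z,Z)$, whose difference is exactly $v$:
\begin{align*}
\frac1m\frac{d}{dt}\|v\|_{L^m}^m&=\bigl(\Delta v+f(v+Z)-f(Z),v|v|^{m-2}\bigr)+\bigl(f(Z),v|v|^{m-2}\bigr)\\
&\le-\lambda\|v\|_{L^m}^m+\|f(Z)\|_{L^m}\|v\|_{L^m}^{m-1}.
\end{align*}
Hence $\frac{d}{dt}\|v\|_{L^m}\le-\lambda\|v\|_{L^m}+\|f(Z)\|_{L^m}$ pathwise. Assumption \ref{assumption: on the covariance operator Q} then gives moments of every order, uniformly in $t$, for $m\in\{2,p\}$, with no pointwise dissipativity of $f$.

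For the $L^\infty$ moments of $\pi$ when $m=p$, insert this bound into the mild formula for $v$ on $[t-1,t]$. Use $\|S(r)\|_{L^1\to L^\infty}\lesssim r^{-1/2}$ in one dimension and $\|f(v+Z)\|_{L^1}\lesssim 1+\|v\|_{L^p}^p+\|Z\|_{L^\infty}^p$.

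If only $m=2$ is assumed, your energy estimates in $L^m$ for large $m$ need not close. In $L^m$ the Laplacian supplies only $\frac{4(m-1)}{m^2}\mu_1$ of dissipation, where $\mu_1$ is the first Dirichlet eigenvalue of $-\Delta$. Use instead that the $m=2$ hypothesis forces $f'\le\mu_1-\lambda$ pointwise; to see this, take $v$ constant and $u-v$ a small multiple of the first eigenfunction. Comparison with the linear problem $\partial_t\bar v=(\Delta+\mu_1-\lambda)\bar v+|f(Z)|$, $\bar v(0)=|u_0|$, then gives $|v|\le\bar v$. Together with $\|e^{t(\Delta+\mu_1-\lambda)}\|_{L^\infty\to L^\infty}\lesssim e^{-\lambda t}$, this yields uniform $L^\infty$ moments.

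Finally, for $u_0\in L^p\setminus L^\infty$ the local fixed point cannot be run in $C([0,T];L^\infty)$. Use a time-weighted space such as $\sup_{0<t\le T}t^{1/(2p)}\|v(t)\|_{L^\infty}<\infty$.
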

A useful sketch of the proof of the above proposition can be found in \cite[Section 3]{brehier2022approximation}, see also \cite[Theorem 3.5.5]{berglund2019introduction}. The method of proof is detailed in  \cite[Theorem 6.3.3 and Chapter 4]{da1996ergodicity}.

\subsection{Proof of  Theorem \ref{thm:Uit for fully implicit scheme}, Theorem \ref{thm: UiT estimate for GTEM} and Theorem \ref{thm: Uit bounds on scheme tamed with grad f}}\label{subsec: 4.1}
Our main results will be proved by applying the frameworks of Section \ref{sec: sec2} to the schemes we presented. In particular we will use Theorem \ref{thm: UiT theorem in our case} to prove our naUiT bounds on the FIE scheme \eqref{subsec:fully implicit scheme} and on the tamed scheme \eqref{scheme:taming1}, i.e. to prove Theorem \ref{thm:Uit for fully implicit scheme} and Theorem \ref{thm: Uit bounds on scheme tamed with grad f}, respectively. We  use Theorem \ref{thm: more general UiT theorem} to prove Theorem \ref{thm: UiT estimate for GTEM}.   

When  using Theorem \ref{thm: UiT theorem in our case} (or Theorem \ref{thm: more general UiT theorem}) 
  the property required to hold in Assumption  \ref{Contractivity of SPDE}  of Theorem \ref{thm: UiT theorem in our case}  (or, analogously, in Assumption \ref{Contractivity of SPDE U} of Theorem \ref{thm: more general UiT theorem}) is a property of the SPDE itself and it does not depend on the scheme. So we prove it in Lemma \ref{lemma:contractivity of SPDE} below. Assumption \ref{Finite time estimate} and \ref{Uniform in time moment bound} depend on the scheme instead, and so we will need to prove them for each scheme separately. Such proofs are deferred to  Appendix \ref{sec: proofs of finite time error bounds} and Section \ref{sec:proof of uniform moment bounds}, respectively.

\begin{lemma}[Proposition 3.3 in \cite{brehier2022approximation}]\label{lemma:contractivity of SPDE}
    Let $m$ be as in Assumption \ref{assump:coercivity assumption on overall drift} and   $u(t;u_0)$ be the solution at time $t\geq 0$ of \eqref{eqn:SPDEintro}, with initial datum $u_0 \in L^m$. If Assumption \ref{assump:coercivity assumption on overall drift} holds with $m=2$, then the contractivity property \ref{Contractivity of SPDE} holds too.  If  Assumption \ref{assump:coercivity assumption on overall drift} holds with $m>2$ then the contractivity property \ref{Contractivity of SPDE} holds with the $L^2$ norm replaced by the $L^m$ norm on both sides of the inequality.
\end{lemma}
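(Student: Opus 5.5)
The approach is synchronous coupling. I would drive the two solutions $u(t):=u(t;u_0)$ and $v(t):=u(t;v_0)$ of \eqref{eqn:SPDEintro} with the same $Q$-Wiener process $W$. Because the noise is additive, it cancels in the difference $w(t):=u(t)-v(t)$, which then solves pathwise the random PDE
$$
\partial_t w = \Delta w + f(u)-f(v), \qquad w|_{\partial\mathcal O}=0, \qquad w(0)=u_0-v_0 .
$$
This equation contains no stochastic integral, so every estimate below holds almost surely. That is exactly the a.s. form of \ref{Contractivity of SPDE}.

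For $m=2$ I would compute $\frac{d}{dt}\tfrac12\|w\|^2=(\Delta w,w)+(f(u)-f(v),w)$. Assumption \ref{assump:coercivity assumption on overall drift} with $m=2$ bounds the right-hand side by $-\lambda\|w\|^2$. Gronwall then gives $\|w(t)\|\le e^{-\lambda t}\|u_0-v_0\|$, which is \ref{Contractivity of SPDE} with $C=1$.

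For general $m=p>2$ I would apply the same argument to $\frac1m\|w\|_{L^m}^m$. Its derivative is $(\Delta w + f(u)-f(v), w|w|^{m-2})$, which Assumption \ref{assump:coercivity assumption on overall drift} bounds by $-\lambda\|w\|_{L^m}^m$. This gives $\frac{d}{dt}\|w\|_{L^m}^m\le -m\lambda\|w\|_{L^m}^m$, so $\|w(t)\|_{L^m}\le e^{-\lambda t}\|u_0-v_0\|_{L^m}$.

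The main obstacle is justifying the energy identity, since $u$ and $v$ are only mild solutions and $w$ need not be regular enough to differentiate the norm. I would handle this by regularization. One option is to replace $w$ by $w_n=n(n-\Delta)^{-1}w$, or to work with Galerkin approximations $P_N$, where $\frac{d}{dt}\|w_n\|^2$ is legitimate. The other option is to write $u=Z+\bar u$ with $Z$ the stochastic convolution. Then $w=\bar u-\bar v$, and $\bar u$ solves a random PDE with continuous forcing. Assumption \ref{assumption: on the covariance operator Q} gives $Z\in L^\infty$, so $f(Z+\bar u)$ is locally Lipschitz and $\bar u$ is a classical, or at least strong $H^1$-valued, solution for $t>0$. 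The chain rule for $\|\cdot\|^2$ (Lions–Magenes) or for $\|\cdot\|_{L^m}^m$ then holds on $[\epsilon,t]$. I would derive the differential inequality there, integrate it, and let $\epsilon\to0$ using continuity of $w$ in $L^m$ at $t=0$. For $u_0,v_0\in H^1$ this continuity is immediate; for general data in $L^m$ I would extend the estimate by density, using the Lipschitz dependence just proved.
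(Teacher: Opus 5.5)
Your proposal is correct and follows the paper's own argument. The paper simply observes that, with a common noise, the difference $u(t;u_0)-u(t;v_0)$ solves the noise-free equation $\partial_t w=\Delta w+f(u)-f(v)$, and then invokes Assumption~\ref{assump:coercivity assumption on overall drift}. You make explicit what the paper leaves implicit: the Gronwall step, which in fact gives $C=1$ in both the $L^2$ and $L^m$ cases, and the justification of the energy identity for mild solutions via the splitting $u=Z+\bar u$ followed by a density argument.
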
 
\begin{proof}
    Just observe that $v(t) = u(t;u_0) - u(t; \tilde u_0)$ solves the equation 
    $$
    d v(t) = A v(t) + f(u(t; u_0)) - f(u(t; \tilde u_0)). 
    $$
    The conclusion follows by using Assumption  \ref{assump:coercivity assumption on overall drift}. 
\end{proof}

\begin{proof}[Proof of Theorem \ref{thm:Uit for fully implicit scheme} and of  Theorem \ref{thm: Uit bounds on scheme tamed with grad f}]
    To prove both results we use Theorem \ref{thm: UiT theorem in our case}. Condition \eqref{assump eqn: growth of f} and Assumption \ref{assumption: on the covariance operator Q} are required to ensure well-posedness of the SPDE. Moreover,  as we said, \ref{Contractivity of SPDE} is a property of the SPDE and it holds thanks to Lemma \ref{lemma:contractivity of SPDE} above. 
    The proof of the finite time error bound \ref{Finite time estimate} and of the of the bound \ref{Uniform in time moment bound} for the implicit scheme can be found in Subsection \ref{subsec:proof of H2 for fully implicit} and Subsection \ref{subsec:proof of H3 for implicit}, respectively. More precisely,  in Proposition \ref{Prop: finite tim error implicit} and  in Proposition \ref{prop:moment bound for implicit scheme}, respectively. 
    Similarly, the proof of the bounds \ref{Finite time estimate} and \ref{Uniform in time moment bound}  for the tamed scheme \eqref{scheme:taming1} are deferred to Subsection \ref{subsec: tamed H3} and Subsection \ref{subsec: tamed H2}, respectively. 
\end{proof}

\begin{proof}[Proof of Theorem \ref{thm: UiT estimate for GTEM}.]
To prove this result we use Theorem \ref{thm: more general UiT theorem} where in that theorem we choose  $\|\cdot \|_B$ to be  the $L^2$ norm  while  $\mathbb M(u_0) = (1+ \mathbb E \|u_0\|^{p(p+1)})$. So, with these choices,   we need to prove  Assumption \ref{Contractivity of SPDE U}, \ref{Finite time estimate U} and \ref{Uniform in time moment bound U}. From Lemma \ref{lemma:contractivity of SPDE}, if Assumption \ref{assump:coercivity assumption on overall drift} holds with $m=2$ then Assumption \ref{Contractivity of SPDE U} holds. Assumption \ref{Finite time estimate U} and \ref{Uniform in time moment bound U} have been proved in \cite{liu2025geometric}. Indeed notice that the scheme \cite[(3.11)]{liu2025geometric} is precisely scheme \eqref{scheme: GTEM scheme} when the diffusion coefficient is constant  (in that paper the authors consider non-constant diffusion coefficient, here for brevity we still consider constant diffusion). With this clarification\cite[Theorem 5.1]{liu2025geometric} (which in our constant diffusion coefficient setting holds under assumption \ref{pointwise cond 2}), gives precisely the finite time estimate 
$$
\sup_{\ell: \tau \ell \leq 1} \mathbb E\|u(\tau\ell; u_0) - u_{h}(\tau \ell; u_0) \|^2 \leq C (1+ \mathbb E \|u_0\|^{p(p+1)}) (h^2 + \tau), 
$$
where $C$ is independent of $\tau, h$ and $u_0$. 
The above estimate is just \cite[equation (5.5)]{liu2025geometric} with $\gamma=0, T=1$ and $q$ in that theorem replaced by $p-1$ in our notation. So  proving Assumption \ref{Uniform in time moment bound U} boils down to showing the following 
\begin{equation}\label{eqn: bounds r}
\sup_{\ell \in \N} \mathbb E \|u_h^\ell\|^{r} \leq \hat C
\end{equation}
when $r=p(p+1)$, for some constant $\hat C$ which does not depend on $\ell$. When $r=2$, the above bound is a consequence of \cite[Theorem 3.3]{liu2025geometric} (which holds under assumption \ref{pointwise cond 1}, which is again a rewriting, in our setting a notation, of the assumptions of  \cite[Theorem 3.3]{liu2025geometric}). Once this is done,  proving \eqref{eqn: bounds r} for $r\geq 2$ requires standard and lengthy calculations, which are completely analogous to computations we show in Section \ref{sec:proof of uniform moment bounds} for other schemes that we consider in this paper, see e.g. from equation \eqref{eq:pn1} on. So we omit this calculation here. 
    
\end{proof}

\section{Why taming: finite time explosion of the semi-implicit Euler Method}\label{sec: why taming}
In this section we show that, when the non-linearity $f$ in equation \eqref{eqn:SPDEintro} is not globally Lipshitz, the SIE scheme may blow up in finite time. As noted in the introduction, an analogous result is already well known for finite dimensional SDEs. Indeed, bearing in mind that a fair analogue of the SIE scheme for SPDEs is the Explicit Euler scheme for SDEs, we recall that   in \cite[Lemma 6.3]{mattingly2002ergodicity} the authors consider the Explict Euler approximation 
$\{X^n\}_{n\in \N}$ for the following SDE in $\R^d$
\be\label{SDE Mattingly}
dX_t= -X_t^3 \, dt  + \sqrt{2}\,  d\beta_t, 
\ee
(where $\beta_t$ is a $d-$ dimensional standard Brownian motion) and prove that the second moment of $X^n$ explodes if the initial datum is too large. More precisely, they show that if $\E |X_0|^2 \geq 2/\tau$ then $\E |X^n|^2 \geq \E |X_0|^2 + n\,  \tau $, so that $\E |X^n|^2$  tends to infinity as $n \rightarrow \infty$.

A similar result holds here. We consider the SPDE \eqref{eqn:SPDEintro} when $f(u)=-u^3$,   that is, 
\begin{equation}\label{SPDE Mattingly}
    du = (-u^3 + \Delta u) dt + \sqrt{2  } dW_t,
\end{equation}
and the associated SIE time-discretization, $\{u^n\}_{n \in \N}$, with time-step $\tau$, namely
\begin{equation}\label{scheme:explicit Euler}
    u^{n+1} = u^n - (u^n)^3 \Dt  + \Delta u^{n+1} \Dt  + \sqrt{2 \Dt}  \, \delta W^n .
\end{equation}
We conjecture that the above time-discretization will exhibit finite time blowup, irrespective of the chosen space-discretization. 
In Subsection \ref{subsec: Can blow up proof} we show blow up of a full space-time discretization of \eqref{SPDE Mattingly} with Dirichlet boundary conditions, where the time-discretization is given by the SIE scheme and space is discretized via FEM scheme; see Lemma \ref{lemma: Can explosion lemma}.  This is confirmed by numerical simulations, presented in Section \ref{sec: numerics}.

Additionally, via our simulations of Section \ref{sec: numerics}, we also observe blowup of the SIE scheme when we consider the SPDE \eqref{SPDE Mattingly} on the torus, and discretise space using pseudo-spectral Glerkin method. While a full proof of this fact is beyond the scope of the present work, we provide a calculation in Section \ref{s:periodicblowup} suggesting blowup.


\subsection{Blow up for space-time discretization}\label{subsec: Can blow up proof}

We consider the SIE sequence \eqref{scheme:explicit Euler} and further discretize space via the FEM discretization discussed in Section \ref{s:fem}. The resulting space-time discretization of \eqref{SPDE Mattingly}   solves
\begin{align}\label{sch1}
u_h^{n+1}-u_h^n=\Delta u_h^{n+1} \tau -(u^n_h)^3 \tau+\sqrt{2}\, \delta W^n \, ,\qquad u_h^n\in V_h \,.
\end{align}

\begin{lemma}\label{lemma: Can explosion lemma}
 Let $\{u_h^n\}$ be the sequence produced by \eqref{sch1}. There exists a constant $a_0>0$ dependent on $h$ and $\tau$ (we characterize this constant in the proof) such that if $\mathbb E{\|u_h^0\|^2} > a_0$ then 
 \begin{align}\label{eqn: explosion of euler scheme}
\mathbb{E}\|u_h^n\|^2\geq\mathbb{E}\|u_h^0\|^2+n\tau \to \infty, \text{ as} \ \ n\to\infty.
 \end{align}
 \end{lemma}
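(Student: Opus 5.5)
We follow the strategy of \cite[Lemma 6.3]{mattingly2002ergodicity}, adapted to the finite dimensional space $V_h$. Write the scheme \eqref{sch1} in the form $(I-\tau\Delta_h)u_h^{n+1} = u_h^n - \tau P_h (u_h^n)^3 + \sqrt 2 P_h\delta W^n$, and set $R_\tau := (I-\tau\Delta_h)^{-1}$. Then $u_h^{n+1} = R_\tau\big(u_h^n - \tau P_h(u_h^n)^3\big) + \sqrt2 R_\tau P_h \delta W^n$. Since $\delta W^n$ is independent of $u_h^n$ and has mean zero, the cross term vanishes in expectation. We obtain
\begin{equation*}
\E\|u_h^{n+1}\|^2 = \E\|R_\tau(u_h^n - \tau P_h (u_h^n)^3)\|^2 + 2\,\E\|R_\tau P_h\delta W^n\|^2 .
\end{equation*}
The noise term equals $2\tau\,\mathrm{Tr}(R_\tau P_h Q P_h R_\tau^*)=:2\tau c_{h,\tau}$, with $c_{h,\tau}>0$ as long as $Q$ does not vanish on $V_h$. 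We will then ensure growth by at least $\tau$ per step, using either this term or the deterministic part.

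The key step is a lower bound on the deterministic map $\Phi(v):=R_\tau(v-\tau P_h v^3)$ on $V_h$. Because $V_h$ has dimension $N-1$, all norms on it are equivalent with $h$-dependent constants. In particular $\|R_\tau w\|\ge (1+\tau\mu_{\max})^{-1}\|w\|$, where $\mu_{\max}\lesssim h^{-2}$ is the largest eigenvalue of $-\Delta_h$. For the cubic term we use $\|P_h v^3\| \ge \|v^3\|_{h}$-type inverse estimates. More concretely, $(P_hv^3,v)=\|v\|_{L^4}^4\ge |\mathcal O|^{-1}\|v\|^4$, hence $\|P_hv^3\|\ge |\mathcal O|^{-1}\|v\|^3$. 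This yields
\begin{equation*}
\|\Phi(v)\|\ge \frac{\tau |\mathcal O|^{-1}\|v\|^3-\|v\|}{1+\tau\mu_{\max}} \ge 2\|v\|\qquad\text{whenever } \|v\|^2\ge a_1:=\frac{|\mathcal O|\,(3+2\tau\mu_{\max})}{\tau}.
\end{equation*}
Thus, pathwise, once $\|u_h^n\|^2\ge a_1$ we get $\|u_h^{n+1}\|\ge 2\|u_h^n\|$ modulo the noise. Taking expectations, the cross term vanishes, so $\E\|u_h^{n+1}\|^2\ge 4\E\|u_h^n\|^2\mathbf 1_{\{\|u_h^n\|^2\ge a_1\}}+2\tau c_{h,\tau}$.

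The main obstacle is passing from this pathwise statement to one in expectation. The hypothesis only controls $\E\|u_h^0\|^2$, and the event $\{\|u_h^n\|^2<a_1\}$ could carry mass. On that event we use only $\|\Phi(v)\|^2\ge 0$, losing at most $a_1$. Hence
\begin{equation*}
\E\|u_h^{n+1}\|^2\ge 4\E\|u_h^n\|^2 - 4a_1 + 2\tau c_{h,\tau}.
\end{equation*}
If $\E\|u_h^n\|^2\ge a_0:=\max\{a_1,\tfrac43 a_1 + \tfrac{\tau}{3}\}$, this gives $\E\|u_h^{n+1}\|^2\ge \E\|u_h^n\|^2+\tau$ (indeed $3\E\|u_h^n\|^2\ge 4a_1+\tau$), and in particular $\E\|u_h^{n+1}\|^2\ge a_0$. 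Induction on $n$ then yields \eqref{eqn: explosion of euler scheme}. If the trace constant $c_{h,\tau}$ is small, the $+\tau$ increment comes from the deterministic excess, which is why $a_0$ is chosen with the extra $\tau/3$. The $h$- and $\tau$-dependence of $a_0$ enters through $\mu_{\max}\sim h^{-2}$ and $1/\tau$, as anticipated in the statement.
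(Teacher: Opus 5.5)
Your proof is correct, and it takes a different route from the paper's.

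The paper works only with moments. It uses \eqref{eqn:lower bound for FEM}, $\|(I-\tau\Delta)^{-1}w\|^2\ge C_0\|w\|^2$, and expands $\|u-\tau u^3\|^2=\|u\|^2-2\tau\|u\|_{L^4}^4+\tau^2\|u\|_{L^6}^6$. It then applies H\"older ($\|u\|_{L^6}^6\ge\|u\|_{L^4}^6$) and Jensen to get a scalar inequality in $x=\mathbb E\|u_h^n\|_{L^4}^4$. By elementary calculus it finds a point $a_0\ge 1$ beyond which $\tilde s(x)\ge(1-C_0)x+\tau$, and it closes with $\mathbb E\|u\|_{L^4}^4\ge(\mathbb E\|u\|^2)^2$.

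You instead prove a pathwise expansion estimate for the deterministic one-step map: $\|\Phi(v)\|\ge 2\|v\|$ whenever $\|v\|^2\ge a_1$. This uses only the spectral lower bound on $R_\tau$, the triangle inequality and the identity $(P_hv^3,v)=\|v\|_{L^4}^4$. You then pass to expectations by truncating on $\{\|u_h^n\|^2<a_1\}$, which costs at most $4a_1$.

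What your version buys:
\begin{itemize}
\item It avoids the Jensen step and the auxiliary function $s$.
\item The threshold $a_0=\tfrac43 a_1+\tfrac\tau3$ is fully explicit, of order $\tau^{-1}+\mu_{\max}$.
\item It actually gives geometric growth, $\mathbb E\|u_h^{n+1}\|^2\ge 4\,\mathbb E\|u_h^n\|^2-4a_1$, of which the linear bound in the statement is a weak consequence.
\item It handles the FEM projection exactly. The paper's expansion is written for the unprojected cubic term; with $P_h$ in place one only has $\|P_hu^3\|^2\le\|u\|_{L^6}^6$, which points the wrong way, so an extra step would be needed there.
\end{itemize}
The paper's moment-level computation, for its part, stays closer to the finite-dimensional argument of \cite{mattingly2002ergodicity} that it adapts.

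One small remark: your aside about ``inverse estimates'' for the cubic term is superfluous. The pairing argument you give immediately afterwards is already complete.
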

Before proving the above lemma we state and prove the next Lemma \ref{lemma: auxiliary lemma}.  
\begin{lemma}\label{lemma: auxiliary lemma}
Consider the following one-dimensional boundary value problem on the interval $(0,1)$: 
\begin{align}
&-\tau u''+u=v, \qquad u\in V_h, \notag\\
&u(0)=u(1)=0. 
\end{align}
Then we have 
 \begin{align}\label{eqn:lower bound for FEM}
 \|u\|^2\geq \frac{1}{1+2C_{\rm inv}^2\tau h^{-2}+C_{\rm inv}^2h^{-4}\tau^2}\|v\|^2:=C_0\|v\|^2.
 \end{align}
\end{lemma}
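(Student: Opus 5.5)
The plan is to read the boundary value problem in its finite element sense and then expand $\|v\|^2$ exactly. Interpreted on $V_h$, the problem says that $u\in V_h$ satisfies $\tau(\nabla u,\nabla \psi)+(u,\psi)=(v,\psi)$ for every $\psi\in V_h$. By the definition \eqref{def:dislap} of the discrete Laplacian this is the identity $-\tau\Delta_h u + u = P_h v$ in $V_h$. When $v\in V_h$, which is the case in the application to \eqref{sch1} where all iterates lie in $V_h$, this becomes $v = u-\tau\Delta_h u$. If $v$ is a general $L^2$ function, the same argument gives the bound with $\|P_hv\|$ in place of $\|v\|$. I will state this as the intended reading.

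\textbf{Step 1: exact expansion.} Take the squared $L^2$ norm of $v=u-\tau\Delta_h u$. The cross term is computed with \eqref{def:dislap}: $-2\tau(\Delta_h u,u)=2\tau\|\nabla u\|^2$. This gives the exact identity
\begin{equation*}
\|v\|^2=\|u\|^2+2\tau\|\nabla u\|^2+\tau^2\|\Delta_h u\|^2 .
\end{equation*}
Every term on the right is nonnegative, so the task reduces to bounding each of them from above by a multiple of $\|u\|^2$.

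\textbf{Step 2: inverse inequalities.} On the uniform P1 mesh the standard inverse estimate $\|\nabla w_h\|\le C_{\rm inv}h^{-1}\|w_h\|$ holds for $w_h\in V_h$. It yields $2\tau\|\nabla u\|^2\le 2C_{\rm inv}^2\tau h^{-2}\|u\|^2$. For the last term I use the variational definition with test function $\Delta_h u$:
\begin{equation*}
\|\Delta_h u\|^2=(\nabla u,\nabla(-\Delta_h u))\le \|\nabla u\|\,\|\nabla\Delta_h u\|\le C_{\rm inv}h^{-1}\|\nabla u\|\,\|\Delta_h u\| .
\end{equation*}
Hence $\|\Delta_h u\|\le C_{\rm inv}h^{-1}\|\nabla u\|\le C_{\rm inv}^2h^{-2}\|u\|$.

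\textbf{Step 3: conclude.} Combining the two steps gives $\|v\|^2\le \bigl(1+2C_{\rm inv}^2\tau h^{-2}+C_{\rm inv}^4\tau^2h^{-4}\bigr)\|u\|^2$, and rearranging yields \eqref{eqn:lower bound for FEM}. Step 2 produces $C_{\rm inv}^4$ where the statement has $C_{\rm inv}^2$ in the $\tau^2h^{-4}$ term. I would absorb this by enlarging the constant: the statement holds as written if $C_{\rm inv}$ there denotes a constant at least $\max(1,c)^2$ for the elementary inverse constant $c$. This does not affect the role of $C_0$ in Lemma \ref{lemma: Can explosion lemma}, where only its positivity and its dependence on $h$ and $\tau$ matter.

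The main obstacle is not the computation but making the statement precise. This means identifying the Dirichlet problem with its Galerkin version, so that $-u''$ means $-\Delta_h u$, and fixing the inverse constant so that it controls both $\|\nabla\cdot\|$ and $\|\Delta_h\cdot\|$ on $V_h$. Once that is settled, the identity in Step 1 makes the rest routine.
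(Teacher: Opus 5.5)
Your proof is correct, and its skeleton is the paper's: expand $\|v\|^2$, identify the cross term as $2\tau\|\nabla u\|^2$, and bound the rest by inverse estimates. The real difference is in the second-order term.

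The paper invokes $\|u''\|\le C_{\mathrm{inv}}h^{-2}\|u\|$ directly. That estimate is not literally available for P1 functions, since $V_h\not\subset H^2$: elementwise $u''=0$, and distributionally $u''$ is a sum of Dirac masses. So in the paper's proof $u''$ must be read as $\Delta_h u$ anyway, and the integration by parts is just \eqref{def:dislap}. You make that reading explicit and derive $\|\Delta_h u\|\le C_{\mathrm{inv}}^2h^{-2}\|u\|$ from the first-order estimate alone, via $\|\Delta_h u\|^2=(\nabla u,\nabla(-\Delta_h u))$.

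The resulting $C_{\mathrm{inv}}^4$ reflects the true scaling rather than a loss in the argument. On $V_h$, the optimal constants in $\|\nabla u\|\le c_1\|u\|$ and $\|\Delta_h u\|\le c_2\|u\|$ are $\Lambda^{1/2}$ and $\Lambda$, where $\Lambda\approx 12h^{-2}$ is the top eigenvalue of $-\Delta_h$. A single $C_{\mathrm{inv}}$ in the statement therefore has to be large enough to cover the squared constant, which is exactly your relabelling. This is harmless downstream.

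One correction to your side remark. In Lemma \ref{lemma: Can explosion lemma} the bound is applied with $v=u_h^n-\tau(u_h^n)^3+\sqrt{2}\,\delta W^n$. This $v$ is \emph{not} in $V_h$: neither the cube nor the Wiener increment is piecewise linear. The paper then expands the full $\|v\|^2$ into $L^4$, $L^6$ and trace terms.

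Under the Galerkin reading you adopt, only $\|P_hv\|\le\|v\|$ is controlled, which is the wrong direction for that lower bound. In fact, for general $v\in L^2$ the inequality with $\|v\|$ is false: if $v\perp V_h$ then $u=0$. So your $P_hv$ caveat does not affect the lemma as you state it, but it does matter for how the lemma is used there.
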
 
 \begin{proof}[Proof of Lemma \ref{lemma: auxiliary lemma}]
 Recall the inverse inequality for FEM \cite[section 4.5]{Brenner}
 $$
 \|u\|_{W_p^l}\leq C_{\rm inv}h^{m-l+\min(0, \frac{d}{p}-\frac{d}{q})}\|u\|_{W_q^m}\,, \quad u \in V_h\, ,
 $$
 where  we are using the standard notation $W_p^\ell$ for the Sobolev space of $L^p$ functions with $\ell$ derivatives and $C_{\rm inv}$ is a positive constant. 
 Hence, 
 $$
 \|u'\|\leq C_{\rm inv}h^{-1}\|u\|, \ \ \text{and} \ \ \ \ \|u''\|\leq C_{\rm inv}h^{-2}\|u\|.
 $$
 We square both sides and use integration by parts, and obtain
 \begin{align*}
 \|v\|^2&=\|-\tau u''+u\|^2\\
 &=\|u\|^2+2\tau\|u'\|^2+\tau^2\|u''\|^2 \\
 &\leq \|u\|^2+2C_{\rm inv}^2\tau h^{-2}\|u\|^2+C_{\rm inv}^2\tau^2h^{-4}\|u\|^2 \, ,
 \end{align*}
 from which \eqref{eqn:lower bound for FEM} follows. 
 \end{proof}

 \begin{proof}[Proof of Lemma \ref{lemma: Can explosion lemma}]
 From the definition of the scheme \eqref{sch1}, we have
 $$
u_h^{n+1} - \tau\Delta u_h^{n+1} = u_h^n-(u_h^n)^3+\sqrt{2}\delta W^n \,.
$$
Since the sequence $\{u_h^n\}_{n \in \mathbb N}$ belongs to $V_h$,   we can use the lower bound \eqref{eqn:lower bound for FEM}, and write
 \begin{align}
\mathbb{E}\|u^{n+1}_h\|^2&=\mathbb{E}\|(I-\tau\Delta)^{-1}(u^n_h-\tau (u_h^n)^3+\sqrt{2}\delta W^n)\|^2\notag\\
 &\geq C_0\mathbb{E} \|u^n_h-\tau (u_h^n)^3+\sqrt{2}\delta W^n\|^2\notag\\
 &{=} C_0 [\mathbb{E}\|u^n_h-\tau (u_h^n)^3\|^2+2Tr(Q)\tau]\notag\\
 &{=}  C_0[\mathbb{E}\|u^n_h\|^2-2\tau\mathbb{E}\|u^n_h\|_{L^4}^4+\tau^2\mathbb{E}\|u^n_h\|_{L^6}^6+2Tr(Q)\tau]\notag\\
 &\geq C_0[\mathbb{E}\|u^n_h\|^2-2\tau\mathbb{E}\|u^n_h\|_{L^4}^4+\tau^2\mathbb{E}\|u^n_h\|_{L^4}^6+2Tr(Q)\tau]\notag\\
 &\geq C_0[\mathbb{E}\|u^n_h\|^2-2\tau\mathbb{E}\|u^n_h\|_{L^4}^4+\tau^2(\mathbb{E}\|u^n_h\|_{L^4}^4)^{3/2}+2Tr(Q)\tau]  \label{func tilde f}
 \end{align}
 where in the second equality we have used the fact that $u_h^n$ is independent of $\delta W^n$, and the last two lower bounds follow from Holder's inequality
 \begin{align}
 \|u\|_{L^4}^4=\int_0^1 u^4dx\leq \bigg(\int_0^1 u^6dx\bigg)^{2/3} \bigg(\int_0^1 1dx\bigg)^{1/3}=\|u\|_{L^6}^4, 
 \end{align}
 and from Jensen's inequality, respectively. 

 Let us set  $n=0$, and write \eqref{func tilde f} as 
 $$
 \mathbb E \|u^1_h\|^2 \geq C_0 \mathbb E \|u^0\|^2 + \tilde s(\mathbb E \|u^0_h\|_{L^4}^4)
 $$
 where 
 $\tilde s (x)$ is the function $\tilde s:\R_+ \rightarrow \R$ defined as 
 $$
 \tilde s(x) := C_0\tau^2 x^{3/2}-2\tau C_0 x+ 2Tr(Q) C_0\tau \,.
 $$
 Consider now the function
 $$
 s(x):= \tilde s(x) - (1-C_0) x-\tau = C_0\tau^2 x^{3/2}-(2\tau C_0+(1-C_0))x+2Tr(Q)C_0\tau-\tau, \ \ x\geq 0.
 $$
 We will show below that there exists a (unique) point $a_0\geq 1$ such that if $x\geq a_0 $ then $s(x)\geq 0$, i.e.
 \begin{equation}\label{use a_0}
     \tilde s(x) \geq (1-C_0) x +\tau, \quad \mbox{for } x\geq a_0 \,.
 \end{equation}
Once this is proved the proof is concluded. Indeed, take an initial datum $u^0$ such that $\mathbb{E}\|u_h^0\|^2\geq \sqrt{a_0}$.  Then  
$\mathbb{E}\|u_h^0\|_{L^4}^4\geq a_0$, because 
  $\mathbb{E}\|u_h^0\|_{L^4}^4\geq \mathbb{E}\|u_h^0\|^4\geq (\mathbb{E}\|u_h^0\|^2)^2\geq a_0$, and 
  we have
 \begin{align}
 \mathbb{E}\|u_h^1\|^2&\geq C_0\mathbb{E}\|u_h^0\|^2+(1-C_0)\mathbb{E}\|u_h^0\|_{L^4}^4+\tau\notag\\
 &\geq C_0\mathbb{E}\|u_h^0\|^2+(1-C_0)(\mathbb{E}\|u_h^0\|^2)^2+\tau\notag\\
 &\geq C_0\mathbb{E}\|u_h^0\|^2+(1-C_0)\mathbb{E}\|u_h^0\|^2+\tau\notag\\
 & = \mathbb{E}\|u_h^0\|^2+\tau \, ,
 \end{align}
 where the last inequality simply follows from the fact that $\mathbb{E}\|u_h^0\|^2 \geq 1$ (because $a_0\geq 1$.)
 By iteration over $n$ we obtain \eqref{eqn: explosion of euler scheme}. 

We are only left to show that there exists $a_0\geq 1$ such that \eqref{use a_0} holds. To this end, note that the derivative of the function $s$ is given by 
 $$
 s'(x)=\frac{3}{2}C_0\tau^2 x^{1/2}-(2C_0\tau+(1-C_0)).
 $$
 Thus, its minimum is attained at
 $$
 x^*=\bigg(\frac{4C_0\tau+2(1-C_0)}{3C_0\tau^2}\bigg)^2.
 $$
 When $x>x^*$, the function $s$ is increasing and $\lim\limits_{x\to\infty}s(x)=+\infty$. So, we can always find a $x_0$ (depending on $\tau$ and $h$) such that $s(x) \geq 0$
 when $x\geq \max\{1,x_0\}$. Setting $a_0:= \max\{1,x_0\}$ concludes the proof.  
  
 \end{proof}

\subsection{Blow up via the Mean} 
\label{s:periodicblowup}
We also simulate \eqref{SPDE Mattingly} on the one dimensional  torus $\mathbb T = [0,2\pi]$ with periodic boundary conditions.  As our simulations  in Section \ref{sec: numerics} will show, SIE has the potential for blowup when the data is sufficiently large.  Here, we note a potential blowup mechanism consistent with our simulations.

Let us decompose our solution as
\begin{equation}
    u^n  = \bar{u}^n + w^n, \quad \tfrac{1}{2\pi}\smallint w^n(x)dx =\langle w^n\rangle = 0,
\end{equation}
the mean and its fluctuations.  Substituting back into \eqref{scheme:explicit Euler}, and projecting onto the mean and the orthogonal complement:
\begin{subequations}
\begin{align}    
    \bar{u}^{n+1} &= \bar{u}^n - \tau ((\bar{u}^n)^3 + 3 \bar{u}^n \langle(w^n)^2\rangle  + \langle (w^n)^3\rangle )  + \sqrt{2}\delta \bar{W}^n\\
    \begin{split}
    (I - \tau \Delta)w^{n+1} &= w^n - \tau (3 (\bar{u}^n)^2 w^n + 3 \bar{u}^n ((w^n)^2-\langle (w^n)^2\rangle ) + ((w^n)^3-\langle (w^n)^3\rangle ) )\\
    &\quad + \sqrt{2}\delta W^n - \sqrt{2}\delta \bar{W}^n
    \end{split}
\end{align}
\end{subequations}
Thus, if $w^0$ is zero, we would anticipate it will remain small for some small number of iterates, and, to leading order, initially,
\begin{equation}
    \bar{u}^{n+1} \approx \bar{u}^n - \tau(\bar{u}^n)^3 + \sqrt{2}\delta \bar{W}^n,
\end{equation}
which exactly of type \eqref{SDE Mattingly}.  Thus, we anticipate that when $u_0(x) = \bar{u}^0$ is a constant, and  $\E[|\bar{u}^0|^2]\gtrsim \tau^{-1}$, blowup may occur.  {The divergence via this mode zero growth mechanism has been studied in detail in \cite{beccari_strong_2019}.}

\section{Numerical Experiments}\label{sec: numerics}

In this section we present numerical experiments to confirm our theoretical results and to provide a comparison amongst our proposed methods. One of the take-home messages from this section is that,   balancing concerns for empirical robustness and efficiency, we recommend use of the truncated pointwise taming scheme, \eqref{scheme:taming by abs val f' pointwise standard form}. Before proceeding we recall that all of our methods are summarized in Table \ref{tab:scheme}. 


For our experiments, we consider the following  SPDE of type \eqref{eqn:SPDEintro}, 
\begin{equation}
\label{e:testspde1}
    du = (- u^3  + \Delta u) dt  + dW(t).
\end{equation}
{We consider this equation in two settings, namely on the spatial domain $[0,1]$ equipped with Dirichlet boundary conditions and then on $[0,2\pi]$ with periodic boundary conditions. In the first case we use FEM space-discretization, in the second we use spectral Galerkin. In each case we show the effect of the different time-discretizations presented in Section \ref{sec: numerical schemes}.    
}



%



{For computational convenience in the Dirichlet FEM case, in our simulations, local nonlinearities are handled by interpolation through the nodal values:
\begin{equation}
\label{e:interp}
f(u_h)(x)\approx \sum_j f(u_h(x_j)) \varphi_j(x) = I_hf(u_h)(x),
\end{equation}
where $I_h$ denotes the interpolation operator.  This is a common choice, as the interpolation error for P1 elements is the same order as the discretization error, $\mathrm{O}(h^2)$; see \cite{Brenner,thomee2007galerkin,larsson_partial_2009}.
}

As \eqref{e:testspde1} has a cubic nonlinearity, for the spectral Galerkin discretization, we evaluate the nonlinearities in real space, and transform back.  To address the aliasing error, we  simulate with twice as many Fourier modes as desired, a standard de-aliasing strategy, \cite{canuto_spectral_2006,boyd_chebyshev_2001}.  Thus the projection of the nonlinear term is analytically handled for schemes \eqref{scheme:im} (FIE), \eqref{scheme:tame by norm of f'(u)} (global taming), and \eqref{subsec:taming by grad fu} (global gradient taming), along with  
 \eqref{scheme: general SIE}  (SIE). For the other schemes (\eqref{scheme:taming by abs val f' pointwise standard form}, \eqref{scheme:taming2derivativeabsolute value f'}, \eqref{scheme:taming2derivativeabsolute value}, \eqref{scheme: GTEM scheme f}), where the nonlinearity is a rational function at any $\tau>0$, there is an additional approximation, though for sufficiently large $N$, this will be manageable.  We also check our results across different $N$ for consistency.  When reporting that our spectral Galerkin simulations are with a particular value of $N$, the number of fully resolved modes is $(N \div 2)-1$ as we also zero out the Nyquist mode.  For consistency of notation, $P_h$ denotes projection onto this set of lowest modes, where, again, $N = 2\pi/h$ will be an integer.  
 

 Strictly speaking our results of Section \ref{sec: numerical schemes} do not apply to this spectral Galerkin setting (as all such results are state for FEM space-discretization, and Dirichlet boundary conditions). In particular, the Poincaré inequality,  which is repeatedly used in establishing the theorems of Section \ref{sec: numerical schemes},  {does not hold for spectral-Galerkin schemes.}  Nevertheless, as PDE models are frequently studied on the torus, we explore it here. {As we will see, while in most cases our results of Section \ref{sec: numerical schemes} remain robust to the choice of space-descretization,  the spectral Galerkin implementation will prove unreliable in some cases. We conjecture that this could be related  to lack of Poincare' inequality for this space-discretization. } 
 
 


In addition to the numerical methods we propose in the present work, we also compare against SIE, demonstrating the propensity for blowup, and the SPDE analog of Gy\"ongy's method \eqref{SDEclassicaltaming},  namely 
\begin{equation}
\label{e:gyongyspde}
    u_h^{k+1} = u_h^k + \Delta u_h^{k+1}\tau + P_{h}\left(\frac{f(u_h^k)}{1+\tau|f(u_h^k)|}\right)\tau + P_h(\delta W^{k+1}).
\end{equation}

The common structure of all of our experiments is to integrate the approximation of \eqref{e:testspde1} and compare the time series of observables, notably the first  moment of the $L^2$ norm of the approximation, $\E\|u_h^k\|\approx \E\|u(t_k)\|$. 


In looking at our results, the reader should look favorably on methods which accurately capture the evolution of the observable {\it throughout} the time of integration; indeed, that is the whole purpose of a naUiT scheme.  The fidelity to truth is judged by comparing simulations with different values of $\tau$ against one another.  

\subsection{Additional Details of Experiments}
We report results with $\tau = 0.1, 0.01, 0.001$.  All experiments are simulated with $10^4$ independent trials and, where they appear, shaded regions reflect the one standard deviation from the ensemble.



Our FEM results are performed with $N=100$, as in \eqref{e:femmesh} and \eqref{e:hatfunction} on $[0,1]$, such that we have $N-1=99$ coefficients.  The covariance of the noise process is taken to be  $Q = (-\Delta)^{-1}$, where the Laplacian is equipped with Dirichlet boundary conditions, making $Q$ trace class.  This is simulated within the FEM environment by initially computing the Cholesky factorization of the stiffness matrix, ${\bf K} = {\bf L}{\bf L}^T$, and solving $\mathbf{L}^T {\mathbf{w}} = {\bf z}$, with ${\bf z} \sim N(0,I_{N-1})$.  Letting $\mathbf{M}$ denote the mass matrix, the update schemes then take the form
\begin{equation*}
({\bf M} + \tau {\bf K}){\bf u}_h^{k+1} = {\bf M} {\bf u}_h^{k} + {\bf M} \cdot  \text{Drift Term} +\sqrt{\tau} {\bf M} {\bf w}^{k+1}
\end{equation*}

Our main results on spectral Galerkin are performed with $N = 256$, corresponding to 127 resolved modes.   For this case, the noise process has covariance $Q = (I - \Delta)^{-1}$ with periodic boundary conditions.  This is sampled in Fourier space where it diagonalizes with variance $(1 + j^2)^{-1}$ in component $j$.  On the Fourier side, the update scheme takes the form
\begin{equation*}
(1+j^2)\hat{u}^{k+1}_{h}(j) = \hat{u}^{k}_{h}(j)  + \mathcal{F}[\text{Drift Term}](j) + \sqrt{\tau} (1+j^2)^{-1/2} \hat{z}^{k+1}(j),
\end{equation*}
where  $\mathcal{F}$ denotes the Fourier transform.  The $\hat{z}^{k+1}(j)$ are such that this is a real valued periodic function; with $(\hat{z}^{k+1}(j))^\ast = \hat{z}^{k+1}(-j)\sim N_{\mathbb{C}}(0,1)$  for $j \neq 0$, and $\hat{z}^{k+1}(0)\sim N_{\mathbb{R}}(0,1)$

\subsection{Experimental Results}

We first report our results for the FEM case and then for spectral Galerkin.  We discuss these results afterwards, in Section \ref{sec:numericdisc}.

\subsubsection{FEM Results}
In our first set of experiments, we take $u_0$ to be a constant function, of different sizes, $u_0(x) = 0, 1, 100$.   The results, in increasing order of magnitude of the initial datum,  are shown in Figures \ref{fig:zerodata_fem}, \ref{fig:onedata_fem}, \ref{fig:hundreddata_fem}. 

When $u_0(x) = 0$, Figure \ref{fig:zerodata_fem} shows good agreement across methods, particularly as $\tau$ is reduced.  Arguably, the best naUiT method for this data is the fully implicit Euler method, as there is very little variation in performance between the coarsest and finest values of $\tau$.  Indeed, the fact  that the results, graphically, appear the same at all values of $\tau$, would suggest this was the most computationally efficient, allowing for the largest time step.  But this obfuscates the additional, per iterate, cost of the fully implicit method, as it requires a Newton type solver.  

Next, we look at ``small'' nonzero data, taking $u_0(x) = 1$.   Figure \ref{fig:onedata_fem} shows results that are quite similar to the case of zero initial data.  We have broad consistency across all cases, improvement as $\tau$ is reduced, and the best results amongst the naUiT methods from the fully implicit method.

For ``large'' data, $u_0(x) = 100$, Figure \ref{fig:hundreddata_fem} shows the first ``failure'' of a method. For SIE, \eqref{scheme: general SIE}, all results are absent; this is due to finite time blowup.  Data of size $u_0 = 10$ also revealed blowup for $\tau=0.1$; for brevity, we do not include those results here.


In Figure \ref{fig:osc10_10_fem}, we present the case of $u_0(x) = 10 \sin(10\pi x)$.  Again, the results are consistent across all methods, and we see convergence as $\tau$ is reduced.


\begin{figure}
    \centering

    \subfigure[SIE, \eqref{scheme: general SIE}]{\includegraphics[width=6.5cm]{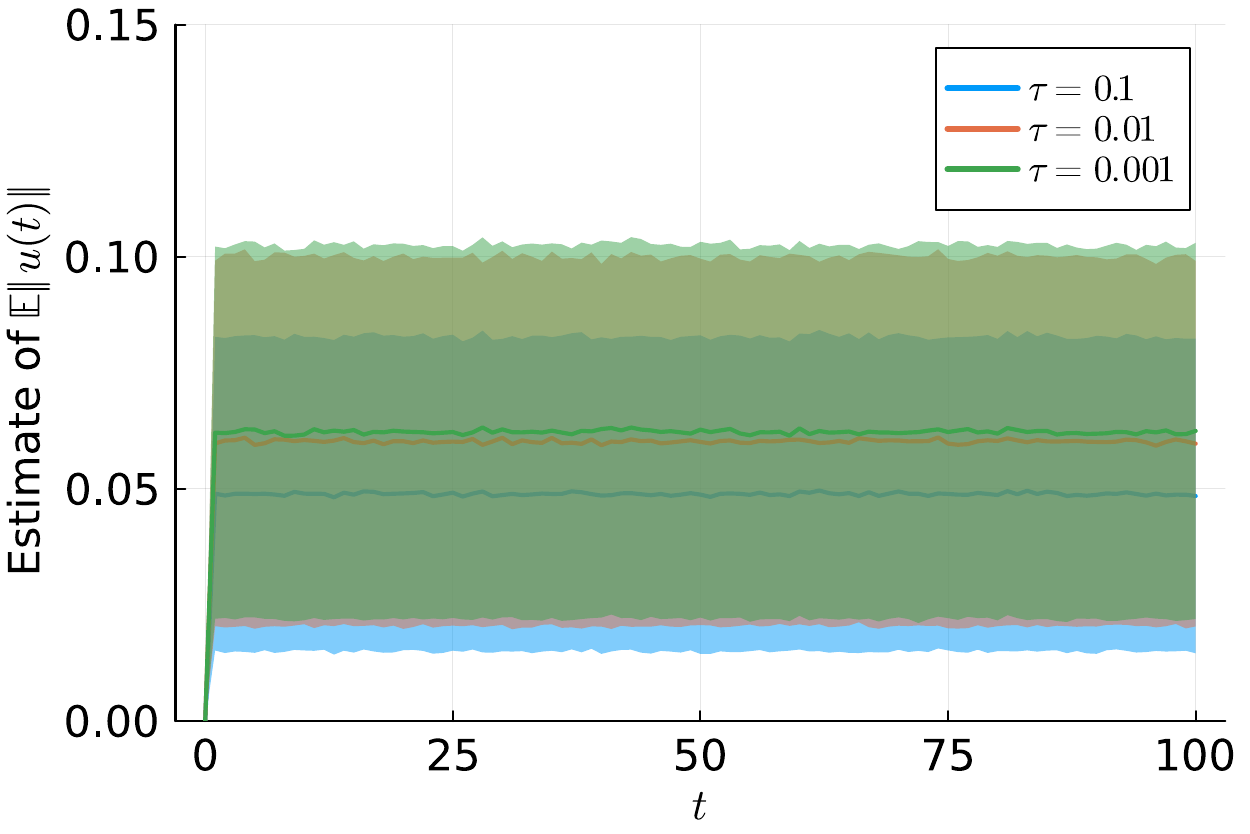}}
    \subfigure[FIE, \eqref{scheme:im}]{\includegraphics[width=6.5cm]{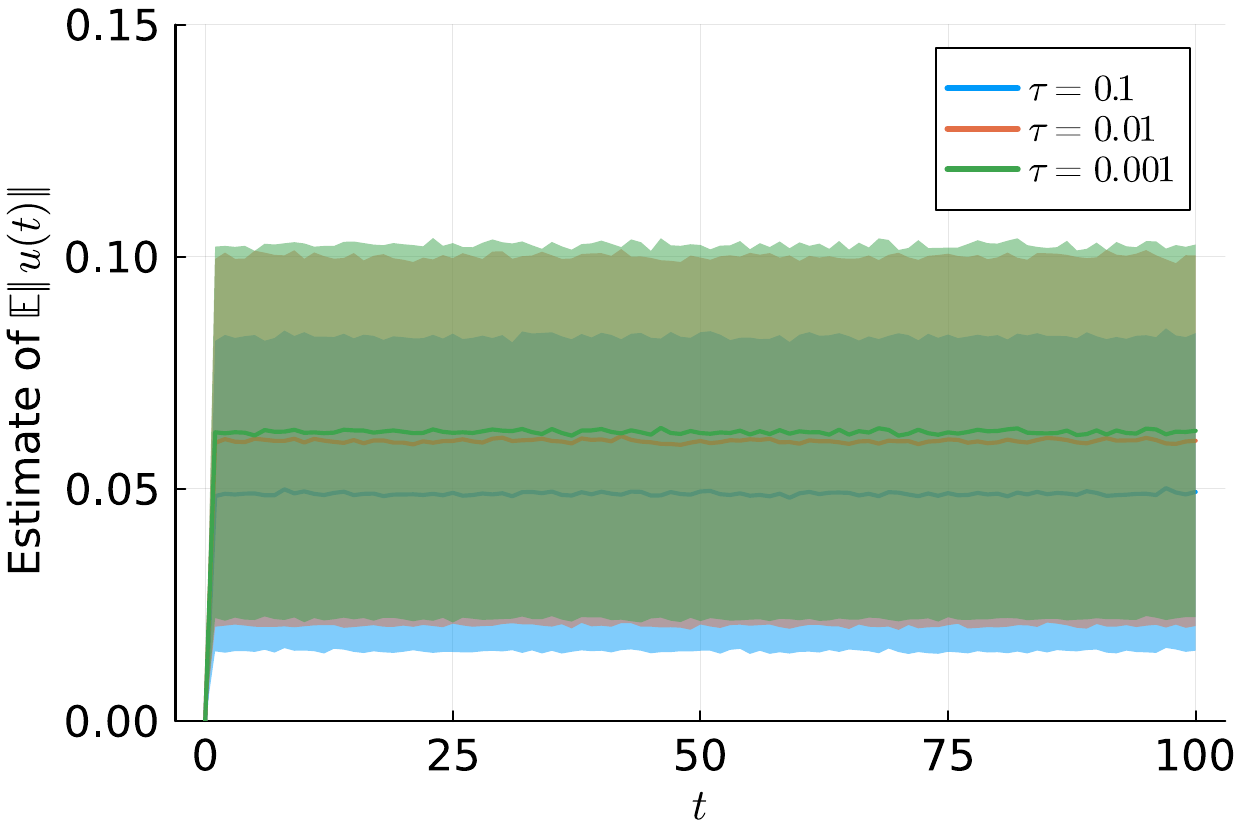}}
    \subfigure[Gy\"ongy's method, \eqref{e:gyongyspde}]{\includegraphics[width=6.5cm]{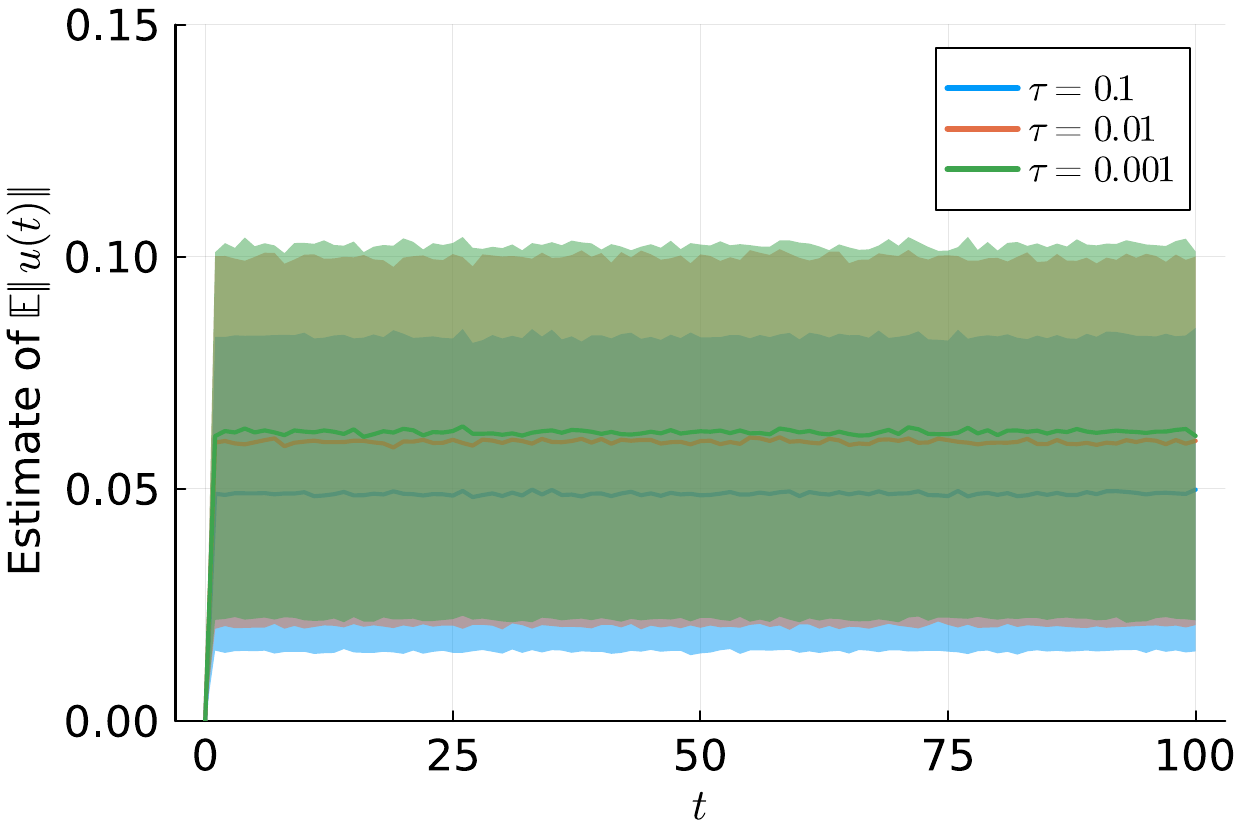}}
    \subfigure[Truncated pointwise taming,  \eqref{scheme:taming by abs val f' pointwise standard form}]{\includegraphics[width=6.5cm]{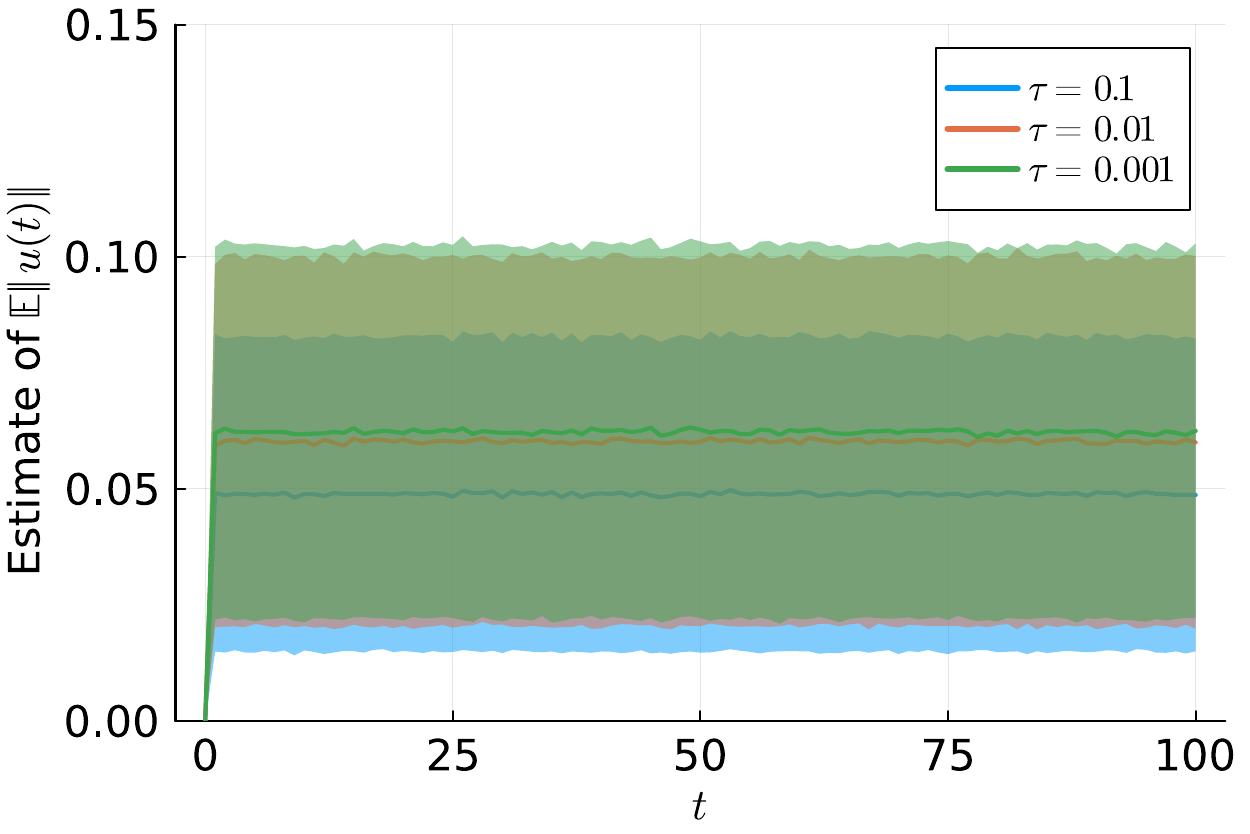}}
    \subfigure[GTEM, \eqref{scheme: GTEM scheme f}]{\includegraphics[width=6.5cm]{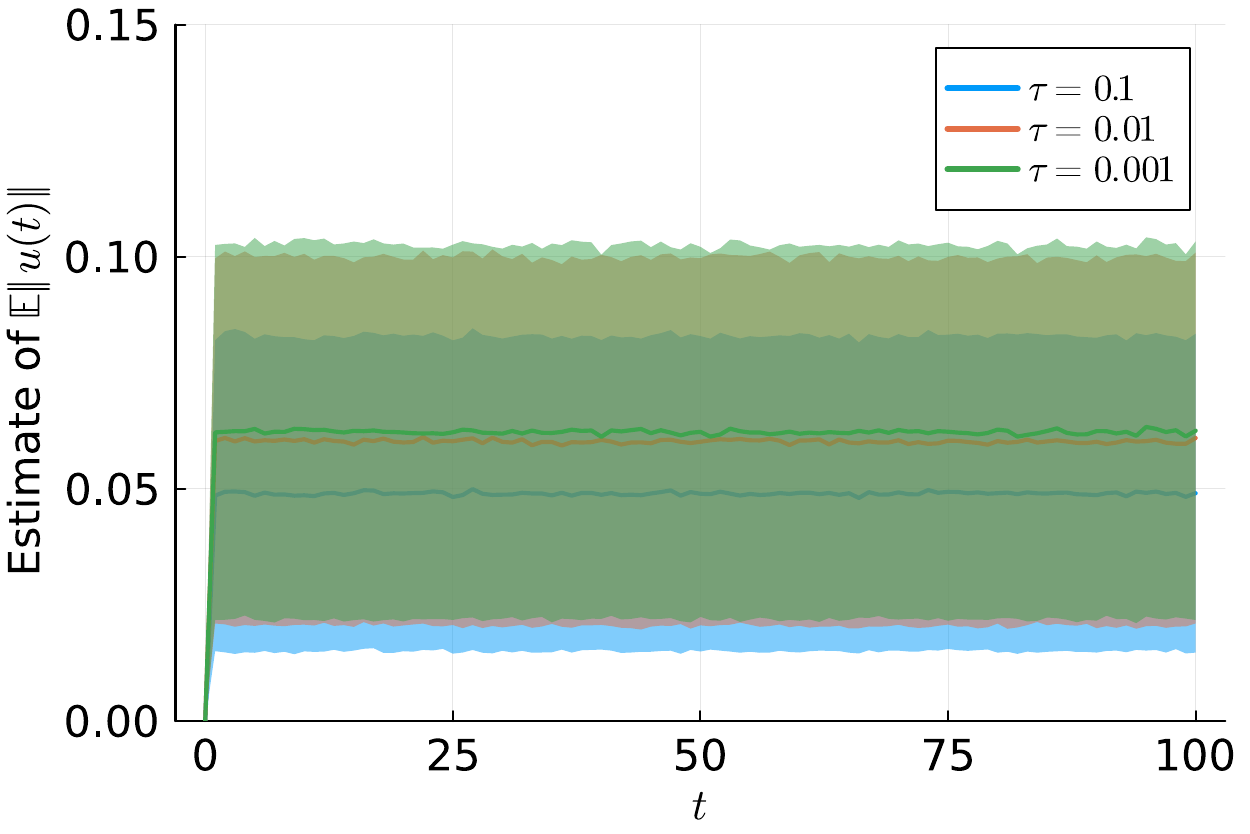}}
\subfigure[Global gradient taming, \eqref{scheme:taming1}]{\includegraphics[width=6.5cm]{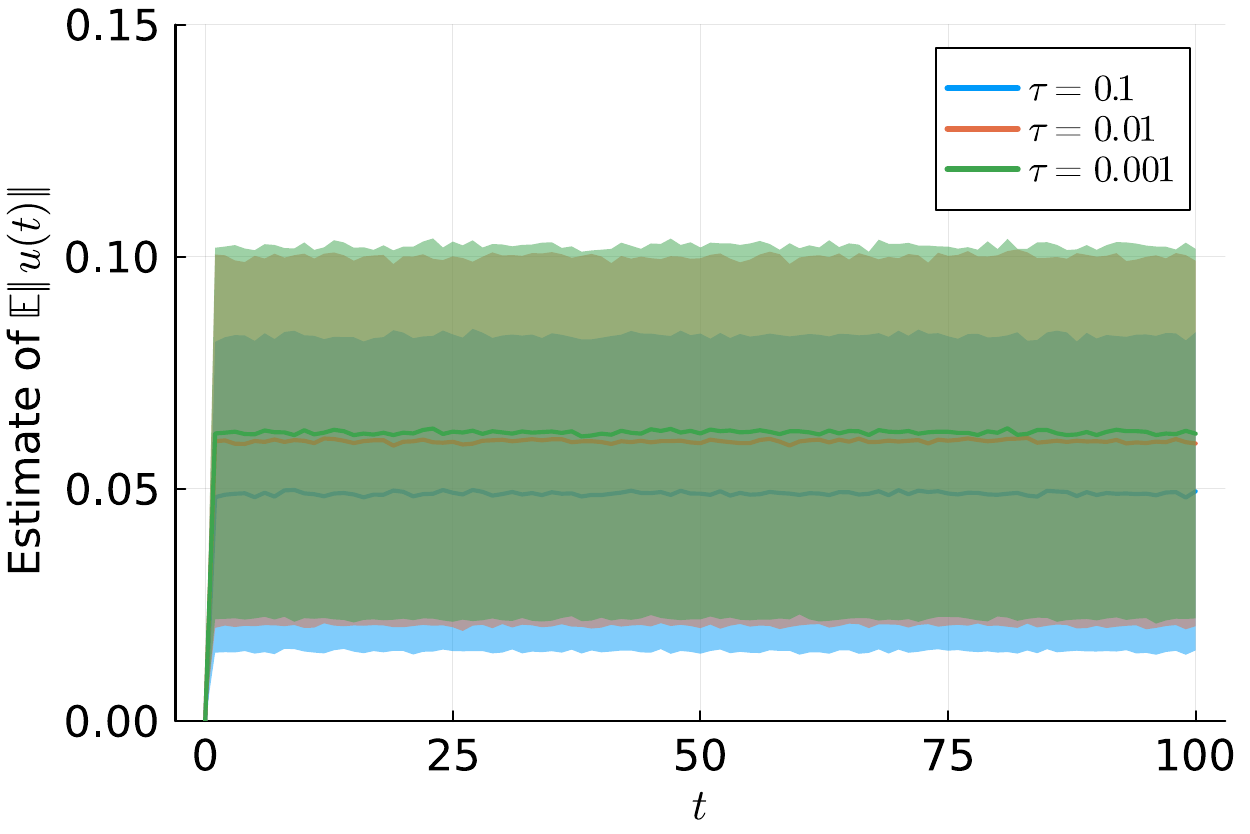}}
    \subfigure[Truncated global taming, \eqref{scheme:tame by norm of f'(u)}]{\includegraphics[width=6.5cm]{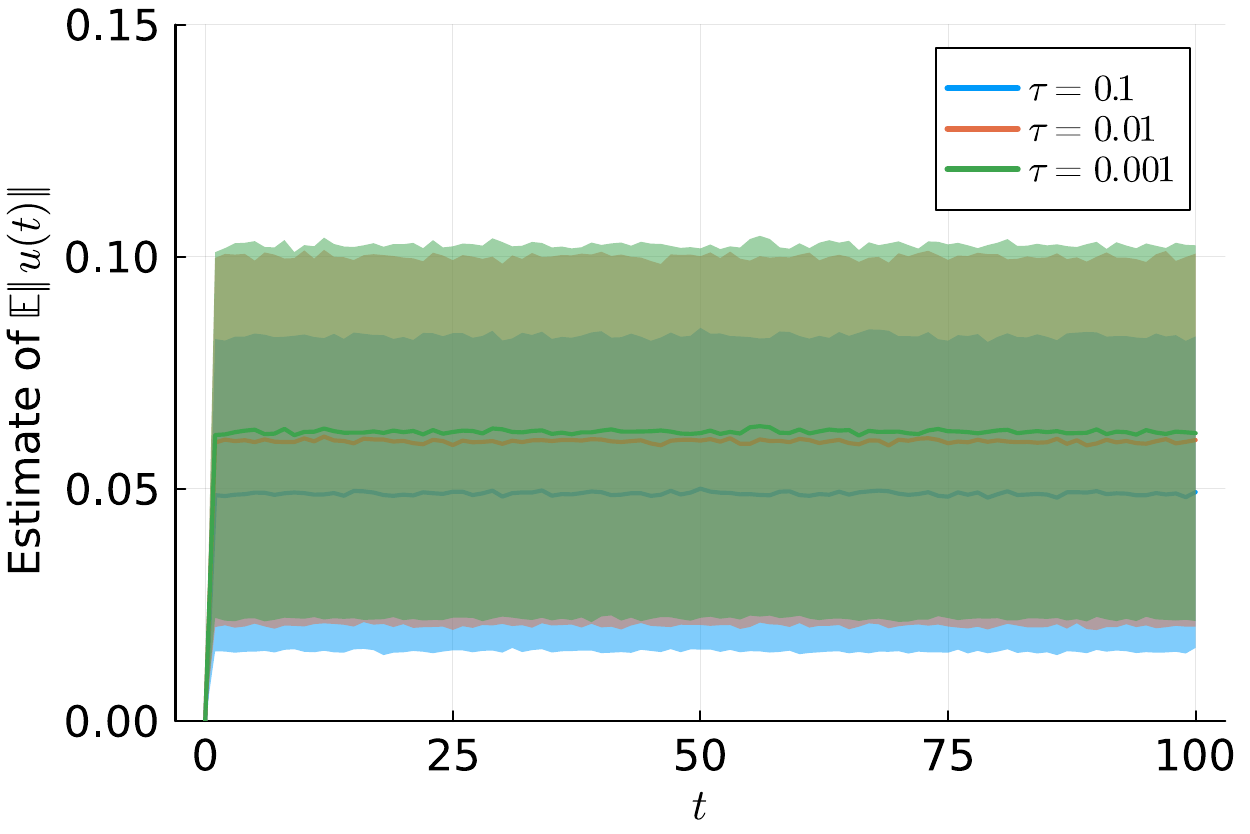}}
    
    \caption{Time series of FEM solutions for the first moment with initial condition $u_0(x) = 0$.  All schemes appear quite comparable.  Shaded regions reflect one standard deviation. }
    \label{fig:zerodata_fem}
\end{figure}

\begin{figure}
    \centering

    \subfigure[SIE, \eqref{scheme: general SIE}]{\includegraphics[width=6.5cm]{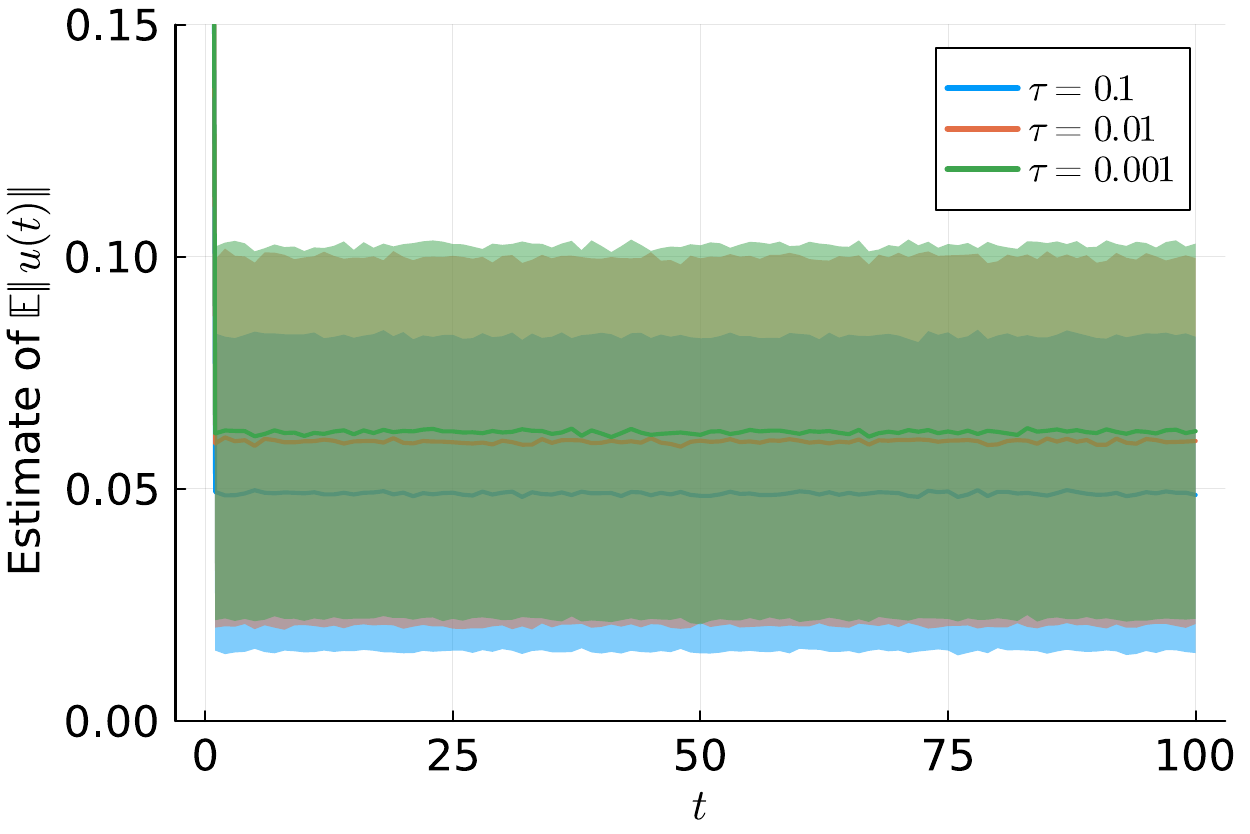}}
    \subfigure[FIE, \eqref{scheme:im}]{\includegraphics[width=6.5cm]{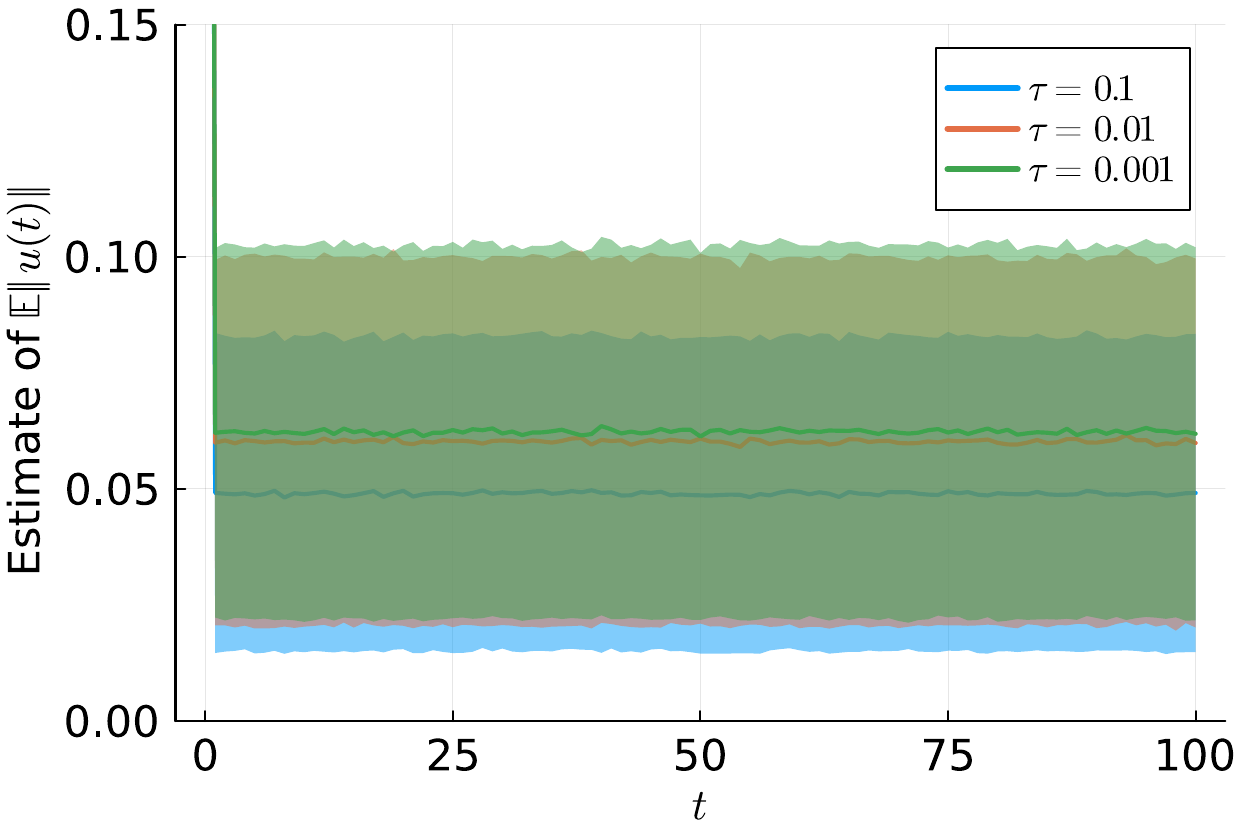}}
    \subfigure[Gy\"ongy's method, \eqref{e:gyongyspde}]{\includegraphics[width=6.5cm]{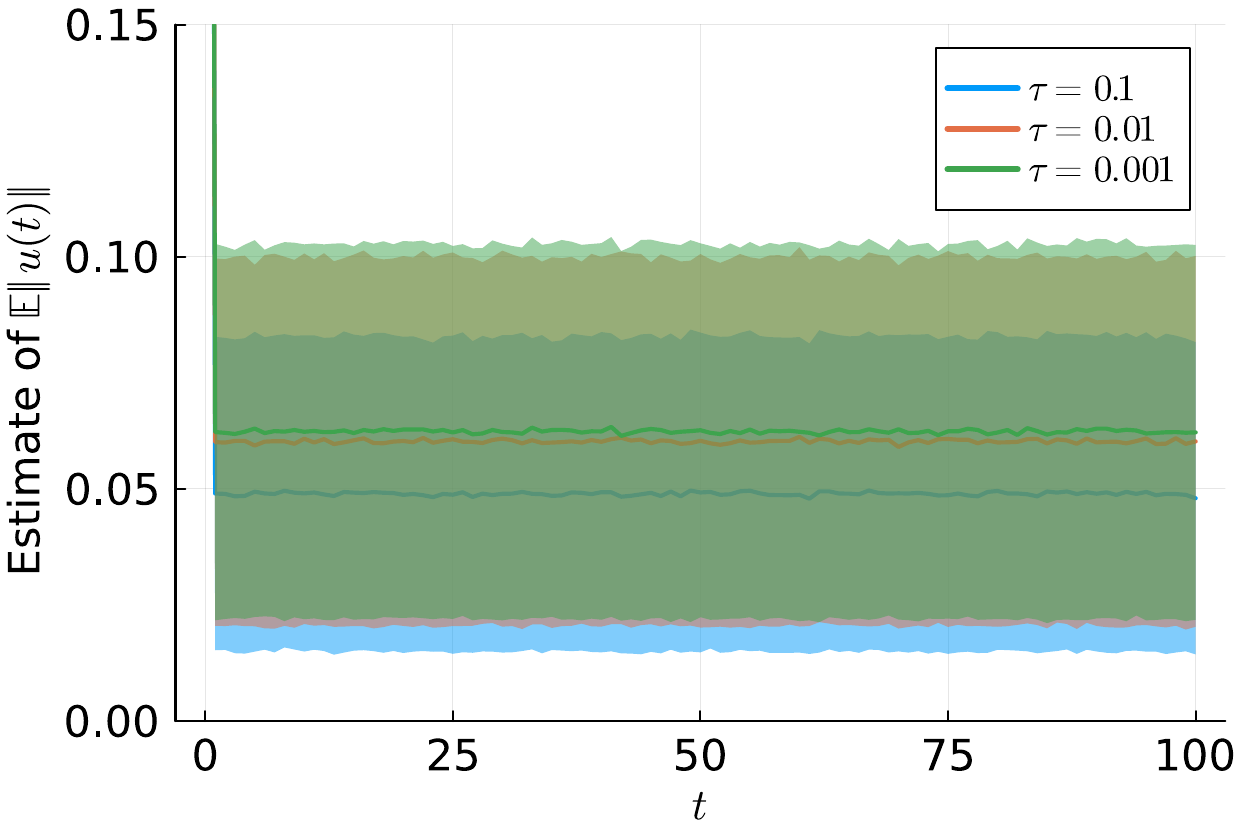}}
    \subfigure[Truncated pointwise taming,  \eqref{scheme:taming by abs val f' pointwise standard form}]{\includegraphics[width=6.5cm]{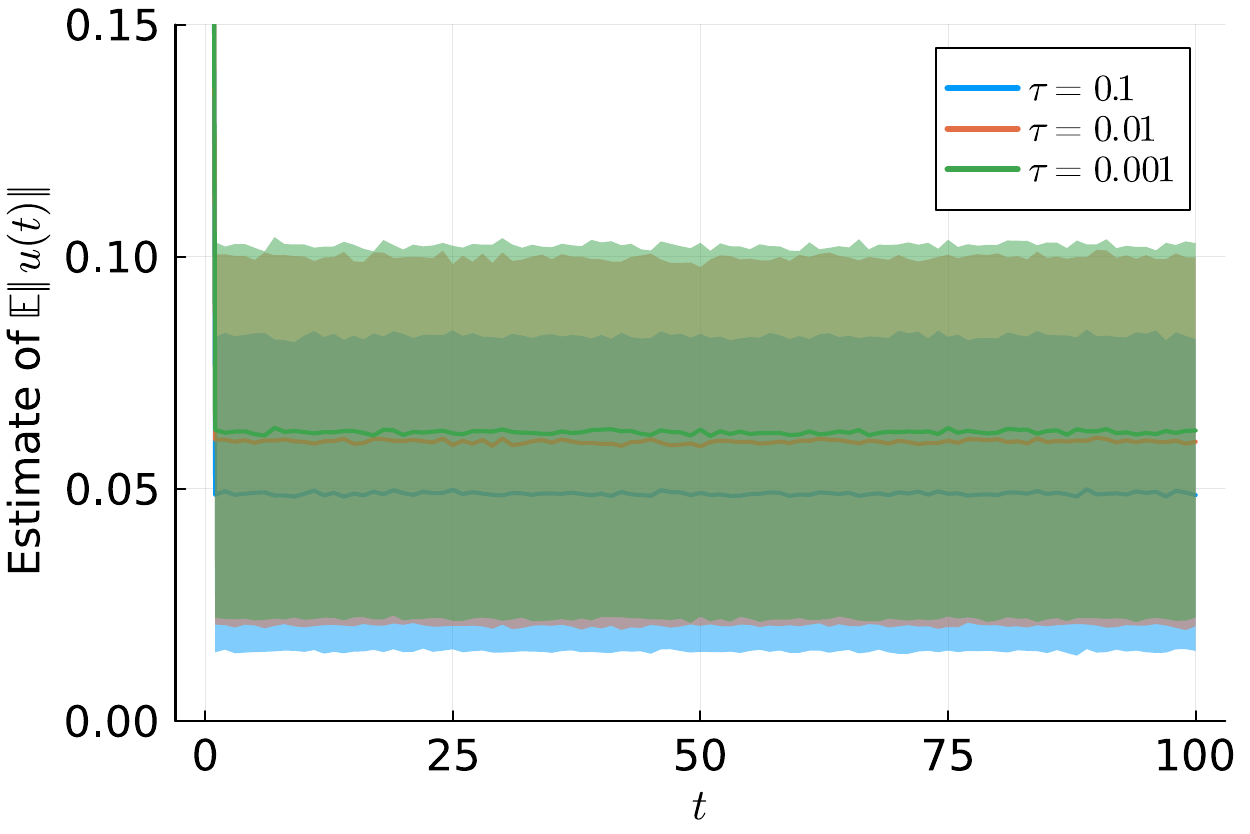}}
    \subfigure[GTEM, \eqref{scheme: GTEM scheme f}]{\includegraphics[width=6.5cm]{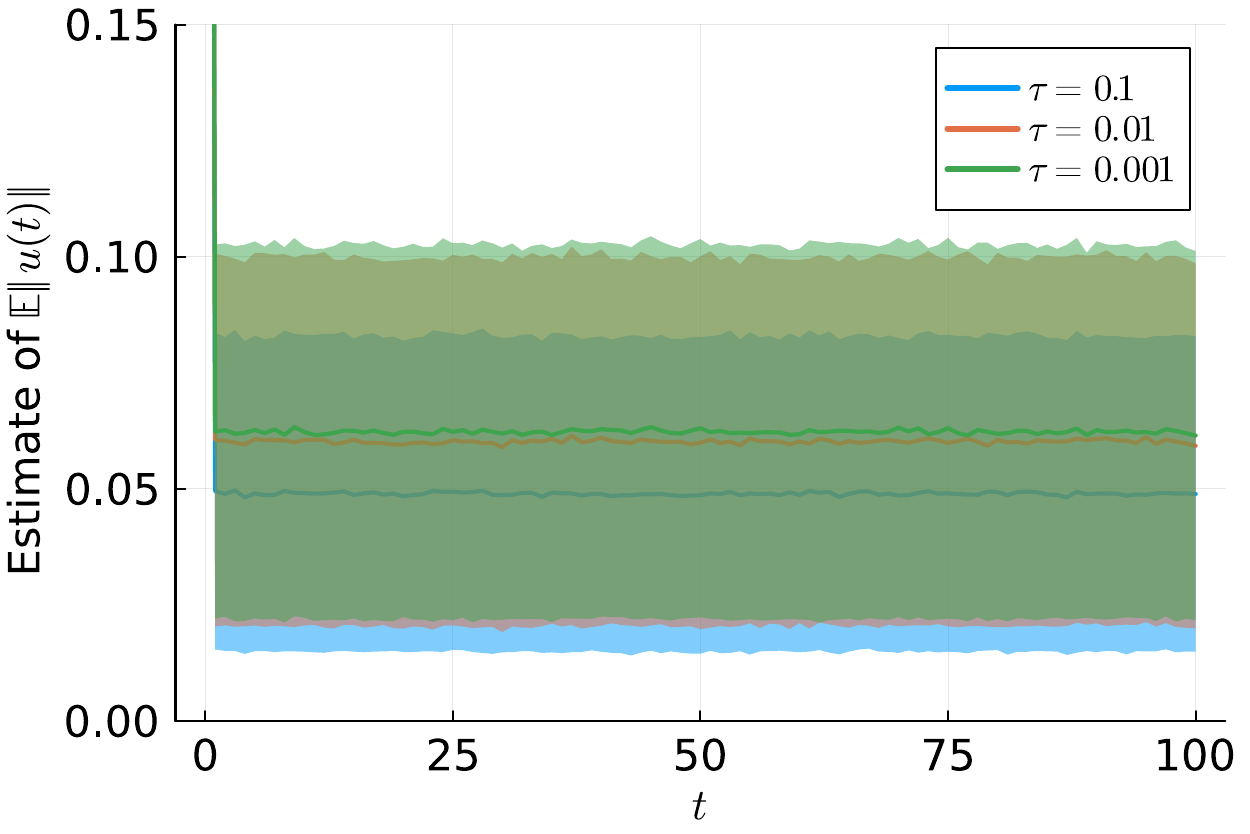}}
    \subfigure[Global gradient taming, \eqref{scheme:taming1}]{\includegraphics[width=6.5cm]{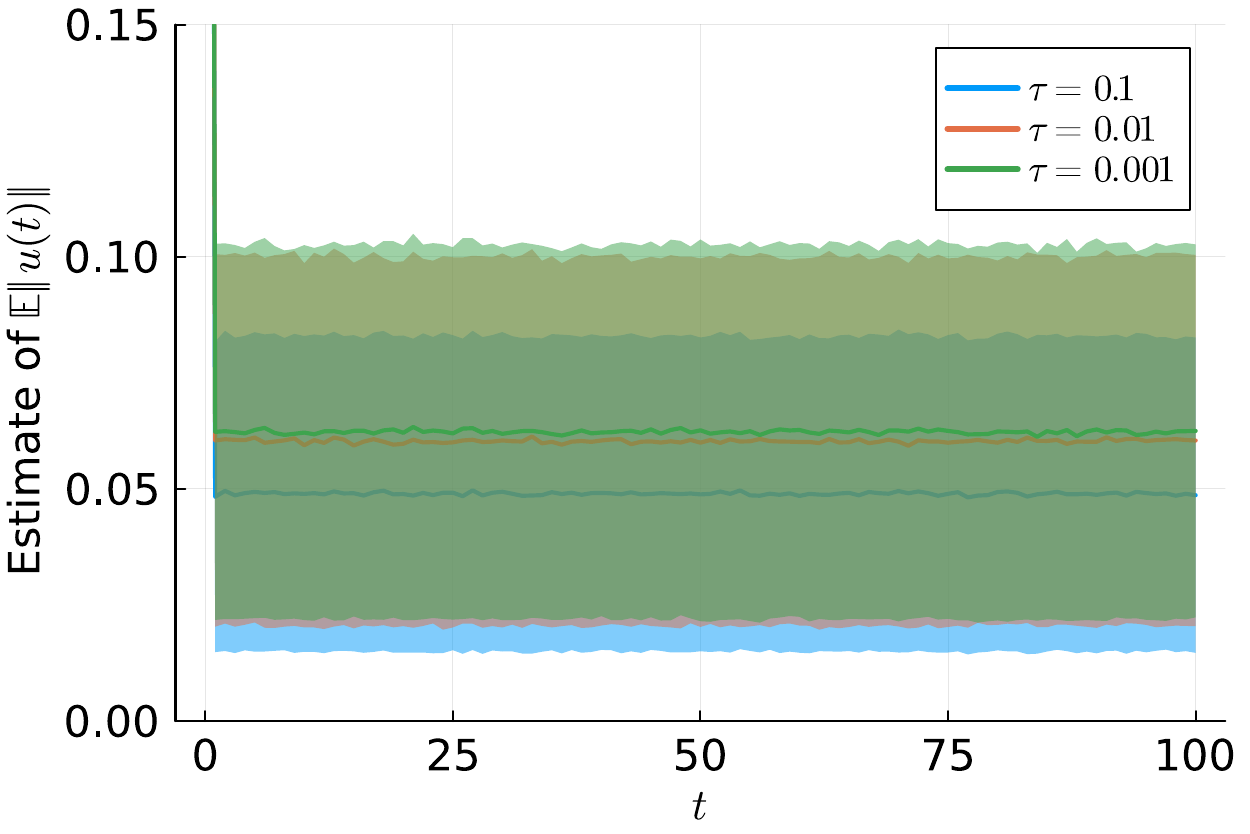}}
    \subfigure[Truncated global taming, \eqref{scheme:tame by norm of f'(u)}]{\includegraphics[width=6.5cm]{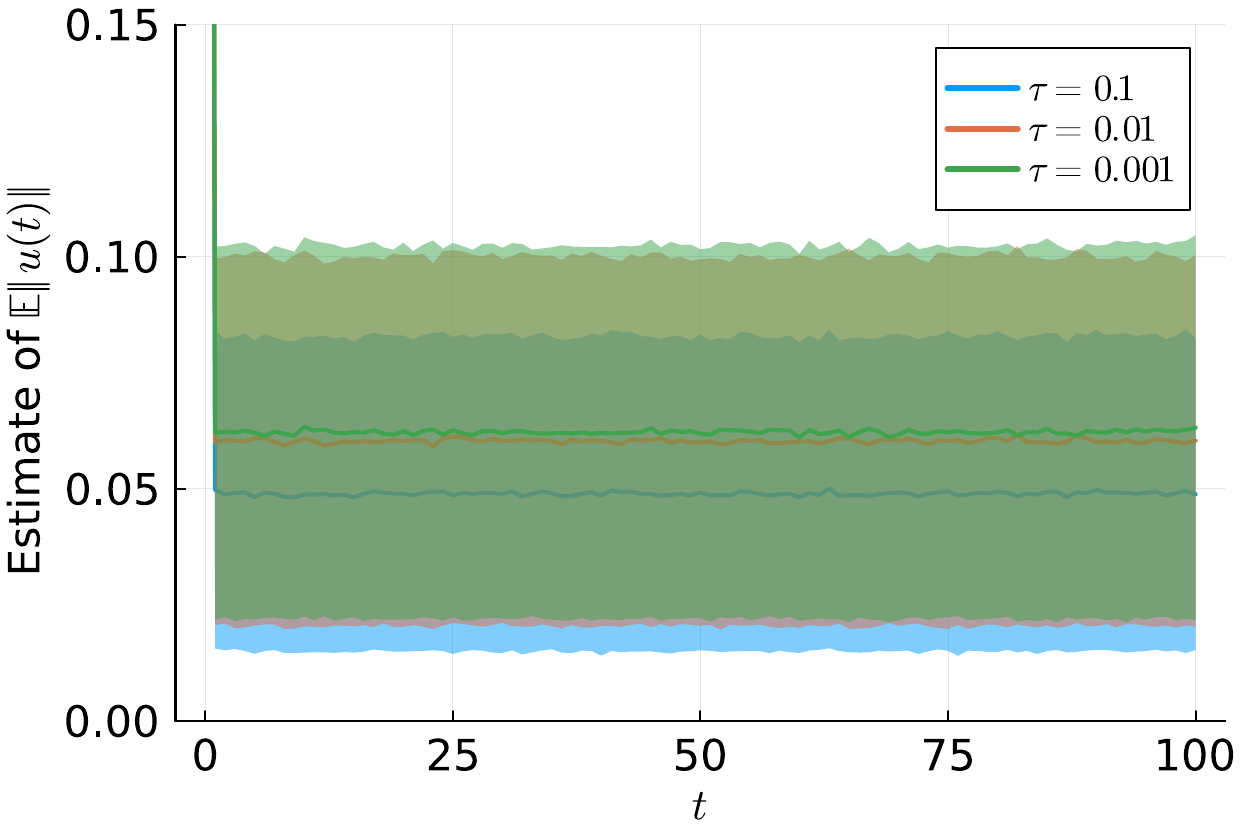}}
    \caption{Time series of FEM solutions for the first moment for initial condition $u_0(x) = 1$.  All schemes appear quite comparable.  Shaded regions reflect one standard deviation.}
    \label{fig:onedata_fem}
\end{figure}



    

    


\begin{figure}
    \centering

    \subfigure[SIE, \eqref{scheme: general SIE}]{\includegraphics[width=6.5cm]{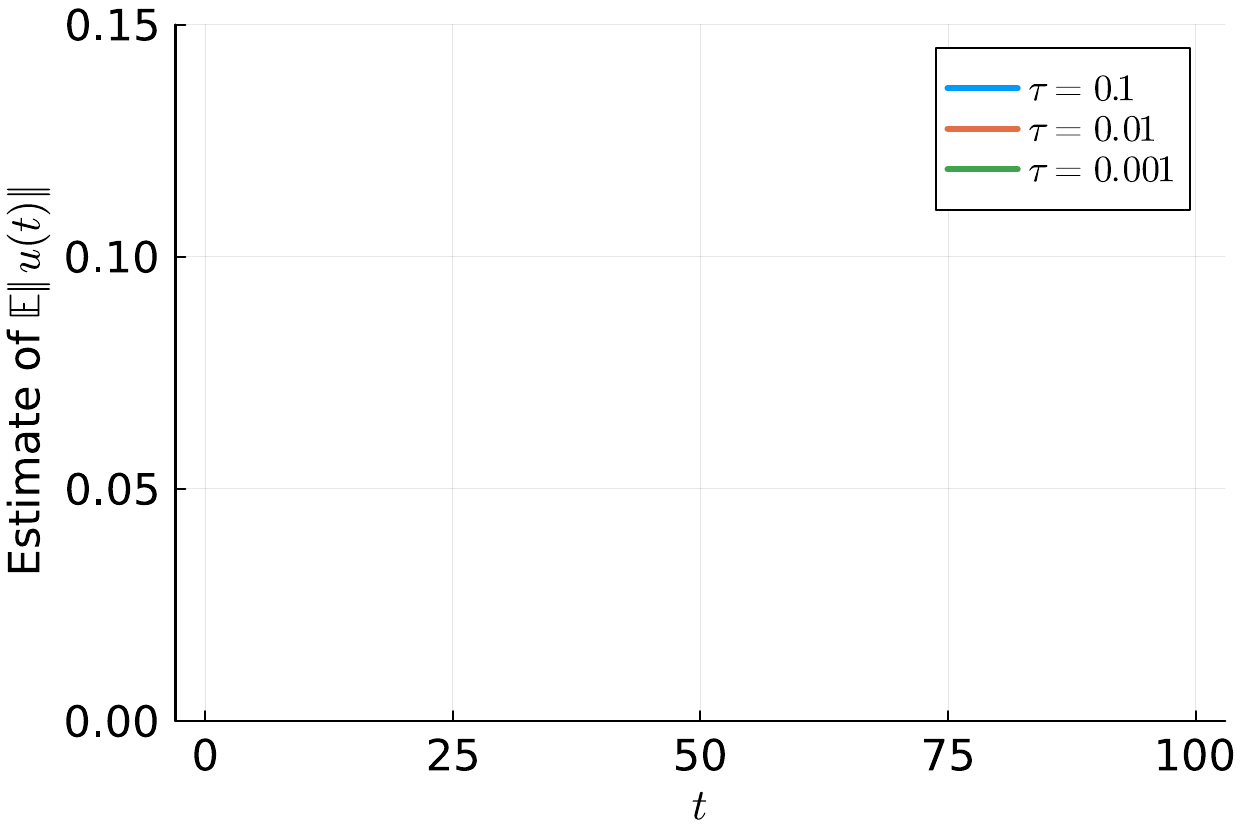}}
    \subfigure[FIE, \eqref{scheme:im}]{\includegraphics[width=6.5cm]{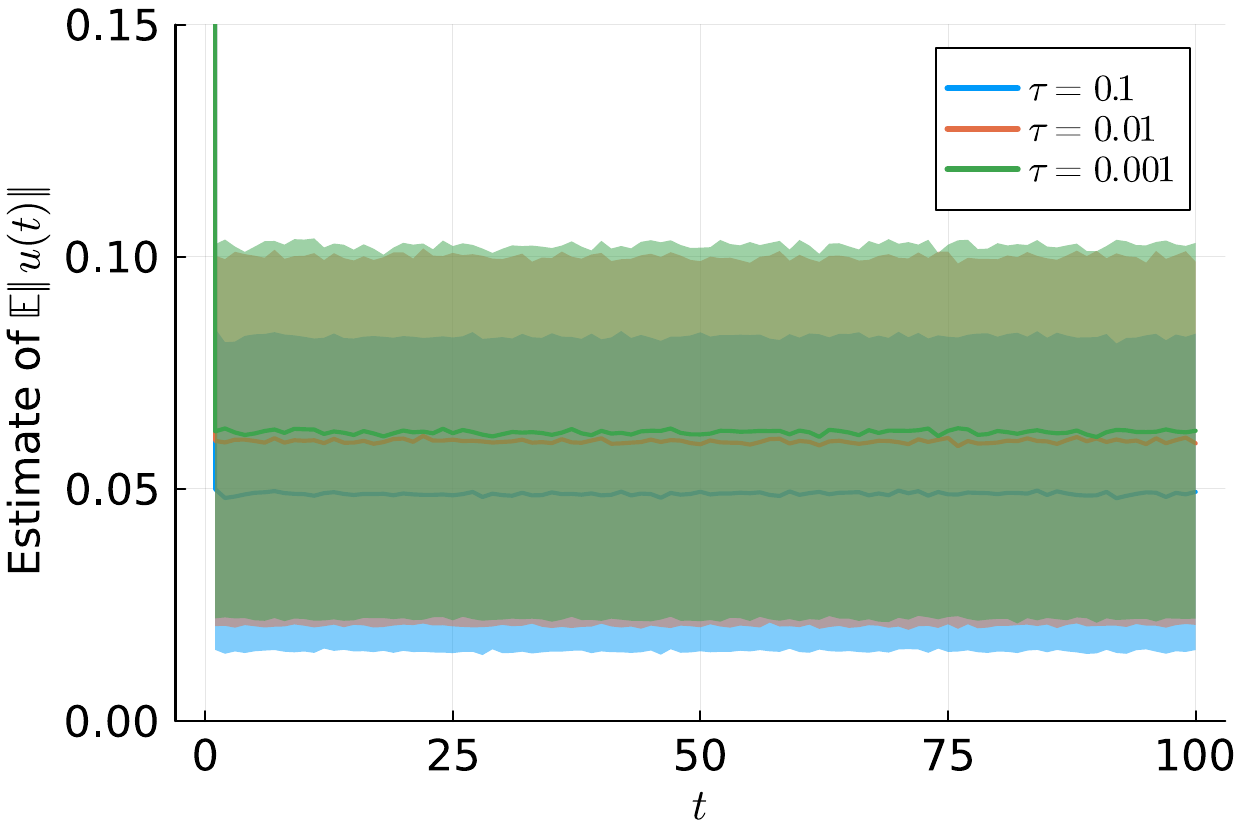}}
    \subfigure[Gy\"ongy's method, \eqref{e:gyongyspde}]{\includegraphics[width=6.5cm]{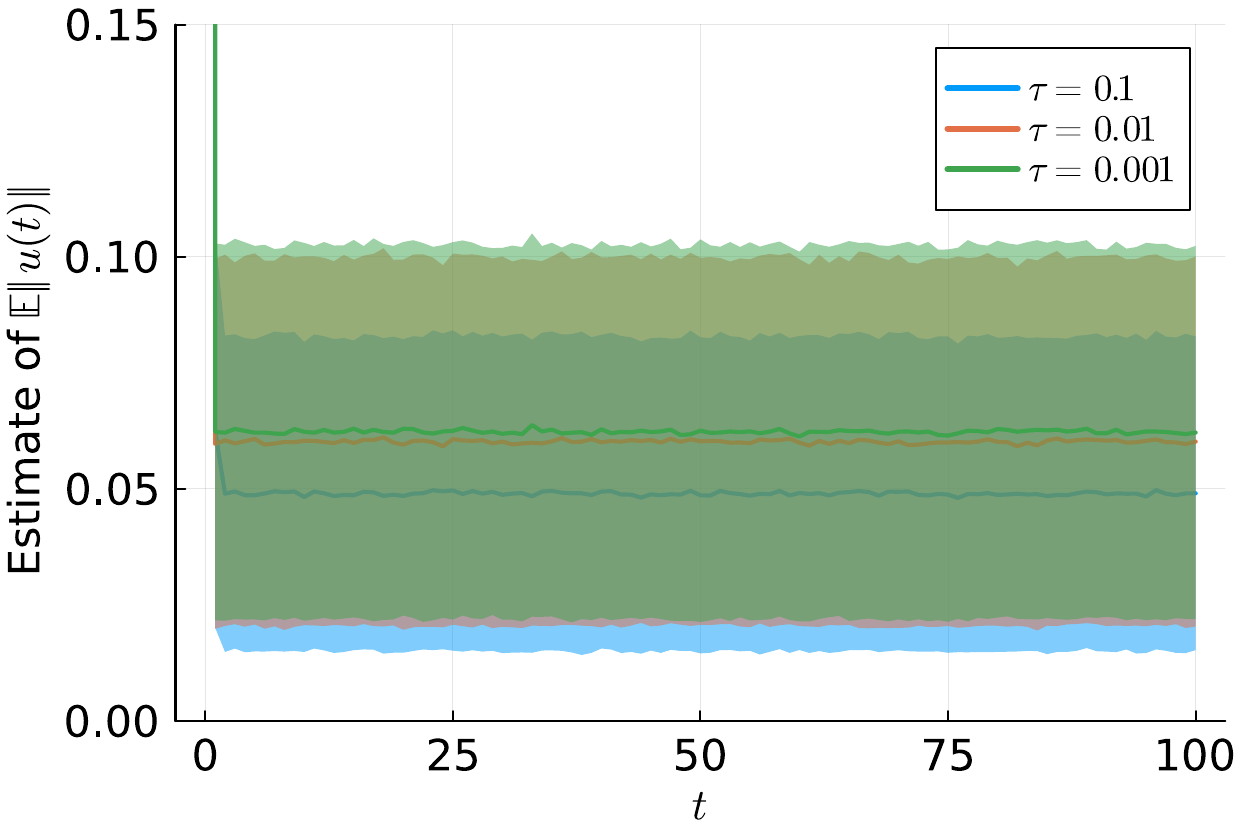}}
    \subfigure[Truncated pointwise taming,  \eqref{scheme:taming by abs val f' pointwise standard form}]{\includegraphics[width=6.5cm]{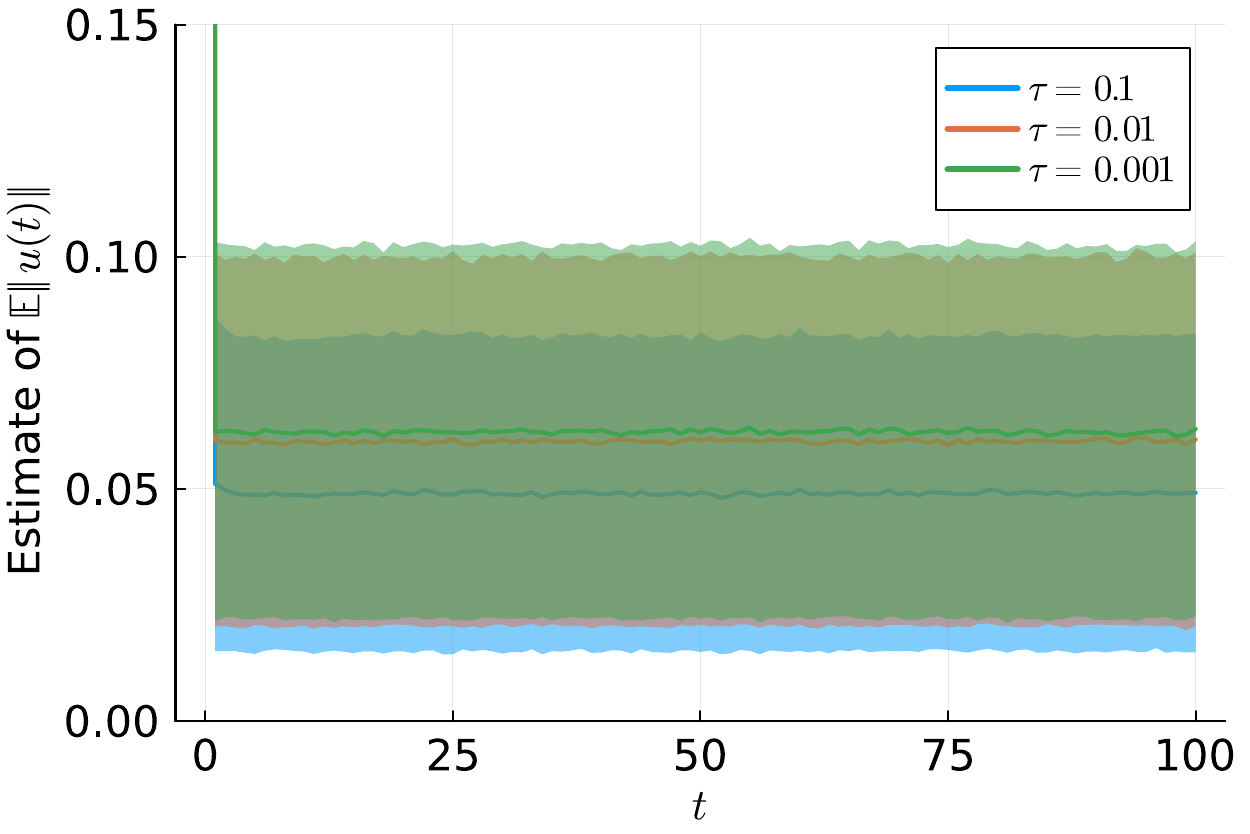}}
    \subfigure[GTEM, \eqref{scheme: GTEM scheme f}]{\includegraphics[width=6.5cm]{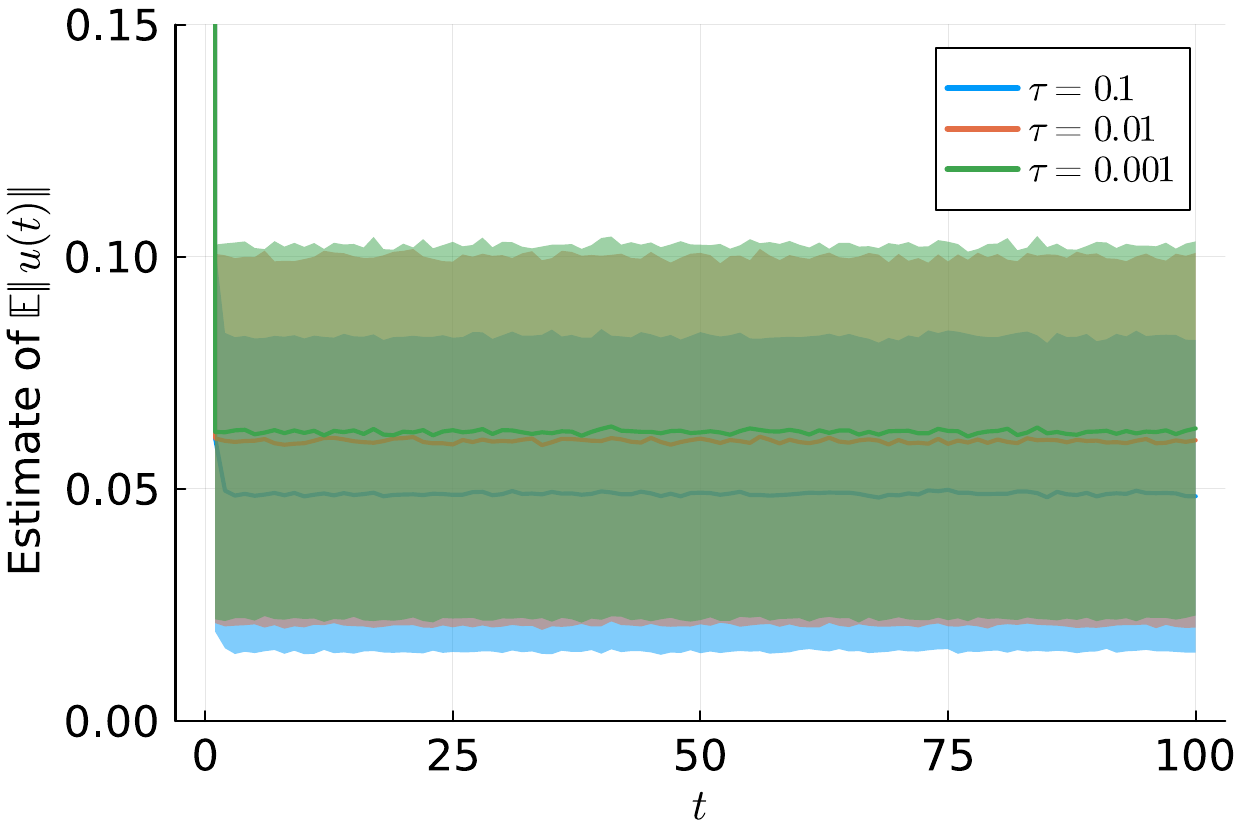}}
    \subfigure[Global gradient taming, \eqref{scheme:taming1}]{\includegraphics[width=6.5cm]{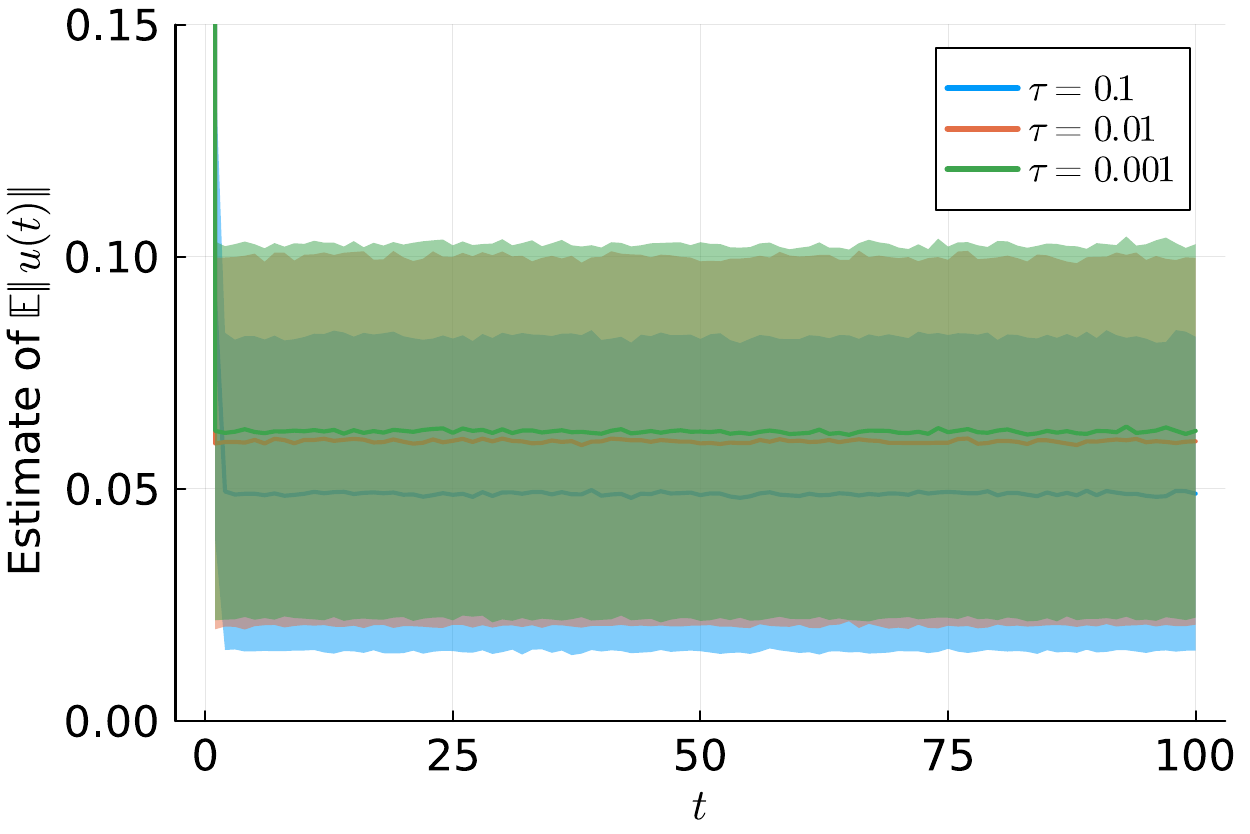}}
    \subfigure[Truncated global taming, \eqref{scheme:tame by norm of f'(u)}]{\includegraphics[width=6.5cm]{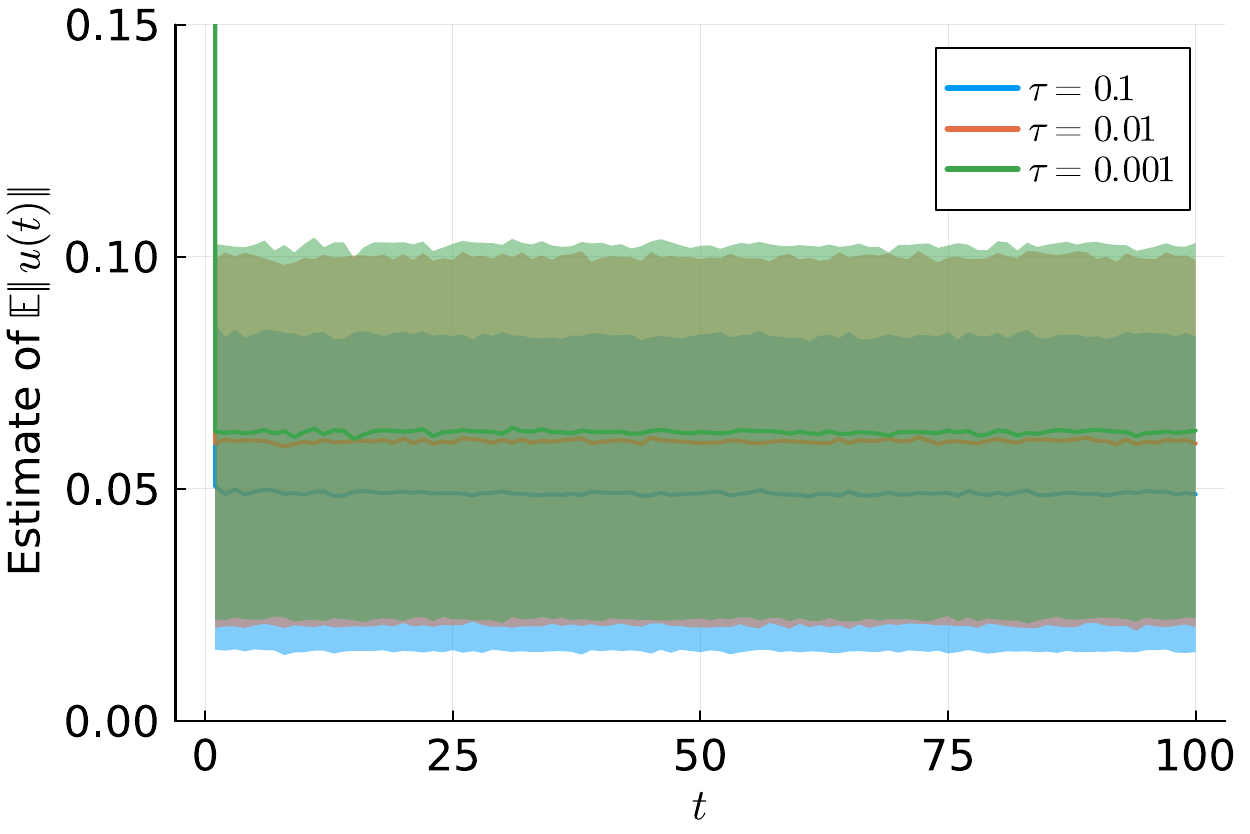}}
    
    \caption{Time series of FEM solutions for the first moment for initial condition $u_0(x) = 100$. For all computed values of $\tau$, SIE experiences blowup.  Otherwise, the schemes appear quite comparable.
    Shaded regions reflect one standard deviation.}
    \label{fig:hundreddata_fem}
\end{figure}


\begin{figure}
    \centering

    \subfigure[SIE, \eqref{scheme: general SIE}]{\includegraphics[width=6.5cm]{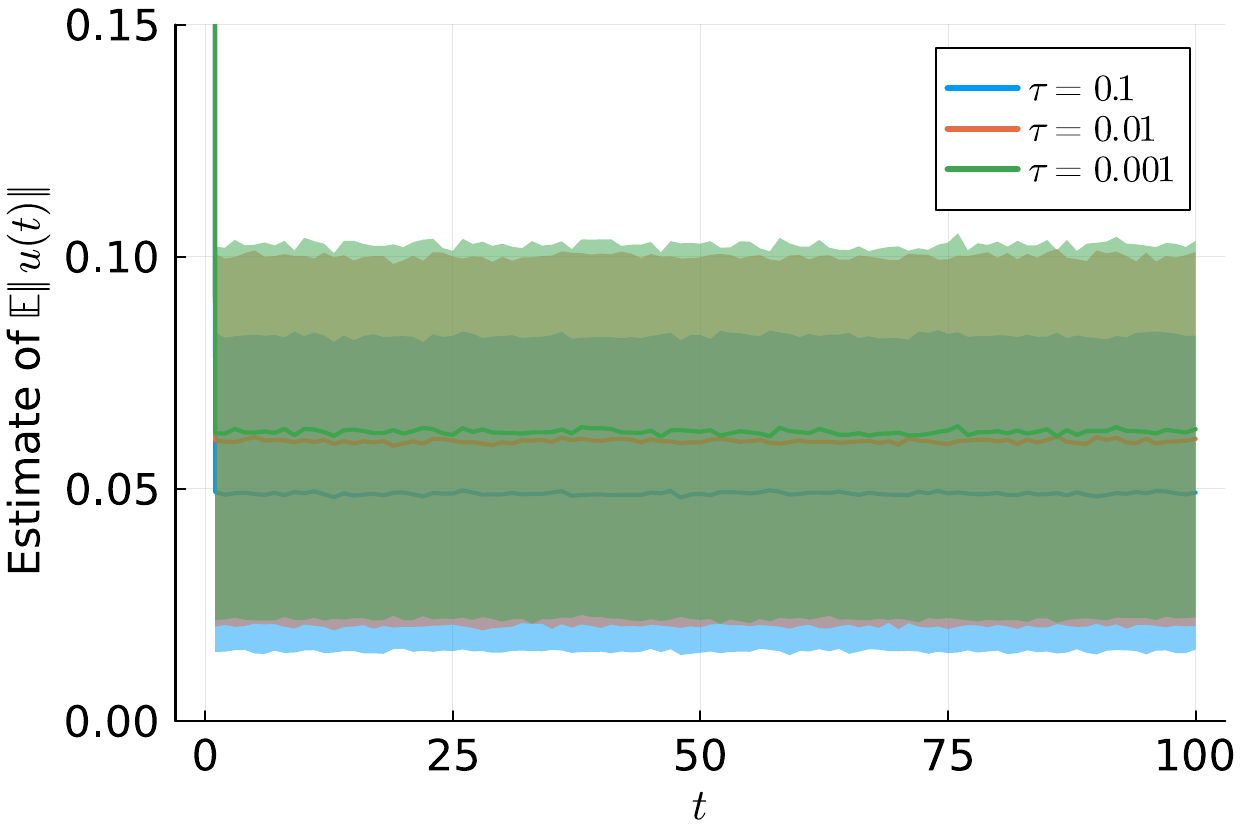}}
    \subfigure[FIE, \eqref{scheme:im}]{\includegraphics[width=6.5cm]{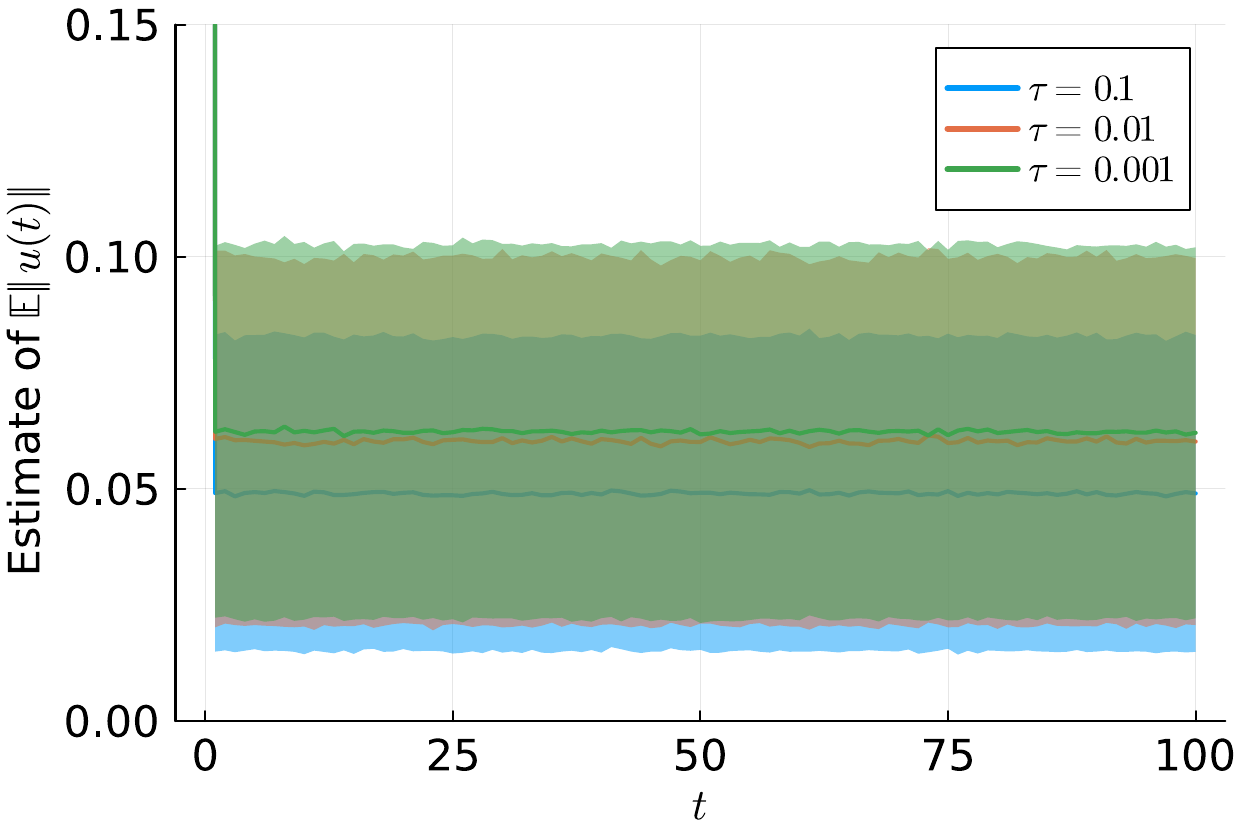}}
    \subfigure[Gy\"ongy's method, \eqref{e:gyongyspde}]{\includegraphics[width=6.5cm]{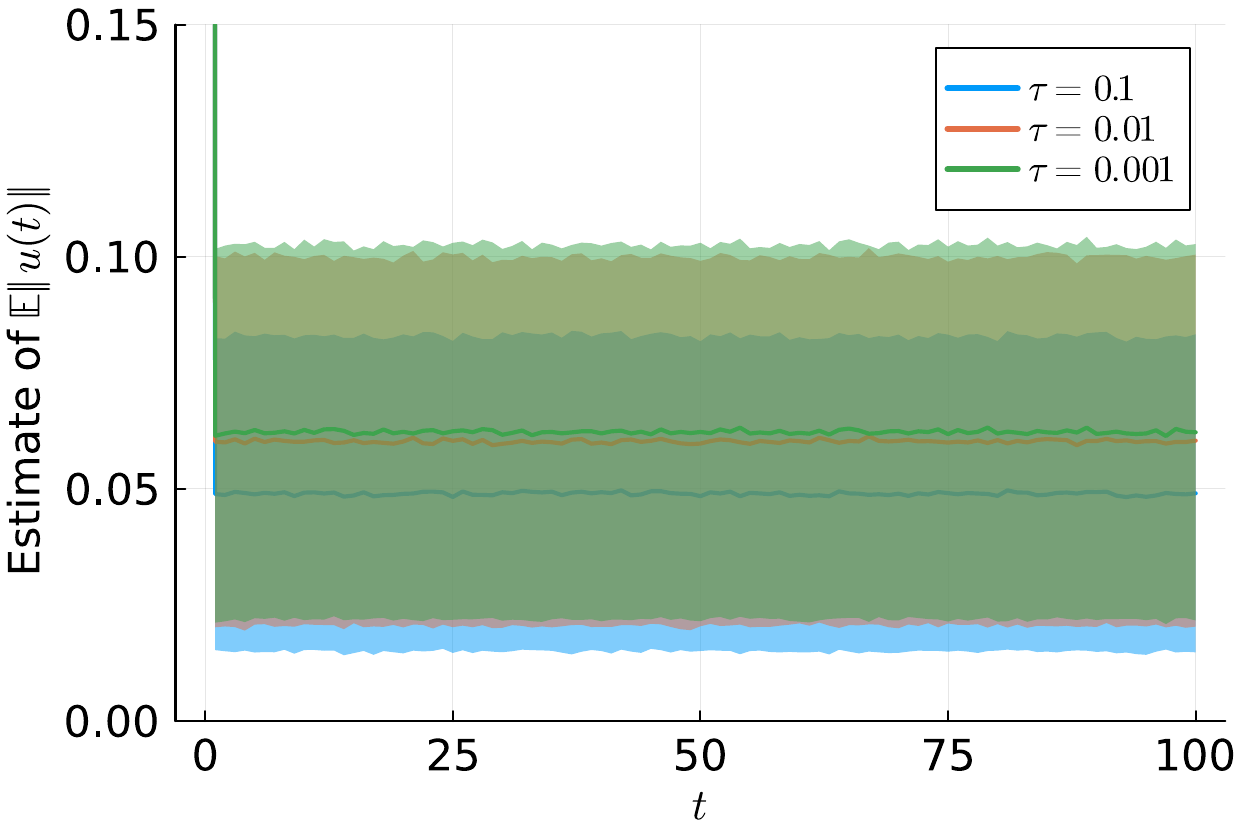}}
    \subfigure[Truncated pointwise taming,  \eqref{scheme:taming by abs val f' pointwise standard form}]{\includegraphics[width=6.5cm]{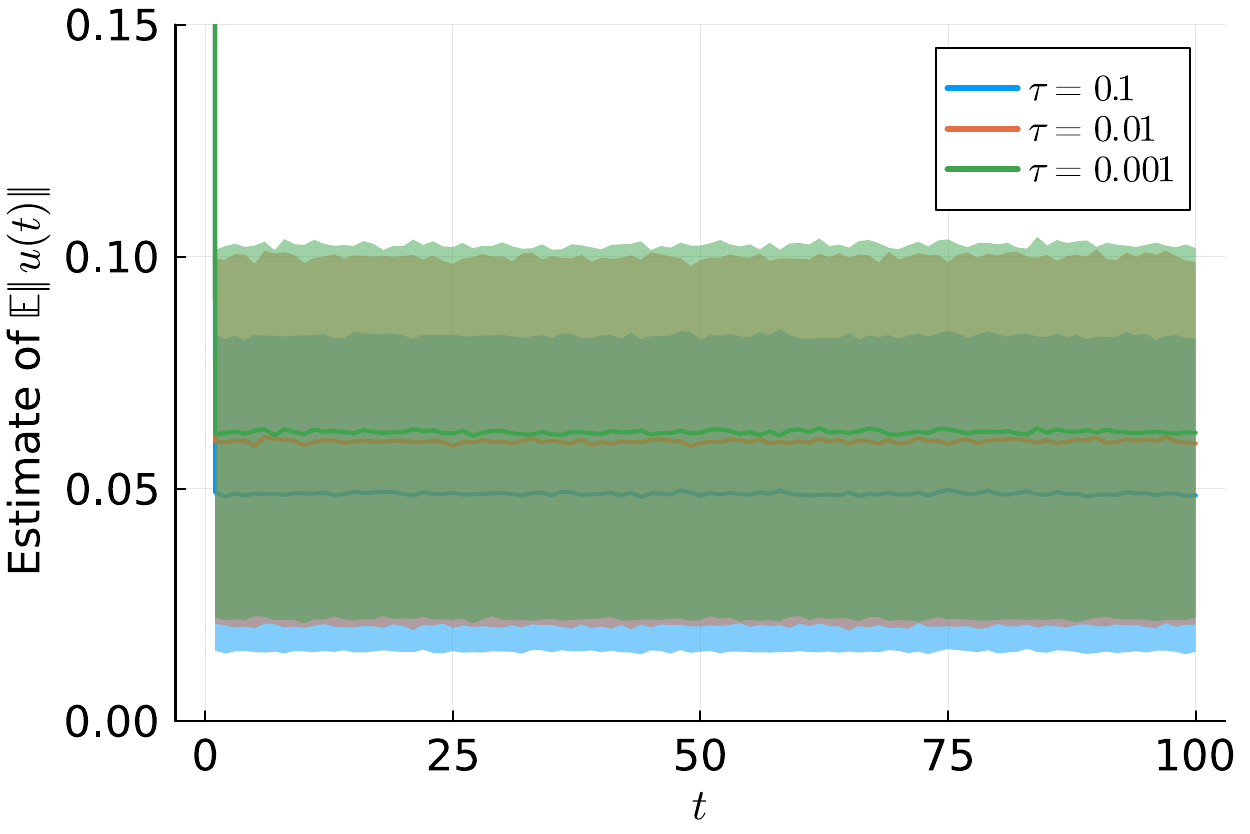}}
    \subfigure[GTEM, \eqref{scheme: GTEM scheme f}]{\includegraphics[width=6.5cm]{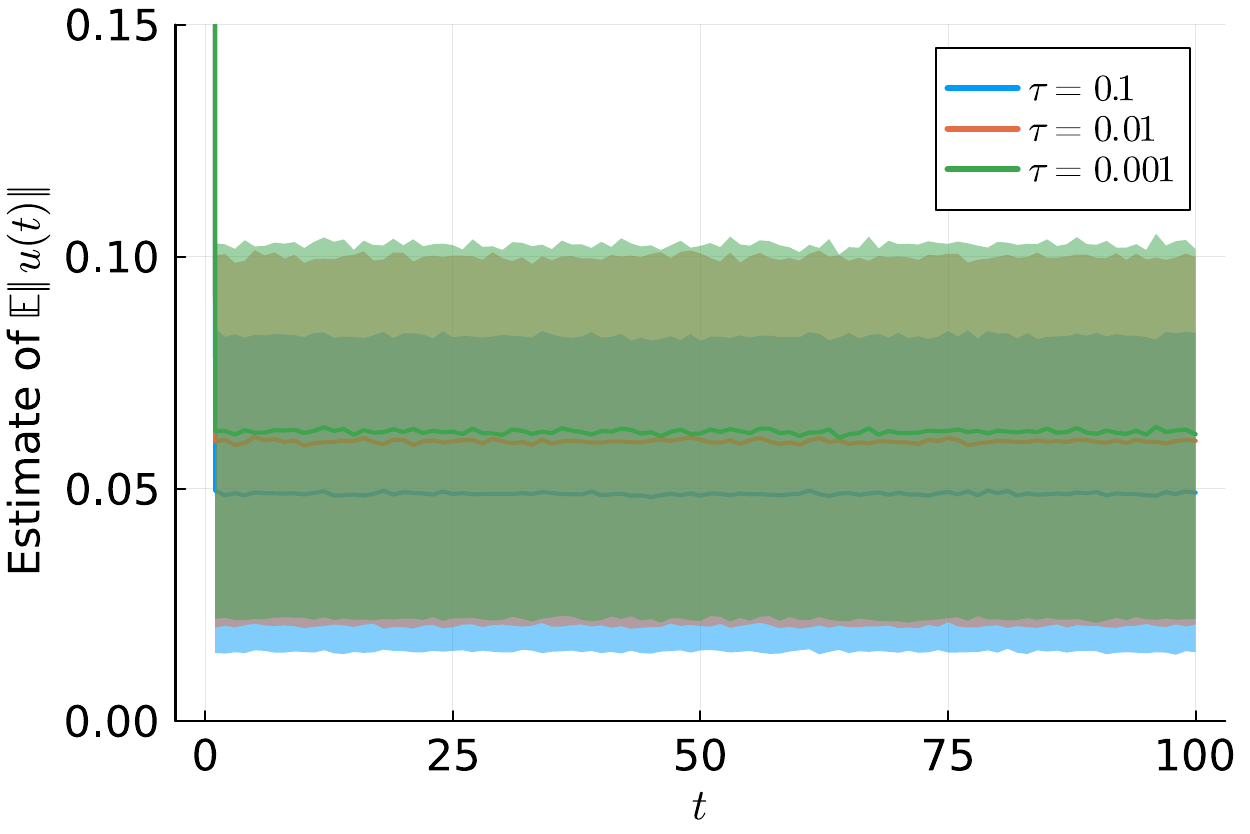}}
    \subfigure[Global gradient taming, \eqref{scheme:taming1}]{\includegraphics[width=6.5cm]{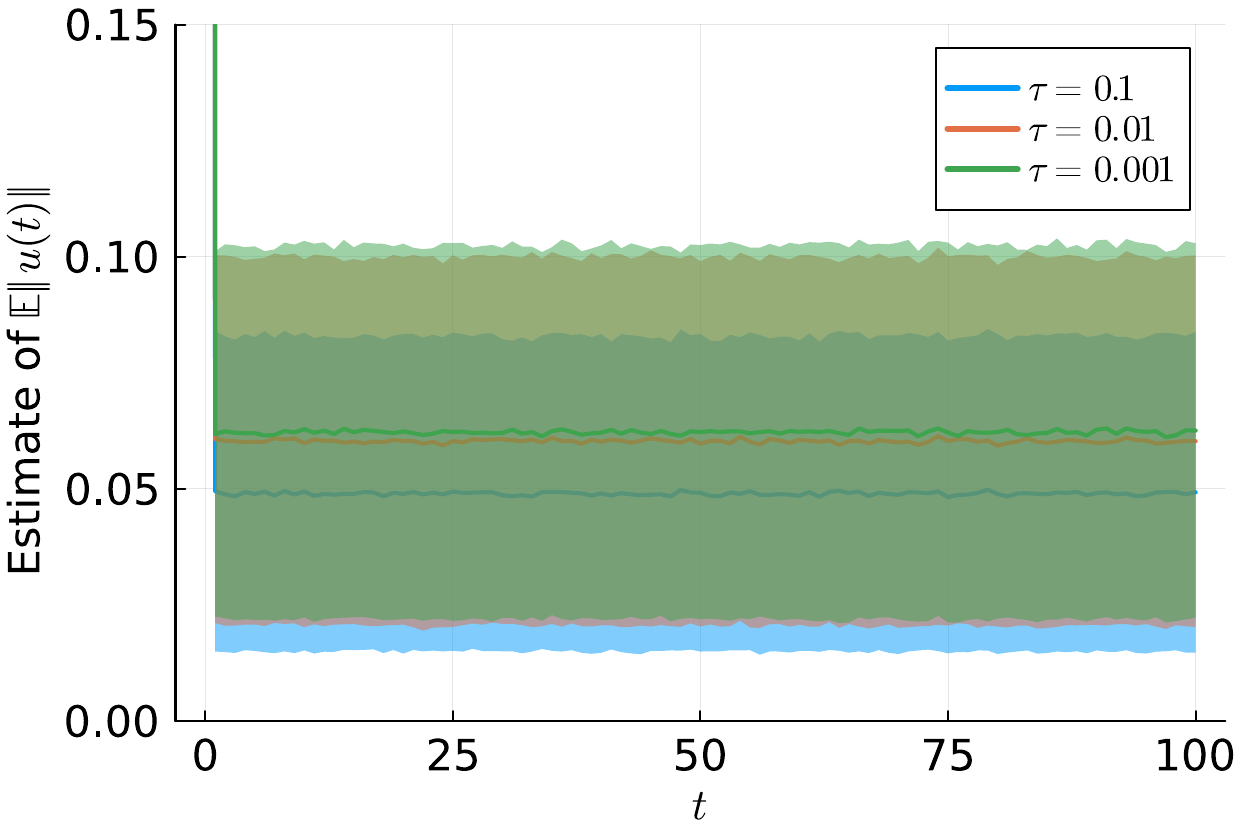}}
    \subfigure[Truncated global taming, \eqref{scheme:tame by norm of f'(u)}]{\includegraphics[width=6.5cm]{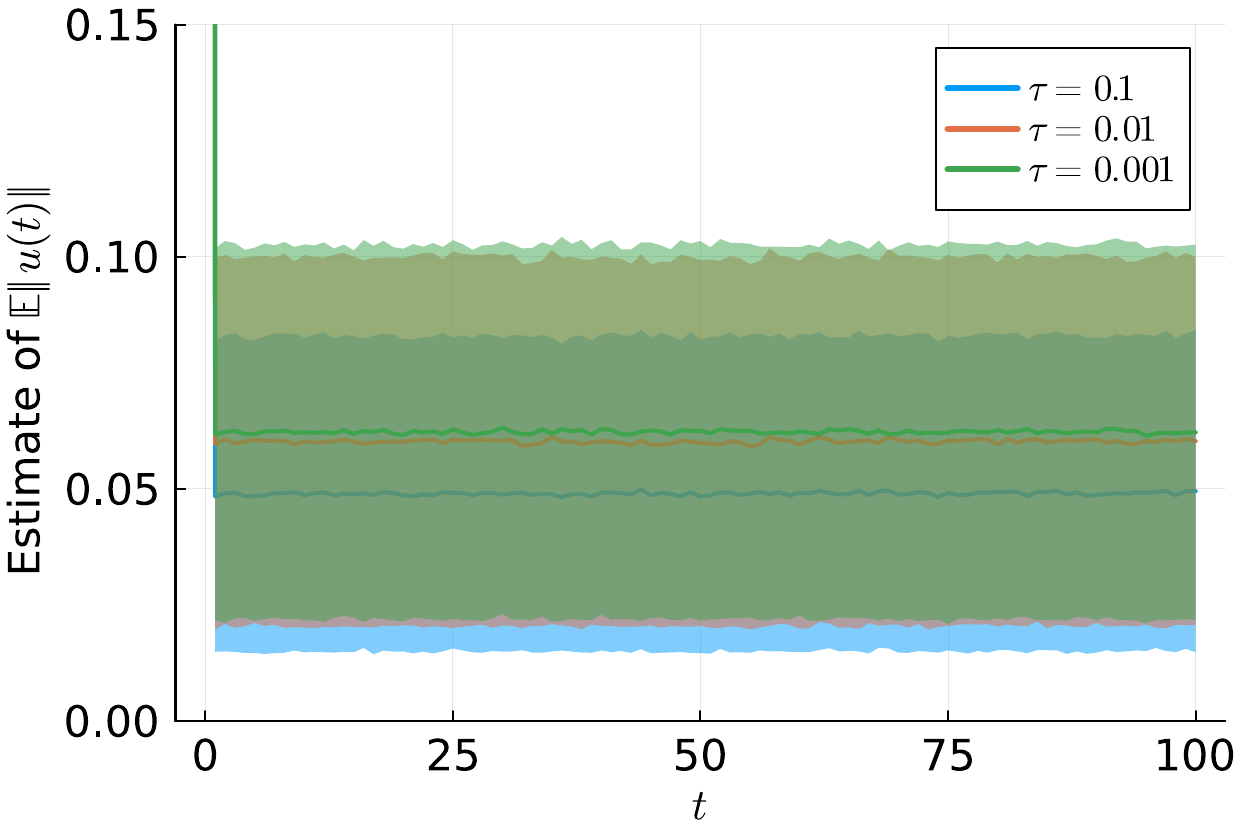}}
    
    \caption{Time series of FEM solutions for the first moment for initial condition $u_0(x) = 10\sin(10\pi x)$.  All methods appear quite comparable.
    Shaded regions reflect one standard deviation.}
    \label{fig:osc10_10_fem}
\end{figure}

\subsubsection{Spectral Galerkin Results}

For the spectral Galerkin problem, the results are more varied in two ways.  First, the  hypothesized blowup mechanism via the mean manifests itself, even when taming is used; we believe this is a consequence of the aforementioned lack of the Poincaré inequality, to be discussed more, below.  Second, during the initial transient phase, even for methods we expect to converge, empirically, some give better results than others. {We conjecture that this latter distinction is at least partially attributable to the ability of the system to relax throught the boundary in the Dirichlet case, something absent in the periodic case.}


The first set of results are, again, for constant initial conditions, and they appear in Figures \ref{fig:zerodata_fft}, \ref{fig:onedata_fft}, and \ref{fig:hundreddata_fft}.  As was the case of the FEM simulations, for zero data, all look in good agreement.  The same is true for $u_0(x) = 1$, the small data case.  Though in this small, nonzero, data case, we do see some discrepancy between methods at $\tau =0.1$ at shorter time scales ($t\lesssim 10$).



Figure \ref{fig:hundreddata_fft} depicts the large data results, where we observe two changes from the FEM case. First, the time series is absent for methods \eqref{scheme: general SIE}  and \eqref{scheme:taming1}; the first is due to blowup of the SIE method, but the latter is not. Indeed, for method \eqref{scheme:taming1}, the values are in excess of the vertical scale and this will be revisited below.  Second, for several methods, including  \eqref{scheme: GTEM scheme f}, \eqref{e:gyongyspde}, and \eqref{scheme:tame by norm of f'(u)}, there is a notable separation between the time series curves at short time ($t\lesssim 10$) for the larger values of $\tau$.   Again, in this example, the best results are found with the computationally expensive FIE method, while \eqref{scheme:taming by abs val f' pointwise standard form}, truncated pointwise taming, is in second place.



Lastly, we try the oscillatory initial condition $u_0(x) = 10 \sin(10x)$.  The results appear in Figure \ref{fig:osc10_10_fft}.  Again, there is broad consistency across methods, though at $\tau = 0.1$, there is some variation in performance.


\begin{figure}
    \centering

    \subfigure[SIE, \eqref{scheme: general SIE}]{\includegraphics[width=6.5cm]{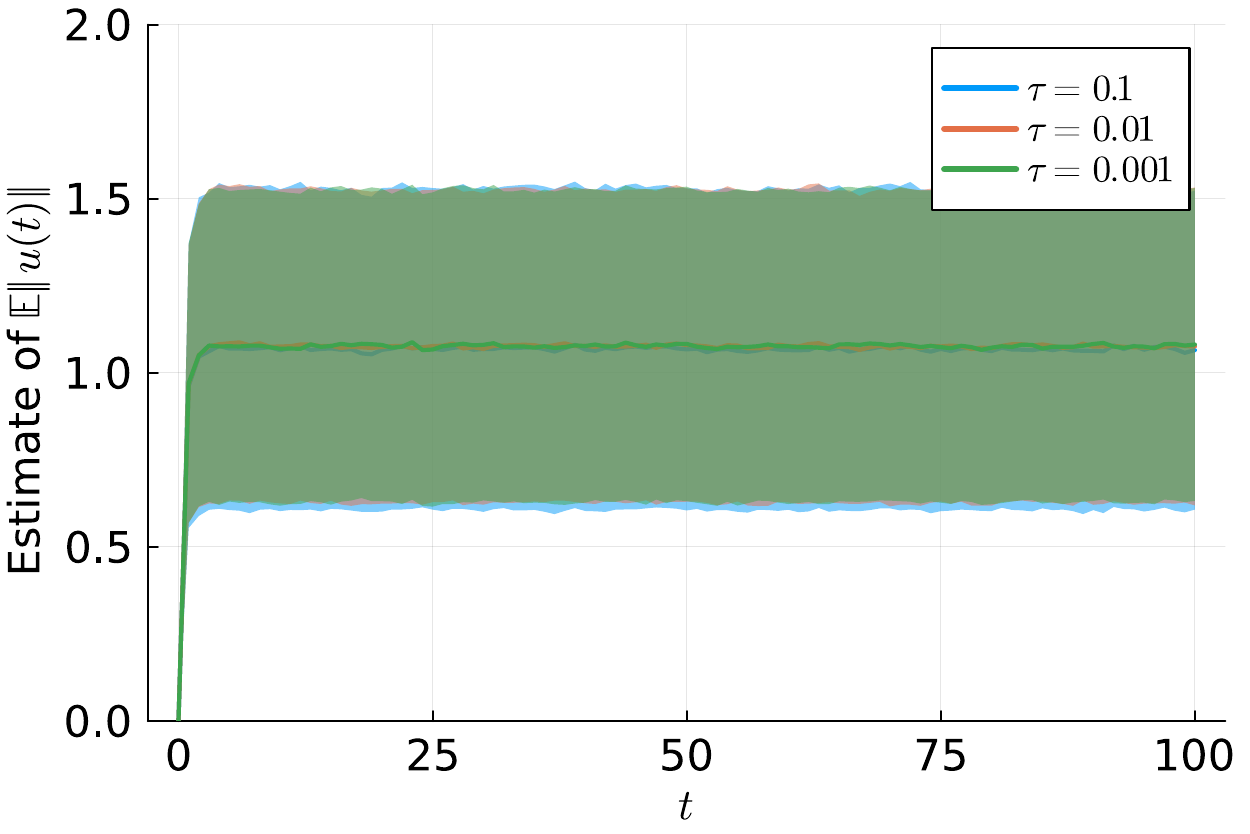}}
    \subfigure[FIE, \eqref{scheme:im}]{\includegraphics[width=6.5cm]{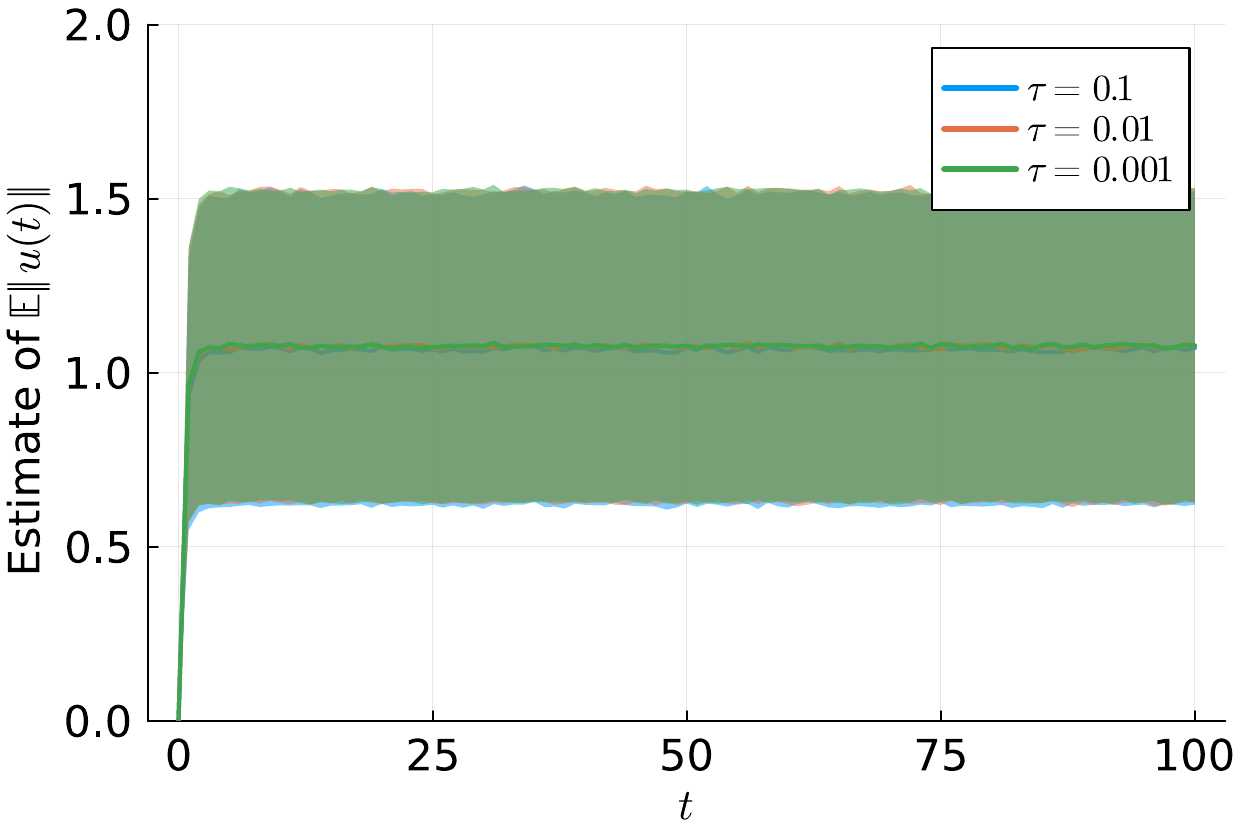}}
    \subfigure[Gy\"ongy's method, \eqref{e:gyongyspde}]{\includegraphics[width=6.5cm]{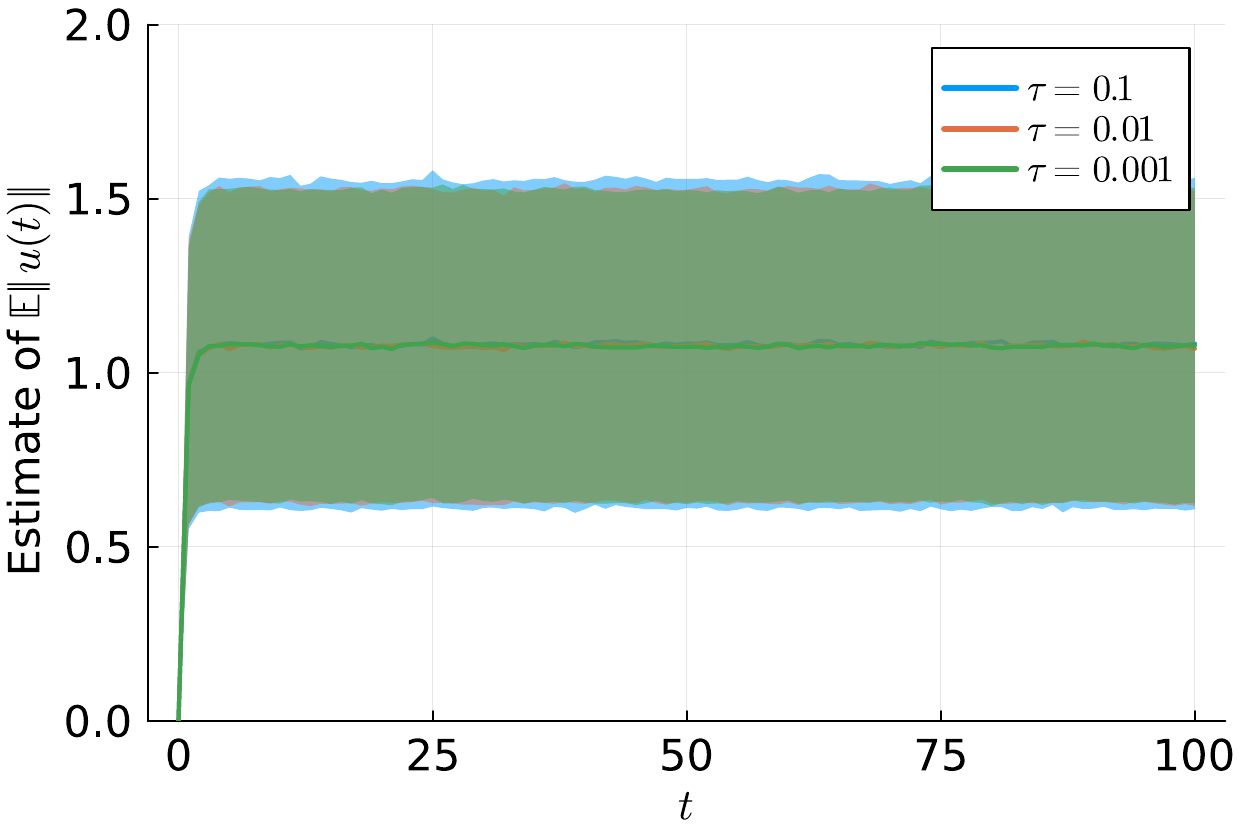}}
    \subfigure[Truncated pointwise taming,  \eqref{scheme:taming by abs val f' pointwise standard form}]{\includegraphics[width=6.5cm]{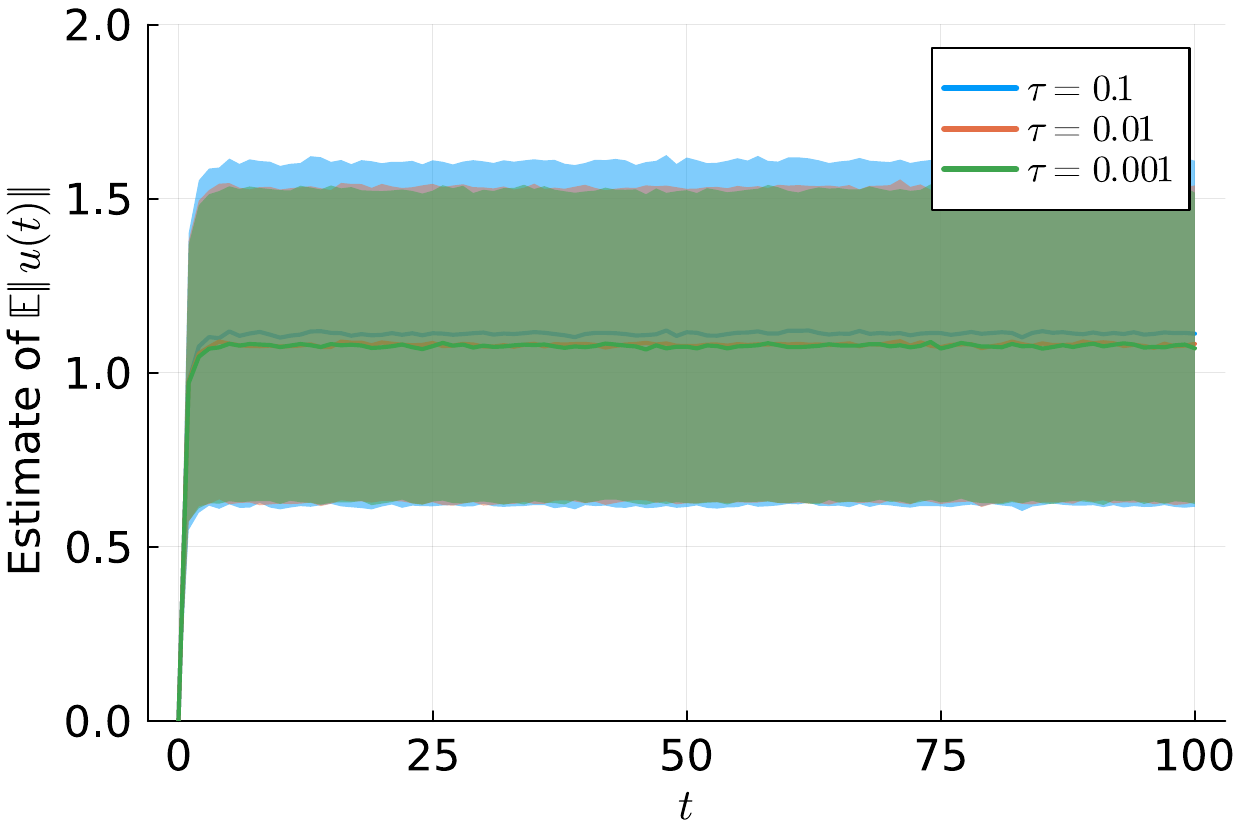}}
    \subfigure[GTEM, \eqref{scheme: GTEM scheme f}]{\includegraphics[width=6.5cm]{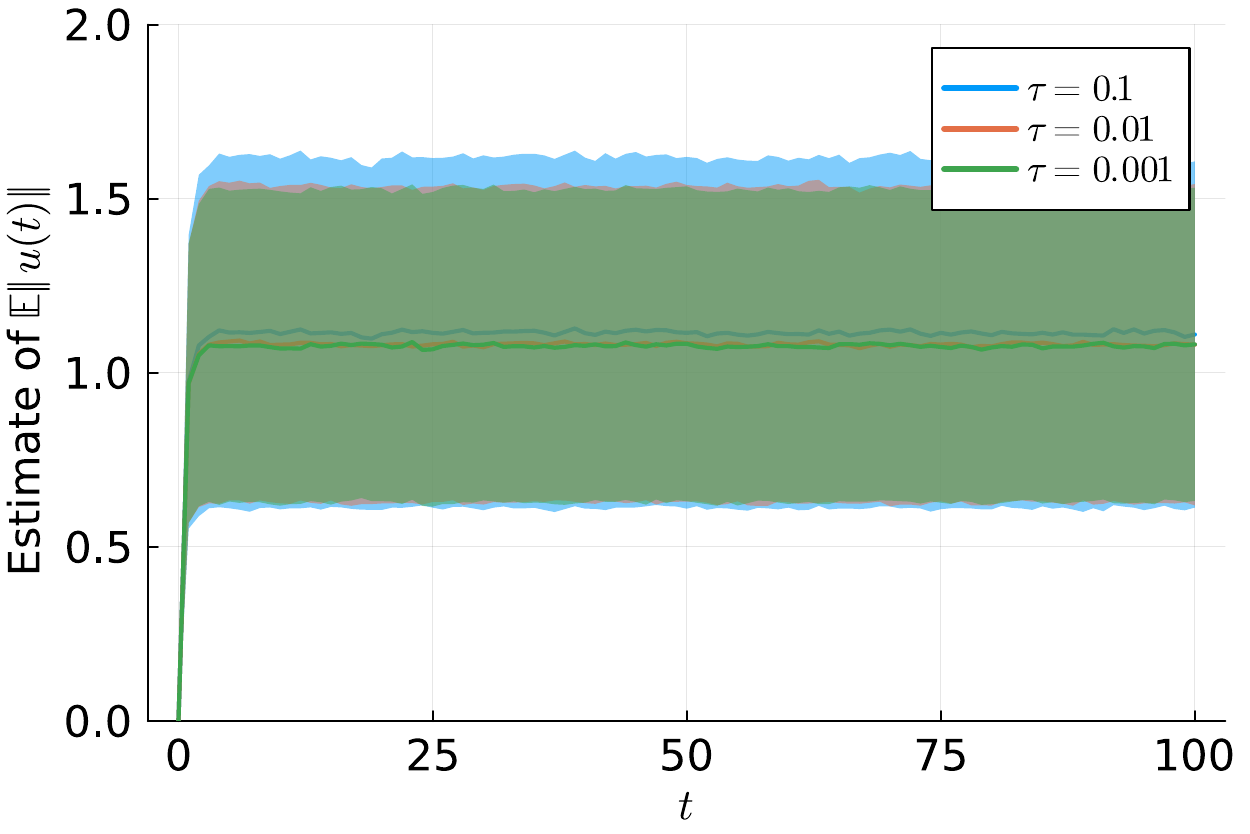}}
    \subfigure[Global gradient taming, \eqref{scheme:taming1}]{\includegraphics[width=6.5cm]{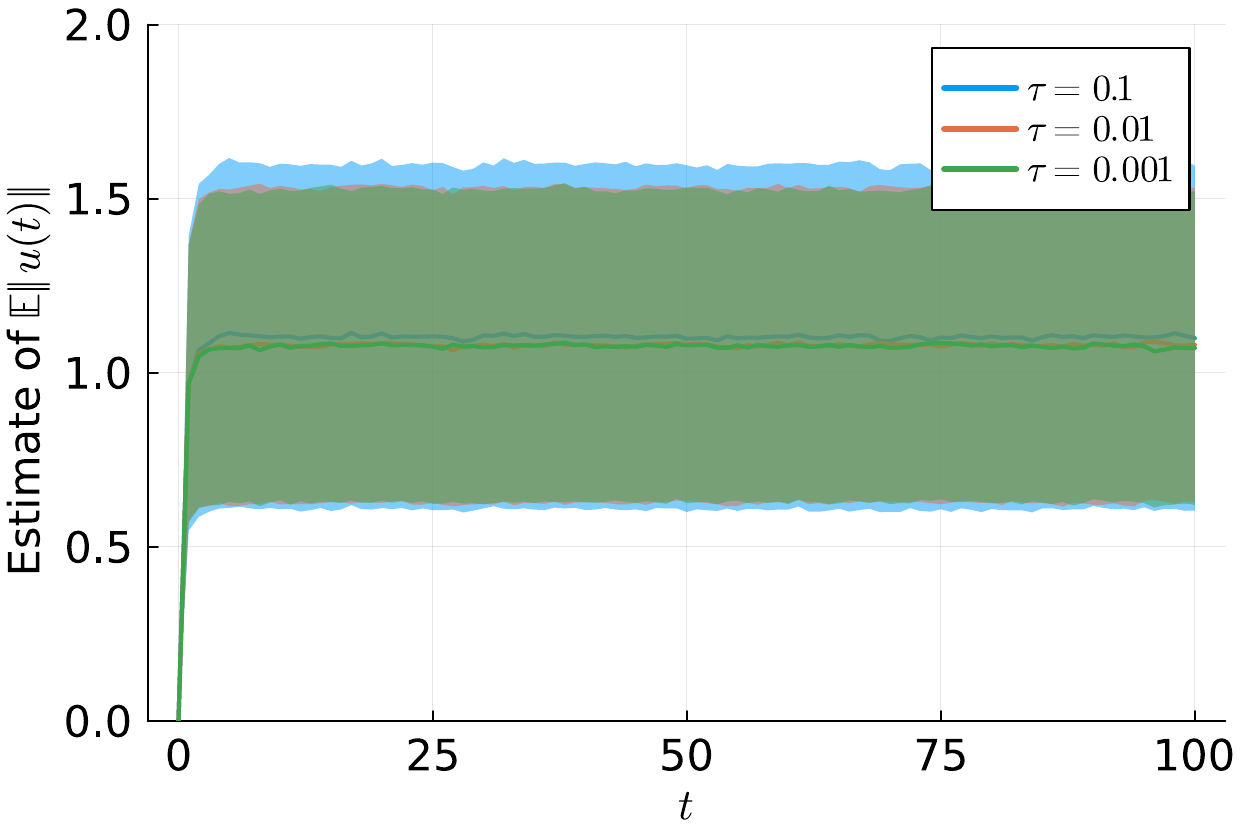}}
    \subfigure[Truncated global taming, \eqref{scheme:tame by norm of f'(u)}]{\includegraphics[width=6.5cm]{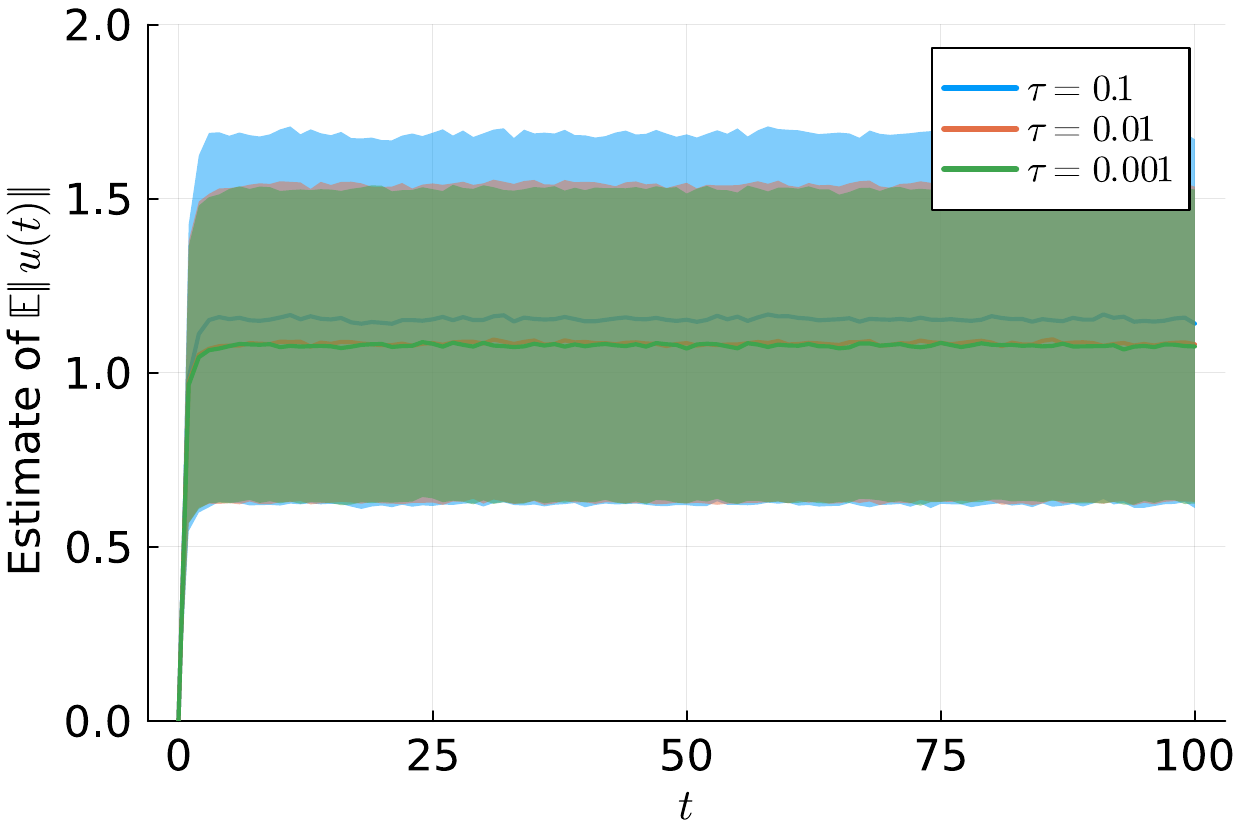}}
    
    \caption{Time series of spectral Galerkin solutions for the first moment for initial condition $u_0(x) = 0$. All methods are quite comparable.  Shaded regions reflect one standard deviation.}
    \label{fig:zerodata_fft}
\end{figure}

\begin{figure}
    \centering

    \subfigure[SIE, \eqref{scheme: general SIE}]{\includegraphics[width=6.5cm]{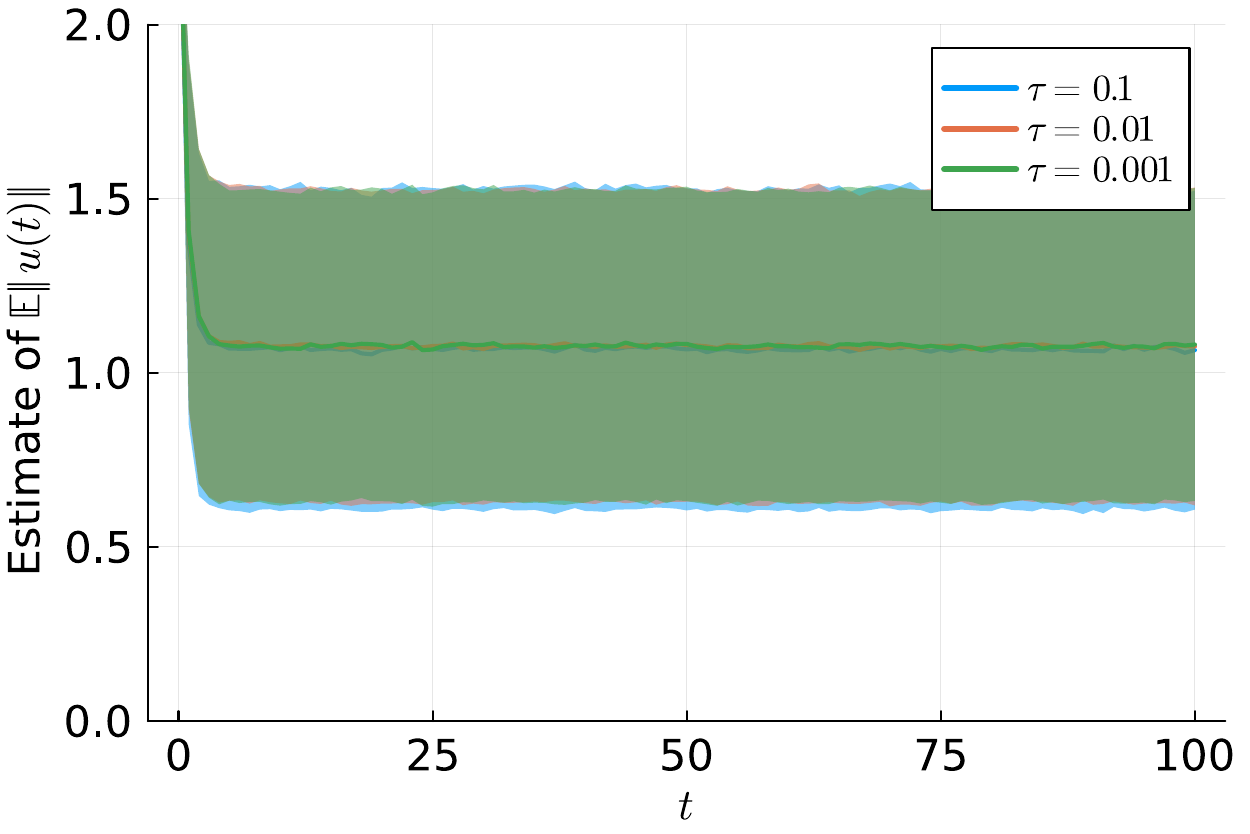}}
    \subfigure[FIE, \eqref{scheme:im}]{\includegraphics[width=6.5cm]{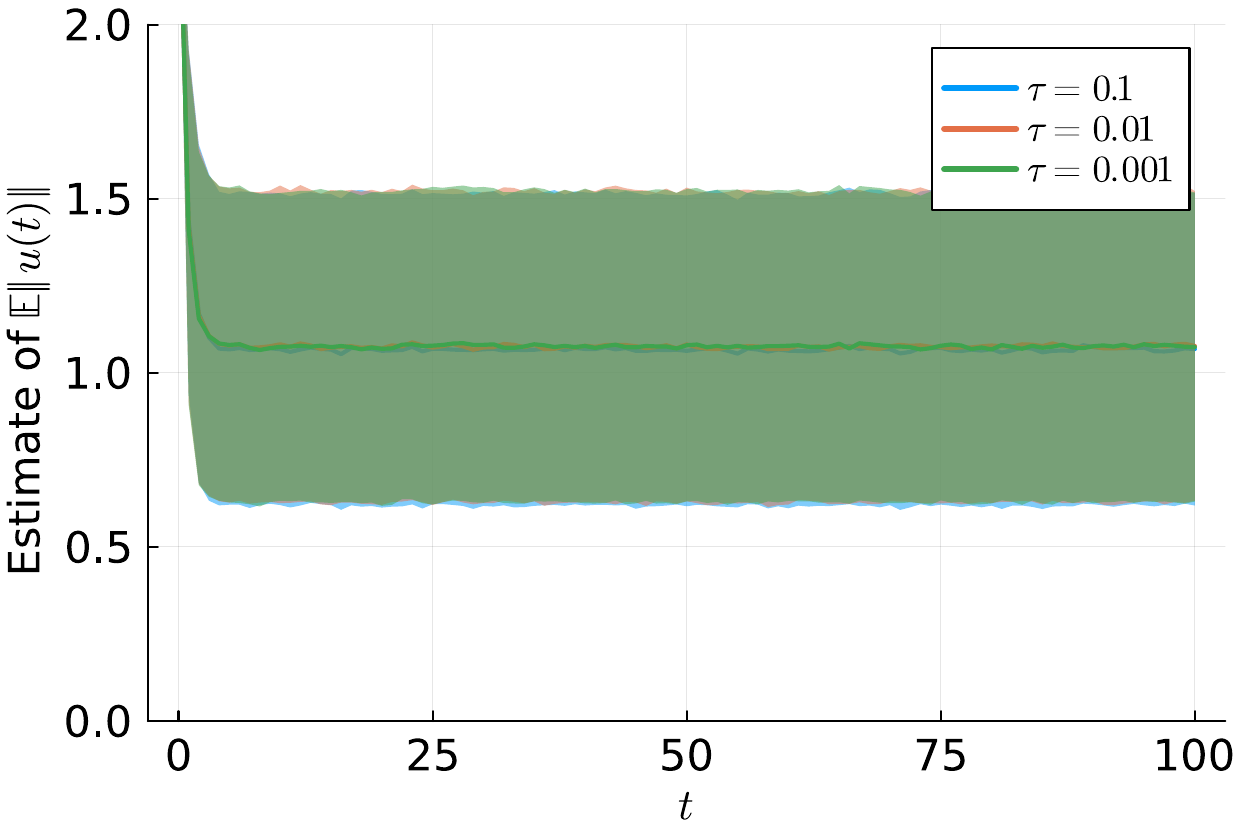}}
    \subfigure[Gy\"ongy's method, \eqref{e:gyongyspde}]{\includegraphics[width=6.5cm]{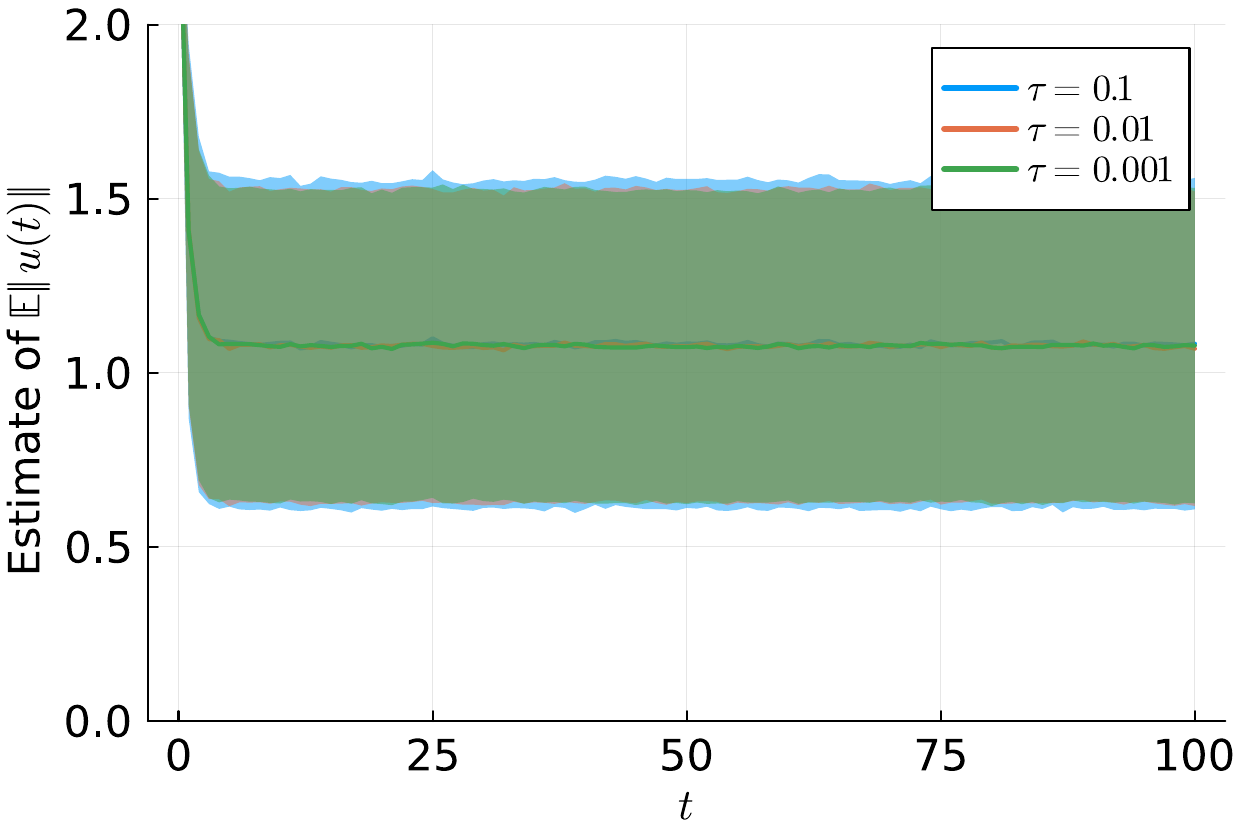}}
    \subfigure[Truncated pointwise taming,  \eqref{scheme:taming by abs val f' pointwise standard form}]{\includegraphics[width=6.5cm]{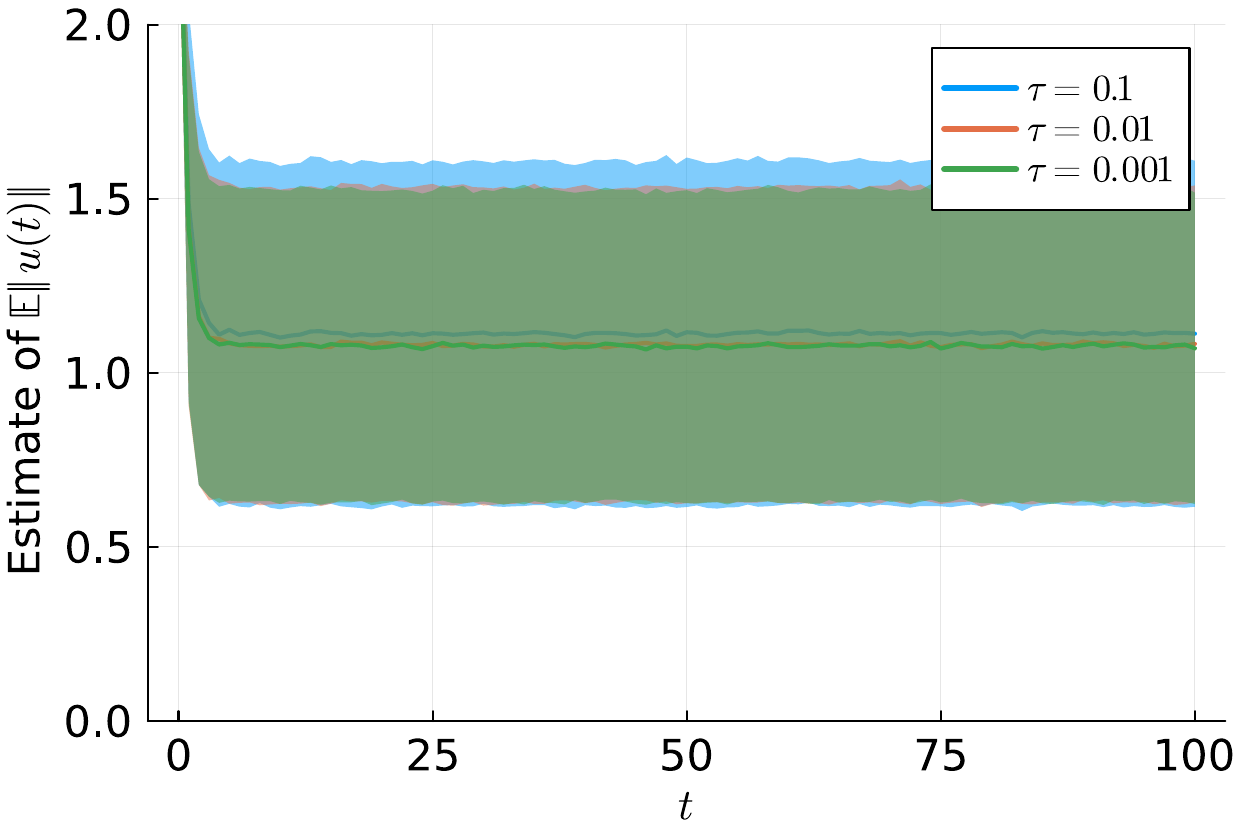}}
    \subfigure[GTEM, \eqref{scheme: GTEM scheme f}]{\includegraphics[width=6.5cm]{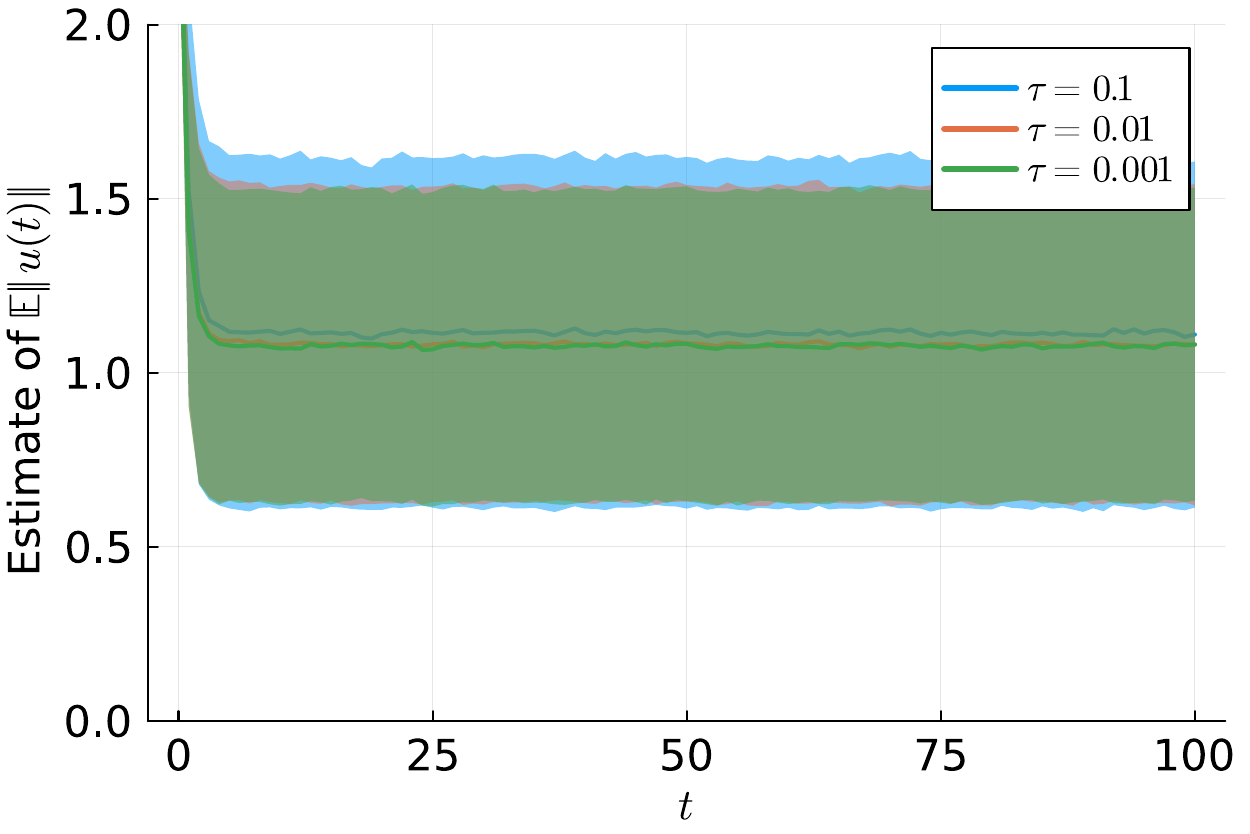}}
    \subfigure[Global gradient taming, \eqref{scheme:taming1}]{\includegraphics[width=6.5cm]{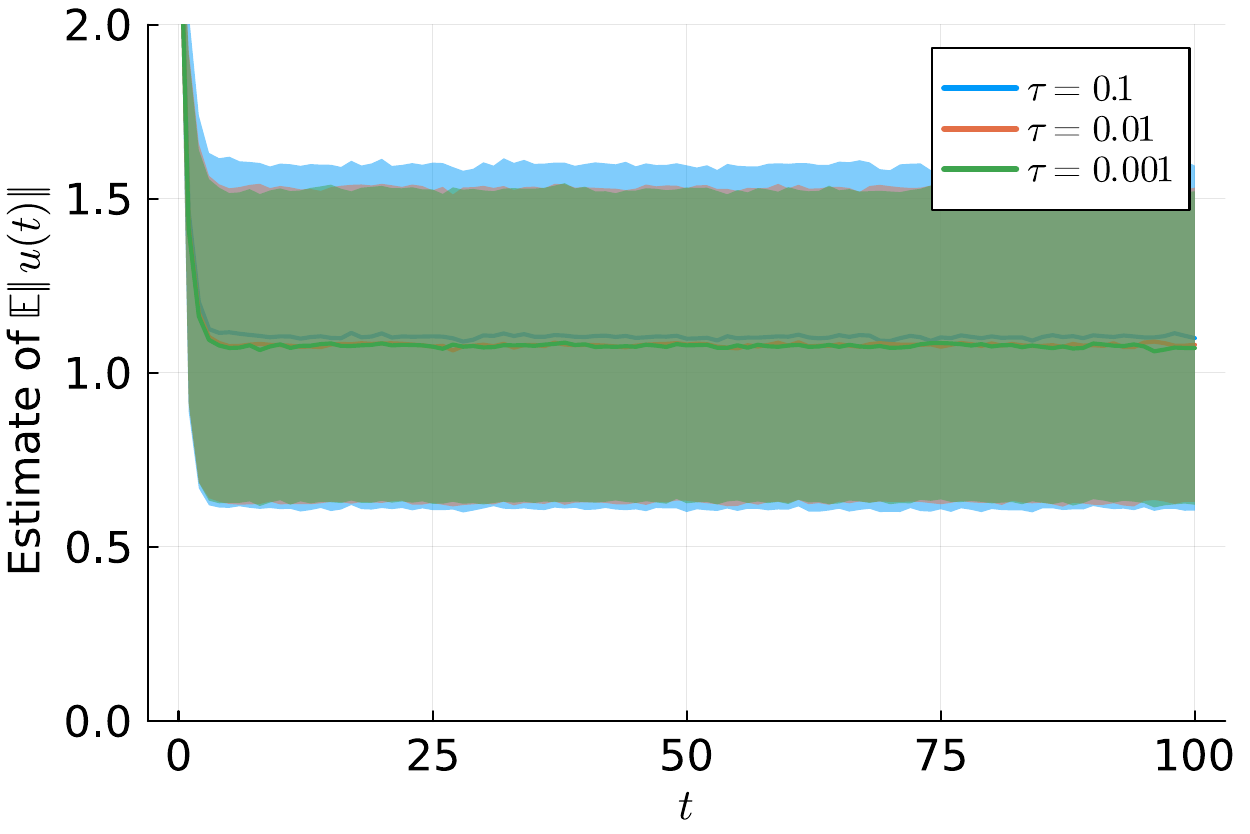}}
    \subfigure[Truncated global taming, \eqref{scheme:tame by norm of f'(u)}]{\includegraphics[width=6.5cm]{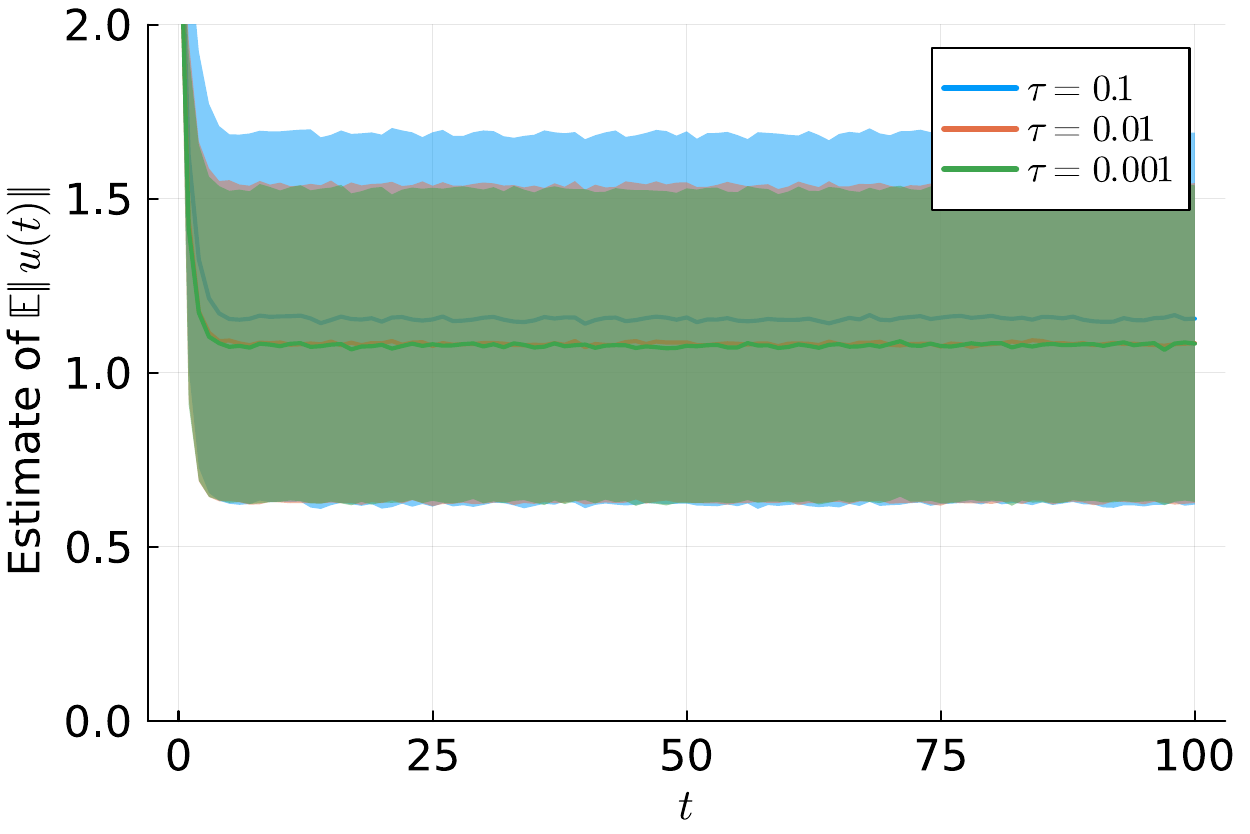}}
    
    \caption{Time series of spectral Galerkin solutions for the first moment for initial condition $u_0(x) = 1$. Results are in good agreement as $\tau$ becomes smaller, though we do see some variation at the larger values of $\tau$.  Shaded regions reflect one standard deviation.}
    \label{fig:onedata_fft}
\end{figure}

\begin{figure}
    \centering

    \subfigure[SIE, \eqref{scheme: general SIE}]{\includegraphics[width=6.5cm]{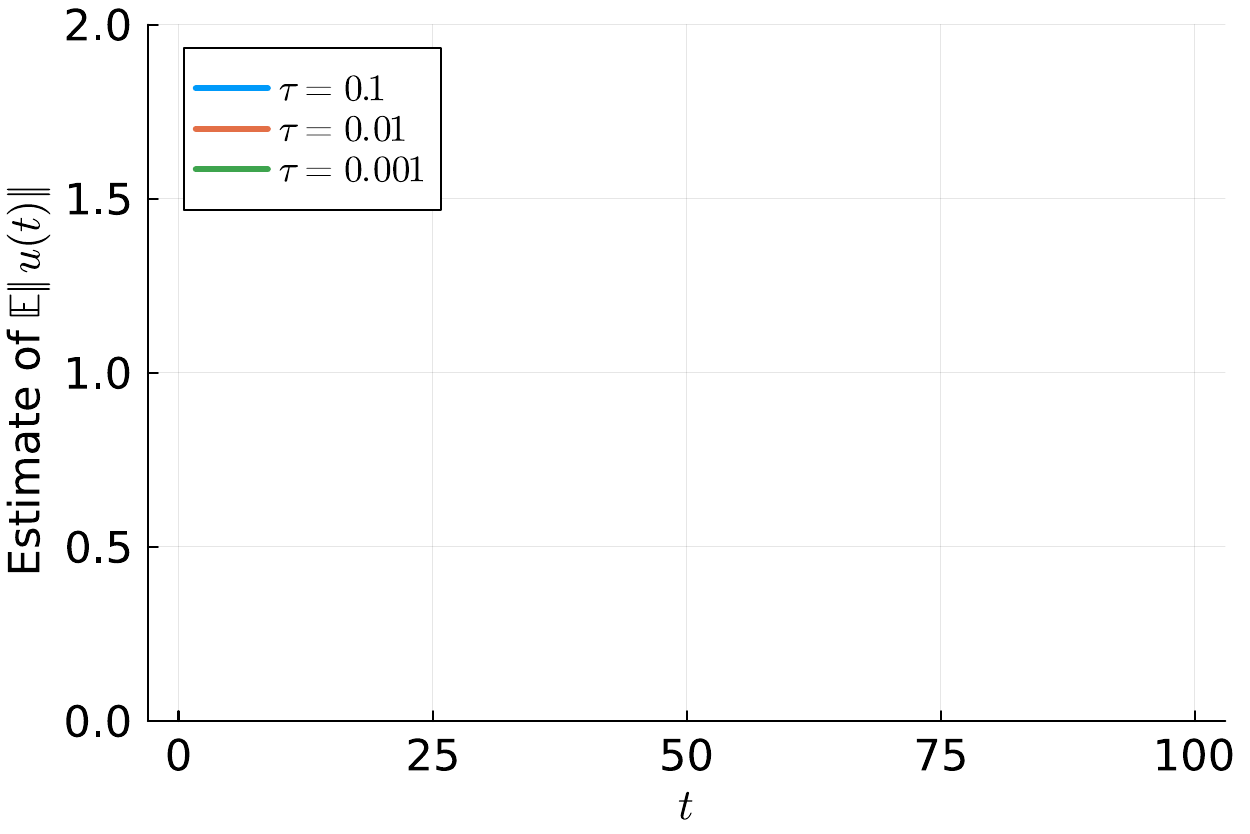}}
    \subfigure[FIE, \eqref{scheme:im}]{\includegraphics[width=6.5cm]{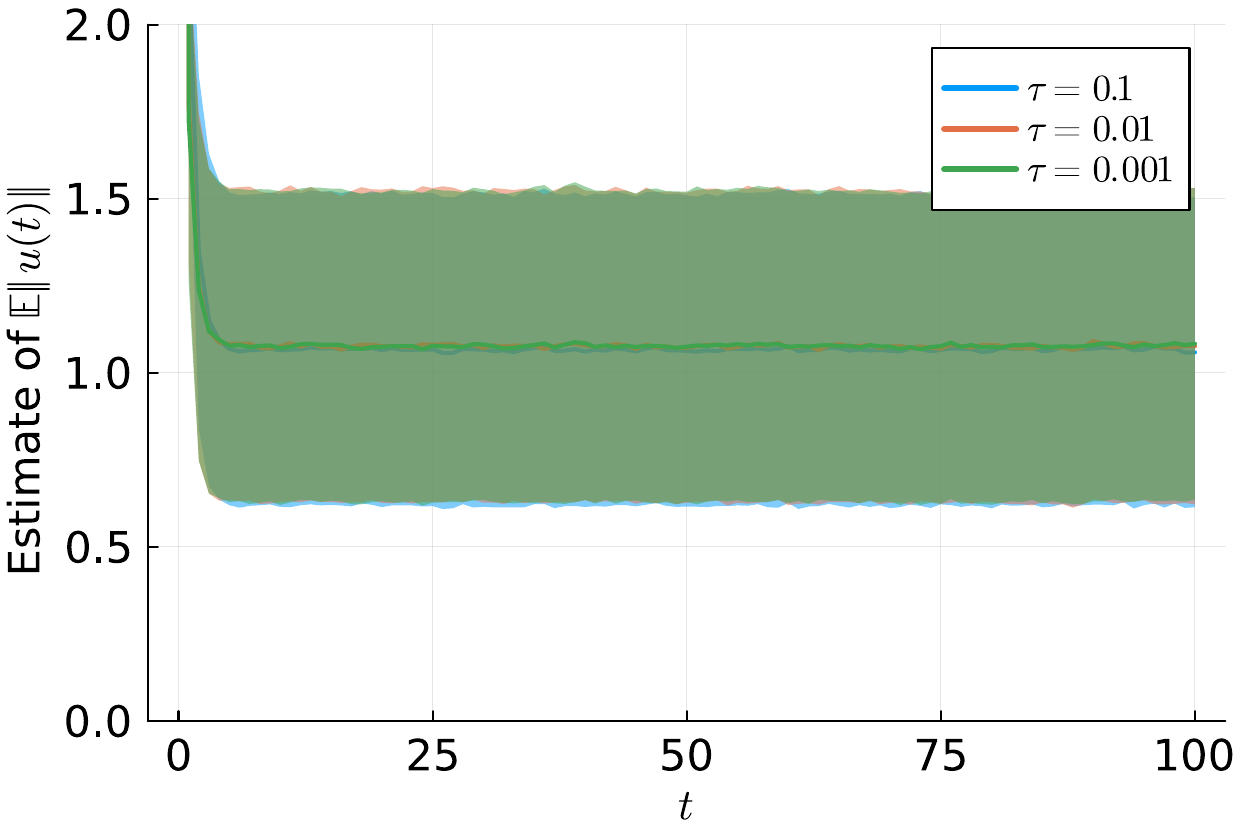}}
    \subfigure[Gy\"ongy's method, \eqref{e:gyongyspde}]{\includegraphics[width=6.5cm]{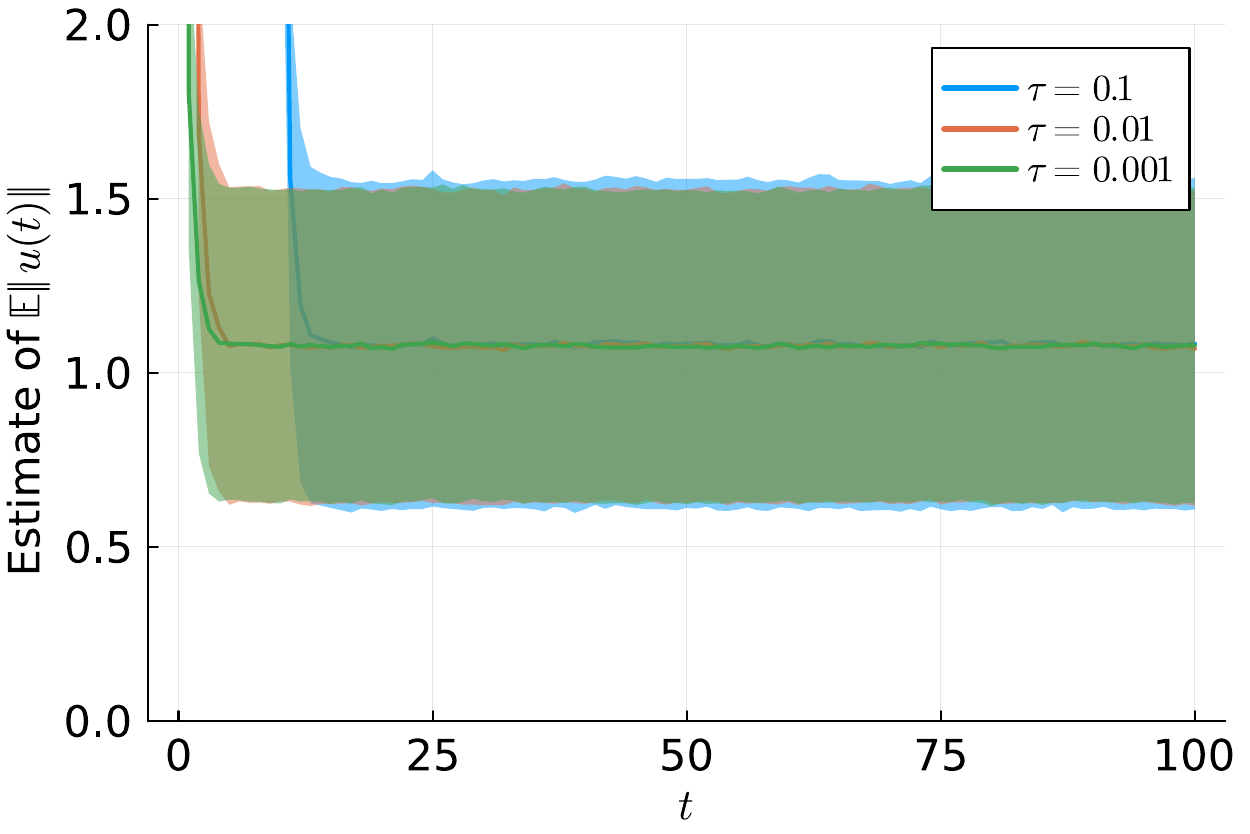}}
    \subfigure[Truncated pointwise taming,  \eqref{scheme:taming by abs val f' pointwise standard form}]{\includegraphics[width=6.5cm]{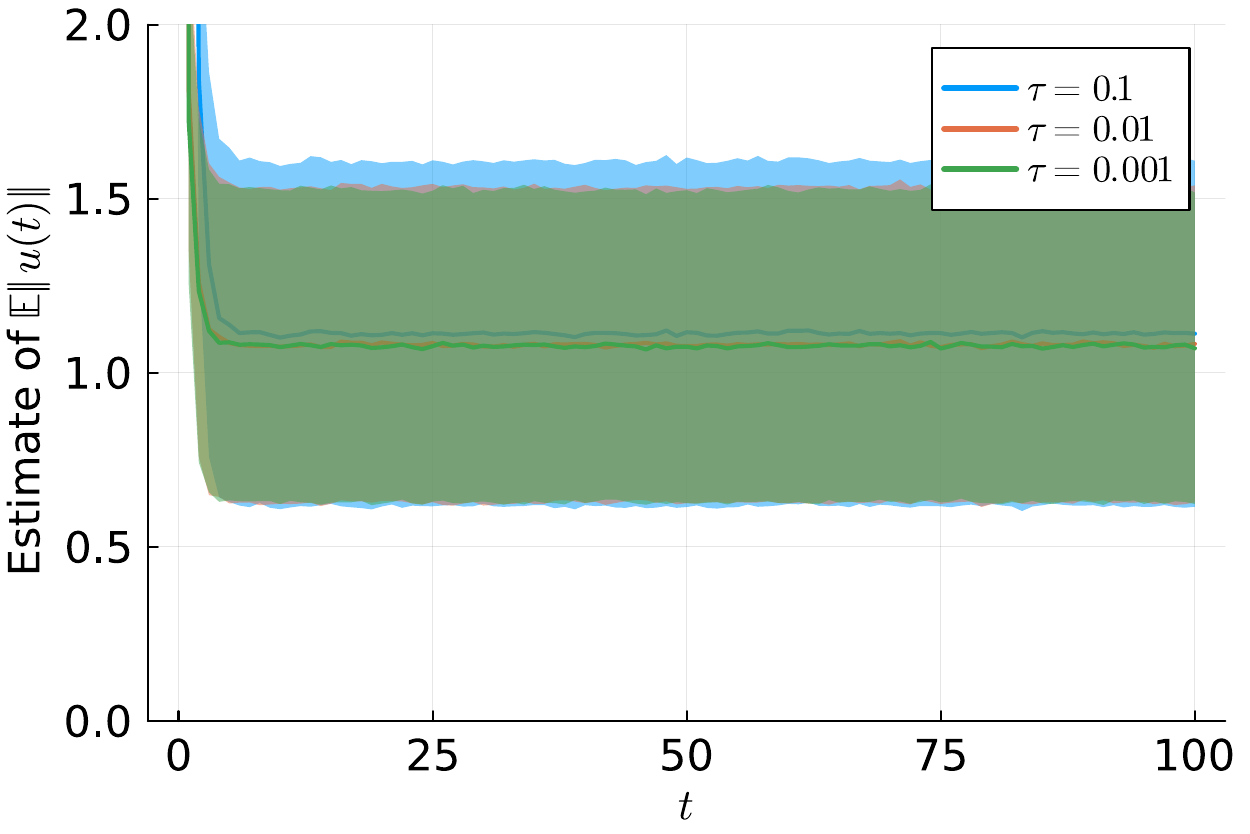}}
    \subfigure[GTEM, \eqref{scheme: GTEM scheme f}]{\includegraphics[width=6.5cm]{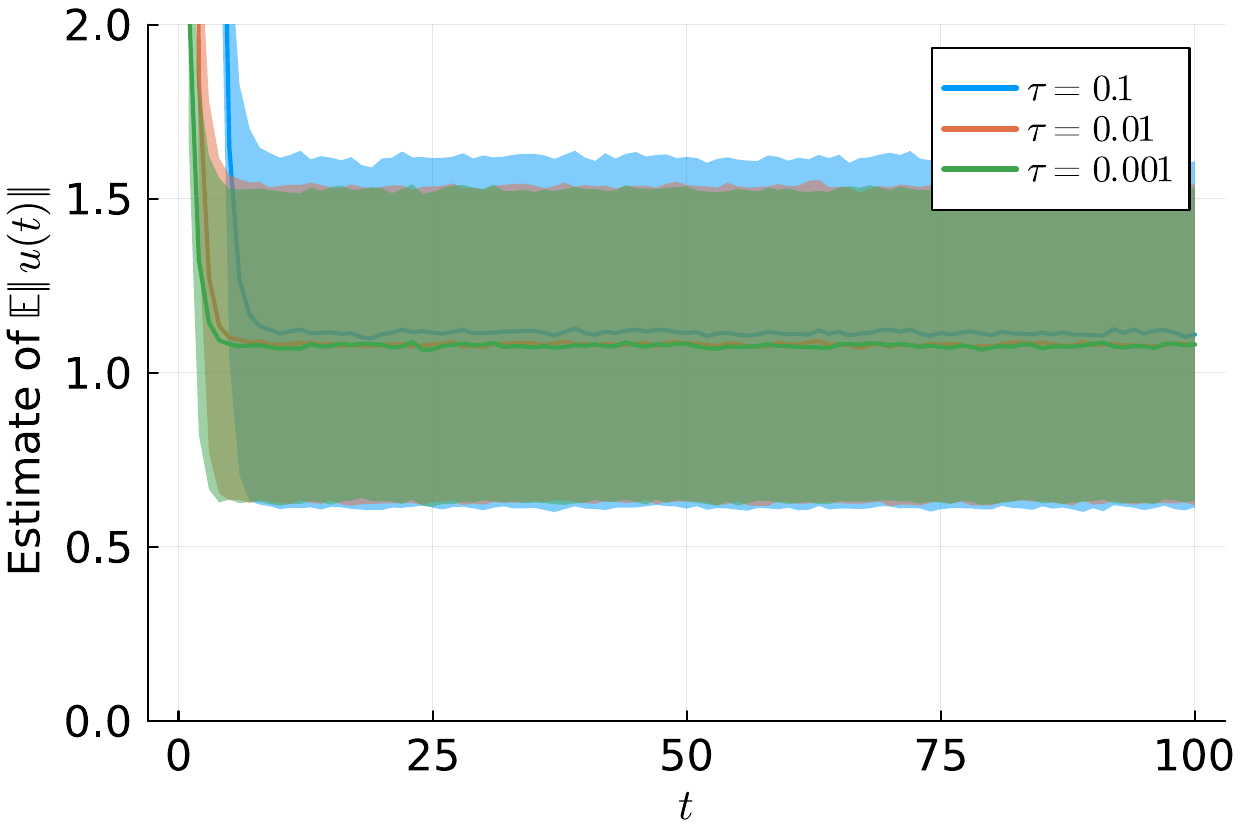}}
    \subfigure[Global gradient taming, \eqref{scheme:taming1}]{\includegraphics[width=6.5cm]{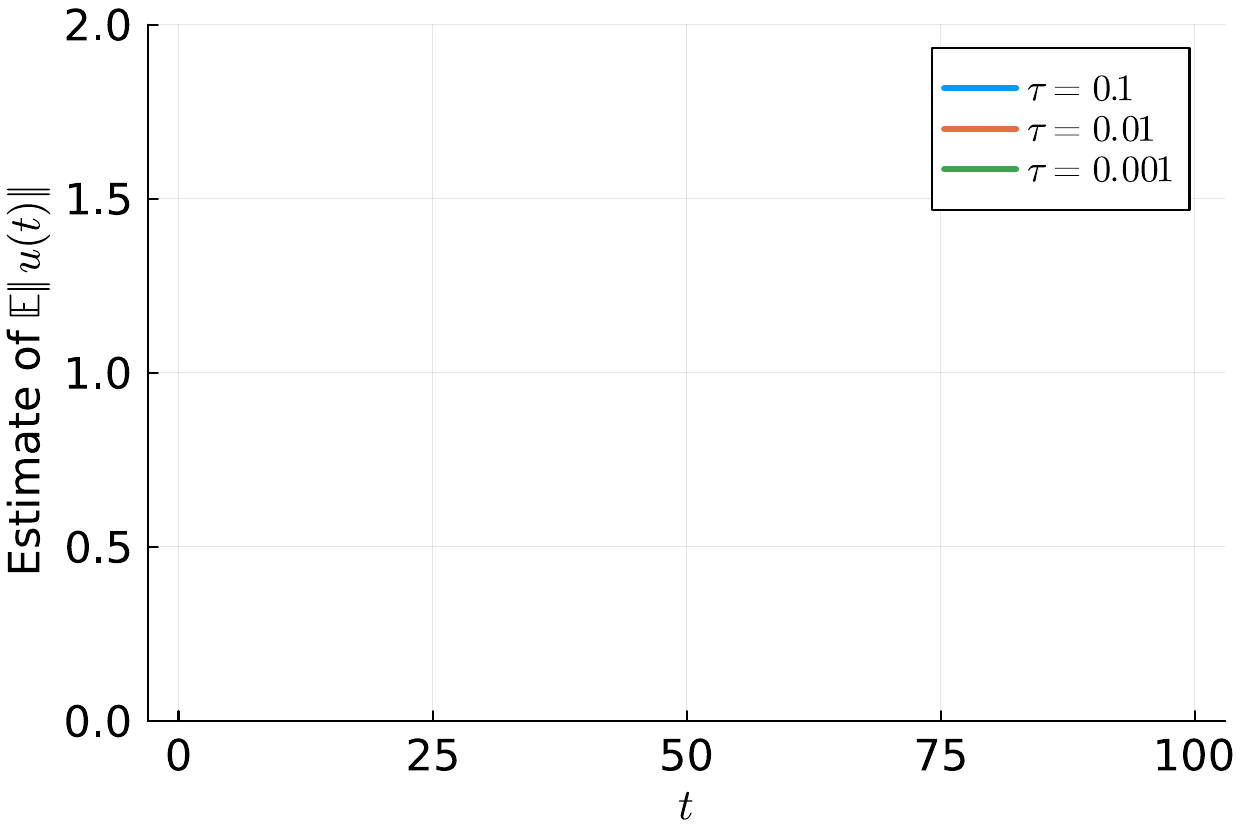}}
    \subfigure[Truncated global taming, \eqref{scheme:tame by norm of f'(u)}]{\includegraphics[width=6.5cm]{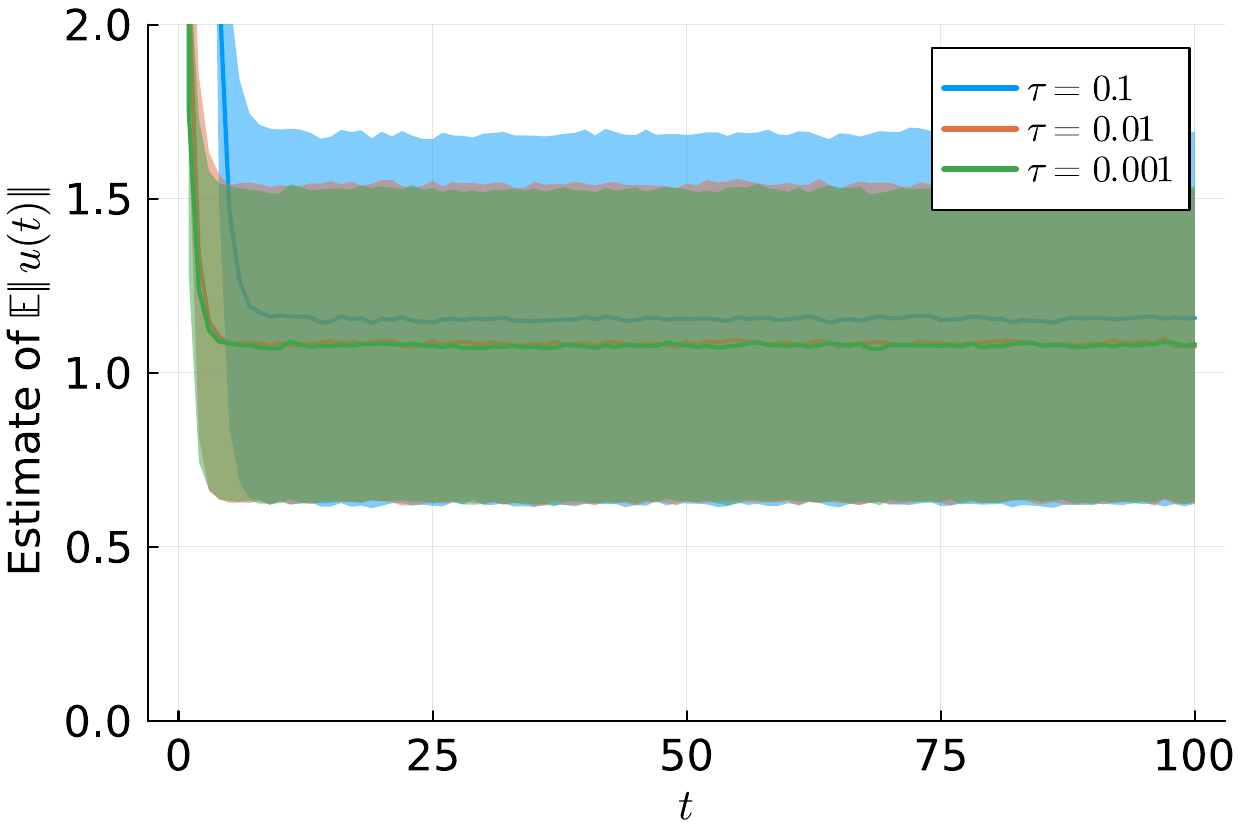}}
    
    \caption{Time series of spectral Galerkin solutions for the first moment for initial condition $u_0(x) = 100$.  SIE experiences blowup while scheme \eqref{scheme:taming1} produces values that are beyond the vertical scale.  Several schemes show visible separation between curves at $\tau = 0.1$. Shaded regions reflect one standard deviation.}
    \label{fig:hundreddata_fft}
\end{figure}



\begin{figure}
    \centering

    \subfigure[SIE, \eqref{scheme: general SIE}]{\includegraphics[width=6.5cm]{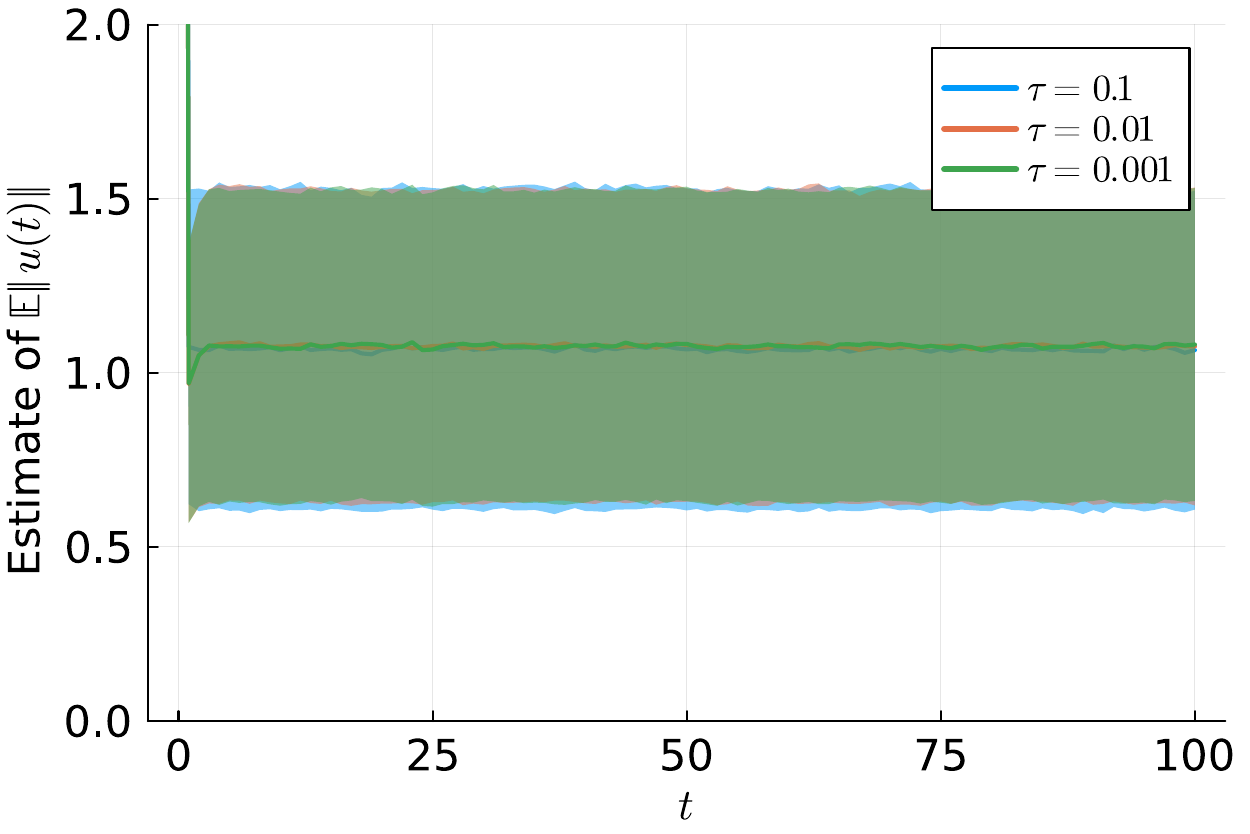}}
    \subfigure[FIE, \eqref{scheme:im}]{\includegraphics[width=6.5cm]{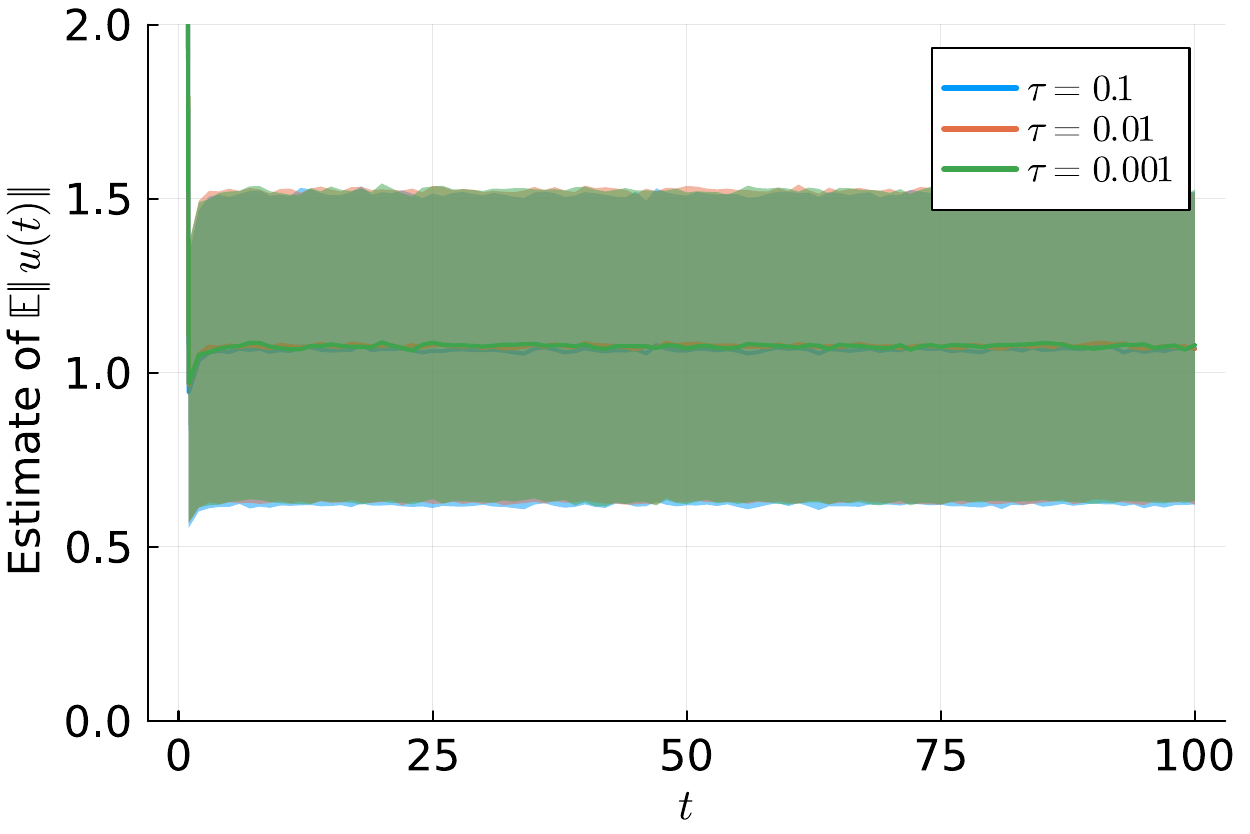}}
    \subfigure[Gy\"ongy's method, \eqref{e:gyongyspde}]{\includegraphics[width=6.5cm]{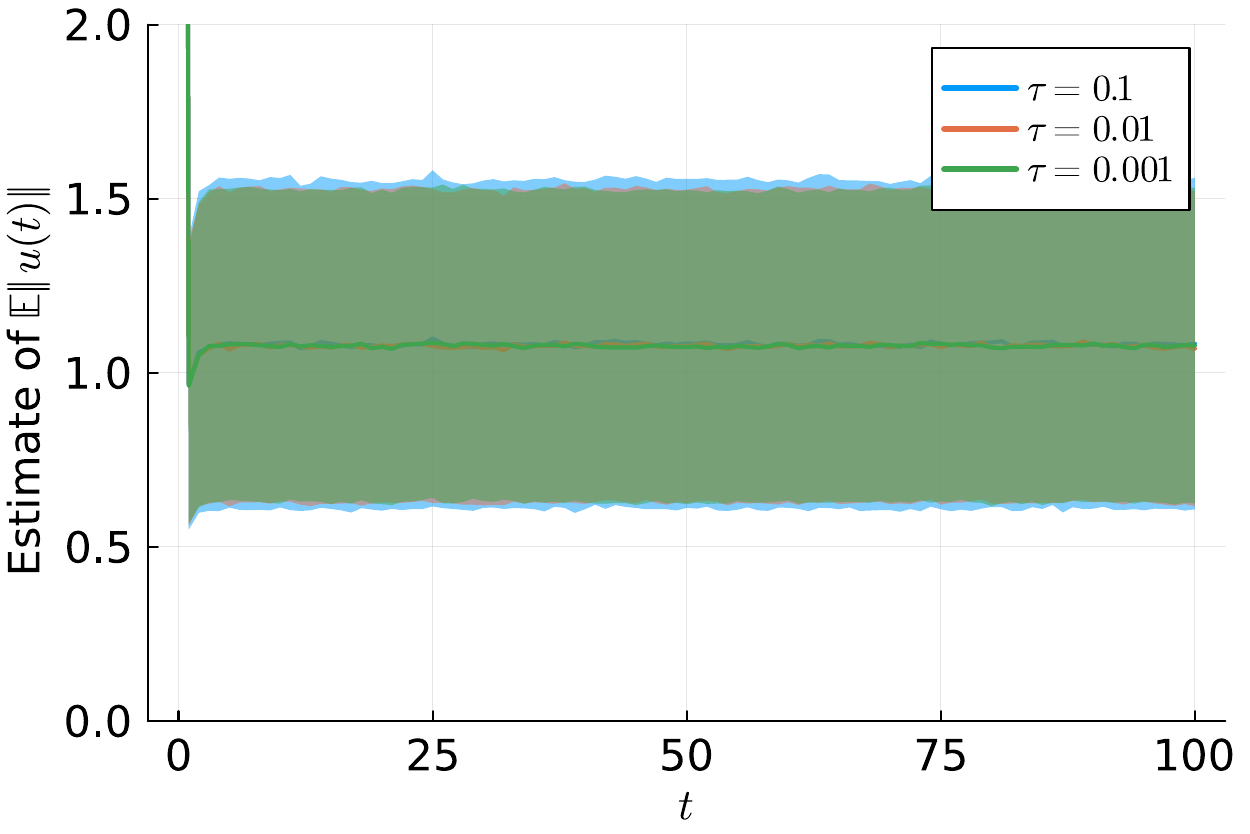}}
    \subfigure[Truncated pointwise taming,  \eqref{scheme:taming by abs val f' pointwise standard form}]{\includegraphics[width=6.5cm]{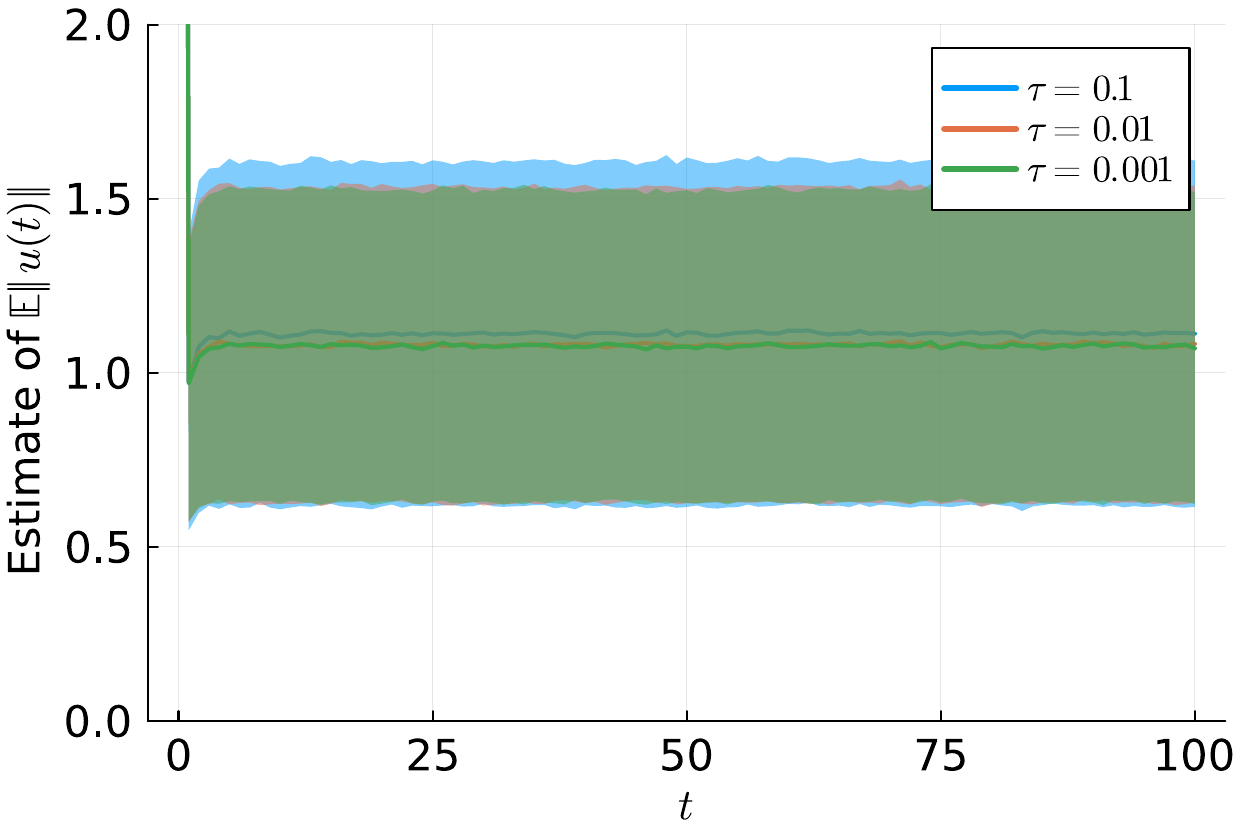}}
    \subfigure[GTEM, \eqref{scheme: GTEM scheme f}]{\includegraphics[width=6.5cm]{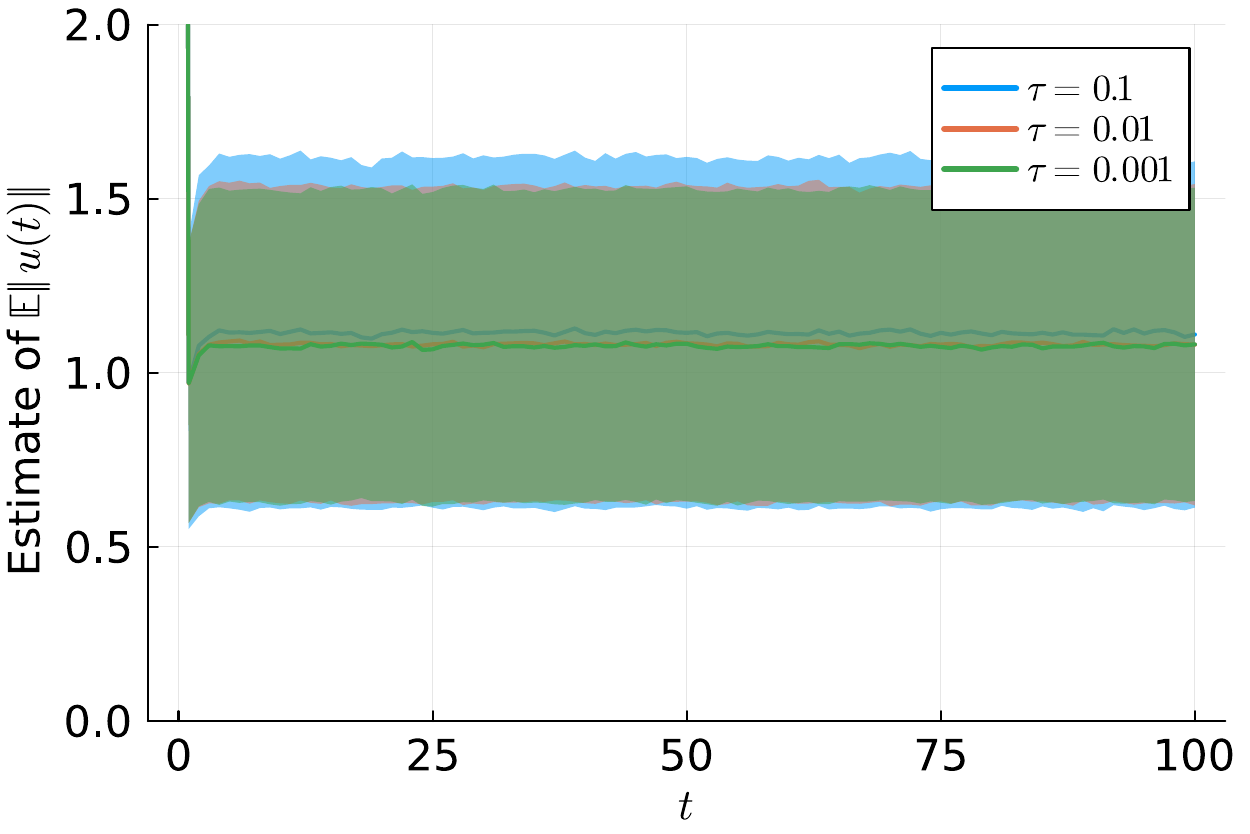}}
    \subfigure[Global gradient taming, \eqref{scheme:taming1}]{\includegraphics[width=6.5cm]{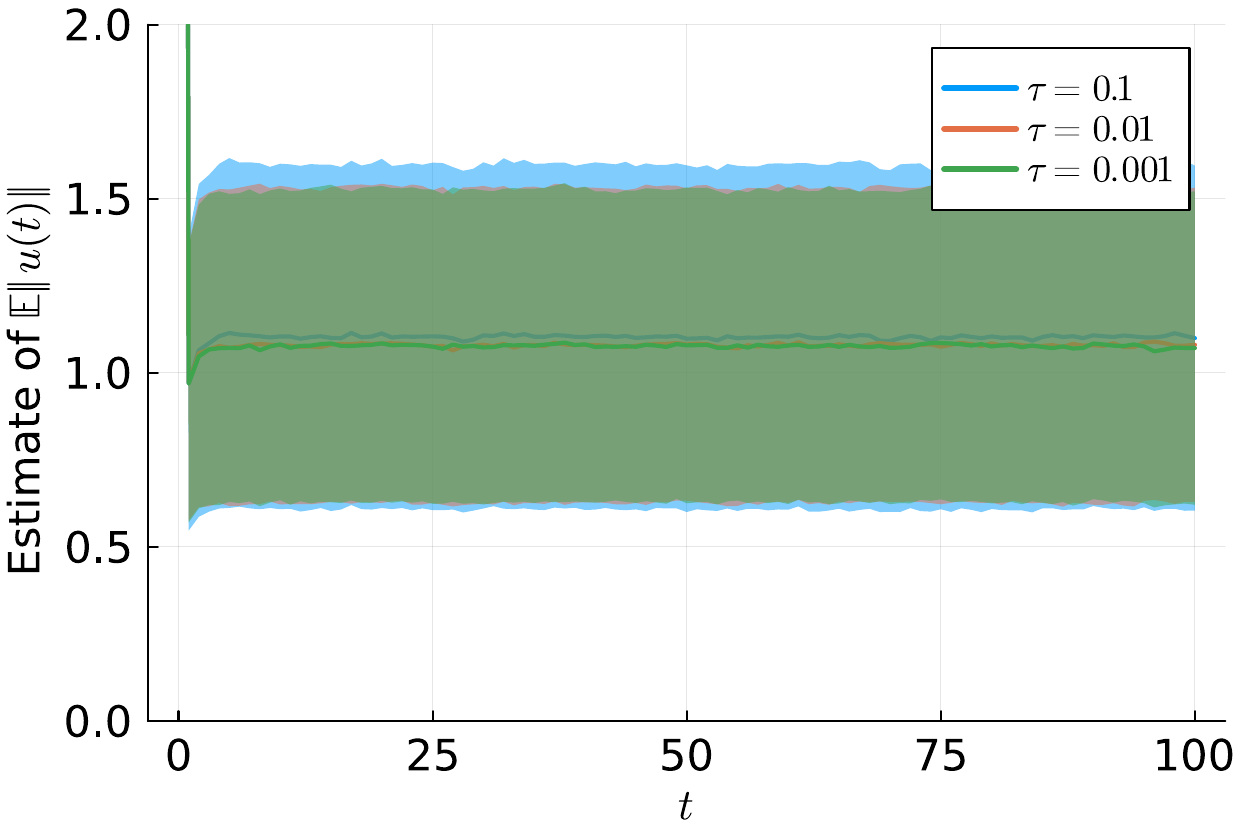}}
    \subfigure[Truncated global taming, \eqref{scheme:tame by norm of f'(u)}]{\includegraphics[width=6.5cm]{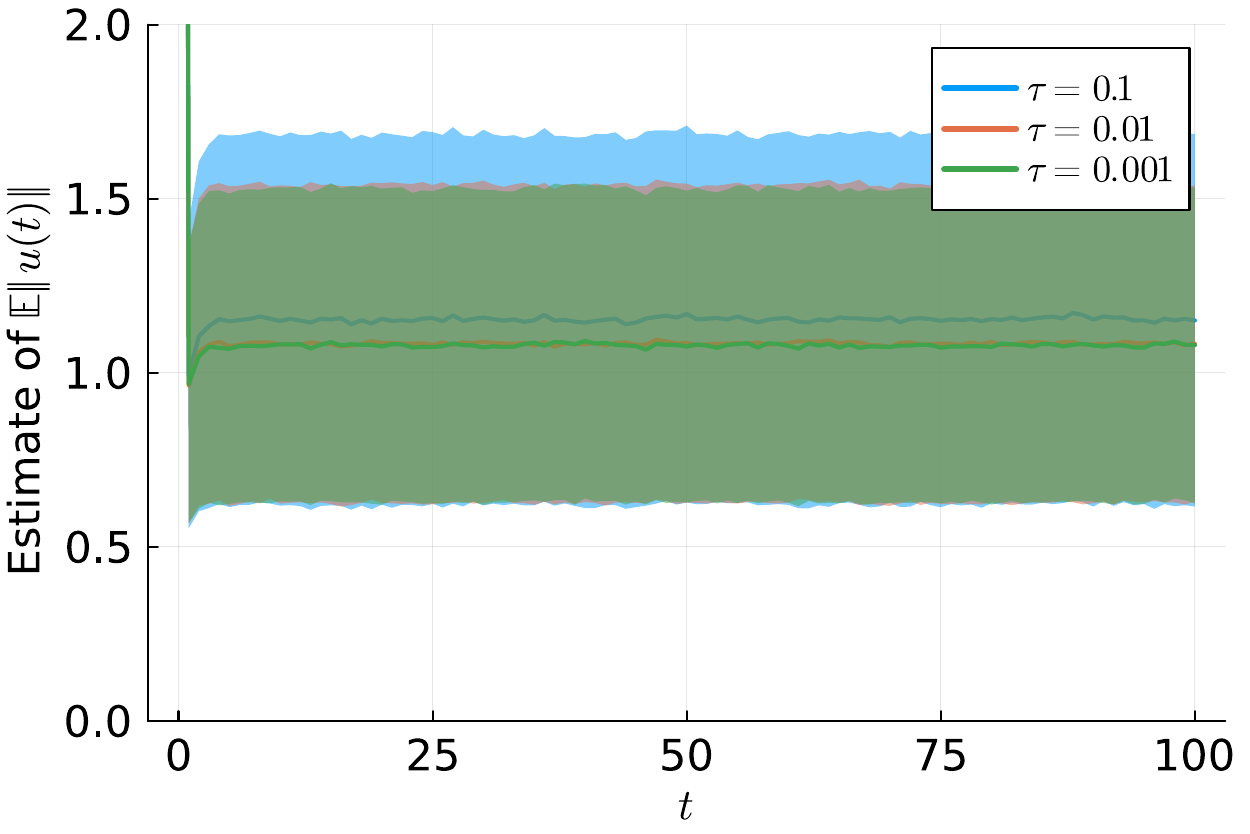}}
    
    \caption{Time series of spectral Galerkin solutions for the first moment for initial condition $u_0(x) = 10\sin(10x)$.  Shaded regions reflect one standard deviation.}
    \label{fig:osc10_10_fft}
\end{figure}

\subsection{Discussion of Numerical Results}
\label{sec:numericdisc}
\subsubsection{Finite Time Blowup without Taming}

By Lemma \ref{lemma: Can explosion lemma},  for any fixed choice of $h$ and $\tau$, for all data sufficiently large, finite time blowup will occur in the SIE scheme.  This explains the results in Figure \ref{fig:hundreddata_fem} (a).  Analogously, based on the calculation from Section \ref{s:periodicblowup}, for initial data $u_0(x) = \text{constant}$, with the constant sufficiently large, $\gtrsim \tau^{-1}$, we should also observe blowup. Indeed, when $u_0(x) = 100$, the simulation is badly behaved for each value of $\tau$; see Figure \ref{fig:hundreddata_fft} (a).



\subsubsection{On Taming by the Gradient Norm}
Next, we revisit global gradient taming, \eqref{scheme:taming1}, for $u_0 = 100$ from Figure \ref{fig:hundreddata_fft} (f), by increasing the vertical limits of the plots; see Figure \ref{fig:hundredzoom} (a).  As now shown, the values are finite for all $\tau$, but they are quite large. 


Our interpretation is as follows.  For initially uniform data, as the solution is continuous in time, on very short time scales, $\nabla f(u)\approx 0$, so there is no taming:
\begin{equation}
    \text{Short Time Scales: }\frac{f(u)}{1 + \tau \|\nabla f(u)\|_2^2}\approx {f(u)}.
\end{equation}
For such a drift, when $u_0 = 100$ we would expect blowup at these values of $\tau$.  Thus, we initially see the $L^\infty$ norm get large; see Figure \ref{fig:hundredzoom} (c).  At the same time, as noise is injected into the system at all modes, 
\begin{equation*}
    \hat{u}_N^{k+1}(j) = \hat{u}_N^k(j) - \tau k^2 \hat{u}^{k+1}_N(j) - 3 \hat{u}_N^k(0)^2 \hat{u}^k_N(j)\tau +\text{Other Terms}.
\end{equation*}
The nonzero modes quickly become large causing growth in $\|\nabla f(u)\|_2^2 = \|3u^2 \nabla u\|_2^2$; see Figure \ref{fig:hundredzoom} (b).  But this is so large that the drift term is obliterated, and
\begin{equation}
    \text{Long Time Scales: }\frac{f(u)}{1 + \tau \|\nabla f(u)\|_2^2}\approx 0,
\end{equation}
and the flow is approximately
\begin{equation*}
    du \approx \Delta u dt + dW.
\end{equation*}
This will struggle to dissipate the spectral energy in mode zero, the mean.


There is {\it not} a corresponding issue in the FEM version of this problem with Dirichlet boundary conditions.  The reason for this is that a Poincaré inequality holds, and $\nabla f(u)$ does not vanish, even if $u_0$ is initially a constant.    Indeed, if we begin with the constant function, $u_0(x) = A$, then for P1 elements,
\begin{equation}
    \|\nabla f(u_0)\|^2 \approx\sum_{i,j}f(A) K_{ij} f(A) = 2 {f(A)^2}/{h}>0
\end{equation}
for stiffness matrix $\mathbf{K} = (K_{ij})$.  For the cubic nonlinearity, this is obviously nonzero for any $A\neq 0$, in contrast to the spectral-Galerkin case.  Indeed, we conjecture that method \eqref{scheme:taming1} is actually suitable for any discretization provided the domain admits a Poincaré. That is to say, we would expect other spatial discretizations of $[0,1]$ with Dirichlet boundary conditions to behave well with taming, while discretiations of $[0,2\pi]$ with periodic boundary conditions might behave poorly, even with taming.




\begin{figure}
    \centering
    \subfigure[]{\includegraphics[width=6.5cm]{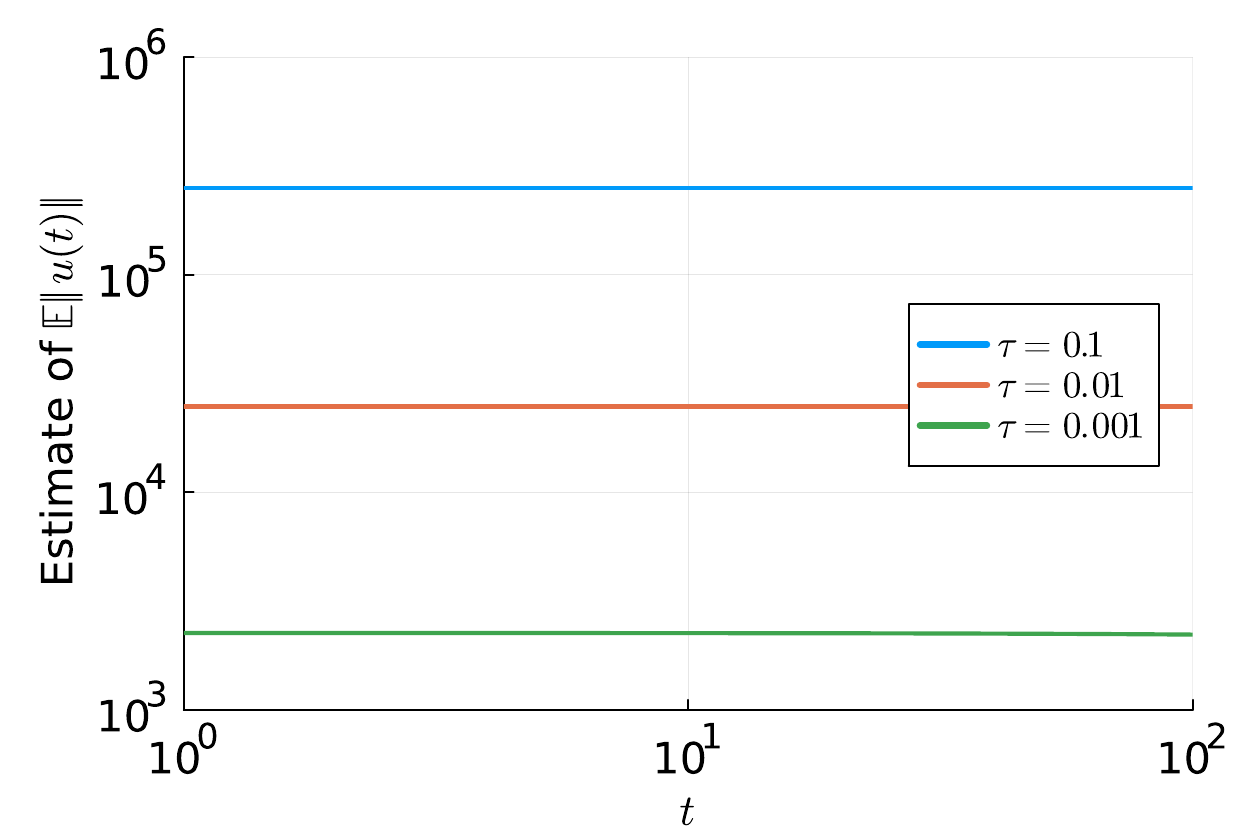}}
    \subfigure[]{\includegraphics[width=6.5cm]{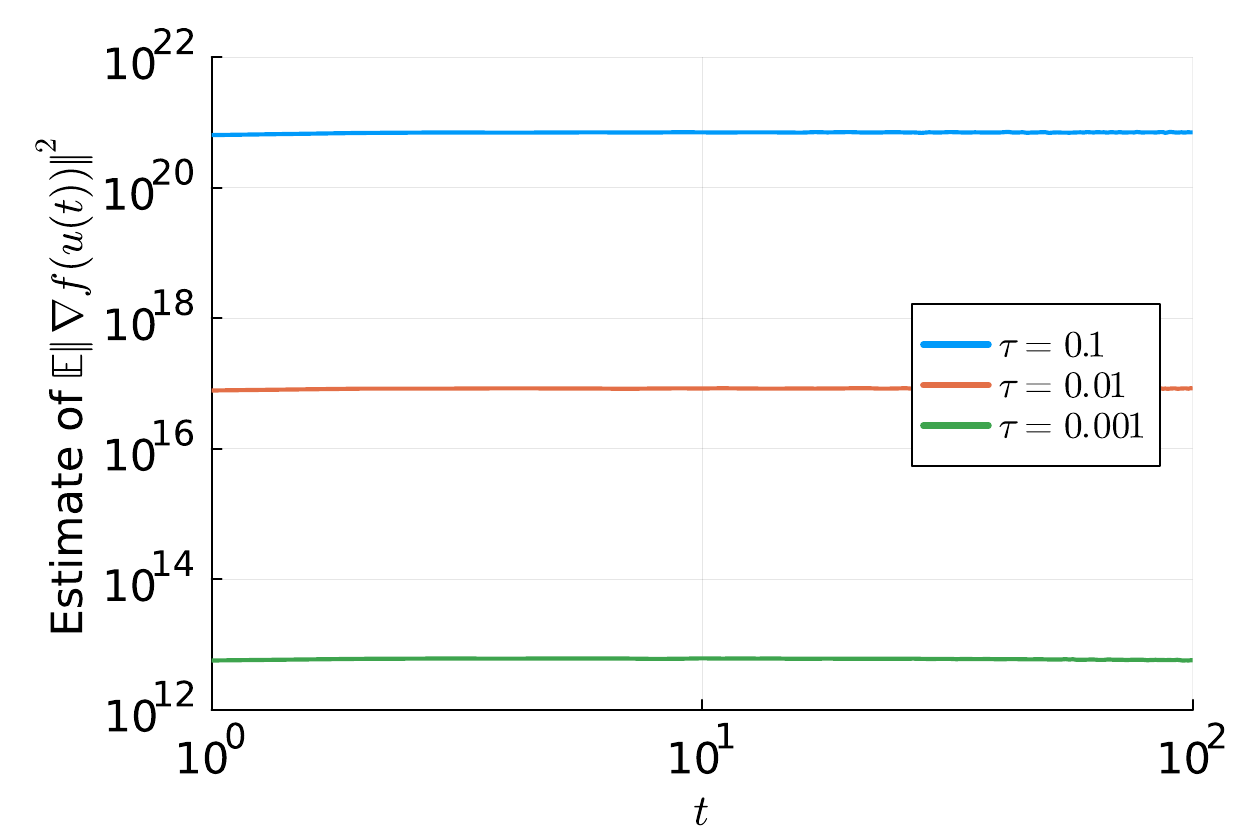}}
    \subfigure[]{\includegraphics[width=6.5cm]{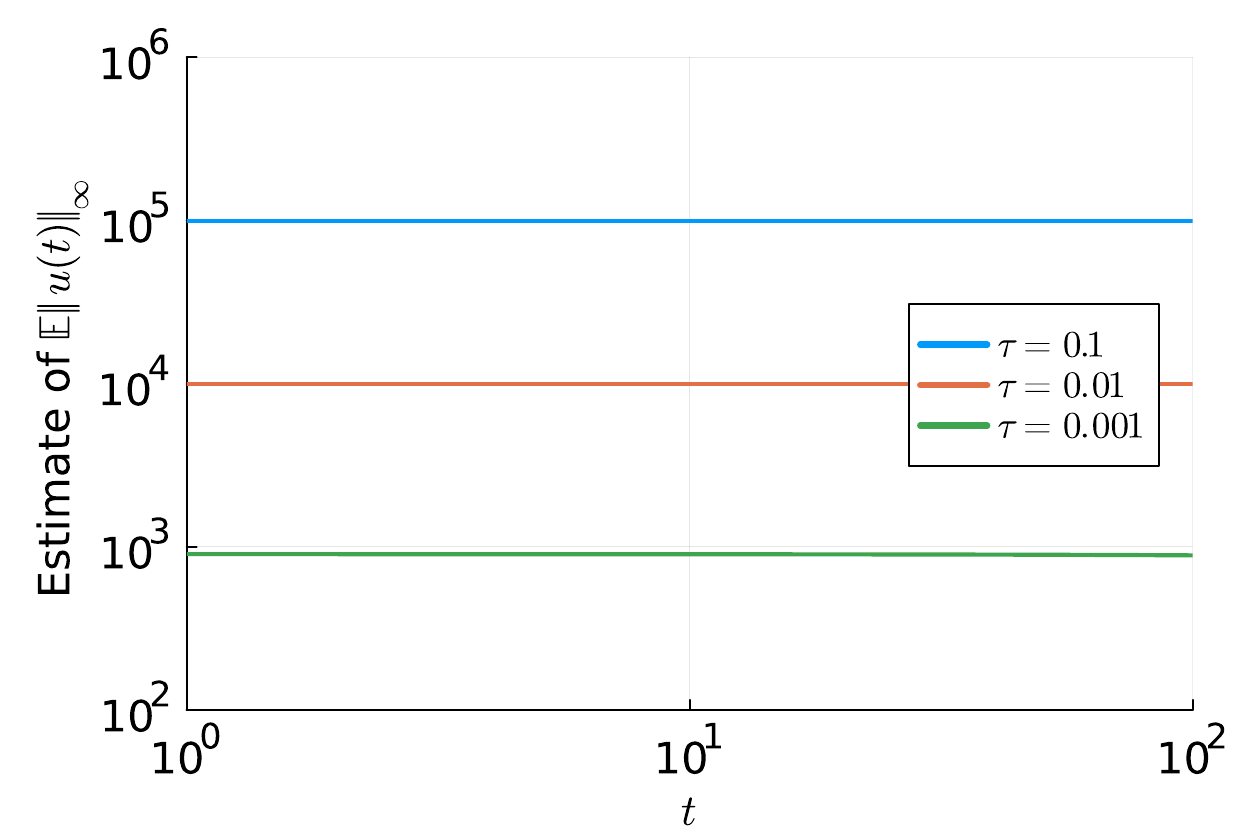}}
    \caption{Observables for Method \eqref{scheme:taming1} with initial condition $u_0(x) =100$ under the spectral Galerkin discretization.}
    \label{fig:hundredzoom}
\end{figure}

\subsubsection{On Transient Dynamics}

Turning to the transient dynamics with the initial condition $u_0(x) = 100$, we see in Figures \ref{fig:hundreddata_short_fem} and  \ref{fig:hundreddata_short_fft} some schemes are more faithful than others.  More variability is present in the spectral Galerkin setting than the FEM case.   Again, the FIE scheme is the most robust, though most costly; the best alternative is \eqref{scheme:taming by abs val f' pointwise standard form}. 


\begin{figure}
    \centering

    \subfigure[FIE, \eqref{scheme:im}]{\includegraphics[width=6.5cm]{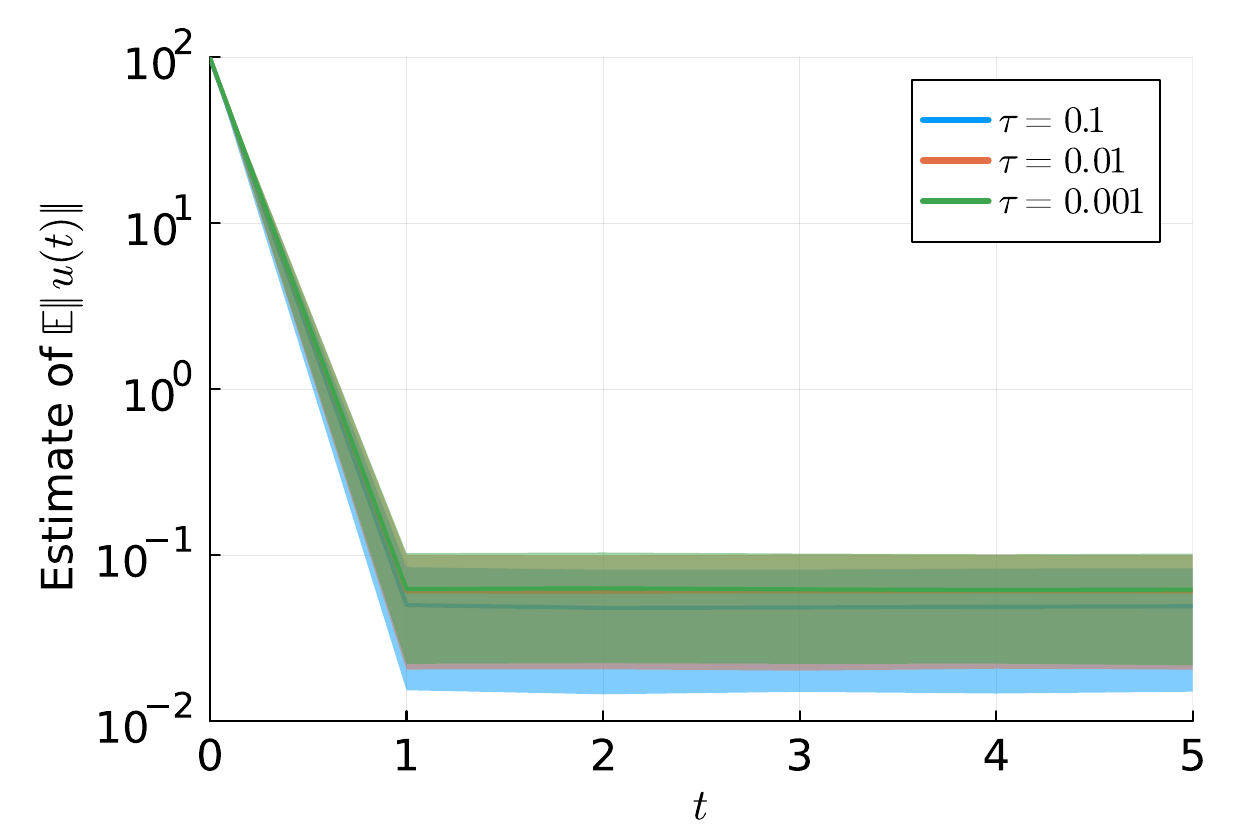}}
    \subfigure[Gy\"ongy's method, \eqref{e:gyongyspde}]{\includegraphics[width=6.5cm]{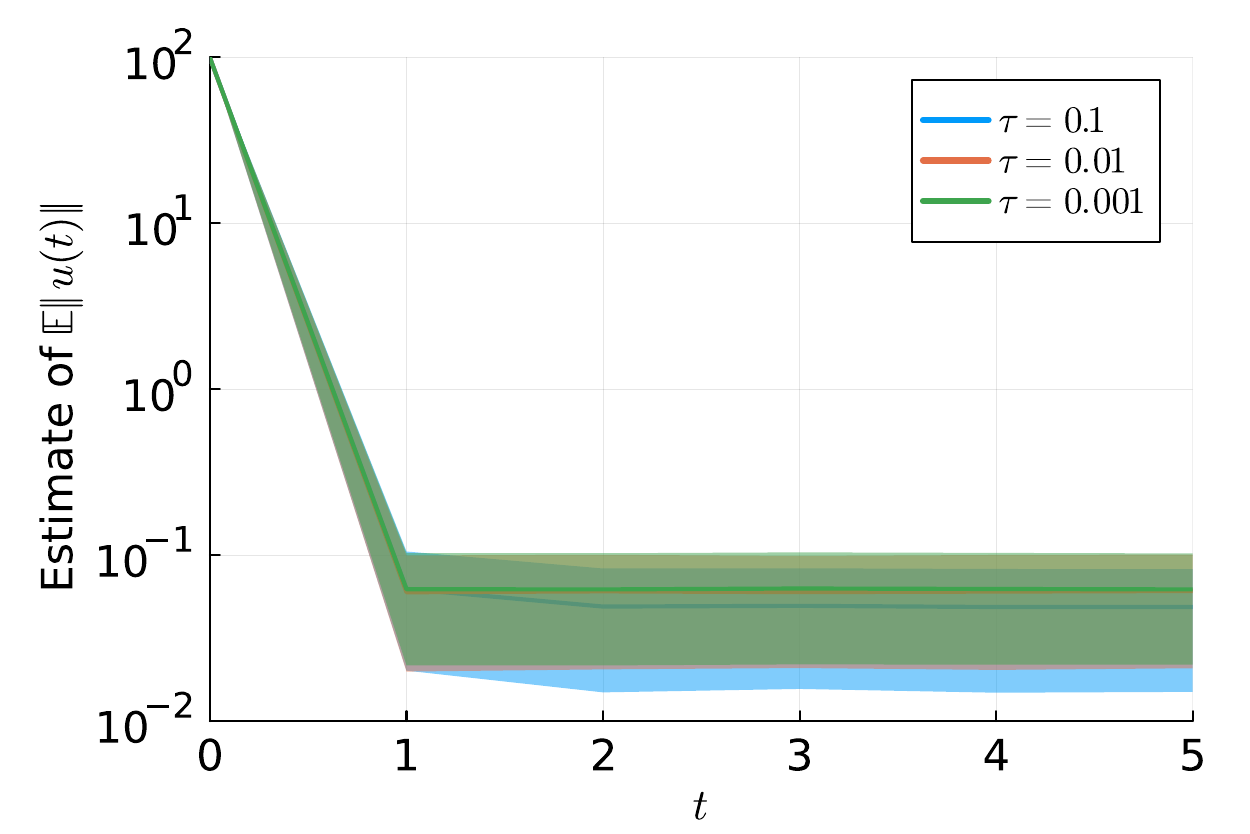}}
    
    \subfigure[Truncated pointwise taming,  \eqref{scheme:taming by abs val f' pointwise standard form}]{\includegraphics[width=6.5cm]{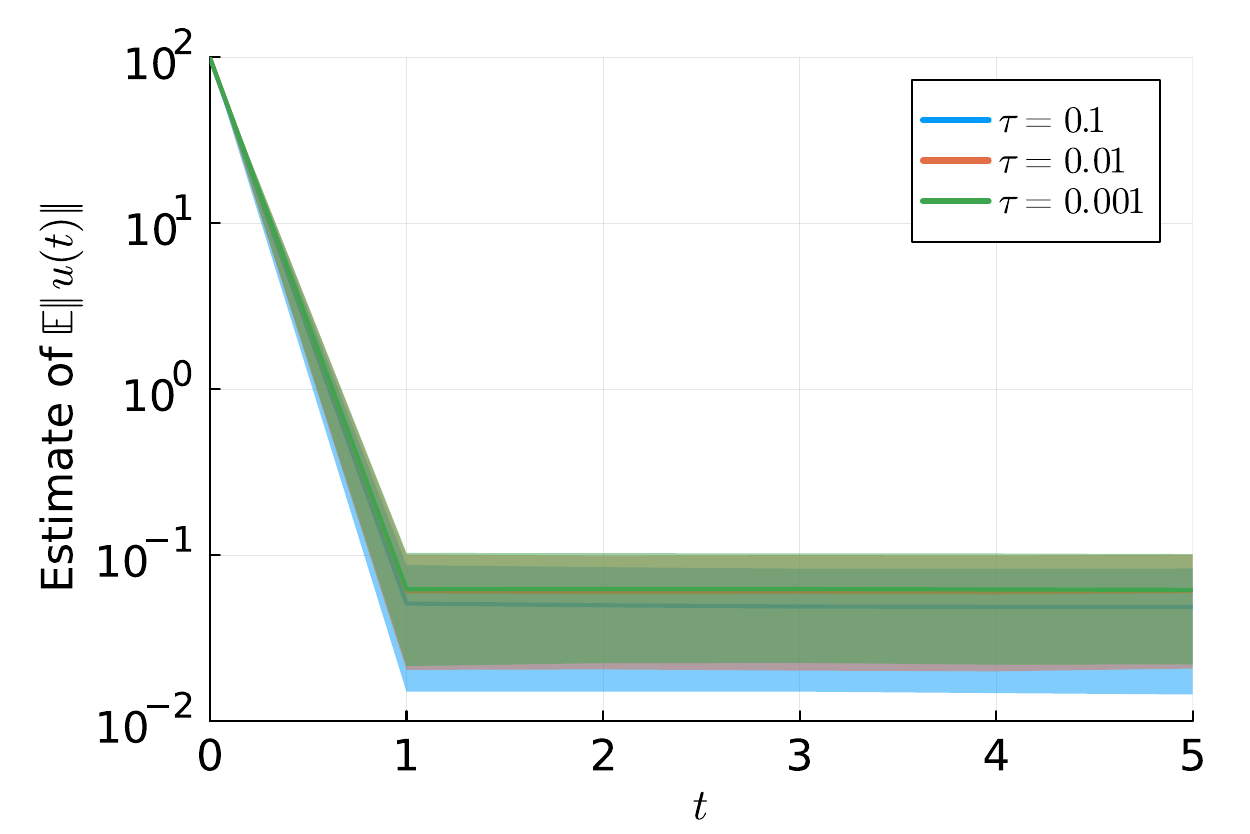}}
    \subfigure[GTEM, \eqref{scheme: GTEM scheme f}]{\includegraphics[width=6.5cm]{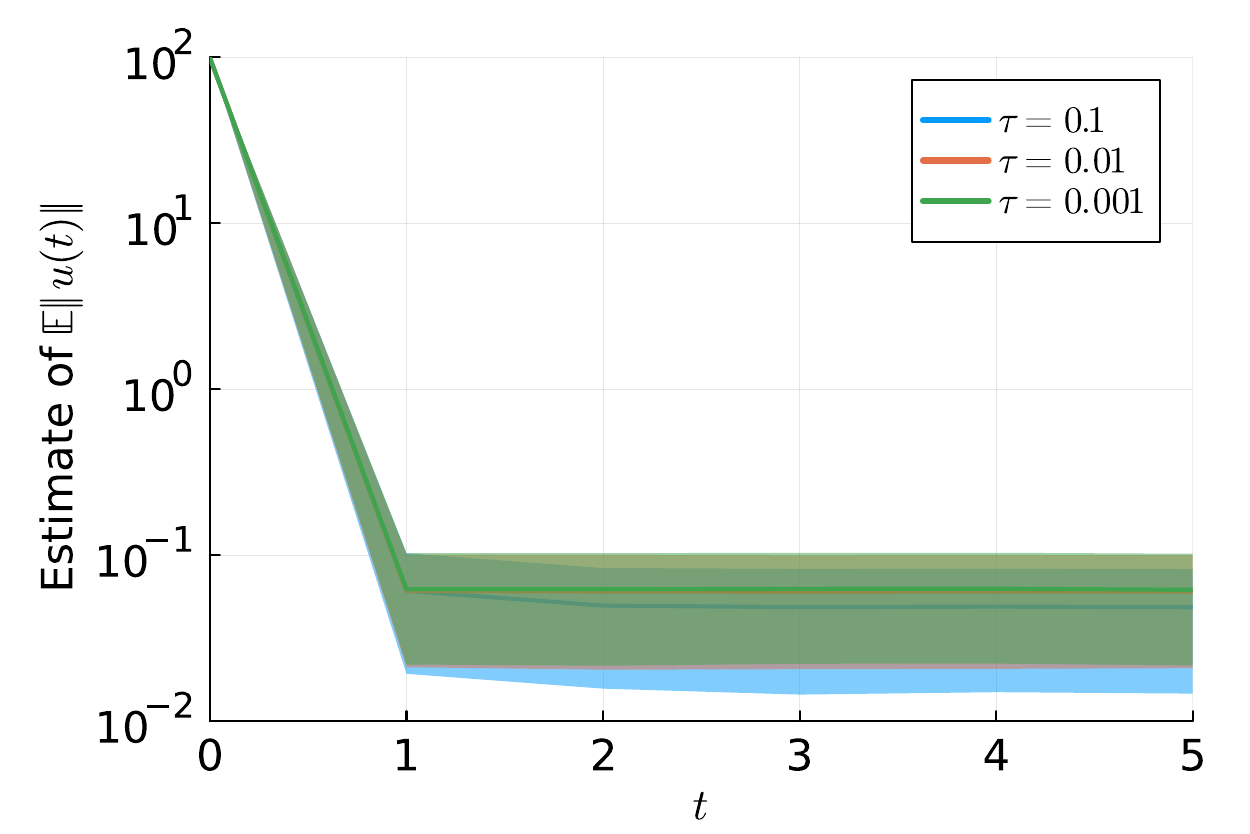}}

    \subfigure[Truncated global taming, \eqref{scheme:tame by norm of f'(u)}]{\includegraphics[width=6.5cm]{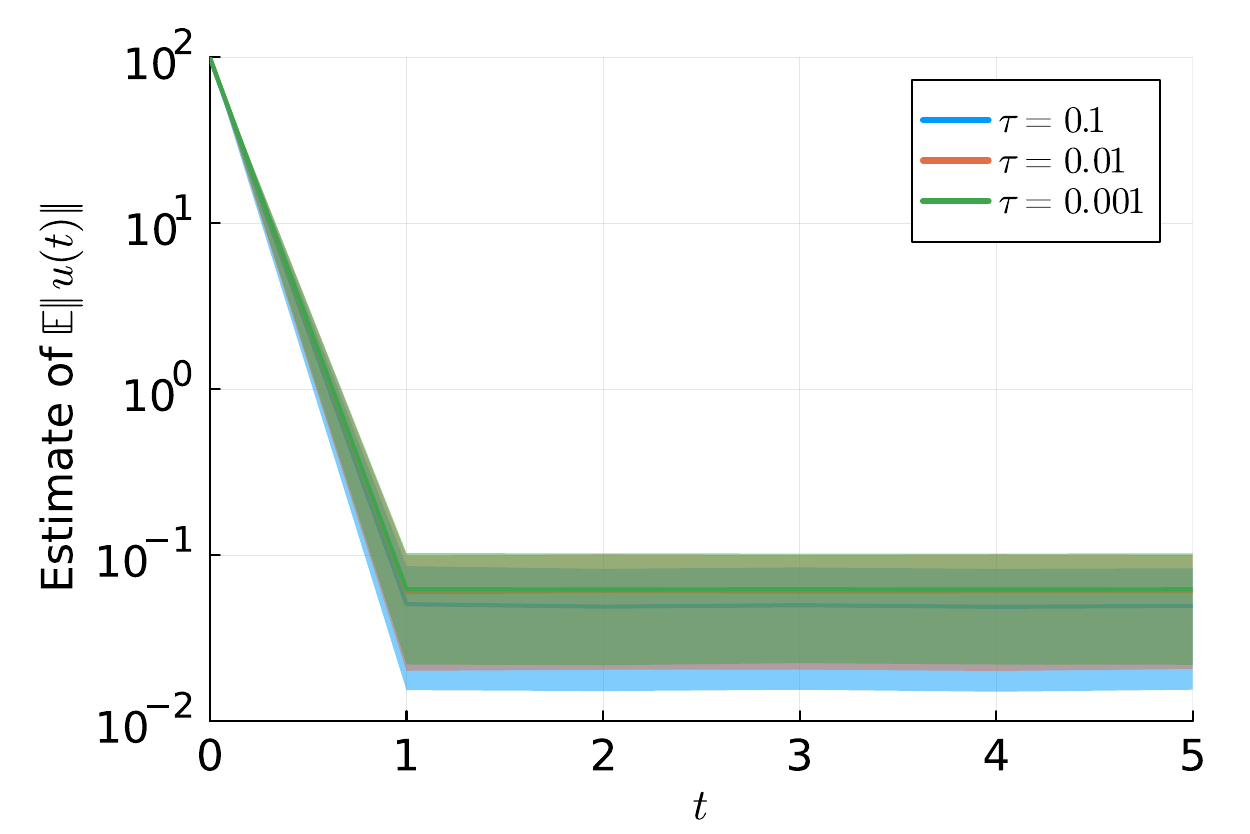}}
    
    \caption{Short time scale data for our methods under the FEM discretization in the case $u_0(x) = 100$.  The fully implicit method and Truncated pointwise taming,  \eqref{scheme:taming by abs val f' pointwise standard form} are the most faithful for larger values of $\tau$.  Shaded regions reflect one standard deviation.}
    \label{fig:hundreddata_short_fem}
\end{figure}


    

    

\begin{figure}
    \centering

    \subfigure[FIE, \eqref{scheme:im}]{\includegraphics[width=6.5cm]{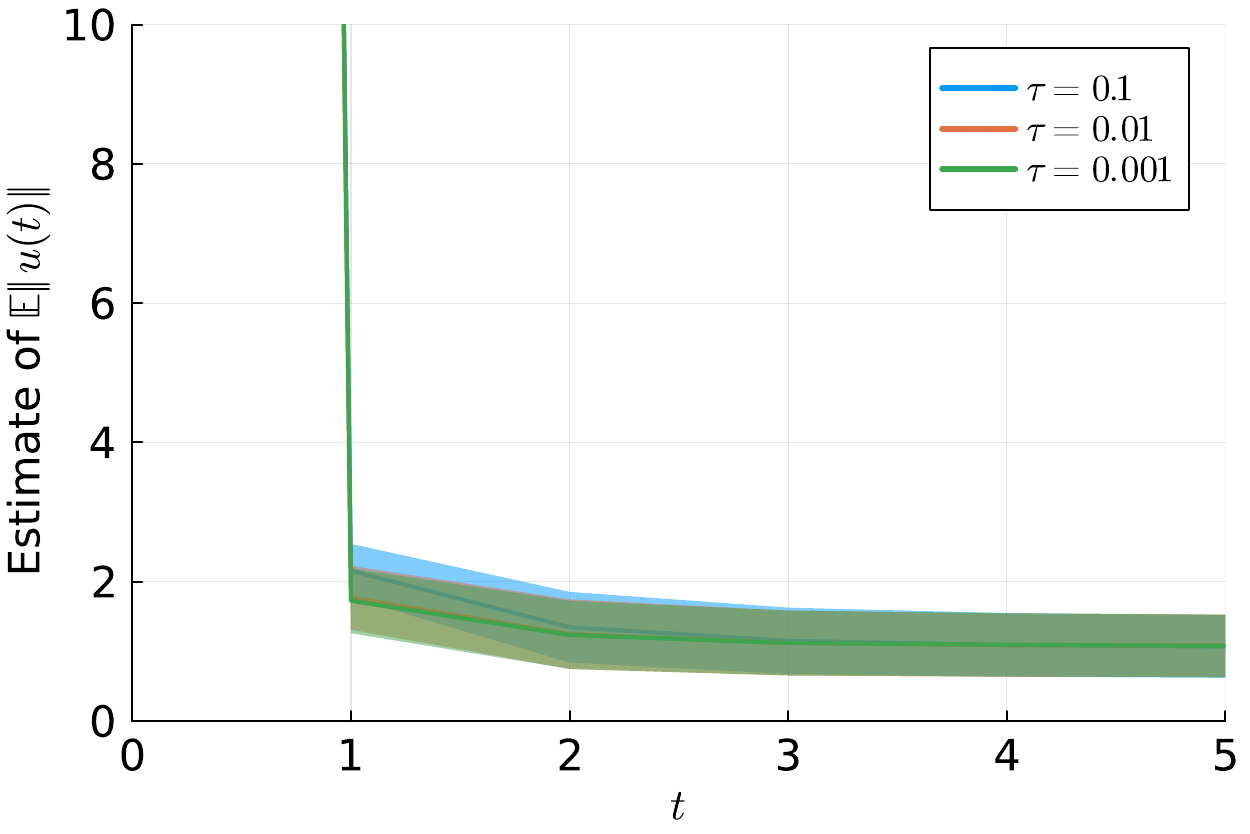}}
    \subfigure[Gy\"ongy's method, \eqref{e:gyongyspde}]{\includegraphics[width=6.5cm]{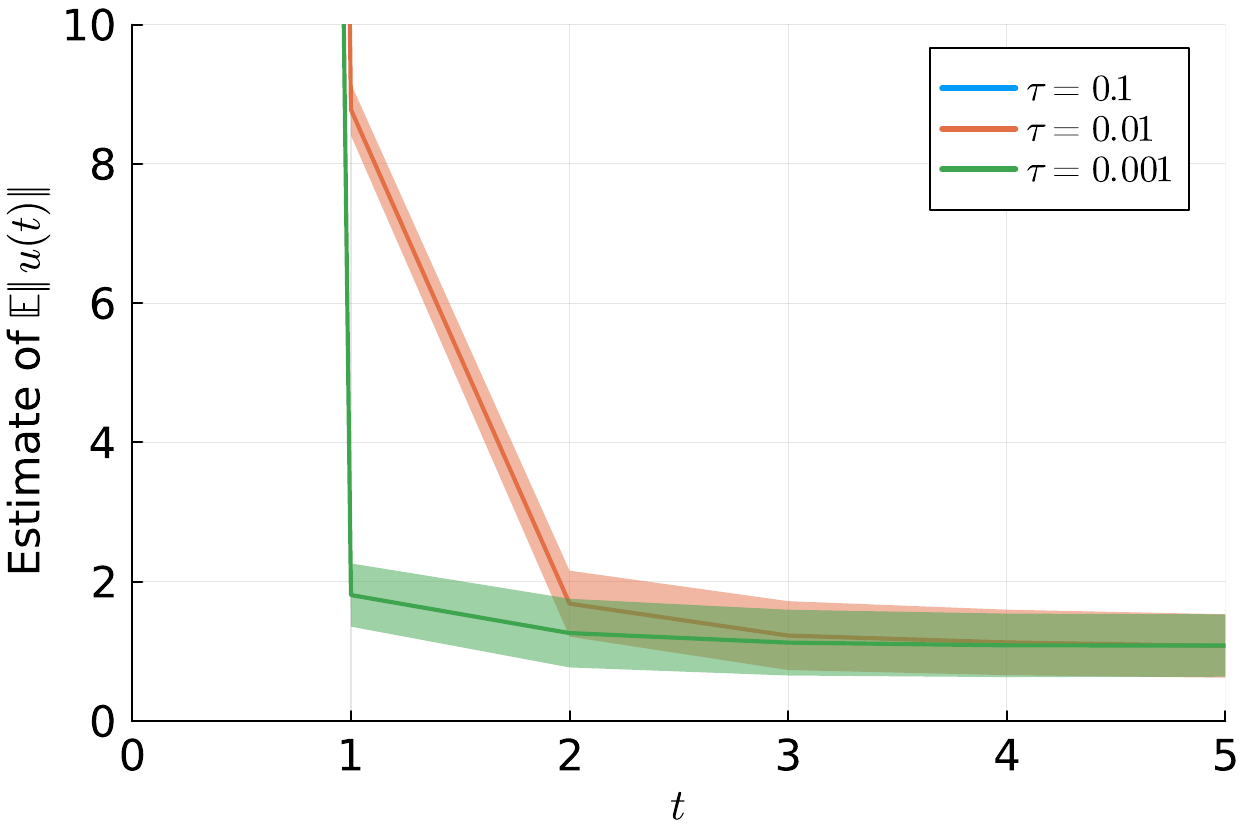}}
    
    \subfigure[Truncated pointwise taming,  \eqref{scheme:taming by abs val f' pointwise standard form}]{\includegraphics[width=6.5cm]{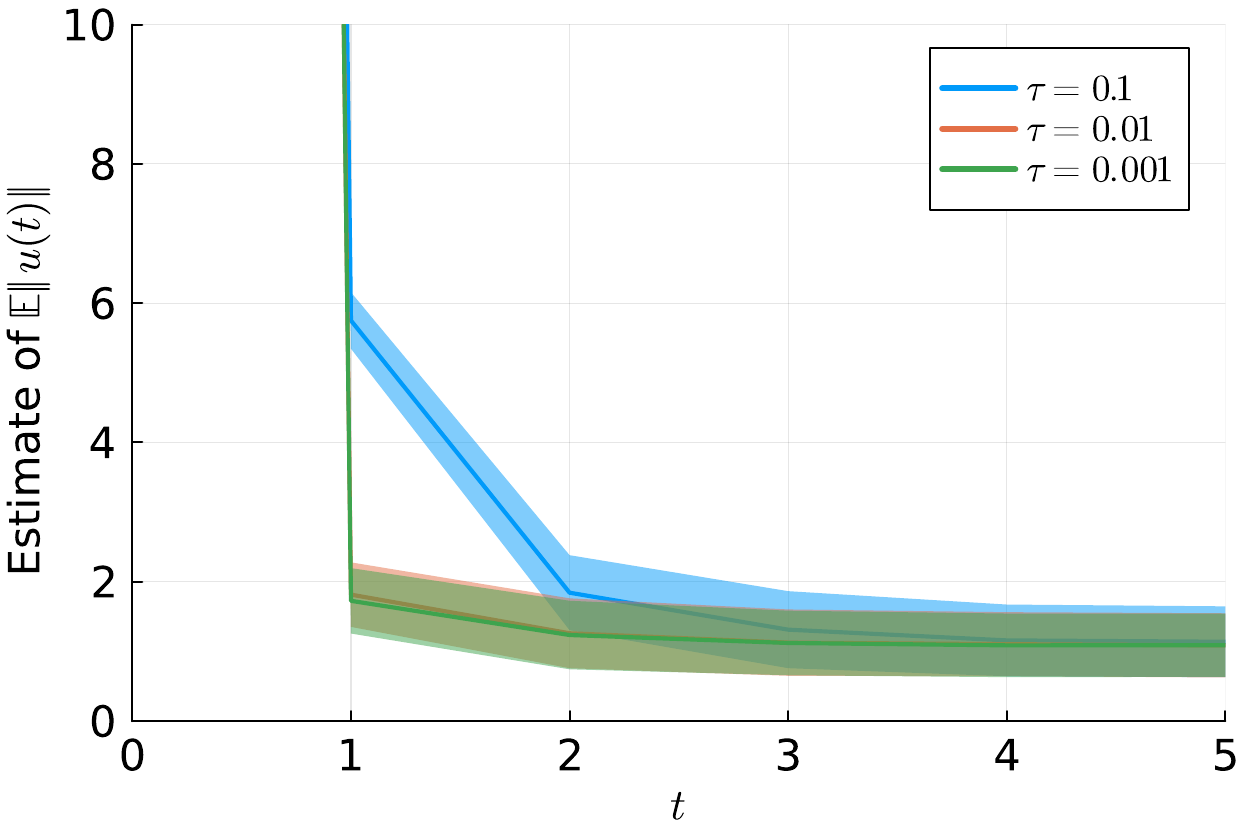}}
    \subfigure[GTEM, \eqref{scheme: GTEM scheme f}]{\includegraphics[width=6.5cm]{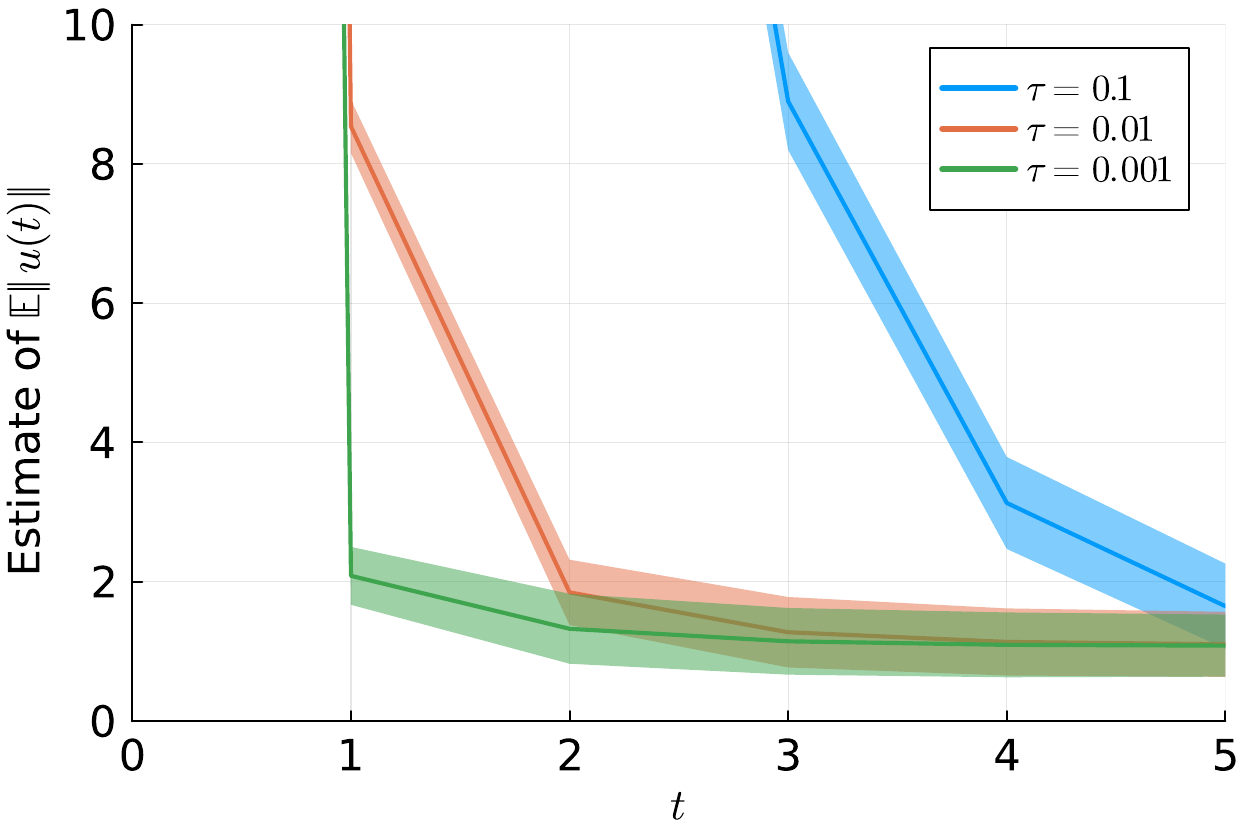}}

    \subfigure[Truncated global taming, \eqref{scheme:tame by norm of f'(u)}]{\includegraphics[width=6.5cm]{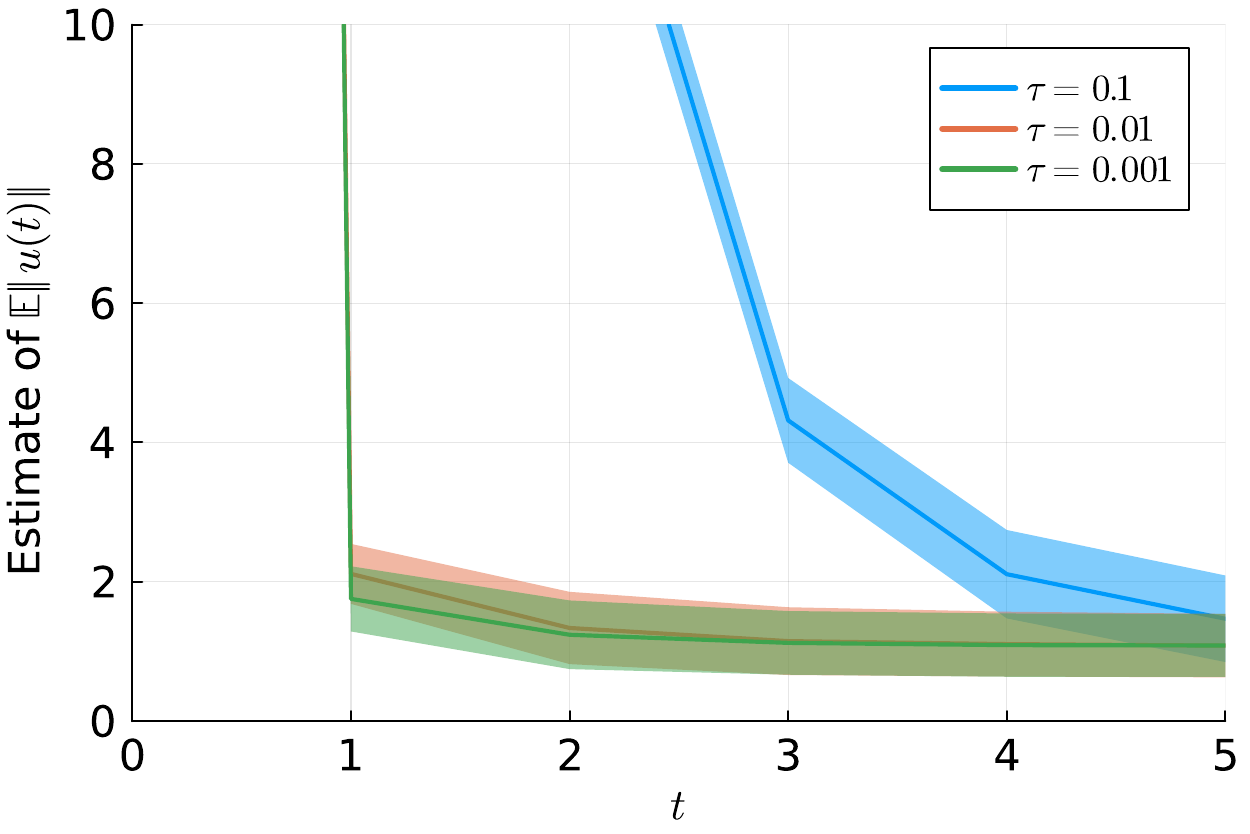}}
    
    \caption{Short time scale data for our methods under the spectral Galerkin discretization in the case $u_0(x) = 100$.  Shaded regions reflect one standard deviation.}
    \label{fig:hundreddata_short_fft}
\end{figure}

\subsubsection{Computational Cost}

Whether we make use of FEM or Spectral-Galerkin, let us take $N$ to be our measurement of the number of degrees of freedom in the system.  This is the number of resolved modes in Spectral-Galerkin, and in the case of an FEM problem in $\R^d$, $N\asymp h^{-d}$.  Furthermore, assume that the FEM method uses local elements, such that the mass and stiffness matrices are sparse, matrix-vector multiplication by either of them can be performed in $\bigo(N)$ operations.

Setting aside the FIE method for a moment, the drift can be evaluated for all methods in either $\bigo(N)$ or $\bigo(N\log N)$ time, assuming we take evaluations of $f$ and $f'$ as costing $\bigo(1)$.  One might initially think that the methods only involving pointwise evaluation, like \eqref{scheme:taming2derivativeabsolute value f'}, would be superior in computational cost, in contrast, to, say method \eqref{scheme:tame by norm of f'(u)}.  Indeed, the norm evaluation itself is $\bigo(N)$.  For FEM, this can be computed as
\begin{equation}
    \|f'(u_h)\|_2^2 = \sum_{ij} f'(u_{h,i})M_{ij} f'(u_{h,j})
\end{equation}
for mass matrix $\mathbf{M} = (M_{ij})$.  As we are using P1 elements, the matrix is sparse, and this is a $\bigo(N)$ computation.  For Spectral-Galerkin in 1D, it corresponds to:
\begin{equation}
    \|f'(u_h)\|_2^2  = \frac{2\pi}{N}\sum_i |f'(u_{h,i})|^2
\end{equation}
In either case, this value only needs to be computed once, per time step, and then can be reused at all evaluations of the modified drift term.  Similar estimates hold for the other quantities.

In our analysis of the tamed methods, there are  constants that appear in inequalities, which we do not fully track.  We attribute some of the variation in performance, particularly in the spectral-Galerkin case, to some methods having larger constants than others, and these constants are likely to change depending on the spatial discretization.  {\it Our recommendation remains to use the pointwise taming method, \eqref{scheme:taming by abs val f' pointwise standard form}, as it shows the best behavior across problems and spatial discretizations, while not requiring root finding, as FIE does.}







\section{Proof of the uniform in time bounds \ref{Uniform in time moment bound}}\label{sec:proof of uniform moment bounds}
In this section we prove that the schemes we consider satisfy assumption \ref{Uniform in time moment bound}. That is, we show that the moments of the schemes we consider are bounded, uniformly in time. 
\subsection{Proof of bound \ref{Uniform in time moment bound} for the fully implicit Euler scheme \eqref{scheme:im}}\label{subsec:proof of H3 for implicit}
\begin{proposition}\label{prop:moment bound for implicit scheme}
(Time-uniform moment bounds for the FIE scheme \eqref{scheme:im}) 
Suppose the nonlinearity $f$ satisfies \eqref{eqn:coercive drift} and \eqref{eqn:assump on gradient of drift} and furthermore that Assumption \ref{assump2} and Assumption \ref{assump3} hold. 
Then the scheme  \eqref{scheme:im} is unconditionally stable, uniformly in time; that is, for $\tau$ small enough,  it satisfies the following bound
\begin{align}
\mathbb{E} \| u_h^{k+1}\|_1^{2n}\leq \mathbb E \|u_h^0\|_1^{2n}+D_n,
\end{align}
for any initial datum $u_0$ satisfying Assumption \ref{assump3},  \footnote{Note that by standard FEM approximation properties if $u^0 \in H^1$ then also its (space) discretization $u^0_h \in H^1$ (cf. \cite[Page 96]{QuarteroniV97}) }
where $D_n$ is a constant  independent of $\tau, h$ and $k$. 
\end{proposition}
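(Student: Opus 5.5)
The plan is to run an energy argument on the scheme \eqref{scheme:im}, first in $L^2$ and then in the $H^1$ seminorm $\|\nabla\cdot\|$, which is equivalent to $\|\cdot\|_1$ on $V_h$ by Poincar\'e. Then I would raise the resulting one-step inequality to the power $n$ and take expectations. The key structural fact is that the implicit treatment of both $\Delta$ and $f$ puts the dissipative terms at level $k+1$, where the coercivity conditions \eqref{eqn:coercive drift}--\eqref{eqn:assump on gradient of drift} can be used directly.

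\textbf{Step 1 ($L^2$ energy).} I test \eqref{scheme:im} with $\psi=u_h^{k+1}$ and use $2(a-b,a)=\|a\|^2-\|b\|^2+\|a-b\|^2$. This gives
\[
\|u_h^{k+1}\|^2-\|u_h^k\|^2+\|u_h^{k+1}-u_h^k\|^2+2\tau\|\nabla u_h^{k+1}\|^2\le 2\tau b_1+2(\delta W^{k+1},u_h^{k+1}).
\]
For the noise term I would write $(\delta W^{k+1},u_h^{k+1})=(\delta W^{k+1},u_h^k)+(\delta W^{k+1},u_h^{k+1}-u_h^k)$. The first piece has zero mean. The second is absorbed by the increment term via Young, leaving $\|P_h\delta W^{k+1}\|^2$, whose expectation is at most $\tau\,\mathrm{Tr}Q$. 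Poincar\'e, $\|\nabla v\|^2\ge\lambda_1\|v\|^2$, then yields a contraction $(1+2\lambda_1\tau)\E\|u_h^{k+1}\|^2\le\E\|u_h^k\|^2+C\tau$, which iterates to a bound uniform in $k$. This step is a warm-up.

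\textbf{Step 2 ($H^1$ energy).} I test with $\psi=-\Delta_h u_h^{k+1}$, using \eqref{def:dislap}. The nonlinear term becomes $\tau(\nabla f(u_h^{k+1}),\nabla u_h^{k+1})$, up to the projection issue discussed below, and is bounded by $b_2\tau\|\nabla u_h^{k+1}\|^2$. This gives
\[
\|\nabla u_h^{k+1}\|^2-\|\nabla u_h^k\|^2+\|\nabla(u_h^{k+1}-u_h^k)\|^2+2(1-b_2)\tau\|\Delta_h u_h^{k+1}\|^2\le 2(\nabla P_h\delta W^{k+1},\nabla u_h^{k+1}).
\]
Since $b_2<1$ and $\|\Delta_h v\|^2\ge\lambda_1\|\nabla v\|^2$, the same splitting of the noise term as in Step 1 gives
\[
(1+c\tau)\|\nabla u_h^{k+1}\|^2\le\|\nabla u_h^k\|^2+M_k+\|\nabla P_h\delta W^{k+1}\|^2,
\]
where $M_k=2(\nabla P_h\delta W^{k+1},\nabla u_h^k)$ is a martingale increment. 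The $H^1$ stability of $P_h$ together with Assumption \ref{assump2} gives $\E\|\nabla P_h\delta W^{k+1}\|^{2j}\le C_j\tau^j$.

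\textbf{Step 3 (higher moments).} I raise the last inequality to the power $n$, with $a=\|\nabla u_h^k\|^2$ and $\xi=M_k+\|\nabla P_h\delta W\|^2$, and expand binomially. The $j=1$ term involving $M_k$ has zero conditional mean. Every other cross term $a^{n-j}\xi^j$ with $j\ge1$ has conditional expectation of order $a^{n-j}\tau^{j/2}a^{j/2}$ or $a^{n-j}\tau^{j}$, and so is of order $\tau$ times lower powers of $a$. By Young's inequality these are bounded by $\frac{c}{2}\tau a^n+C\tau$. Absorbing gives $(1+c\tau)^n\E a_{k+1}^n\le(1+\frac c2\tau)\E a_k^n+C\tau$, which for $\tau$ small is a strict contraction. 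Iterating then gives $\E a_{k}^n\le\E a_0^n+D_n$ uniformly in $k$. Assumption \ref{assump3} and the $H^1$ stability of $P_h$ control $\E a_0^n$.

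\textbf{Main obstacle.} The hard step is the nonlinear term in Step 2. In weak form the term is $(f(u_h^{k+1}),-\Delta_h u_h^{k+1})$, and since $f(u_h)\notin V_h$ it is not literally $(\nabla f(u_h),\nabla u_h)$. I would handle this by noting that $(P_hf(u_h),-\Delta_h u_h)=(f(u_h),-\Delta_hu_h)$, and then arguing either with the interpolated nonlinearity $I_hf$ as in \eqref{e:interp} or via a discrete analogue of \eqref{eqn:assump on gradient of drift}, so that the inequality holds with $b_2$ replaced by some $b_2'<1$. A secondary concern is the existence and measurability of $u_h^{k+1}$ for the implicit step. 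This should follow from monotonicity: \eqref{C21} makes the map $v\mapsto v-\tau\Delta_hv-\tau P_hf(v)$ strongly monotone once $\tau K_1<1$, which is the source of the condition ``$\tau$ small enough''.
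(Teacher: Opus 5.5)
Your energy estimates follow the paper: test with $u_h^{k+1}$ and $-\Delta_h u_h^{k+1}$, use \eqref{eqn:coercive drift}--\eqref{eqn:assump on gradient of drift}, and split the noise into a centred term plus a piece absorbed by $\|u_h^{k+1}-u_h^k\|_1^2$. Two things differ.

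First, the paper does not treat the $L^2$ identity as a warm-up. It adds it to the gradient identity, so that the dissipation $2\tau\|\nabla u_h^{k+1}\|^2$ from the $L^2$ step absorbs the $2b_2\tau\|\nabla u_h^{k+1}\|^2$ coming from the drift. This yields $b_p\|u_h^{k+1}\|_1^2$ on the left, with $b_p=1+2\tau\min\{\lambda_1^2,1-b_2\}$, under $b_2<1$ alone. Your Step 2 on its own does two weaker things:
\begin{itemize}
\item To write $2(1-b_2)\tau\|\Delta_h u_h^{k+1}\|^2$ it silently uses $\|\nabla v\|\le\|\Delta_h v\|$. In general you only get $2\tau(\lambda_1-b_2)\|\nabla u_h^{k+1}\|^2$, which needs $b_2<\lambda_1$; this is fine on $[0,1]$.
\item It controls only the seminorm, so returning to $\|\cdot\|_1$ puts a factor $(1+\lambda_1^{-1})^n$ in front of $\mathbb E\|u_h^0\|_1^{2n}$.
\end{itemize}
Running your Step 3 on the summed inequality removes both issues.

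Second, for $n\ge 2$ the paper does not use the martingale structure. It bounds the noise cross terms by $\|\delta W^1\|_1\|u_h^0\|_1$ and completes the square. It then applies $(a+b)^{2n}\le(1+2^{2n-1}\alpha)a^{2n}+(1+2^{2n-1}\alpha^{-1})b^{2n}$ with $\alpha\propto\tau$, tuned so that $1+2^{2n-1}\alpha=b_p$. The surviving contraction $b_p^{-(n-1)}$ pays for the $\tau^{-1}$ in front of $\mathbb E b^{2n}=O(\tau^n)$. This is pathwise and needs only moments of $\delta W$. However, it breaks down for $n=1$, which the paper treats separately using $\mathbb E(\delta W^1,u_h^0)=0$. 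Your conditional binomial expansion, using $\mathbb E[M_k\mid\mathcal F_k]=0$ and Gaussian conditional moments, handles all $n$ at once with an $O(1)$ remainder, and it is correct as you describe it.

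The step you single out as the main obstacle is genuine, and the paper does not resolve it. It writes the drift term as $2\tau(\nabla f(u_h^1),\nabla u_h^1)$ and applies \eqref{eqn:assump on gradient of drift} directly. But testing with $-\Delta_h u_h^{k+1}$ actually produces $(f(u_h^{k+1}),-\Delta_h u_h^{k+1})=(\nabla P_h f(u_h^{k+1}),\nabla u_h^{k+1})$. For the noise, the analogous replacement of $\delta W$ by $P_h\delta W$ is harmless, since only the two-sided $H^1$-stability of $P_h$ is needed. The drift, by contrast, needs a one-sided bound, which $P_h$ is not known to preserve.

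Your interpolation fix does work, but for the scheme with $I_h f$ in place of $P_h f$, which is the one implemented in Section \ref{sec: numerics}. Since \eqref{eqn:assump on gradient of drift} holds for all $u\in H^1$, testing with $u=y+\epsilon x$ and letting $\epsilon\to 0$ gives $f'\le b_2$ pointwise. On each element, the mean value theorem gives $(I_h f(u_h))'=f'(\xi)\,u_h'$. Hence $(\nabla I_h f(u_h),\nabla u_h)\le b_2\|\nabla u_h\|^2$, given $f(0)=0$. For \eqref{scheme:im} as stated, a discrete analogue of \eqref{eqn:assump on gradient of drift} would have to be proved or assumed. On this point your argument and the paper's are equally incomplete.

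Incidentally, the same bound $f'\le b_2$ gives \eqref{C21} with $K_1=b_2$. So your monotonicity argument for solvability of the implicit step needs nothing beyond the proposition's own hypotheses.
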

\begin{proof}
We first consider the case $n=1$. First, let $k=0$ and choose $\psi=2 u_h^1$ in \eqref{scheme:im}.
This results in
$$
2(u_h^{1}, u_h^1-u_h^0) =  2\tau (\Delta u_h^{1}, u_h^{1})  +
2\tau(f(u_h^{1}), u_h^{1}) + 2(\delta W^1, u_h^{1}) \,.
$$
Writing
\begin{equation}\label{eqn: AA}
2(u_h^{1}, u_h^1-u_h^0) = \|u_h^{1}\|^2+\|u_h^{1}-u_h^{0}\|^2-\|u_h^{0}\|^2
\end{equation}
and using \eqref{eqn:coercive drift} gives
\begin{align}
\| u_h^1\|^2+\|u_h^1-u_h^0\|^2+2\tau\|\nabla u_h^1\|^2 & =\| u_h^0\|^2+2\tau ( f(u_h^1), u_h^1)+2(\delta W^1, u_h^1)\notag\\
&\leq \| u_h^0\|^2+2b_1\tau+2(\delta W^1, u_h^1) \,.\notag
\end{align}
To work on the noise term we use the fact that $2(c,a)\leq 2(c,b)+2c^2 + |a-b|^2/2$, obtaining 
\begin{align}\label{eq:imL2}
\| u_h^1\|^2+\|u_h^1-u_h^0\|^2+2\tau\|\nabla u_h^1\|^2 &\leq \| u_h^0\|^2+2b_1\tau +2( \delta W^1, u_h^0)+2\|\delta W^1\|^2+\frac{1}{2}\| u_h^1- u_h^0\|^2 \,.
\end{align}
{Next, to obtain an estimate on the gradient of $u_h^1$,}
we let $k=0$ and choose $\psi=-2\Delta u_h^1$ in \eqref{scheme:im}. 
 Then, using \eqref{eqn:assump on gradient of drift} and similar calculations as above gives
\begin{align}\label{eq:imH1}
\|\nabla u_h^1\|^2&+\| \nabla u_h^1-\nabla u_h^0\|^2+2\tau\|\Delta u_h^1\|^2=\|\nabla u_h^0\|^2+2\tau (\nabla f(u_h^1), \nabla u_h^1)+2(\nabla\delta W^1, \nabla u_h^1)\notag\\
&\leq \|\nabla u_h^0\|^2+2b_2\tau\|\nabla u_h^1\|^2+2(\nabla\delta W^1, \nabla u_h^1)\notag\\
&\leq \|\nabla u_h^0\|^2+2b_2\tau\|\nabla u_h^1\|^2+2(\nabla \delta W^1, \nabla u_h^0)+2\|\nabla\delta W^1\|^2+\frac{1}{2}\|\nabla u_h^1-\nabla u_h^0\|^2, 
\end{align}
where we observe that,  thanks to Assumption \ref{assump2} , the variable   $\nabla \delta W^1$ 
is a Gaussian random variable with covariance  operator $\tilde Q$(cf. \cite{KovacsLL15}) given by
\begin{align}\label{eq:tildeQ}
\tilde{Q}:=\tau (-\Delta)^{\frac{1}{2}}Q^{\frac{1}{2}} ((-\Delta)^{\frac{1}{2}}Q^{\frac{1}{2}})^*,
\end{align}
so that 
$$\mathbb{E}\|\delta W^j\|_1^2=\tau (\text{Tr}(\tilde{Q})+\text{Tr}(Q)):=\tau |Q|_1^2.
$$
Let us point out that in the above and throughout $\nabla \delta W$ is a shorthand notation for $\sqrt{-\Delta} \delta W$ (i.e. it is scalar-valued, not vector valued, as the short notation would suggest). 
Adding up \eqref{eq:imL2} and \eqref{eq:imH1} and applying the inequality $\lambda_1\|v\|\leq \|\Delta v\|$ (where we recall $\lambda_1$ is the smallest eigenvalue of $-\Delta$ on $\mathcal{O}$) to the term $\|\Delta u_h^1\|^2$, then gives \footnote{The inequality $\lambda_1 \|v\|\leq \|\Delta v\|$ is obtained by first writing $v=\sum_jv_j e_j$, where $\{e_j\}$ is an orthonormal basis of $L^2$, with respect to which $\Delta$ is diagonal, and $v_j = (v, e_j)$. Then, since the eigenvalues of $-\Delta$ (with either Dirichlet or periodic boundary conditions) form a positive, increasing sequence, we have  $\|\Delta v\|^2 = \sum_j v_j^2 \lambda_j^2 \geq \lambda_1^2 \sum_j v_j^2 =\lambda_1^2\|v\|^2$ \,.}
\begin{align}
b_p\|u_h^1\|_1^2&+\frac{1}{2}\| u_h^1- u_h^0\|_1^2
\leq \|u_h^0\|_1^2+2b_1\tau+2(\delta W^1, u_h^0)+2(\nabla \delta W^1, \nabla u_h^0)+2\|\delta W^1\|_1^2,
\end{align}
where $b_p=1+2\tau\min\{\lambda_1^2,1-b_2\}$. That is,
\begin{align}\label{eq:im2}
\|u_h^1\|_1^2+\frac{1}{2b_p}\|u_h^1-u_h^0\|_1^2\leq \frac{1}{b_p}\|u_h^0\|_1^2+\frac{2}{b_p}(\delta W^1, u_h^0)+\frac{2}{b_p}(\nabla\delta W^1, \nabla u_h^0)+\frac{2}{b_p}\|\delta W^1\|_1^2+\frac{2b_1\tau}{b_p}.
\end{align}
Because of our assumption on $b_2$ contained in \eqref{eqn:assump on gradient of drift}, $b_2<1$, so that $b_p$ is strictly greater than one and $1/b_p<1$. Using the Burkholder-Davies-Gundy (BDG) inequality (\cite[Page 18]{Kruse}),  we have
\begin{align}\label{eq:BDG}
\mathbb{E}\|W(t)-W(s)\|_1^{2n}\leq K_{2n} |t-s|^n |Q|_1^n, \ \ n\in \mathbb{N},
\end{align}
with $K_2=1$ and $K_{2n}=\bigg[n(2n-1)\bigg(\frac{2n}{2n-1}\bigg)^{2n-2}\bigg]^n, \ n>1$,  and 
taking expectation of \eqref{eq:im2}, 
 we obtain
\begin{align}
\mathbb{E}\| u_h^1\|_1^2+\frac{1}{2b_p}\mathbb{E}\| u_h^1-u_h^0\|_1^2\leq \frac{1}{b_p}\mathbb{E}\|u_h^0\|_1^2+\frac{2|Q|_1^2+2b_1}{b_p}\tau.
\end{align}
Therefore, by recursion, we obtain
\begin{align}
\mathbb{E}\| u_h^k\|_1^2\leq {\frac{1}{b_p^k}}\mathbb{E}\|u^0\|_1^2+{(2|Q|_1^2+2b_1)}\tau \sum_{j=1}^k\frac{1}{b_p^j}, \quad \mbox{for every } k\geq 1
\end{align}
which gives
\begin{align}
 \sup_{k \in \N}\mathbb{E}\| u_h^{k}\|_1^2\leq\mathbb{E} \|u^0\|_1^2+\bigg(({2|Q|_1^2+2b_1})\tau\bigg)/\bigg(1-\frac{1}{b_p}\bigg).
\end{align}
For the case $n>1$, from \eqref{eq:im2} we have
\begin{align}\label{eq:pn1}
\mathbb{E}\|u_h^1\|_1^{2n}&\leq \mathbb{E}\bigg(\frac{1}{b_p}\|u_h^0\|_1^2+\frac{2b_1\tau}{b_p}+\frac{2}{b_p}((\nabla\delta W^1, \nabla u_h^0)+(\delta W^1, u_h^0))+\frac{2}{b_p}\|\delta W^1\|_1^2\bigg)^{n}\\
&\leq  \mathbb{E}\bigg(\frac{1}{b_p}\|u_h^0\|_1^2+\frac{2b_1\tau}{b_p}+\frac{2}{b_p}(\|\nabla\delta W^1\| \|\nabla u_h^0\|+\|\delta W^1\|\|u_h^0\|)+\frac{2}{b_p}\|\delta W^1\|_1^2\bigg)^{n}\notag\\
&\leq  \frac{1}{b_p^n}\mathbb{E}\bigg(\|u_h^0\|_1^2+2b_1\tau+2\|\delta W^1\|_1 \| u_h^0\|_1+2\|\delta W^1\|_1^2\bigg)^{n}, \notag
\end{align}
where the following Cauchy's inequality is used in the last step:
$$
a_1d_1+a_2d_2\leq \sqrt{a_1^2+a_2^2} \sqrt{d_1^2+d_2^2}, \quad a_1, a_2, d_1, d_2\in \mathbb{R}. 
$$
Hence, completing squares gives 
\begin{align}
\mathbb E\|u_h^1\|^{2n}&\leq \frac{1}{b_p^n} \mathbb E\bigg(\|u_h^0\|_1+\sqrt{2}\|\delta W^1\|_1+\sqrt{2b_1\tau}\bigg)^{2n}  \,. 
\end{align}
Applying the following inequality (see \cite[Lemma B.2]{Angeli}), 
\begin{equation}\label{eq:binom}
\begin{aligned}
&(a+b)^k \leq\left(1+2^{k-1} \alpha\right) a^k+\left(1+2^{k-1} \alpha^{-1}\right) b^k,
\end{aligned}
\end{equation}
which holds for every  $a, b>0, k \in \mathbb{N}, k>1,$  and  $\alpha>0$, we have
\begin{align}
\mathbb E\|u_h^1\|^{2n}_1&\leq \frac{1}{b_p^n} \mathbb E \left[(1+2^{2n-1}\alpha)\|u_h^0\|_1^{2n}+(1+2^{2n-1}\alpha^{-1})(\sqrt{2}\|\delta W^1\|_1+\sqrt{2b_1\tau})^{2n}\right].
\end{align}
Choosing $\alpha=2^{-2n+2}\min\{\lambda_1,1-b_2\}\tau$ such that
$$
1+2^{2n-1}\alpha=b_p,
$$
we obtain
\begin{align}
\mathbb E\|u_h^1\|_1^{2n}&\leq \frac{1}{b_p^{n-1}}\mathbb E \|u_h^0\|_1^{2n}+\frac{1+2^{2n-1}\alpha^{-1}}{b_p^n}\mathbb E(\sqrt{2}\|\delta W^1\|_1+\sqrt{2b_1\tau})^{2n}\notag\\
&\leq \frac{1}{b_p^{n-1}}\mathbb E \|u_h^0\|_1^{2n}+\frac{1+2^{2n-1}\alpha^{-1}}{b_p^n}(1+2^{2n-1})(2^n\mathbb E\|\delta W^1\|_1^{2n}+(2b_1)^n\tau^n)\notag\\
&\leq \frac{1}{b_p^{n-1}}\mathbb E \|u_h^0\|_1^{2n}+\frac{1+2^{2n-1}\alpha^{-1}}{b_p^n}(1+2^{2n-1})(2^n K_{2n}|Q|_1^n\tau^n+(2b_1)^n\tau^n).
\end{align}
Here, we have used \eqref{eq:binom} with $\alpha=1$ and then \eqref{eq:BDG} .  
Looking at the last addend in the above, we now observe that there exists a constant $\hat D_n$ so that
\begin{align}
\frac{1+2^{2n-1}\alpha^{-1}}{b_p^n}(1+2^{2n-1})(2^n K_{2n}|Q|_1^n\tau^n+(2b_1)^n\tau^n)\leq \hat D_n\tau^{n-1}, 
\end{align}
where $\hat D_n$ depends on $n$ and $|Q|_1$ only. Consequently,
\begin{align}
\mathbb{E}\|u_h^1\|_1^{2n}\leq \frac{1}{b_p^{n-1}}\mathbb{E}\| u_h^0\|_1^{2n}+\hat D_n\tau^{n-1}. 
\end{align}
By recursion, we obtain
\begin{align}
\mathbb{E}\| u_h^k\|_1^{2n}\leq \frac{1}{(b_p)^{k(n-1)}}\mathbb{E}\| u_h^0\|_1^{2n}+D_n \tau^{n-1} \sum_j \frac{1}{(b_p^{n-1})^j}, \ \ n\geq 2,
\end{align}
for every $k$. This concludes the proof.
\end{proof}

\subsection{Proof of bound \ref{Uniform in time moment bound} for the scheme \eqref{scheme:taming1}}\label{subsec: tamed H3}

\begin{proposition}\label{them: uniform in time moment bound for tamed}
(Time-uniform moment bound for the tamed scheme \eqref{scheme:taming1}).  Suppose the nonlinearity $f$ satisfies \eqref{eqn:coercive drift} and \eqref{eqn:assump on gradient of drift} and furthermore that Assumption \ref{assump2} and Assumption \ref{assump3} hold. Then there exists a constant  $D_n>0$, independent of $k$ and $h$,  such that for $\tau$ small enough, the following bound holds
\begin{align}
\mathbb{E} \| u_h^{k+1}\|_1^{2n}\leq  \mathbb{E} \| u_h^{0}\|_1^{2n}+D_n \,.
\end{align}
\end{proposition}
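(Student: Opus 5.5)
The proof follows the same two-stage energy argument as Proposition \ref{prop:moment bound for implicit scheme}. The one structural difference is that the nonlinearity is now evaluated explicitly at $u_h^k$, multiplied by the scalar taming factor $\theta_k:=\tau/(1+\alpha\tau\|\nabla f(u_h^k)\|^2)\in(0,\tau]$. First I test \eqref{scheme:taming1} with $\psi=2u_h^{k+1}$, then with $\psi=-2\Delta u_h^{k+1}$. Using \eqref{eqn: AA} and its gradient analogue gives
\begin{align*}
\|u_h^{k+1}\|_1^2+\|u_h^{k+1}-u_h^k\|_1^2+2\tau\|\nabla u_h^{k+1}\|^2+2\tau\|\Delta u_h^{k+1}\|^2
&= \|u_h^k\|_1^2+2\theta_k\big[(f(u_h^k),u_h^{k+1})+(\nabla f(u_h^k),\nabla u_h^{k+1})\big]\\
&\quad +2(\delta W^{k+1},u_h^{k+1})+2(\nabla\delta W^{k+1},\nabla u_h^{k+1}).
\end{align*}
The noise terms are handled exactly as in \eqref{eq:imL2}--\eqref{eq:imH1}. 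One splits $u_h^{k+1}=u_h^k+(u_h^{k+1}-u_h^k)$ and absorbs $\frac12\|u_h^{k+1}-u_h^k\|_1^2$ on the left. The remaining terms are the martingale part $2(\delta W^{k+1},u_h^k)_1$, which has zero mean, and $2\|\delta W^{k+1}\|_1^2$, which is $O(\tau)$ in expectation.

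For the drift I also split $u_h^{k+1}=u_h^k+(u_h^{k+1}-u_h^k)$. The part involving $u_h^k$ is controlled by \eqref{eqn:coercive drift} and \eqref{eqn:assump on gradient of drift}:
\[
2\theta_k\big[(f(u_h^k),u_h^k)+(\nabla f(u_h^k),\nabla u_h^k)\big]\le 2\tau b_1+2\theta_k b_2\|\nabla u_h^k\|^2 .
\]
When $b_2\le0$ this is simply $\le 2\tau b_1$. The remainder is the explicit-scheme cross term $2\theta_k(f(u_h^k),u_h^{k+1}-u_h^k)_1$, and this is where the taming is used. By Young's inequality it is at most $\frac14\|u_h^{k+1}-u_h^k\|_1^2+4\theta_k^2(\|f(u_h^k)\|^2+\|\nabla f(u_h^k)\|^2)$. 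For the gradient part, the definition of $\theta_k$ gives $\theta_k^2\|\nabla f(u_h^k)\|^2\le \tau/\alpha$. For the $L^2$ part, the Dirichlet boundary condition on $V_h$ gives $f(u_h^k)\in H^1_0$ when $f(0)=0$, or up to an additive constant otherwise. Poincar\'e's inequality then yields $\|f(u_h^k)\|\le C(1+\|\nabla f(u_h^k)\|)$, and hence $\theta_k^2\|f(u_h^k)\|^2\le C\tau$. So the whole cross term costs only $O(\tau)$ plus an absorbable increment.

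It remains to recover the contraction factor. The term $2\theta_k b_2\|\nabla u_h^k\|^2$, with $b_2>0$ and $\theta_k\le\tau$, sits at time level $k$ rather than $k+1$. I compare it with $2\tau\|\nabla u_h^{k+1}\|^2$ via $\|\nabla u_h^k\|^2\le(1+\epsilon)\|\nabla u_h^{k+1}\|^2+(1+\epsilon^{-1})\|\nabla(u_h^{k+1}-u_h^k)\|^2$. The increment is absorbed for $\tau$ small, and the strict inequality $b_2<\min\{1,\lambda_1^2\}$ leaves a margin. Using $\|\Delta v\|\ge\lambda_1\|v\|$ as in the implicit case, I expect to reach
\[
b_p\,\mathbb E\|u_h^{k+1}\|_1^2\le \mathbb E\|u_h^k\|_1^2+C\tau,\qquad b_p=1+c\tau,\ c>0,
\]
which iterates to a uniform-in-$k$ bound for $n=1$. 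The strengthened assumption on $b_2$ is used precisely at this point.

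For $n>1$ I repeat \eqref{eq:pn1}: raise the pathwise one-step inequality (before taking expectations) to the power $n$. I then complete the square in $\|u_h^k\|_1$, $\|\delta W^{k+1}\|_1$ and $\sqrt{\tau}$, apply \eqref{eq:binom} with $\alpha\sim\tau$ chosen so that $1+2^{2n-1}\alpha=b_p$, and bound the noise moments with \eqref{eq:BDG}. The recursion $\mathbb E\|u_h^{k+1}\|_1^{2n}\le b_p^{-(n-1)}\mathbb E\|u_h^k\|_1^{2n}+\hat D_n\tau^{n-1}$ then sums to the claim.

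I expect the main obstacle to be the pathwise handling of the explicit cross term and of the level-$k$ gradient term. The taming only gives $\theta_k\|\nabla f(u_h^k)\|\le(\tau/\alpha)^{1/2}$, a bound of order $\sqrt\tau$ and not $\tau$, so the Young splitting must be arranged so that only squared quantities appear. If the Poincar\'e route for $\|f(u_h^k)\|$ fails because $f(0)\neq0$, I would instead split off $f(0)$ as a deterministic bounded forcing.
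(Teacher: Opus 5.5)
Your proof is correct and follows the paper's argument closely. You use the same test functions $2u_h^{k+1}$ and $-2\Delta u_h^{k+1}$, the same splitting $u_h^{k+1}=u_h^k+(u_h^{k+1}-u_h^k)$ in the drift and noise terms, and the same combination of taming factor and Poincar\'e inequality to make the explicit cross term $O(\tau)$. Your $n>1$ step via \eqref{eq:binom} and \eqref{eq:BDG} is also the paper's.

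The one genuine difference is how you close the contraction. The paper keeps $2\theta_k b_2\|\nabla u_h^k\|^2$ at level $k$ and bounds the right-hand side by $(1+2b_2\tau)\|u_h^k\|_1^2$ (see \eqref{eq:sta2} and \eqref{eq:pnH2}). It then concludes from $(1+2b_2\tau)/b_p<1$ with $b_p=1+2\tau\min\{1,\lambda_1^2\}$, i.e.\ \eqref{eqn: CC}. That is exactly where the extra hypothesis $b_2<\min\{1,\lambda_1^2\}$ for scheme \eqref{scheme:taming1} is consumed, and this step needs no restriction on $\tau$.

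You instead move the term to level $k+1$ and absorb $2\tau b_2(1+\epsilon^{-1})\|\nabla(u_h^{k+1}-u_h^k)\|^2$ into the leftover $\frac14\|u_h^{k+1}-u_h^k\|_1^2$. This requires $\tau\lesssim\epsilon/b_2$, which the statement permits, and buys two things:
\begin{itemize}
\item Your one-step inequality has coefficient exactly one on $\|u_h^k\|_1^2$. The $n>1$ recursion is therefore literally that of \eqref{eq:im2}--\eqref{eq:pn1}.
\item The transferred term competes only with $2\tau\|\nabla u_h^{k+1}\|^2$. The remaining dissipation $2\tau(1-(1+\epsilon)b_2)\|\nabla u_h^{k+1}\|^2+2\tau\|\Delta u_h^{k+1}\|^2$ already controls $c\tau\|u_h^{k+1}\|_1^2$ once $b_2<1$ and $(1+\epsilon)b_2<1$. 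Contrary to your closing remark, your route never uses $b_2<\lambda_1^2$.
\end{itemize}

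Your handling of $\|f(u_h^k)\|$ through $f(u_h^k)-f(0)\in H^1_0$ is also slightly more general than \eqref{e:PC1}. The paper applies Poincar\'e to $f(u_h^0)$ itself and so tacitly needs $f(0)=0$.

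One shared gloss, not a difference between the two proofs: both arguments replace $(f(u_h^k),-\Delta_h v)=(\nabla P_hf(u_h^k),\nabla v)$, $v\in V_h$, by $(\nabla f(u_h^k),\nabla v)$ before invoking \eqref{eqn:assump on gradient of drift}.
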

\begin{proof}
This proof is in places similar to the proof of Proposition \ref{prop:moment bound for implicit scheme}, so we partly give a scketch and provide more detail when the two proofs differ. 
We first prove the case $n=1$.  
To this end,  let $k=0$ and $\psi=2u_h^{k+1}$ in \eqref{scheme:taming1}. Using \eqref{eqn: AA} and \eqref{eqn:coercive drift}, we then obtain 
\begin{align}\label{first block}
&\|u_h^1\|^2+\|u_h^1-u_h^0\|^2+2\tau\|\nabla u_h^1\|^2\notag\\
&\ =\|u_h^0\|^2+\frac{2\tau(f(u_h^{0}), u_h^0)}{1+\alpha\tau \|\nabla f(u_h^0)\|^2}
+\frac{2\tau(f(u_h^{0}), u_h^1-u_h^0)}{1+\alpha\tau \|\nabla f(u_h^0)\|^2}+2(\delta W^1, u_h^0)+2(\delta W^1, u_h^1-u_h^0) \,.\notag
\end{align}
Then, by Young inequality with $\theta=4$ (which we use twice in the next step), we get
\begin{align}
&\|u_h^1\|^2+\|u_h^1-u_h^0\|^2+2\tau\|\nabla u_h^1\|^2\notag\\
& \leq\|u_h^0\|^2+\frac{2\tau b_1}{1+\alpha\tau \|\nabla f(u_h^0)\|^2}
+\frac{1}{4}\|u_h^1-u_h^0\|^2+\frac{4\tau^2\|f(u_h^{0})\|^2}{(1+\alpha\tau\|\nabla f(u_h^0)\|^2)^2}\notag\\
&+2(\delta W^1, u_h^0)+\frac{1}{4}\|u_h^1-u_h^0\|^2+4\|\delta W^1\|^2\notag\\
&\ \leq \|u_h^0\|^2+\frac{2\tau b_1}{1+\alpha\tau\|\nabla f(u_h^0)\|^2}+\frac{1}{2}\|u_h^1-u_h^0\|^2
+\frac{4\tau^2\|f(u_h^{0})\|^2}{(1+\alpha\tau\|\nabla f(u_h^0)\|^2)^2}+2(\delta W^1, u_h^0)+4\|\delta W^1\|^2.
\end{align}
Next, using the inequality $(1+c)^2\geq 4c$ (which holds for any $c\in\R$) and the Poincare inequality $\|\nabla f(u_h^0)\|\geq c_{PC}\|f(u_h^0)\|$, we obtain
\begin{equation}
\label{e:PC1}    
\frac{4\tau^2\|f(u_h^0)\|^2}{(1+\alpha\tau\|\nabla f(u_h^0)\|^2)^2}\leq \frac{\tau\|f(u_h^0)\|^2}{\alpha \|\nabla f(u_h^0)\|^2}\leq \frac{\tau}{\alpha c_{PC}^2}. 
\end{equation}
Hence,
we have
\begin{align}\label{eq:sta1}
\|u_h^1\|^2+2\tau\|\nabla u_h^1\|^2+\frac{1}{2}\|u_h^1-u_h^0\|^2
&\leq \|u_h^0\|^2
+2(\delta W^1, u_h^0)+\bigg(2b_1+\frac{1}{\alpha c_{PC}^2}\bigg)\tau+4\|\delta W^1\|^2.
\end{align}
Now let $k=0$ and $\psi=-2\Delta u_h^{k+1}$ in \eqref{scheme:taming1}, and we obtain
\begin{align}
\|\nabla u_h^1\|^2+\|\nabla u_h^1-\nabla u_h^0\|^2+2\tau\|\Delta u_h^1\|^2 & =\|\nabla u_h^0\|^2+\frac{2\tau(\nabla f(u_h^{0}), \nabla u_h^0)}{1+\alpha\tau \|\nabla f(u_h^0)\|^2}\notag\\
&+\frac{2\tau(\nabla f(u_h^{0}), \nabla u_h^1-\nabla u_h^0)}{1+\alpha\tau \|\nabla f(u_h^0)\|^2}+2(\nabla\delta W^1, \nabla u_h^1).
\end{align}
With calculations similar to those in the above we then land with
\begin{align}\label{eq:sta2}
&\|\nabla u_h^1\|^2+2\tau\|\Delta u_h^1\|^2+\frac{1}{2}\|\nabla u_h^1-\nabla u_h^0\|^2\notag\\
&\leq \|\nabla u_h^0\|^2
+2(\nabla \delta W^1, \nabla u_h^0)+\frac{2b_2\tau \|\nabla u_h^0\|^2}{1+\alpha\tau\|\nabla f(u_h^0)\|^2}
+\frac{4\tau^2\|\nabla f(u_h^0)\|^2}{(1+\alpha\tau\|\nabla f(u_h^0)\|^2)^2}+4\|\nabla \delta W^1\|^2\notag\\
&\leq (1+2b_2\tau)\|\nabla u_h^0\|^2
+2(\nabla\delta W^1, \nabla u_h^0)+4\|\nabla\delta W^1\|^2+\frac{\tau}{\alpha} \,.
\end{align}
The rest is similar to the proof of Proposition \ref{prop:moment bound for implicit scheme}, the main difference is in tracking the constants. 
Indeed, summing \eqref{eq:sta1} and \eqref{eq:sta2} up, and using the Poincare's inequality $\|\nabla u_h^1\|\geq c_{PC}\| u_h^1\|$  and $\lambda_1\|u_h^1\|\leq \|\Delta u_h^1\|$ leads to
\begin{align}\label{eq:pnH2}
b_p\|u_h^1\|_1^2+\frac{1}{2}\|u_h^1-u_h^0\|_1^2\leq (1+2b_2\tau)\|u_h^0\|_1^2+2(\delta W^1, u_h^0)+2(\nabla \delta W^1, \nabla u_h^0)+4\|\delta W^1\|_1^2+c_1\tau,
\end{align}
where 
\begin{align*}
&b_p=1+2\tau\min\{1, \lambda_1^2\}, \\
 &c_1:=2b_1+\frac{1}{\alpha c_{PC}^2}+\frac{1}{\alpha}.
 \end{align*}
Taking expectation on both sides of \eqref{eq:pnH2} and noting that both $\mathbb{E}(\delta W^1, u_h^0)$ and $\mathbb{E}(\nabla \delta W^1, \nabla u_h^0)$ vanish,  
leads to
\begin{align}
b_p\mathbb{E}\|u_h^1\|_1^2+\frac{1}{2}\mathbb{E}\|u_h^1-u_h^0\|_1^2
&\leq \mathbb{E}(1+2b_2\tau)\|u_h^0\|_1^2+4\mathbb{E}\|\delta W^1\|_1^2+c_1\tau\notag\\
&\leq (1+2b_2\tau)\mathbb{E}\|u_h^0\|_1^2+\big(4|Q|_1^2+c_1\big)\tau.
\end{align}
That is,
\begin{align}
&\mathbb{E}\|u_h^1\|_1^2
\leq \frac{1+2b_2\tau}{b_p}\mathbb{E}\|u_h^0\|_1^2+\frac{1}{b_p}\big(4|Q|_1^2+c_1\big)\tau.
\end{align}
Now recall  that by Assumption \ref{assump1}, $b_2< \min\{1, \lambda_1^2\}$. Hence
\begin{align}\label{eqn: CC}
\frac{(1+2b_2 \tau)}{b_p}<1 \,. 
\end{align}
So, again be recursion, we obtain
\begin{align}
\sup\limits_{k\in \N}\mathbb{E} \| u_h^k\|_1^2\leq \mathbb E\|u_0\|_1^2+\frac{\big(4|Q|_1^2+c_1\big)}{\min\{\lambda_1^2, 1\}}. 
\end{align}
Next, we consider the case $n>1$.  
Taking $n$-th power on both sides of \eqref{eq:pnH2} and then  taking expectation yields
\begin{align}\label{eq:pn2}
&\mathbb{E}\| u_h^1\|_1^{2n}
\leq \mathbb E\bigg(\frac{1+2b_2\tau}{b_p}\|u_h^0\|_1^2+\frac{2(\delta W^1, u_h^0)}{b_p}+\frac{2(\nabla W^1,\nabla u_h^0)}{b_p}+\frac{4\|\delta W^1\|_1^2+c_1\tau}{b_p}\bigg)^n.
\end{align}
Comparing \eqref{eq:pn1} with \eqref{eq:pn2}, and  following the same steps as in the proof of \eqref{eq:pn1} we arrive at
\begin{align}
\mathbb{E}\| u_h^k\|_1^{2n}\leq \bigg(\frac{1+2b_2\tau}{b_p}\bigg)^{k(n-1)}\|u_h^0\|_1^{2n}+D_n \tau^{n-2}, \ \ n\geq 2,
\end{align}
for every $k$, where $D_n$ depends on $n$ and $|Q|_1$ and $c_1$. So, after recalling \eqref{eqn: CC},  a recursion argument again allows to conclude. 
\end{proof}

{\bf Acknowledgments. } C. Huang thanks the support of NSFC grant 12271457. M. Ottobre  gratefully acknowledges the support of
the Leverhulme grant RPG–2020–095 and of the EPSRC Standard grant EP Z534225/1. G. Simpson  was supported in part by NSF grant DMS–2111278 and ARO grant  W911NF-19-1-0243. Work
reported here was run on hardware supported by Drexel’s University Research Computing
Facility.

\appendix
\section{Proofs of the finite time error bounds \ref{Finite time estimate}}\label{sec: proofs of finite time error bounds}
Throughout this section, for any $0<\nu\leq 2$, $\| \cdot \|_{\dot{H}^{\nu}}:=\|(-\Delta)^{\frac{\nu}{2}}\cdot\|$ denotes norm in $\dot{H}^{\nu}$. Before starting the proofs, we make some preliminary observations that will be used later. Firstly,  as a consequence of \eqref{assump eqn: growth of f}
for any $u,v \in L^\infty$ there exists a constant $C>0$ such that\footnote{To see this just use Taylor's theorem and write $f(x)-f(y) = f'(y)(x-y)+ f''(x\theta+y)(x-y)^2$, $\theta \in [0,1]$, and then use the growth of $f'$ and $f''$}

\begin{equation}\label{eqn: f polynomial}
\|f(u)-f(v)\|\leq C \|u-v\|(1+\|u\|_\infty^{p+1}+\|v\|_\infty^{p+1}) \,.
\end{equation}
Let now  
\begin{align}
&S(t)=e^{t\Delta}, \,\, S_{h,\tau}=(I-\tau \Delta_h)^{-1}, \notag\\
&S_{h,\tau}(t):=S_{h,\tau}^j, \quad \text { if } t \in\left[t_{j-1}, t_j\right) \text { for } j=1,2, \ldots \notag\\
&E_{h,\tau}(t)=S(t)-S_{h,\tau}(t)P_h,
\end{align}
where $\Delta_h$ is defined in \eqref{def:dislap}.
We have the following error estimates for $E_{h,\tau}(t)$ (cf. \cite[Lemma 3.12 and 3.13]{Kruse}).
\begin{lemma}\label{lem:serrop}
Let $t>0$. 
\begin{enumerate}
\item For $0\leq \nu\leq 1$,  $\|(-\Delta)^\nu S(t)\|\leq Ct^{-\nu},\;\;\|(-\Delta)^{-\nu}(S(t)-\mathbb{I})\|\leq Ct^\nu.$
\item For $0\leq \nu\leq \mu\leq 2$ and $x\in \dot{H}^\nu$, 
\begin{align}
\|E_{h,\tau}(t)x\|\leq C(h^\mu+\tau^{\frac{\mu}{2}})t^{-\frac{\mu-\nu}{2}}\|x\|_{\dot{H}^{\nu}}. 
\end{align}
\item For $0\leq \mu\leq 1$ and $x\in \dot{H}^\mu$,
\begin{align}
\bigg(\int_0^t\|E_{h,\tau}(r)x\|^2dr\bigg)^{1/2}\leq C(h^{1+\mu}+\tau^{\frac{1+\mu}{2}})\|x\|_{\dot{H}^\mu}.
\end{align}
\end{enumerate}
\end{lemma}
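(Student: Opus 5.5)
The statement to prove is Lemma \ref{lem:serrop}: smoothing estimates for the semigroup $S(t)$, and nonsmooth-data error bounds for the fully discrete error operator $E_{h,\tau}(t)=S(t)-S_{h,\tau}(t)P_h$. I would split $E_{h,\tau}(t)$ into a spatial part and a temporal part and prove each with spectral or energy arguments. That is, I would work with the semidiscrete semigroup $S_h(t)=e^{t\Delta_h}$ and write
\[
E_{h,\tau}(t)=\bigl(S(t)-S_h(t)P_h\bigr)+\bigl(S_h(t)-S_{h,\tau}(t)\bigr)P_h=:F_h(t)+G_{h,\tau}(t).
\]

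\textbf{Part (1).} This is a direct spectral computation. With $-\Delta e_j=\lambda_j e_j$ we have
\[
\|(-\Delta)^\nu S(t)\|=\sup_j \lambda_j^\nu e^{-\lambda_j t}\le \sup_{x>0}x^\nu e^{-x}\,t^{-\nu},
\]
and
\[
\|(-\Delta)^{-\nu}(S(t)-I)\|=\sup_j \lambda_j^{-\nu}\bigl(1-e^{-\lambda_j t}\bigr)\le \sup_{x>0}x^{-\nu}\bigl(1-e^{-x}\bigr)\,t^\nu .
\]
Both suprema are finite for $0\le\nu\le1$.

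\textbf{Part (2).} For the spatial part I use the classical nonsmooth-data parabolic FEM estimate (Thom\'ee, Ch.~3):
\[
\|F_h(t)x\|\le Ch^\mu t^{-(\mu-\nu)/2}\|x\|_{\dot H^\nu}, \qquad 0\le\nu\le\mu\le2.
\]
I would obtain it by proving the endpoint cases $\mu=\nu=0$ (stability), $\mu=\nu=2$ and $\nu=0,\mu=2$, using the Ritz projection $R_h$, the splitting $F_h=(I-R_h)S+(R_hS-S_hP_h)$, and duality/energy arguments with weights $t$, $t^2$. Interpolation then gives the full range. For the temporal part, both $S_h$ and $S_{h,\tau}$ are functions of the self-adjoint operator $-\Delta_h$, so I expand in its discrete eigenbasis $\{\lambda_{h,j}\}$. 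For $t\in[t_{j-1},t_j)$ I then bound the scalar quantity
\[
\sup_{\lambda>0}\lambda^{-\nu/2}\,\bigl|e^{-\lambda t}-(1+\tau\lambda)^{-j}\bigr|\;\le\; C\tau^{\mu/2}t^{-(\mu-\nu)/2}.
\]
This follows from the rational-approximation estimate $|e^{-j\tau\lambda}-(1+\tau\lambda)^{-j}|\le C\min\{1,(\tau\lambda)^{\mu/2}\}e^{-c j\tau\lambda}$ (A-stability plus first-order consistency). The mismatch between $t$ and $t_j$ is controlled by part (1), since $t_j-t\le\tau$. To convert the discrete $\lambda_h^{\nu/2}$-norm of $P_hx$ back to $\|x\|_{\dot H^\nu}$, I use $\|(-\Delta_h)^{\nu/2}P_hx\|\le C\|x\|_{\dot H^\nu}$ for $0\le\nu\le1$, which is where the $H^1$-stability of $P_h$ on quasi-uniform meshes is needed. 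For $\nu>1$ the spatial part absorbs the regularity, with $\mu\le2$ on the temporal side.

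\textbf{Part (3).} Here I would use the same splitting, but with square-integrated estimates. For the semidiscrete part, the energy identity
\[
\tfrac12\tfrac{d}{dt}\|e\|^2+\|\nabla e\|^2=\cdots
\]
for $e=S_hP_hx-R_hS x$, combined with $\int_0^t\|\nabla S(r)x\|^2\,dr\le\frac12\|x\|^2$, gives $h^{1+\mu}$ at $\mu=0$ and $\mu=1$, and interpolation covers $0\le\mu\le1$. For the temporal part I again work spectrally:
\[
\int_0^\infty\bigl|e^{-\lambda r}-(1+\tau\lambda)^{-j(r)}\bigr|^2dr\le C\min\{\lambda^{-1},\tau\}\le C\tau^{(1+\mu)}\lambda^{\mu}\cdot\lambda^{-\mu}\min\{\cdots\}.
\]
A case split on $\tau\lambda\lessgtr1$ then yields $C\tau^{1+\mu}\lambda^{\mu}$, and summing over modes gives $\tau^{(1+\mu)/2}\|x\|_{\dot H^\mu}$.

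\textbf{Main obstacle.} I expect the hardest step to be the nonsmooth-data spatial estimate in (2) for the whole range $\nu<\mu$, including $\nu=0,\mu=2$. This needs careful weighted energy or resolvent arguments; I would rely on the standard resolvent-estimate proof of Thom\'ee rather than redo it. The comparison between the discrete and continuous fractional norms is the other delicate point.
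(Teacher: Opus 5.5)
The paper gives no proof of this lemma; it only cites Kruse, Lemmas 3.12--3.13. Your splitting $E_{h,\tau}=F_h+G_{h,\tau}$ is the standard route, and part (1) and the spatial half of (2) are fine. Several temporal and integrated steps fail as written, however.

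\textbf{Temporal part of (3) for $\mu>0$.} Your mode-wise bound $\int_0^\infty|e^{-\lambda r}-(1+\tau\lambda)^{-j(r)}|^2\,dr\le C\min\{\lambda^{-1},\tau\}$ only gives the case $\mu=0$. For $\tau\lambda<1$ it yields $C\tau$, but you need $C\tau^{1+\mu}\lambda^{\mu}=C\tau(\tau\lambda)^{\mu}$, so the case split fails for every $\mu\in(0,1]$. The missing ingredient is first-order consistency at low frequencies. With $z=\tau\lambda\le1$ and $r\in[t_{j-1},t_j)$ you have $|e^{-\lambda r}-e^{-\lambda t_j}|\le z\,e^{-\lambda r}$ and $|e^{-jz}-(1+z)^{-j}|\le Cjz^2e^{-cjz}$. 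These give the sharper bound $C\tau^2\lambda$, and then $\min\{\lambda^{-1},\tau^2\lambda\}\le\tau^{1+\mu}\lambda^{\mu}$ for $0\le\mu\le1$.

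\textbf{The rational-approximation estimate in (2).} The bound $|e^{-jz}-(1+z)^{-j}|\le C\min\{1,z^{\mu/2}\}e^{-cjz}$ is false for $z\gg1$: for $j=1$ the left side is of order $1/z$. The scalar bound you actually need is $z^{-\nu/2}|e^{-jz}-(1+z)^{-j}|\le Cj^{-(\mu-\nu)/2}$. It follows from the small-$z$ estimate above together with $|e^{-jz}-(1+z)^{-j}|\le 2^{1-j}$ for $z\ge1$.

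\textbf{The range $1<\nu\le2$ in (2).} You have $\|(-\Delta_h)^{\nu/2}P_hx\|\le C\|x\|_{\dot H^{\nu}}$ only for $\nu\le1$, and ``the spatial part absorbs the regularity'' is not an argument. Falling back on $\nu=1$ leaves a factor $t^{-(\mu-1)/2}$, e.g.\ $\tau t^{-1/2}$ instead of $\tau$ when $\nu=\mu=2$. This range is never used in the paper, but it is part of the statement. On the uniform mesh the conversion does extend to $\nu=2$: $\|\Delta_hP_hx\|\le\|P_h\Delta x\|+Ch^{-2}\|(P_h-R_h)x\|\le C\|x\|_{\dot H^2}$, using $\Delta_hR_h=P_h\Delta$ and the inverse inequality. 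Interpolation then covers $1<\nu<2$. Alternatively, prove the endpoints $(\nu,\mu)=(0,0),(0,2),(2,2)$ for the whole operator $E_{h,\tau}(t)$ and interpolate.

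\textbf{Spatial part of (3).} The energy identity for $e=S_hP_hx-R_hS(\cdot)x$ cannot give the case $\mu=0$, which is the only case of (3) the paper uses. For $x\in L^2$ the Ritz projection $R_hx$ is not even defined (in one dimension it is nodal interpolation). At $\mu=1$ the initial error $\|(P_h-R_h)x\|$ is only $O(h)$, where $h^2$ is needed. Compare instead with $\eta(r)=P_hS(r)x-S_h(r)P_hx$. It vanishes at $r=0$ and satisfies $(\eta_t,\chi)+(\nabla\eta,\nabla\chi)=((R_h-P_h)S(r)x,\Delta_h\chi)$ for $\chi\in V_h$. Testing with $\chi=(-\Delta_h)^{-1}\eta$ gives $\int_0^t\|\eta\|^2dr\le\int_0^t\|(R_h-P_h)S(r)x\|^2dr$, which is at most $Ch^2\|x\|^2$, respectively $Ch^4\|x\|_{\dot H^1}^2$. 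The remaining term $\int_0^t\|(I-P_h)S(r)x\|^2dr$ is handled by $\int_0^t\|\nabla S(r)x\|^2dr\le\tfrac12\|x\|^2$, resp.\ its $\dot H^1$ analogue.
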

We also recall that the unique mild solution of  
\eqref{eqn:SPDEintro}, i.e. 
\begin{align}\label{eq:truesol}
u(t)=S(t)u_0+\int_0^t S(t-s)f(u(s))ds+\int_0^t S(t-s)dW(s), 
\end{align}
enjoys the following properties. Firstly, if \eqref{assump eqn: growth of f}, Assumption \ref{assump:coercivity assumption on overall drift}, Assumption \ref{assump3} and Assumption \ref{assumption: on the covariance operator Q} hold,   with the same calculations as in the proof of \cite[Proposition 3.1]{brehier2014approximation}, one has that for every $m \in \N$ and for every $q\geq 1$, 
\begin{align}\label{eq:p61u0}
\sup\limits_{t\geq 0}\|u(t)\|_{L^{m}(\Omega;L^{q})} \leq C (1+ \|u_0\|_{L^{m}(\Omega;L^{q})}) \,.
\end{align}
In the above and throughout this section $L^m(\Omega;L^q)$ denotes the space of random functions v such that $\mathbb E \|v\|_{L^q}^m$ is finite.  

If also our Assumption \ref{assump2} holds, from \cite[Equation (3.60)]{WangW25}, \footnote{Our \eqref{eq:p61u1} is \cite[Equation (3.60)]{WangW25} when in the latter we take $\gamma=1$. Notice also that \cite[Equation (3.60)]{WangW25} is inside the proof of \cite[Theorem 3.8]{WangW25}. Such a theorem requires \cite[Assumption 2.1, 2.2, 3.6]{WangW25} to hold. The first and last assumptions are easily checked to hold under our assumptions. As for \cite[Assumption 2.2]{WangW25}, this is not used to prove \cite[Equation (3.60)]{WangW25}. 
} 
we have 
\begin{align}\label{eq:p61u1}
 \sup\limits_{t\in [0,T]} \|u(t)\|_{L^{m}(\Omega; \dot{H}^1)}\leq C(T)\big[\|u_0\|_{L^{m}(\Omega; \dot{H}^1)}+\sup\limits_{t\in [0,T]}\|f(u(t))\|_{L^{m}(\Omega;L^2)}+Tr(Q)\big].
\end{align}
Since  
\begin{align}\label{eq:p61f}
\sup\limits_{t\geq 0}\|f(u(t))\|_{L^{m}(\Omega;L^2)}&
\leq C \sup_{ t\geq 0}  \left[ 1+ \left(\mathbb{E} \|u(t)^p\|^m\right)^{1/m}\right] \nonumber \\
& = C \sup_{ t \geq 0}  \left[ 1+ \left(\mathbb{E} \|u(t)\|_{L^{2p}}^{pm}\right)^{1/m}\right] \nonumber\\
& \leq  C \left[ 1+ \left(\mathbb{E} \|u_0\|_{L^{2p}}^{pm}\right)^{1/m}\right]
= C (1+ \|u_0\|^p_{L^{m}(\Omega; L^{2p}) }) \, ,
\end{align} 
 substituting the above \eqref{eq:p61f} into \eqref{eq:p61u1} and using \eqref{eq:p61u0},  we obtain
\begin{align}\label{eq:p61u2}
\sup\limits_{t\in [0,T]} \|u(t)\|_{L^{m}(\Omega; H^1)}\leq C(T)(1+\|u_0\|_{L^{m}(\Omega; H^1)}+\|u_0\|^p_{L^m(\Omega; L^{2p})}) \,.
\end{align}
If also \eqref{C22} holds,  by the embedding $H^1\hookrightarrow L^\infty$ ( which holds for $d=1$), we can write  
\begin{align}\label{eq:nablaf}
\|\nabla f(u(t))\|&=\|f'(u(t))\nabla u(t)\|\leq C( 1+\|u(t)\|_\infty^{p-1})\|\nabla u(t)\|
\leq C(1+\|u(t)\|_1^p) \, ,
\end{align}
so that, by \eqref{eq:p61u2}, 
we also have 
\begin{align}\label{eq:fH1}
\sup\limits_{t\in[0,T]}\|f(u(t))\|_{L^{m}(\Omega;{H}^1)}\leq
C (T) (1+ \|u_0\|^p_{L^m(\Omega; {H}^1)}+ \|u_0\|^{p^2}_{L^m(\Omega; L^{2p})}) \,.
\end{align}
Finally, \eqref{eq:p61f} and Burkholder-Davis-Gundy inequality imply that for every $m \in \N$
\begin{align}\label{eq:contu}
&\mathbb{E}\|u(t)-u(s)\|^{2m}\\
&\leq C\mathbb{E}\bigg\|\int_s^t S(t-\sigma)f(u(\sigma))d\sigma\bigg\|^{2m}+C\mathbb{E}\bigg\|\int_s^t S(t-\sigma)dW(\sigma)\bigg\|^{2m}\notag\\
&\leq C(t-s)^{2m}\sup\limits_{t\in [0,T]}\mathbb{E}\|f(u(t))\|^{2m}+C\mathbb{E}\bigg\|\int_s^t S(t-\sigma)dW(\sigma)\bigg\|^{2m}\notag\\
&\leq C(t-s)^{2m}C(T)(1+\|u_0\|^{pm}_{L^{2m}(\Omega; L^{2p})})+C(T)|t-s|^m  \, . 
\end{align}


Throughout this section $\con(u_0)$ denotes a constant which depends on the initial datum and that may change from line to line. More precisely, it is a quantity of the form $\|u_0\|_{L^m(\Omega ; H^1)}$ for some $m$ (which will change from line to line), the important thing being that it depends only on the  moments of the $H^1$ norm (as opposed to other norms) of the initial datum. 
We also emphasize that the proofs of this section are rather standard, so we only give the main steps. 
\subsection{Finite time error bounds for the fully implicit scheme \eqref{scheme:im}} \label{subsec:proof of H2 for fully implicit} This bound has already been proved in \cite{liu2021strong} for deterministic $u_0\in \dot{H}^1$. Proposition \ref{Prop: finite tim error implicit} below  is substantially a restatement such results, in a notation and setting coherent to the one of our paper (and under the slight extension of random initial data).  

\begin{proposition}\label{Prop: finite tim error implicit}(Finite time error bound for FIE scheme  \eqref{scheme:im})
 Let $u_h^n$ be as in \eqref{scheme:im} and $u(t)$ be as in \eqref{eqn:SPDEintro}.  Suppose  \eqref{C21} holds for some constant $K_1 \in \R$.  Let also  \eqref{C22} and \eqref{assump eqn: growth of f} hold, together with  Assumption \ref{assump:coercivity assumption on overall drift}, Assumption \ref{assump3}, Assumptions \ref{assump2} and Assumption \ref{assumption: on the covariance operator Q}. Then 
\begin{align}\label{cimu0}
\|u(T)-u_h^n\|_{L^2(\Omega;L^2)}&\leq C(T) \con(u_0)(h+\tau^{\frac{1}{2}}) \, ,
\end{align}
where we recall that $\con(u_0)$ is a constant which depends on some moments of the $H^1$-norm of the initial datum $u_0$, while $C(T)$ is a generic constant which depends on the final time $T$. 
\end{proposition}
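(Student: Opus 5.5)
The plan is the standard error splitting for fully discrete implicit schemes with one-sided Lipschitz nonlinearity, as in \cite{liu2021strong}. The error is split through an auxiliary linear discrete process. Let $Z_h^n$ solve the linear scheme
\[
Z_h^{n+1}=S_{h,\tau}Z_h^n+S_{h,\tau}P_h\delta W^{n+1}, \qquad Z_h^0=0,
\]
and let $Z(t)=\int_0^tS(t-s)dW(s)$. Set $v(t)=u(t)-Z(t)$ and $v_h^n=u_h^n-Z_h^n$. Then
\[
u(t_n)-u_h^n=\big(v(t_n)-v_h^n\big)+\big(Z(t_n)-Z_h^n\big).
\]

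\textbf{Step 1: the stochastic convolution.} Write $Z(t_n)-Z_h^n=\int_0^{t_n}E_{h,\tau}(t_n-s)\,dW(s)$, using the piecewise constant $S_{h,\tau}(t)$. By the It\^o isometry,
\[
\mathbb E\|Z(t_n)-Z_h^n\|^2=\sum_j\gamma_j\int_0^{t_n}\|E_{h,\tau}(r)\tilde e_j\|^2dr.
\]
Lemma \ref{lem:serrop}(3) with $\mu=1$ bounds this by $C(h^2+\tau)\|(-\Delta)^{1/2}Q^{1/2}\|_{HS}^2$, which is finite by Assumption \ref{assump2}. Moments of $\|Z_h^n\|_{L^\infty}$ are bounded uniformly in $n\tau\le T$ by the discrete analogue of Assumption \ref{assumption: on the covariance operator Q}, using the inverse inequality or the discrete Sobolev embedding in $d=1$.

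\textbf{Step 2: the random PDE part.} The function $v_h^n$ solves the pathwise implicit scheme
\[
v_h^{n+1}-v_h^n=\tau\Delta_hv_h^{n+1}+\tau P_hf(v_h^{n+1}+Z_h^{n+1}).
\]
Testing with $v_h^{n+1}$ and using \eqref{eqn:coercive drift} together with the polynomial growth \eqref{assump eqn: growth of f} gives a priori bounds on $\mathbb E\|u_h^n\|_{L^\infty}^q$ for $n\tau\le T$. For the real solution, \eqref{eq:p61u0} and \eqref{eq:p61u2} give the corresponding $L^\infty$ and $H^1$ moment bounds.

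\textbf{Step 3: the error equation for $e^n=P_hv(t_n)-v_h^n$.} It reads
\[
e^{n+1}-e^n-\tau\Delta_he^{n+1}=\tau P_h\big[f(u(t_{n+1}))-f(u_h^{n+1})\big]+R^{n+1},
\]
where $R^{n+1}$ collects the consistency terms:
\begin{itemize}
\item the time-discretization defect of $v$, controlled by the H\"older continuity \eqref{eq:contu};
\item the spatial defect $(I-P_h)$, controlled by $ch\|\cdot\|_1$ and \eqref{eq:p61u2};
\item the drift quadrature error $\int_{t_n}^{t_{n+1}}S(t_{n+1}-s)f(u(s))ds-\tau f(u(t_{n+1}))$, controlled by \eqref{eq:fH1} and Lemma \ref{lem:serrop}(1).
\end{itemize}

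Testing the error equation with $e^{n+1}$, I split
\[
f(u(t_{n+1}))-f(u_h^{n+1})=\big[f(u(t_{n+1}))-f(\tilde u^{n+1})\big]+\big[f(\tilde u^{n+1})-f(u_h^{n+1})\big],
\qquad \tilde u^{n+1}=P_hv(t_{n+1})+Z_h^{n+1}.
\]
Since $\tilde u^{n+1}-u_h^{n+1}=e^{n+1}$, the one-sided condition \eqref{C21} bounds the second bracket tested against $e^{n+1}$ by $K_1\|e^{n+1}\|^2$. The first bracket is estimated via \eqref{eqn: f polynomial} by $\|u(t_{n+1})-\tilde u^{n+1}\|(1+\|u\|_\infty^{p+1}+\|\tilde u\|_\infty^{p+1})$. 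This is of order $(h+\tau^{1/2})$ in $L^2(\Omega)$ after H\"older's inequality in $\omega$, using Step 1 and the moment bounds of Step 2.

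A discrete Gronwall argument over $n\tau\le T$ then gives
\[
\mathbb E\|e^n\|^2\le C(T)\,\con(u_0)^2(h^2+\tau).
\]
The bound \eqref{cimu0} follows by the triangle inequality, using $\|v(t_n)-P_hv(t_n)\|\le ch\|v\|_1$.

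\textbf{Main obstacle.} I expect the hardest part to be the non-globally Lipschitz term: keeping $K_1$ as the only constant multiplying $\|e\|^2$ inside the Gronwall step, while moving all polynomial weights onto terms that do not involve $e$. This requires the uniform-in-$n$ $L^\infty$ moment bounds for both the numerical solution $u_h^n$ and the discrete convolution $Z_h^n$. For $Z_h^n$ I would first try the inverse estimate combined with BDG; if that loses powers of $h$, I would instead use the $H^1$ moment bounds of Proposition \ref{prop:moment bound for implicit scheme} together with $H^1\hookrightarrow L^\infty$.
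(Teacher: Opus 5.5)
Your overall architecture matches the paper's: a comparison process carrying the same discrete noise as $u_h^n$, the splitting $f(u)-f(u_h)=[f(u)-f(\tilde u)]+[f(\tilde u)-f(u_h)]$ with \eqref{C21} on the second bracket and local Lipschitz on the first, and a discrete Gronwall argument. Your Step 1 is also fine.

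\textbf{The gap is in Step 3, in how the consistency error enters.} You compare $P_hv(t_n)$ directly with $v_h^n$, so your error equation carries a residual $R^{n+1}$. Since $\Delta_hP_h\neq P_h\Delta$ and backward Euler evaluates the Laplacian at $t_{n+1}$, the linear part of $(R^{n+1},\psi)$, $\psi\in V_h$, is
\[
\int_{t_n}^{t_{n+1}}\big(\nabla(P_hv(t_{n+1})-v(s)),\nabla\psi\big)\,ds .
\]
Test with $e^{n+1}$ and absorb into $\tau\|\nabla e^{n+1}\|^2$. Closing the Gronwall argument at the rate $h^2+\tau$ then requires
\[
\sum_n\int_{t_n}^{t_{n+1}}\mathbb E\|\nabla(P_hv(t_{n+1})-v(s))\|^2\,ds\lesssim h^2+\tau .
\]
This needs $\|\nabla(I-P_h)v\|\lesssim h\|v\|_2$ and H\"older continuity in time of $v$ in $H^1$.

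The tools you cite cannot deliver this. Both \eqref{eq:contu} and $\|(I-P_h)w\|\le ch\|w\|_1$ are $L^2$-in-space statements. With $u_0$ only in $H^1$, this sum can only be controlled by a global summation that exploits parabolic smoothing, which the proposal does not contain. Switching to the Ritz projection does not help, since the time-increment term $\int(\nabla(v(t_{n+1})-v(s)),\nabla\psi)\,ds$ remains.

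\textbf{How the paper avoids this.} It never produces a residual. Its comparison process \eqref{eq:uhatdef} is the discrete variation-of-constants formula driven by the \emph{exact} nonlinearity. In your notation it is $\breve u_h^n=\breve v_h^n+Z_h^n$, where
\[
\breve v_h^n=S_{h,\tau}^nP_hu_0+\tau\sum_{\ell=0}^{n-1}S_{h,\tau}^{n-\ell}P_hf(u(t_{\ell+1}))
\]
replaces $P_hv(t_n)$. Then $\breve e^k=u_h^k-\breve u_h^k$ satisfies
\[
\breve e^k-\breve e^{k-1}=\tau\Delta_h\breve e^k+\tau P_h\big(f(u_h^k)-f(u(t_k))\big),
\]
with no remainder. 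All consistency sits in $u(t_k)-\breve u_h^k$, which is estimated globally through $E_{h,\tau}$:
\begin{itemize}
\item the initial datum by Lemma \ref{lem:serrop}(2) with $\mu=\nu=1$;
\item the drift by Lemma \ref{lem:serrop}(2) with $\mu=1,\nu=0$ and the integrable weight $t^{-1/2}$;
\item the noise by Lemma \ref{lem:serrop}(3);
\item the increment $f(u(s))-f(u(t_{\ell+1}))$ inside the mild formula by $L^2$ H\"older continuity, which is sufficient there.
\end{itemize}
With this substitution your nonlinear step goes through verbatim.

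\textbf{Your Step 2 is unnecessary.} As written it also does not give $L^\infty$ moments. Testing with $v_h^{n+1}$ yields only $L^2$/energy bounds, and \eqref{eqn:coercive drift} controls $(f(w),w)$, not $(f(v_h+Z_h),v_h)$. The only term containing $u_h$ is handled by \eqref{C21} alone. The polynomial weights involve only $u(t_k)$ and the comparison process, whose $H^1$ moments follow from its explicit linear form, cf.\ \eqref{eq:stabuhat}. So the ``main obstacle'' you anticipate does not arise; the real difficulty is the consistency step above.
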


\begin{proof}[Sketch of proof of Proposition \ref{Prop: finite tim error implicit}] The statements of this proposition are a direct consequence of  the proof of \cite[Lemma 4.2 and Theorem 1.1]{liu2021strong}. We give minimal detail.  
First, we introduce an auxiliary process: 
\begin{align}\label{eq:uhatdef}
\breve{u}_h^{k+1}=S_{h,\tau}^{k+1}P_hu_0+\tau\sum\limits_{\ell=0}^kS_{h,\tau}^{k+1-\ell}P_hf(u(t_{\ell+1}))+\sum\limits_{\ell=0}^k S_{h,\tau}^{k+1-\ell}P_h[\delta W^{\ell+1}].
\end{align}
By the triangle inequality
\begin{equation}\label{eqn: use split implicit}
\left\|u\left(t_k\right)-u_h^k\right\|_{L^{2}(\Omega ; L^2)} \leq\left\|u\left(t_k\right)-\breve{u}_h^k\right\|_{L^{2}(\Omega ; L^2)}+\left\|\breve{u}_h^k-u_h^k\right\|_{L^{2}(\Omega ; L^2)}.
\end{equation}
The first term on the RHS of the above is studied in \cite[Lemma 4.2]{liu2021strong}. When comparing with that paper, recall that in that paper $q-1$ is what here we call $p$ (i.e. the growth of $f$) and take everywhere $\gamma=0$.

When the initial datum is deterministic, from \cite[equation (4.14) and bound after (4.19)]{liu2021strong} we have 
$$
\|u(t_k)-\breve{u}_h^k\|_{L^q(\Omega; L^2)}\leq C (h+\tau^{1/2}) (1+\|u_0\|_{\dot{H}^1}^{p} + \|u_0\|_{\dot{H}^1}^{p(p-1)}) \,.
$$
When the initial datum is random we just need to adapt slightly the calculations in \cite{liu2021strong}. So,  acting as in \cite[Proof of Lemma 4.2]{liu2021strong}, for any $q\geq 2$ we have
\begin{align}
\|u(t_k)-\breve{u}_h^k\|_{L^q(\Omega; L^2)}
&\leq \|E_{h,\tau}(t_k)u_0\|_{L^q(\Omega;L^2)}\\
&+\bigg\|\int_0^{t_k}S(t_k-s)f(u(s))ds-\tau\sum\limits_{\ell=0}^{k-1} S_{h,\tau}^{k-\ell}P_hf(u(t_{\ell+1}))\bigg\|_{L^q(\Omega;L^2)}\notag\\
&+\bigg\|\int_0^{t_k}S(t_k-s)dW^Q(s)-\sum\limits_{\ell=1}^k \int_{t_{\ell-1}}^{t_\ell}S_{h,\tau}^{k-\ell}P_h\delta W(s)\bigg\|_{L^q(\Omega;L^2)}\notag\\
&:=J_1+J_2+J_3.
\end{align}
We are using here the same notation $J_{1,2,3}$ as in \cite{liu2021strong} to help the reader track the proof. 
Using Lemma \ref{lem:serrop} (2) with $\mu=\nu=1$ and Assumption \ref{assump3} give 
\begin{align}
J_1\leq C(h+\tau^{\frac{1}{2}})\|u_0\|_{L^q(\Omega;H^1)}.
\end{align}
The term $J_2$ is then decomposed as follows.
\begin{align}
J_2&=\bigg\|\int_0^{t_k}S(t_k-s)f(u(s))ds-\tau\sum\limits_{\ell=0}^{k-1}S_{h,\tau}^{k-\ell}P_hf(u(t_{\ell+1}))\bigg\|_{L^q(\Omega;L^2)}\notag\\
&\leq \bigg\|\sum\limits_{\ell=0}^{k-1}\int_{t_{\ell}}^{t_{\ell+1}}S(t_k-s)(f(u(s))-f(u(t_{\ell+1})))ds\bigg\|_{L^q(\Omega;L^2)}\notag\\
&\quad+\bigg\|\sum\limits_{\ell=0}^{k-1} \int_{t_{\ell}}^{t_{\ell+1}} (S(t_k-s)-S_{h,\tau}^{k-\ell}P_h)f(u(t_{\ell+1}))ds\bigg\|_{L^q(\Omega;L^2)}\notag\\
&:=J_{21}+J_{22}.
\end{align}
From the estimate of the terms that are denoted $J_{21}^{m+1}$ and $J_{22}^{m+1}$ in \cite[Lemma 4.2]{liu2021strong}, we have
\begin{align}
J_{21}&\leq C(T)\big[1+\|u_0\|_{L^{2q(p-1)}(\Omega;\dot{H}^1)}^{p-1}\big][1+\|u_0\|_{L^{pq}(\Omega;\dot{H}^1)}^p]\tau^{\frac{1}{2}}
\end{align}
and 
\begin{align}
J_{22}&\leq C(T)(h+\tau^{\frac{1}{2}})\sup\limits_{t\in [0,T]}\|f(u(t))\|_{L^{q}(\Omega;L^2)}\notag\\
&\leq C(T)\con(u_0)(h+\tau^{\frac{1}{2}}),
\end{align}
where we have used \eqref{eq:p61f} and the embedding of $L^q$ into $H^1$. 
Therefore, 
\begin{align}
J_2\leq C(T) \con(u_0)(h+\tau^{\frac{1}{2}}) .
\end{align}
Similarly, by the estimate of $J_3^{m+1}$ of \cite[Lemma 4.2]{liu2021strong},  \eqref{eq:p61u2} and again  the embedding of $L^q$ into $H^1$, we have
\begin{align}
J_3&\leq C(T)(h+\tau^{\frac{1}{2}})(1+\sup\limits_{t\in [0,T]}\|u(t)\|_{L^q(\Omega;\dot{H}^1)})\leq C(T) \con(u_0)(h+\tau^{\frac{1}{2}}).
\end{align}
Therefore, 
\begin{align}\label{eq:erruhat1}
\|u(t_k)-\breve{u}_h^k\|_{L^q(\Omega;L^2)}&\leq C(T) \con(u_0)(h+\tau^{\frac{1}{2}})
\end{align}
{We now consider the second term in \eqref{eqn: use split implicit}, denoting the difference inside the norm as
\begin{equation}
\label{e:ekchk}
    \breve{e}^k:=u_h^k-\breve{u}_h^k.
\end{equation}
This} satisfies
\begin{align}
\breve{e}^k-\breve{e}^{k-1}=\tau\Delta_h\breve{e}^k+\tau P_h(f(u_h^k)-f(u(t_k))),\ \ \ \breve{e}^0=0.
\end{align}

 From the proof of \cite[Theorem 1.1]{liu2021strong} (see equation above \cite[equation (4.27)]{liu2021strong}), 
\begin{equation}
\begin{aligned}
 \mathbb{E}\|\breve{e}_h^{k}\|^2-\mathbb{E}\|\breve{e}_h^{k-1}\|^2 
 &\leq 2|K_1| \mathbb{E}\|\breve{e}_h^{k}\|^2 \tau+C\tau\sqrt{\mathbb{E}\left\|u\left(t_{k}\right)-\breve{u}_h^{k}\right\|^4} \\
&  \times\left(1+\left(\mathbb{E}\left\|u\left(t_{k}\right)\right\|_1^{2(p-1)}\right)^{\frac{1}{2}}+\left(\mathbb{E}\|\breve{u}_h^{k}\|_1^{2(p-1)}\right)^{\frac{1}{2}}\right)\,.
\end{aligned}
\end{equation}
Therefore, summing over $k$ gives
\begin{align}
\left(1-2 |K_1| \tau\right) \mathbb{E}\|\breve{e}_h^k\|^2 &\leq C\tau\sum\limits_{m=0}^{k-1}\sqrt{\mathbb{E}\left\|u\left(t_{m}\right)-\breve{u}_h^{m}\right\|^4}\left(1+\left(\mathbb{E}\left\|u\left(t_{m}\right)\right\|_{\dot{H}^1}^{2(p-1)}\right)^{\frac{1}{2}}+\left(\mathbb{E}\|\breve{u}_h^{m}\|_{\dot{H}^1}^{2(p-1)}\right)^{\frac{1}{2}}\right)\notag\\
&+2 |K_1| \tau \sum_{m=0}^{k-1} \mathbb{E}\left\|\breve{e}_h^m\right\|^2.\notag
\end{align}
Hence, the discrete Gronwall's inequality implies
\begin{align}\label{eq:ehat}
\mathbb{E}\|\breve{e}_h^k\|^2&\leq C\tau\sum\limits_{m=0}^{k-1}\sqrt{\mathbb{E}\left\|u\left(t_{m}\right)-\breve{u}_h^{m}\right\|^4}\left(1+\left(\mathbb{E}\left\|u\left(t_{m}\right)\right\|_{\dot{H}^1}^{2(p-1)}\right)^{\frac{1}{2}}+\left(\mathbb{E}\|\breve{u}_h^{m}\|_{\dot{H}^1}^{2(p-1)}\right)^{\frac{1}{2}}\right)e^{4|K_1|T}. 
\end{align}
Now, by \eqref{eq:uhatdef},
\begin{align}\label{eq:stabuhat}
\|\breve{u}_h^{m}\|_{L^{2(p-1)}(\Omega;H^1)}&\leq \|S_{h,\tau}^{m}P_hu_0\|_{L^{2(p-1)}(\Omega;H^1)}+\tau\sum\limits_{j=0}^{m-1}\big\|S_{h,\tau}^{m-j}P_hf(u(t_{m}))\big\|_{L^{2(p-1)}(\Omega;H^1)}\\
&+\sum\limits_{j=0}^{m-1} \|S_{h,\tau}^{m-j}P_h[\delta W^{j+1}] \|_{L^{2(p-1)}(\Omega;H^1)}\notag\\
&\leq \|u_0\|_{L^{2(p-1)}(\Omega;H^1)}+C(T)(1+\|u_0\|_{L^{2(p-1)p^3}(\Omega; \dot{H}^1)}^{p^2})+C(T)\|(-\Delta)^{\frac{1}{2}}Q^{\frac{1}{2}}\|_{HS}. \notag
\end{align}
Therefore, substituting the estimates of \eqref{eq:erruhat1}, \eqref{eq:p61u2} and \eqref{eq:stabuhat} into \eqref{eq:ehat} leads to
\begin{align}\label{eq:erruhat2}
\|\breve{e}_h^k\|_{L^2(\Omega;H)}&\leq C(T)(h+\tau^{\frac{1}{2}}) \con(u_0) e^{2|K_1|T}.
\end{align}
Collecting the estimates of \eqref{eq:erruhat1} and \eqref{eq:erruhat2}, we have
\begin{align}
\|u(t_k)-u_h^k\|_{L^2(\Omega;H)}&\leq C(T) \con(u_0)(h+\tau^{\frac{1}{2}}).
\end{align}
\end{proof}

\subsection{Finite time error bound for the scheme \eqref{scheme:taming1}}\label{subsec: tamed H2}
\begin{proposition}\label{Prop: one time  step estimate taming1}
Let $u_h^n$ and $u(t)$ be as in \eqref{scheme:taming1} and \eqref{eqn:SPDEintro}.  Suppose  \eqref{C21} holds for some constant $K_1 \in \R$.  Let also \eqref{C22} hold, together with Assumptions \ref{assump1}, Assumption \ref{assump:coercivity assumption on overall drift} (for $m=2$), Assumption \ref{assump3}, Assumptions \ref{assump2} and Assumption \ref{assumption: on the covariance operator Q}. Then, the following error estimate holds:
\begin{equation}\label{eqn:finite time bound for our scheme}
\mathbb E \|u(T) -u_h^n \|^2 \leq C(T) \con(u_0)(h^2+\tau),
\end{equation}
  where $n=T/\tau$. 
\end{proposition}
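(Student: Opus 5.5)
I would mirror the proof of Proposition \ref{Prop: finite tim error implicit}: compare $u_h^k$ with an auxiliary linear-implicit process built from the exact solution, and then control the remainder through an error equation. Concretely, I would introduce
\begin{align*}
\breve{u}_h^{k}=S_{h,\tau}^{k}P_hu_0+\tau\sum_{\ell=0}^{k-1}S_{h,\tau}^{k-\ell}P_hf(u(t_{\ell}))+\sum_{\ell=0}^{k-1} S_{h,\tau}^{k-\ell}P_h[\delta W^{\ell+1}].
\end{align*}
Because the tamed scheme is explicit in the drift, the nonlinearity is evaluated at $t_\ell$ rather than $t_{\ell+1}$. The error $\|u(t_k)-\breve u_h^k\|_{L^q(\Omega;L^2)}$ is then bounded exactly as for $J_1,J_2,J_3$ in the proof of Proposition \ref{Prop: finite tim error implicit}. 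I would use Lemma \ref{lem:serrop}, the time regularity \eqref{eq:contu}, the bound \eqref{eq:p61f}, and the $H^1$ bounds \eqref{eq:p61u2}, \eqref{eq:fH1}. The shift from $t_{\ell+1}$ to $t_\ell$ only changes $J_{21}$ by a term of the same $\tau^{1/2}$ order. The outcome is $C(T)\con(u_0)(h+\tau^{1/2})$.

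Next I would write $e^k:=u_h^k-\breve u_h^k$, with $e^0=0$. It solves
\begin{align*}
e^{k+1}-e^k=\tau\Delta_h e^{k+1}+\tau P_h\big(\Gamma_k f(u_h^k)-f(u(t_k))\big),\qquad \Gamma_k:=\big(1+\alpha\tau\|\nabla f(u_h^k)\|^2\big)^{-1}.
\end{align*}
I would split the bracket into three pieces:
\[
\big[f(u_h^k)-f(\breve u_h^k)\big]+\big[f(\breve u_h^k)-f(u(t_k))\big]-(1-\Gamma_k)f(u_h^k).
\]
Testing with $2e^{k+1}$ gives $\|e^{k+1}\|^2-\|e^k\|^2+2\tau\|\nabla e^{k+1}\|^2$ on the left-hand side. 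The piece $f(u_h^k)-f(\breve u_h^k)$ is paired with $e^{k+1}=e^k+(e^{k+1}-e^k)$. Against $e^k$, the one-sided Lipschitz bound \eqref{C21} gives $K_1\|e^k\|^2$. Against $e^{k+1}-e^k$, I would use \eqref{eqn: f polynomial} and the embedding $H^1\hookrightarrow L^\infty$ to obtain $\tau\|e^k\|(1+\|u_h^k\|_1^{p-1}+\|\breve u_h^k\|_1^{p-1})\|e^{k+1}-e^k\|$. This is absorbed using the $\tau\|\nabla e^{k+1}\|^2$ term together with an inverse/Poincaré argument, or alternatively by Young's inequality giving a $\tau^2$ factor. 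The second piece is handled like the corresponding term in Proposition \ref{Prop: finite tim error implicit}, producing $\tau\|u(t_k)-\breve u_h^k\|(1+\|u(t_k)\|_1^{p-1}+\|\breve u_h^k\|_1^{p-1})$. The taming defect satisfies $1-\Gamma_k\le\alpha\tau\|\nabla f(u_h^k)\|^2$. So its contribution is $\tau^2\|\nabla f(u_h^k)\|^2\|f(u_h^k)\|\,\|e^{k+1}\|$, which by \eqref{eq:nablaf} is bounded by $\tau^2(1+\|u_h^k\|_1^{3p})\|e^{k+1}\|$. After Young's inequality this is $O(\tau^3)$ per step times moments, hence $O(\tau^2)$ after summation, which is better than needed.

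The main obstacle is that the coefficients multiplying $\|e^k\|$ are random and polynomial in $\|u_h^k\|_1$, so a plain Gronwall argument in expectation does not close. I would first try to avoid this by using \eqref{C21} for the whole difference $f(u_h^{k})-f(\breve u_h^{k})$ paired with $e^k$, keeping only deterministic constants $K_1$ there. The remaining cross term would then be bounded using the uniform moments of $u_h^k$ in $H^1$ from Proposition \ref{them: uniform in time moment bound for tamed}, and of $\breve u_h^k$ as in \eqref{eq:stabuhat}, via Hölder's inequality. If the mismatch between $e^{k+1}$ and $e^k$ causes trouble, my fallback is a localisation on the event $\{\|u_h^k\|_1\le R\}$: moments give a Chebyshev bound for the complement, and $R$ is chosen as a power of $\tau^{-1}$. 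Summing over $k\le T/\tau$ and applying the discrete Gronwall inequality then gives $\mathbb E\|e^n\|^2\le C(T)\con(u_0)(h^2+\tau)$. Combined with the first step by the triangle inequality, this yields \eqref{eqn:finite time bound for our scheme}.
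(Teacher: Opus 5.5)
Your proposal is correct and is essentially the paper's argument. It uses an auxiliary process built from the exact solution, and applies the one-sided Lipschitz bound \eqref{C21} to $f(u_h^k)-f(\breve u_h^k)$ against $e^k$. The cross term against $e^{k+1}-e^k$ is handled by Young's inequality followed by a crude moment bound, using Proposition \ref{them: uniform in time moment bound for tamed} and the $H^1$ bound on $\breve u_h^k$. This gives $O(\tau^2)$ per step, hence $O(\tau)$ after summation, and discrete Gronwall closes the estimate, so neither the inverse/Poincar\'e option nor the localisation fallback is needed.

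The only real difference is where the taming enters. The paper builds the factor $1/(1+\tau\|\nabla f(u(t_j))\|^2)$ into $\breve u_h^k$, which forces it to estimate the defect at the exact solution (the term $I_{23}$) and a difference of taming factors ($J_1$). You keep $\breve u_h^k$ untamed and treat the single defect $(1-\Gamma_k)f(u_h^k)$ at the numerical solution, which is equally valid and slightly more economical.
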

Before starting the proof, similarly to \eqref{eq:uhatdef}, we introduce an auxiliary process: 
\begin{align}\label{aux}
\breve{u}_h^{k+1}=S_{h,\tau}^{k+1}P_hu_0+\tau\sum\limits_{j=0}^k \frac{S_{h,\tau}^{k+1-j}P_hf(u(t_j))}{1+\tau\|\nabla f(u(t_j))\|^2}+\sum\limits_{j=0}^k S_{h,\tau}^{k+1-j}P_h[\delta W^{j+1}].
\end{align}
Hence, using the equivalence \cite{anderssonL16}
$$c_1\|(-\Delta_h)^\gamma v\|\leq \|(-\Delta)^\gamma v\|\leq c_2\|(-\Delta_h)^\gamma v\|,\;\; v\in V_h, \; -\frac{1}{2}\leq\gamma\leq \frac{1}{2}, $$
 where $c_1,c_2$ are two positive constants independent of $h$, and the Burkholder-Davis-Gundy inequality, we have
\begin{align}\label{eq:hatu}
\|\breve{u}_h^{k+1}\|_{L^{2q}(\Omega;\dot{H}^1)}&\leq \|u_0\|_{L^{2q}(\Omega;H^1)}+\tau \sum\limits_{j=0}^k \|(-\Delta_h)^{\frac{1}{2}}S_{h,\tau}^{k+1-j}P_hf(u(t_j))\|_{L^{2q}(\Omega;L^2)}\notag\\
&+\left\|\sum\limits_{j=0}^k(-\Delta_h)^{\frac{1}{2}}S_{h,\tau}^{k+1-j}P_h[\delta W^{j+1}]\right\|_{L^{2q}(\Omega;L^2)} \notag\\
&\leq \|u_0\|_{L^{2q}(\Omega;H^1)}+C\tau\sum\limits_{j=0}^k t_{k+1-j}^{-\frac{1}{2}}\|f(u(t_j))\|_{L^{2q}(\Omega;L^2)}\notag\\
&+C\bigg(\tau\sum\limits_{j=0}^k \|(-\Delta_h)^{\frac{1}{2}}S_{h,\tau}^{k+1-j}P_hQ^{\frac{1}{2}}\|^2_{HS}\bigg)^{\frac{1}{2}}\notag\\
&\leq C(T) (\con(u_0) +\|(-\Delta)^{\frac{1}{2}}Q^{\frac{1}{2}}\|_{HS}), 
\end{align}
where \eqref{eq:p61f} has been used in the last step.

\begin{lemma}\label{lem:erruhat}
With the notation introduced so far and with the same assumption as in Proposition \ref{Prop: one time  step estimate taming1}, the following holds: 
\begin{align}
\|u(t_k)-\breve{u}_h^k\|_{L^q(\Omega;L^2)}
\leq C(T)\con(u_0)(h+\tau^{\frac{1}{2}}) \, ,\notag
\end{align}
for $0\leq t_k \leq T$
or\  $k\leq T/\tau$. 
\end{lemma}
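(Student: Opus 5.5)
We follow the structure used for the fully implicit scheme, that is, the decomposition in the proof of Proposition \ref{Prop: finite tim error implicit} adapted from \cite[Lemma 4.2]{liu2021strong}, and add one extra term that accounts for the taming factor. Subtracting \eqref{aux} (with index $k$) from the mild formulation \eqref{eq:truesol} at $t=t_k$, we write
\begin{align*}
\|u(t_k)-\breve{u}_h^k\|_{L^q(\Omega;L^2)}\leq J_1+J_{21}+J_{22}+J_{23}+J_3,
\end{align*}
where the terms are defined as follows.
\begin{itemize}
\item $J_1=\|E_{h,\tau}(t_k)u_0\|_{L^q(\Omega;L^2)}$.
\item $J_{21}$ is the time-regularity term $\sum_j\int_{t_j}^{t_{j+1}}S(t_k-s)(f(u(s))-f(u(t_j)))ds$.
\item $J_{22}$ is the semigroup-error term $\sum_j\int_{t_j}^{t_{j+1}}(S(t_k-s)-S_{h,\tau}^{k-j}P_h)f(u(t_j))ds$.
\item $J_{23}=\big\|\tau\sum_j S_{h,\tau}^{k-j}P_h f(u(t_j))\big(1-(1+\tau\|\nabla f(u(t_j))\|^2)^{-1}\big)\big\|_{L^q(\Omega;L^2)}$ is the taming error.
\item $J_3$ is the stochastic convolution error.
\end{itemize}
Terms $J_1$, $J_{22}$ and $J_3$ are identical to the fully implicit case. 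Lemma \ref{lem:serrop}(2) with $\mu=\nu=1$ gives $J_1\leq C(h+\tau^{1/2})\con(u_0)$. Lemma \ref{lem:serrop}(2) with $\nu=0$, together with \eqref{eq:p61f}, gives the same rate for $J_{22}$. Lemma \ref{lem:serrop}(3) with $\mu=0$, combined with Assumption \ref{assump2} and \eqref{eq:p61u2}, handles $J_3$.

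For $J_{21}$ the evaluation point is now the left endpoint $t_j$ rather than $t_{j+1}$. This is harmless. We will use \eqref{eqn: f polynomial} together with H\"older's inequality in $\Omega$ to bound $\|f(u(s))-f(u(t_j))\|$ by $\|u(s)-u(t_j)\|_{L^{2q}(\Omega;L^2)}$ times $1+\sup_t\|u(t)\|^{p+1}_{L^{2q(p+1)}(\Omega;L^\infty)}$. The first factor is $O(\tau^{1/2})$ by \eqref{eq:contu}. The second is controlled by $\con(u_0)$ through the Sobolev embedding $H^1\hookrightarrow L^\infty$ and \eqref{eq:p61u2}. Summing, and using $\|S(t_k-s)\|\leq 1$, yields $J_{21}\leq C(T)\con(u_0)\tau^{1/2}$. (One can in fact keep $S(t_k-s)$ and exploit smoothing to improve the rate, but this is not needed.)

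The new term is $J_{23}$. Using $1-(1+x)^{-1}\leq x$ with $x=\tau\|\nabla f(u(t_j))\|^2$, and the contractivity $\|S_{h,\tau}^{k-j}P_h\|\leq 1$ in $L^2$, we get
$$J_{23}\leq \tau\sum_{j<k}\tau\,\big\|\,\|f(u(t_j))\|\,\|\nabla f(u(t_j))\|^2\big\|_{L^q(\Omega)}\leq C T\tau \sup_{t\le T}\big(\|f(u(t))\|_{L^{3q}(\Omega;L^2)}\|f(u(t))\|^2_{L^{3q}(\Omega;H^1)}\big).$$
By \eqref{eq:p61f} and \eqref{eq:fH1} this is at most $C(T)\con(u_0)\tau\leq C(T)\con(u_0)\tau^{1/2}$ for $\tau\leq1$. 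Here the moments of $\|u_0\|_{L^{2p}}$ appearing in \eqref{eq:fH1} are absorbed into $H^1$-moments via the embedding $H^1\hookrightarrow L^{2p}$, and the resulting constant is finite thanks to Assumption \ref{assump3}.

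We expect the main obstacle to be this taming term. It is first order in $\tau$ only at the price of high moments of $\|\nabla f(u)\|$, which in turn require the $H^1$ moment bound \eqref{eq:p61u2} uniformly on $[0,T]$. The argument therefore depends on checking that every such moment is expressible through $\con(u_0)$ via \eqref{eq:fH1} and the 1D embedding. The remaining steps are bookkeeping along the lines of \cite[Lemma 4.2]{liu2021strong}. Collecting the five bounds gives $\|u(t_k)-\breve{u}_h^k\|_{L^q(\Omega;L^2)}\leq C(T)\con(u_0)(h+\tau^{1/2})$.
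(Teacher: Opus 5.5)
Your proposal is correct and follows essentially the same route as the paper. It uses the same splitting into the initial-data error, the time-regularity term, the semigroup-approximation term, the taming error and the stochastic-convolution error (the paper's $I_1, I_{21}, I_{22}, I_{23}, I_3$), with the taming error shown to be $O(\tau)$ through the moment bounds \eqref{eq:nablaf} and \eqref{eq:fH1}. Your version is slightly cleaner: you isolate the taming error exactly via $x/(1+x)\le x$ and the $L^2$-contractivity of $S_{h,\tau}^{k-j}P_h$, and you keep the factor $\tau^{1/2}$ in the time-regularity term, which the paper's write-up of $I_{21}$ drops.
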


\begin{proof} Subtracting \eqref{aux} from \eqref{eq:truesol}, and taking associated norms, we have
\begin{align}
&\|u(t_k)-\breve{u}_h^k\|_{L^q(\Omega; L^2)}\notag\\
&\leq \|E_{h,\tau}(t_k)u_0\|_{L^q(\Omega;L^2)}+\bigg\|\int_0^{t_k}S(t_k-s)f(u(s))ds-\tau\sum\limits_{\ell=1}^{k} \frac{S_{h,\tau}^{k-\ell}P_hf(u(t_{\ell-1}))}{1+\tau\|\nabla f(u(t_{\ell-1}))\|^2}\bigg\|_{L^q(\Omega;L^2)}\notag\\
&+\bigg\|\int_0^{t_k}S(t_k-s)dW^Q(s)-\sum\limits_{\ell=1}^k \int_{t_{\ell-1}}^{t_\ell}S_{h,\tau}^{k-\ell}P_h\delta W(s)\bigg\|_{L^q(\Omega;L^2)}\notag\\
&:=I_1+I_2+I_3.
\end{align}
Using the first bound  of Lemma \ref{lem:serrop}  with $\mu=\nu=1$ and Assumption \ref{assump3} gives 
\begin{align}
I_1\leq C(h+\tau^{\frac{1}{2}})\|u_0\|_{L^q(\Omega;H^1)}.
\end{align}
The term $I_2$ can be decomposed in the following way:
\begin{align}
I_2 & =\bigg\|\sum\limits_{\ell=1}^k\int_{t_{\ell-1}}^{t_\ell}\bigg[S(t_k-s)f(u(s))-\frac{S_{h,\tau}^{k-\ell}P_hf(u(t_{\ell-1}))}{1+\tau\|\nabla f(u(t_{\ell-1}))\|^2}\bigg]ds\bigg\|_{L^q(\Omega;L^2)}\notag\\
&\leq \bigg\|\sum\limits_{\ell=1}^k\int_{t_{\ell-1}}^{t_\ell}S(t_k-s)[f(u(s))-f(u(t_{\ell-1}))]ds\bigg\|_{L^q(\Omega;L^2)}\notag\\
&+\bigg\|\sum\limits_{\ell=1}^k\int_{t_{\ell-1}}^{t_\ell}[S(t_k-s)-S_{h,\tau}^{k-\ell}P_h]f(u(t_{\ell-1}))ds\bigg\|_{L^q(\Omega;L^2)}\notag\\
&+\bigg\|\sum\limits_{\ell=1}^k\int_{t_{\ell-1}}^{t_\ell}\bigg[S_{h,\tau}^{k-\ell}P_hf(u(t_{\ell-1}))-\frac{S_{h,\tau}^{k-\ell}P_hf(u(t_{\ell-1}))}{1+\tau\|\nabla f(u(t_{\ell-1}))\|^2}\bigg]ds\bigg\|_{L^q(\Omega;L^2)}\notag\\
&\quad:=I_{21}+I_{22}+I_{23}.
\end{align}

To work on $I_{21}$,  let us first compute the continuity of $f$. To do so, using  \eqref{eqn: f polynomial} we have
\begin{align}
\mathbb{E}\|f(u(s))-f(u(t_{\ell-1}))\|^q &\leq 
C\mathbb{E}[\|u(s)-u(t_{\ell-1})\|(1+\|u(s)\|_\infty^{p+1}+\|u(t_{\ell-1})\|_\infty^{p+1})]^q\notag\\
&\leq C\sqrt{\mathbb{E}\|u(s)-u(t_{\ell-1})\|^{2q}}\sqrt{\mathbb{E}(1+\|u(s)\|_\infty^{p+1}+\|u(t_{\ell-1})\|_\infty^{p+1})^{2q}} \,.
\end{align}
Hence, using again the embedding $H^1\hookrightarrow L^\infty$ ($d=1$), 
\begin{align}
&\|f(u(s))-f(u(t_{l-1}))\|_{L^q(\Omega;L^2)} \\
&\leq C\|u(s)-u(t_{\ell-1})\|_{L^{2q}(\Omega;L^2)}\big(1+\|u(s)\|_{L^{2q}(\Omega; H^1)}^{p+1}+\|u(t_{\ell-1})\|_{L^{2q}(\Omega; H^1)}^{p+1}\big). \notag
\end{align}
Therefore,  \eqref{eq:contu} and \eqref{eq:p61u2} imply
\begin{align*}
I_{21}&=\bigg\|\sum\limits_{\ell=1}^k\int_{t_{\ell-1}}^{t_\ell} S(t_k-s)(f(u(s))-f(u(t_{\ell-1})))ds\bigg\|_{L^q(\Omega;L^2)}\notag\\
 &\leq \sum\limits_{\ell=1}^k\int_{t_{\ell-1}}^{t_\ell} \|f(u(s))-f(u(t_{l-1}))\|_{L^q(\Omega;L^2)}ds\leq C(T) \con(u_0) \,. 
\end{align*}
As for  $I_{22}$,  first  
by Lemma \ref{lem:serrop} (1) with $\nu=1/2$, we  have
\begin{align}\label{eq:estfu1}
&\|[S(t_k-s)-S(t_k-t_{\ell-1})]f(u(t_{\ell-1}))\|_{L^q(\Omega;L^2)} \notag\\
&=\|(-\Delta)^{\frac{1}{2}}S(t_k-s)(-\Delta)^{-\frac{1}{2}}(\mathbb{I}-S(s-t_{\ell-1}))f(u(t_{\ell-1}))\|_{L^q(\Omega;L^2)}\notag\\
&\leq \|(-\Delta)^{\frac{1}{2}}S(t_k-s)\| \|(-\Delta)^{-\frac{1}{2}}(\mathbb{I}-S(s-t_{\ell-1}))\| \|f(u(t_{\ell-1}))\|_{L^q(\Omega;L^2)}\notag\\
&\leq C(t_k-s)^{-\frac{1}{2}}\tau^{\frac{1}{2}}\|f(u(t_{\ell-1}))\|_{L^q(\Omega;L^2)}
\end{align}
and by Lemma \ref{lem:serrop} (2) with $\mu=1, \nu=0$, we obtain
\begin{align}\label{eq:estfu2}
 \|[S(t_k-t_{\ell-1})-S_{h,\tau}^{k-\ell}P_h]f(u(t_{\ell-1}))\|_{L^q(\Omega;L^2)}\leq C(h+\tau^{\frac{1}{2}})(t_k-t_{\ell-1})^{-\frac{1}{2}}\|f(u(t_{\ell-1}))\|_{L^q(\Omega;L^2)}.
\end{align}
Therefore, \eqref{eq:p61f} gives
\begin{align}
I_{22}&\leq \bigg\|\sum\limits_{\ell=1}^k\int_{t_{\ell-1}}^{t_\ell}[S(t_k-s)-S(t_k-t_{\ell-1})]f(u(t_{\ell-1}))ds\bigg\|_{L^q(\Omega;L^2)}\notag\\
&+\bigg\|\sum\limits_{\ell=1}^k\int_{t_{\ell-1}}^{t_\ell}[S(t_k-t_{\ell-1})-S_{h,\tau}^{k-\ell}P_h]f(u(t_{\ell-1}))ds\bigg\|_{L^q(\Omega;L^2)}\notag\\
&\leq \sum\limits_{\ell=1}^k\int_{t_{\ell-1}}^{t_k} \|[S(t_k-s)-S(s-t_{\ell-1})]f(u(t_{\ell-1}))\|_{L^q(\Omega;L^2)}ds\notag\\
&+ \sum\limits_{\ell=1}^k\int_{t_{\ell-1}}^{t_k} \|[S(t_k-t_{\ell-1})-S_{h,\tau}^{k-\ell}P_h]f(u(t_{\ell-1}))\|_{L^q(\Omega;L^2)} ds\notag\\
&\leq C(h+\tau^{\frac{1}{2}}) \sum\limits_{\ell=1}^k\int_{t_{\ell-1}}^{t_\ell} (t_k-s)^{-\frac{1}{2}} \|f(u(t_{\ell-1}))\|_{L^q(\Omega;L^2)}ds
\leq C(T) \con(u_0)(h+\tau^{\frac{1}{2}}). \label{eq:I22}
\end{align}
Similarly,  
\begin{align}
I_{23}&= \bigg\|\sum\limits_{\ell=1}^k\int_{t_{\ell-1}}^{t_\ell}\bigg[S_{h,\tau}^{k-\ell}f(u(t_{\ell-1}))-\frac{S_{h,\tau}^{k-\ell}P_hf(u(t_{\ell-1}))}{1+\tau\|\nabla f(u(t_{\ell-1}))\|^2}\bigg]ds\bigg\|_{L^q(\Omega;L^2)}\notag\\
&\leq \sum\limits_{\ell=1}^k\int_{t_{\ell-1}}^{t_\ell}\bigg\|S_{h,\tau}^{k-\ell}f(u(t_{\ell-1}))-\frac{S_{h,\tau}^{k-\ell}P_hf(u(t_{\ell-1}))}{1+\tau\|\nabla f(u(t_{\ell-1}))\|^2}\bigg\|_{L^q(\Omega;L^2)}ds\notag\\
&\leq \sum\limits_{\ell=1}^k\int_{t_{\ell-1}}^{t_\ell} \bigg\|\frac{(S_{h,\tau}^{k-\ell}-S_{h,\tau}^{k-\ell}P_h)f(u(t_{\ell-1}))
+\tau\|\nabla f(u(t_{\ell-1}))\|^2 S_{h,\tau}^{k-\ell}f(u(t_{\ell-1}))}{1+\tau\|\nabla f(u(t_{\ell-1}))\|^2}\bigg\|_{L^q(\Omega;L^2)}ds\notag\\
&\leq \sum\limits_{\ell=1}^k\int_{t_{\ell-1}}^{t_\ell} \|(S_{h,\tau}^{k-\ell}-S_{h,\tau}^{k-\ell}P_h)f(u(t_{\ell-1}))\|_{L^q(\Omega;L^2)}ds\notag\\
&+\sum\limits_{\ell=1}^k\int_{t_{\ell-1}}^{t_\ell}  \tau \bigg\|\frac{\|\nabla f(u(t_{\ell-1}))\|^2 S_{h,\tau}^{k-\ell}f(u(t_{\ell-1}))}{1+\tau\|\nabla f(u(t_{\ell-1}))\|^2}\bigg\|_{L^q(\Omega;L^2)}ds.
\end{align}
Note that the first term on the RHS can be controlled by using \eqref{eq:estfu2}, giving :
\begin{align}\label{eq:I231}
\sum\limits_{\ell=1}^k\int_{t_{\ell-1}}^{t_\ell} \|(S_{h,\tau}^{k-\ell}-S_{h,\tau}^{k-\ell}P_h)f(u(t_{\ell-1}))\|_{L^q(\Omega;L^2)}\leq C(T)\con(u_0)(h+\tau^{\frac{1}{2}}).
\end{align}
For the  second term:  
\begin{align} \label{eq:I232}
&\sum\limits_{\ell=1}^k\int_{t_{\ell-1}}^{t_\ell}  \tau \bigg\|\frac{\|\nabla f(u(t_{\ell-1}))\|^2 S_{h,\tau}^{k-\ell}f(u(t_{\ell-1}))}{1+\tau\|\nabla f(u(t_{\ell-1}))\|^2}\bigg\|_{L^q(\Omega;L^2)}ds\notag\\
&\leq \tau \sum\limits_{\ell=1}^k\int_{t_{\ell-1}}^{t_\ell} \bigg(\mathbb{E}\|\nabla f(u(t_{\ell-1})) \|^{2q}\|f(u(t_{\ell-1}))\|^q\bigg)^{\frac{1}{q}}ds \leq \tau C(T) \con(u_0) \, ,
\end{align}
where \eqref{eq:nablaf} and  \eqref{eq:fH1} have been used. Clearly, this is a higher-order term.  
Combining the result of \eqref{eq:I231} and \eqref{eq:I232}, we have
\begin{align}
I_{23}&\leq C(T) \con(u_0)(h+\tau^{\frac{1}{2}}).
\end{align}
Collecting the estimates of $I_{21}, I_{22}$ and $I_{23}$ we obtain
\begin{align}
I_2\leq C(T) \con(u_0)(h+\tau^{\frac{1}{2}}).
\end{align}
  To estimate $I_3$, similarly to \cite[Theorem 3.14]{Kruse},   we have
\begin{align}
\sum\limits_{\ell=1}^k S_{h,\tau}^{k-\ell}P_h\delta W^\ell=\int_0^{t_k} S_{h,\tau}(s)P_hdW(s). 
\end{align}
Hence, by Lemma \ref{lem:serrop} (3) (with $\mu=0$)
\begin{align}
\mathbb{E}I_3^q&=\mathbb{E}\bigg\|\int_0^{t_k}(S(t_k-s)-S_{h,\tau}(t_k-s))P_hdW(s)\bigg\|^q\notag\\
&\leq C\bigg(\int_0^{t_k} \|[S(t_k-s)-S_{h\tau}(t_k-s)P_h]Q^{\frac{1}{2}}\|_{HS}^2ds\bigg)^\frac{q}{2}\notag\\
&\leq C\bigg(\sum\limits_{j=1}^\infty \gamma_j\int_0^{t_k} \|[S(t_k-s)-S_{h\tau}(t_k-s)P_h]e_j\|^2ds\bigg)^\frac{q}{2}\notag\\
&\leq C \bigg(\sum\limits_{j=1}^\infty \gamma_j (h^2+\tau)\|e_j\|^2\bigg)^\frac{q}{2}\notag\\
&\leq C(T) Tr(Q)^{\frac{q}{2}}(h^2+\tau)^{\frac{q}{2}},
\end{align}
where we applied the Burkholder-Davis-Gundy inequality in the first step.
The statement of the lemma follows by combining the estimates of $I_1, I_2$ and $I_3$. 
\end{proof}

\begin{proof}[Proof of Proposition \ref{Prop: one time  step estimate taming1}]
Let us write
$$
\|u(t_k)-u_h^k\|_{L^2(\Omega;L^2)}\leq \|u(t_k)-\breve{u}_h^k\|_{L^2(\Omega;L^2)}+\|\breve{u}_h^k-u_h^k\|_{L^2(\Omega;L^2)}, 
$$
where $\breve{u}_h^k$ is defined in \eqref{aux}. The first added on the RHS of the above has already been estimated in  Lemma \ref{lem:erruhat}, so we only need to bound the second term. 

Denote $\breve{e}^k:=u_h^k-\breve{u}_h^k$. Then $\breve{e}_k$ satisfies the equation
\begin{align}
& \breve{e}^k-\breve{e}^{k-1}= \tau \Delta_h \breve{e}^k+\frac{\tau P_hf\left(u_h^{k-1}\right)}{1+\tau\left\|\nabla f\left(u_h^{k-1}\right)\right\|^2}-\frac{\tau P_h f\left(u\left(t_{k-1}\right))\right.}{1+\tau\left\|\nabla f\left(u\left(t_{k-1}\right)\right)\right\|^2} \\
& \breve{e}_0=0 .
\end{align}
Taking inner product of both sides with $\breve{e}^k$,  recalling the definition of $\Delta_h$ in \eqref{def:dislap} and the fact that $(u,v)=(P_hu,v)$ for any $ v\in V_h$, we have 
\begin{align}\label{eq:J}
& \frac{1}{2}\|\breve{e}^k\|^2-\frac{1}{2}\|\breve{e}^{k-1}\|^2+\frac{1}{2}\|\breve{e}^k-\breve{e}^{k-1}\|^2+\tau\|\nabla \breve{e}^k\|^2\notag \\
& =\left(\frac{\tau f\left(u_h^{k-1}\right)}{1+\tau\left\|\nabla f\left(u_h^{k-1}\right)\right\|^2}-\frac{\tau f\left(u\left(t_{k-1}\right)\right.}{1+\tau\left\|\nabla f\left(u\left(t_{k-1}\right)\right)\right\|^2}, \breve{e}^k\right):=J.
\end{align}

Computing further the expression for $J$ gives
$$
\begin{aligned}
J & =\frac{\tau\left(f\left(u_h^{k-1}\right)-f\left(u\left(t_{k-1}\right)\right), \breve{e}^k\right)+\tau^2\left(\left\|\nabla f\left(u\left(t_{k-1}\right)\right)\right\|^2 f\left(u_h^{k-1}\right)-\| \nabla f\left(u_h^{k-1}\right) \|^2 f\left(u\left(t_{k-1}\right)\right), \breve{e}^k\right)}{\left(1+\tau\left\|\nabla f\left(u\left(t_{k-1}\right)\right)\right\|^2\right)\left(1+\tau\left\|\nabla f\left(u_h^{k-1}\right)\right\|^2\right)} \\
& =\frac{\tau^2\left(\left\|\nabla f\left(u\left(t_{k-1}\right)\right)\right\|^2-\left\|\nabla f\left(u_h^{k-1}\right)\right\|^2\right)\left(f\left(u\left(t_{k-1}\right)\right), \breve{e}^k\right)}{\left(1+\tau\left\|\nabla f\left(u\left(t_{k-1}\right)\right)\right\|^2\right)\left(1+\tau\left\|\nabla f\left(u_h^{k-1}\right)\right\|^2\right)}+\frac{\tau\left(f\left(u_h^{k-1}\right)-f\left(u\left(t_{k-1}\right), \breve{e}^k\right)\right.}{1+\tau\left\|\nabla f\left(u_h^{k-1}\right)\right\|^2}\\
&:=J_1+J_2.
\end{aligned}
$$
By Young's inequality
\begin{align}
\mathbb{E} J_1 & \leq C \tau^3 \mathbb{E}
\bigg[\left(\left\|\nabla f\left(u_h^{k-1}\right)\right\|^2
+\|\nabla f(u(t_{k-1}))\|^2 \right)\|f(u(t_{k-1}))\|^2\bigg]+\frac{\tau}{4} \mathbb{E}\| \breve{e}^k\|^2 \notag \\
&\leq C\tau^3\bigg(\sqrt{\mathbb{E}\|\nabla f(u_h^{k-1})\|^4}+\sqrt{\mathbb{E}\|\nabla f(u(t_{k-1}))\|^4} \bigg)\sqrt{\mathbb{E}\|f(u(t_{k-1}))\|^4}
+\frac{\tau}{4} \mathbb{E}\|\breve{e}^k\|^2.
\end{align}
Using \eqref{eq:fH1}  and  our $H^1$ stability result Proposition \ref{them: uniform in time moment bound for tamed}, we land with 
\begin{align}
\mathbb{E}J_1\leq \tau^3 C(T)\con(u_0)+\frac{\tau}{4} \mathbb{E}\|\breve{e}^k\|^2.
\end{align}
Next, let us estimate $J_2$. Using the Young's inequality and \eqref{C21}, 
\begin{align}
J_2 & =\frac{\tau\left(
f\left(u\left(t_{k-1}\right)-f\left(u_h^{k-1}\right), \breve{e}^k\right)\right.}{1+\tau\left\|\nabla f\left(u_h^{k-1}\right)\right\|^2} \\
& =\frac{\tau\left(f\left(u\left(t_{k-1}\right)-f\left(\breve{u}_h^{k-1}\right), \breve{e}^k\right)+\tau\left( f\left(\breve{u}_h^{k-1}\right)-f\left(u_h^{k-1}\right), \breve{e}^{k-1}\right)\right.}{1+\tau\left\|\nabla f\left(u_h^{k-1}\right)\right\|^2} \notag\\
&+\frac{\tau\left(f\left(\breve{u}_h^{k-1}\right)-f\left(u_h^{k-1}\right), \breve{e}^k-\breve{e}^{k-1}\right)}
{1+\tau\left\|\nabla f\left(u_h^{k-1}\right)\right\|^2}\notag\\
& \leq \tau\left[\|f(u(t_{k-1}))-f\left(\breve{u}_h^{k-1}\right)\|^2+\frac{1}{4}\| \breve{e}^k \|^2\right]+ |K_1|\tau\| \breve{e}^{k-1}\|^2+\tau^2\left\| f\left(\breve{u}_h^{k-1}\right)-f\left(u_h^{k-1}\right) \right\|^2 \notag\\
&+\frac{1}{4}\|\breve{e}^k-\breve{e}^{k-1}\|^2 .
\end{align}
From Lemma \ref{lem:erruhat}, the embedding $H^1\hookrightarrow L^\infty$  and the bounds \eqref{eqn: f polynomial}, \eqref{eq:p61u2} and \eqref{eq:hatu} we then deduce
\begin{align}\label{eqn:fuuhat}
& \tau \mathbb{E} \| f\left(u\left(t_{k-1}\right)\right)-f\left(\breve{u}_h^{k-1}\right) \|^2 \\
& \leq C \tau \mathbb{E}\left[\left(1+\left\|u\left(t_{k-1}\right)\right\|_{\infty}^{2p+2}+\|\breve{u}_h^{k-1}\|_{\infty}^{2p+2}\right)\|u\left(t_{k-1}\right)-\breve{u}_h^{k-1}\|^2\right] \notag\\
& \leq C \tau \sqrt{\mathbb{E}\left(1+\left\|u\left(t_{k-1}\right)\right\|_{\infty}^{4p+4}+\|\breve{u}_h^{k-1}\|_{\infty}^{4p+4}\right)}\sqrt{\mathbb{E}\|u\left(t_{k-1}\right)-\breve{u}^{k-1}\|^4}\notag \\
& \leq C \tau \sqrt{\mathbb{E}\left(1+\left\|u\left(t_{k-1}\right)\right\|_{1}^{4p+4}+\|\breve{u}_h^{k-1}\|_{1}^{4p+4}\right)}\sqrt{\mathbb{E}\|u\left(t_{k-1}\right)-\breve{u}^{k-1}\|^4} \notag\\
& \leq C \tau (h^2+\tau) C(T) \con(u_0) 
\end{align}
and, similarly, 
\begin{align}
&\tau^2 \mathbb{E}\|f\left(\breve{u}_h^{k-1}\right)-f\left(u_h^{k-1}\right)\|^2 \notag\\
&\leq 2\tau^2\left[\mathbb{E}\|f(\breve{u}_h^{k-1})\|^2+\mathbb{E}\|f(u_h^{k-1})\|^2\right]\notag\\
&\leq C\tau^2\left[\mathbb{E}\|\breve{u}_h^{k-1}\|_\infty^{2p}+\mathbb{E}\|u_h^{k-1}\|_\infty^{2p}\right]\notag\\
&\leq C\tau^2\left[\mathbb{E}\|\breve{u}_h^{k-1}\|_{{H}^1}^{2p}+\mathbb{E}\|u_h^{k-1}\|_{{H}^1}^{2p}\right]\notag\\
&\leq C(T) \tau^2 \con(u_0).
\end{align}
Hence, we drop high-order terms, and have
\begin{align}\label{eq:estJ}
\mathbb{E} J &\leq \tau (h^2+\tau)C(T) \con(u_0)+ \frac{\tau}{2} \mathbb{E}\|\breve{e}^k\|^2+ K_1\tau \mathbb{E}\|\breve{e}^{k-1}\|^2+\frac{1}{4} \mathbb{E}\|\breve{e}^k-\breve{e}^{k-1}\|^2 .
\end{align}
Taking expectation on both sides of \eqref{eq:J} and applying \eqref{eq:estJ}, we have
\begin{align}\label{eq:hate1}
\mathbb{E}&\|\breve{e}^k\|^2\leq A(\tau)\mathbb{E}\|\breve{e}^{k-1}\|^2+\tau (h^2+\tau)C(T) \con(u_0)\notag
\end{align}
where
$$
A(\tau):=\frac{1+2 |K_1|\tau}{1-\tau}.
$$
Using the elementary inequality $(1+x)^k\leq e^{kx}, x,k>0$, we have
\begin{align}
& A(\tau)^k\leq e^{(2|K_1|+1)T}, \notag\\
&\sum\limits_{\ell=1}^{k-1}A(\tau)^\ell=\frac{1-A(\tau)^k}{1-A(\tau)}\leq \frac{e^{(2|K_1|+1)T}}{(2|K_1|+1)\tau}.
\end{align}
Therefore,
\begin{align}\label{eq:hate2}
\mathbb{E}\|\breve{e}^k\|^2 &\leq A(\tau)^k\mathbb{E}\|\breve{e}^0\|^2+\sum\limits_{\ell=1}^{k-1} A(\tau)^\ell\left[\tau (h^2+\tau)C(T) \con(u_0)\right]\notag \\
&\leq (h^2+\tau)C(T) e^{(2|K_1|+1)T} \con(u_0) \,.\notag
\end{align}
This concludes the proof. 
\end{proof}

\bibliographystyle{abbrv}
\bibliography{bib}

\end{document}